\documentclass[12pt,a4paper]{amsbook}

\textwidth=455pt \evensidemargin=8pt \oddsidemargin=8pt
\marginparsep=8pt \marginparpush=8pt \textheight=640pt
\topmargin=-20pt

\usepackage{amsmath,amssymb,amsthm}
\usepackage{amscd}
\usepackage{diagrams}
\newcommand{\ity}{\infty}
\newcommand{\C}{\mathbb{C}}
\newcommand{\R}{\mathcal{R}}
\newcommand{\F}{F(\mathcal{R})}
\newcommand{\J}{J(\mathcal{R})}
\newcommand{\D}{\mathbb{D}}
\newcommand{\ti}{\widetilde}

\newcommand{\la}{\lambda}
\newcommand{\B}{\Big}
\newcommand{\A}{\mathcal{A}}
\usepackage{hyperref}

\numberwithin{equation}{section}
\numberwithin{section}{chapter}
\newtheorem{theorem}{Theorem}[section]
\newtheorem{lemma}[theorem]{Lemma}
\newtheorem{corollary}[theorem]{Corollary}
\newtheorem{proposition}[theorem]{Proposition}
\newtheorem{result}[theorem]{Result}
\theoremstyle{remark}
\newtheorem{remark}[theorem]{Remark}
\newtheorem{example}[theorem]{Example}
\newtheorem{definition}[theorem]{Definition}

\raggedbottom

\begin{document}
\frontmatter
\thispagestyle{empty}
\begin{center} \large
\textbf{FATOU'S NO WANDERING DOMAIN CONJECTURE : SULLIVAN'S PROOF}
\end{center}

\vfill

\begin{center}
Dissertation submitted to the University of Delhi in partial fulfilment of the requirements for the degree of
\end{center}

\vfill

\begin{center}\large
\textbf{MASTER OF PHILOSOPHY\\
 IN  \\
 MATHEMATICS }
\end{center}

\vfill

\vfill

\begin{center}
By\\ \large \textbf{DINESH KUMAR}
\end{center}
\vfill

\begin{center}
Under the Supervision of\\ \begin{large} \textbf{DR. SANJAY KUMAR} \end{large} \\
Department of Mathematics\\Deen Dayal Upadhyaya College\\University of Delhi, Delhi
\end{center}

\vfill

\begin{center}\bf
DEPARTMENT OF MATHEMATICS \\ UNIVERSITY OF DELHI\\ DELHI-110007\\
JANUARY, 2011
\end{center}


\newpage

\bigskip

\bigskip
\centerline{\textsc{Declaration}}
\bigskip

\bigskip
This dissertation entitled ``\textbf{Fatou's No Wandering  Domain Conjecture : Sullivan's Proof}'' contains a comprehensive study of the Dynamical Partition of the Riemann Sphere and a complementary analysis of the work done by Dennis Sullivan. This study has been carried out by me under the supervision of \textbf{Dr.~Sanjay~Kumar}, Associate Professor, Department of Mathematics, Deen Dayal Upadhyaya College, University of Delhi, Delhi, for the award of the degree of Master of Philosophy in Mathematics.

I hereby also declare that, to the best of my knowledge, the work included in this dissertation has not been submitted earlier, either in part or in full, to this or any other University/institution for the award of any degree or diploma.
 \bigskip
  \bigskip

\bigskip

\bigskip
\begin{flushright}
\textbf{(Dinesh Kumar)}
\end{flushright}

\bigskip
 \bigskip

\bigskip \bigskip

\bigskip \bigskip

\bigskip
\bigskip
\bigskip
\bigskip

\begin{tabular}{l}
  (Supervisor)\\
 \textbf{Dr. Sanjay Kumar}\\
  Department of Mathematics\\
  Deen Dayal Upadhyaya College\\
  (University of Delhi)\\
  Karam Pura, New Delhi-110015\\
\end{tabular}
\hfill
\begin{tabular}{l}
(Head of the Department)\\
\textbf{Prof. B. K. Dass}\\
Department of Mathematics\\
University of Delhi\\
Delhi-110007\\
\end{tabular}


\newpage

\centerline{\textbf{ Acknowledgment }}
\bigskip
\bigskip

I take this opportunity to express my deep sense of gratitude to my supervisor, \emph{Dr. Sanjay Kumar}, for his constant encouragement, cooperation and invaluable guidance in the successful accomplishment of this dissertation. I also express my gratitude to \emph{Prof. B. K. Dass}, Head,  Department of Mathematics, University of Delhi for providing necessary facilities and constant encouragement during the course of this study.

I also wish to extend my thanks to all the faculty members of the Department of Mathematics, University of Delhi for their help, guidance and motivation for the work presented in the dissertation. They have always been there for me whenever I needed support from them, providing me critical research insights and answering my questions with their valuable time. Their academic excellence has also been a great value to my dissertation.

I am also thankful to my friends and fellow research scholars (specially  Sumit Nagpal, Kuldeep Prakash, Sarika Goyal and  Rani Kumari) for their help and discussion during the course of my study. I am also thankful to M.M.Mishra, Assistant Professor, Hansraj college, for his valuable guidance in Latex.

I also wish to express my gratitude to the C.S.I.R for granting me the fellowship which was a great financial assistance in the completion of my M. Phil programme.
I am sincerely thankful to my parents for motivating me to do higher studies. I would also like to extend my gratitude to my brothers  and sisters for helping me in every possible way and encouraging me to achieve my long cherished goal.

Above all, I thank, The Almighty, for all his blessings bestowed upon me in completing this work successfully.

\tableofcontents

\mainmatter
\chapter{Introduction}\label{ch1}
\section{A Short History}
The subject known today as complex dynamics\,- is the study of iterations of analytic functions. It originated with the Newton-Raphson iteration method of approximation of roots. The foundation of the modern theory of complex dynamics, iteration of analytic functions, was laid by Fatou and Julia early in the twentieth century.\,They exploited the  work of Montel , which he developed around that time  on normal families, to investigate the iteration of rational maps of the Riemann sphere (extended complex plane) $\mathbb{\ti{C}}$ and found that these dynamical systems had an extremely rich orbit structure. If $\R$ is a non constant rational function, the Fatou set of $\R$, denoted by $\F$ is the maximal open subset of the extended complex plane $\mathbb{\ti{C}}$ on which $\{\R^n\}$ is equicontinous, and the Julia set of $\R$ denoted by $\J$ is the complement of $\F$ in the extended complex plane $\mathbb{\ti{C}}$.\\
Both $\F$ and $\J$ are completely invariant subsets of $\mathbb{\ti{C}}$ under the action of $\R$.\ The dynamics of $\R$ (iterative behavior)  on $\F$ is {\it{stable}} (predictable) and on $\J$ it is {\it{chaotic}} (highly unpredictable) . Fatou gave special attention to {\it{stable}} subset $\F$ , also known as Fatou set. The components of $\F$ are mapped to other components, and Fatou observed that these seemed to eventually fall into a periodic cycle of components (components do not wander forever). But he could not prove this fact, although he verified it for many examples. He however conjectured that rational maps have no wandering domains.

It soon became apparent that even the local dynamics of an analytic map was not well understood. It was not always possible to linearize the dynamics of a map near a fixed or periodic point, and remarkable examples to that effect were produced by H.Cremer. In the succeeding decades researchers in the subject turned to this {\it{linearization}} problem, and the more global aspects of the dynamics of rational maps were all but forgotten for about half a century. With the arrival of fast computers and the first pictures of the Mandelbrot set, interest in the subject began to be revived. But the real revolution in the subject came with the work of D. Sullivan in the early 1980's. He was the first to realize that the Fatou-Julia theory was strongly linked to the theory of Kleinian groups, and he established a dictionary between the two theories. Borrowing a fundamental technique first used by Ahlfors in the theory of Kleinian groups, Sullivan proved Fatou's long standing conjecture on no wandering domains, which states that each component of the Fatou set is eventually mapped into a periodic component.
\section{Overview of thesis }

 In Chapter two, we reviewed the existing theory of complex analytic dynamics on the Riemann sphere $\ti\C.$ The standard references for the material presented in this chapter are the textbooks by Beardon~\cite{beardon}, Carleson and Gamelin~\cite{carleson and gamelin} and Milnor~\cite{milnor}, as well as expository articles by Blanchard~\cite{blanchard} and Blanchard and Chiu~\cite{complex dynamics}. Section \ref{ch1,sec2} emphasised on rational maps with specific reference to extended complex plane, spherical and chordal metrics, normal and equicontinuous families and the important notion of conjugacy.
In Section \ref{ch1,sec3}, we used the rational map $\R$ to partition the sphere into two disjoint completely invariant sets, on one of which $\R$ is well behaved (the Fatou set, $\F$), on the other of which $\R$ has chaotic behaviour (the Julia set, $\J$). In Section \ref{ch1,sec4} we discussed the dynamics in a neighbourhood of a periodic point, and the global consequences of Montel's Theorem is highlighted in Section \ref{ch1,sec5}.
Section \ref{ch1,sec6} focussed on the behaviour of a rational map $\R$ on the Fatou set $\F$ and in Section \ref{ch1,sec7}, we classified the periodic components of $\R.$

In Chapter three entitled ``Making Sullivan Readable,'' we did a complementary analysis of \textbf{Sullivan's} paper~\cite{sullivan}. In Sections \ref{ch2,sec1} and \ref{ch2,sec2}, we reviewed the direct and inverse limit of spaces and the theory of Riemann surfaces. In rest of the sections, we explained the remarkable argument of Sullivan, using the theory of quasiconformal homeomorphisms. Sullivan's idea is explained as follows : If $U$ is a wandering stable region (Fatou component), then the regions $U,\R(U),\ldots,$ are pairwise disjoint. The possible complications near the boundary of $U$ is overcomed using Caratheodory's theory of ``prime ends,'' Section \ref{ch2,sec8}. By deforming the conformal structure of each domain $\R^n(U)$ (Section \ref{ch2,sec5}), we produced a deformation of the rational map $\R$ (by the \textbf{Measurable Riemann Mapping Theorem of Ahlfors-Bers}~\cite{riemann's mapping theorem for variable metrics}, Section \ref{ch2,sec9}). Finally in Section \ref{ch2,sec12}, we concluded that, this association sending conformal structures of $U\cup\R(U)\cup\ldots,$ to certain deformations of $\R$ is injective. However, this leads to a contradiction, for the space of conformal structures of  $U\cup\R(U)\cup\ldots,$ is infinite dimensional whereas the space of rational mappings of given $degree\;d,$ viz. $CP_0^{2d+1}$ is finite dimensional.\\
 In addition, the theories have been well illustrated with examples.\\
Appendix gives a reflection of Topological viewpoint on Complex analysis.
\chapter[Complex Analytic Dynamics]{Dynamical Partition of the Riemann Sphere}\label{ch1}

\section{Introduction}\label{ch1,sec1}
We are interested in studying the dynamics of a discrete dynamical system of the Riemann Sphere $\mathbb{\ti{C}}$ generated by a holomorphic transformation $\R :\ti{\C}\rightarrow\ti{\C}$. In other words, our phase space will be the unique, simply connected, closed Riemann surface (see Section \ref{ch2,sec2}) $\ti{\C} = \C\cup \{\infty\}$ which is homeomorphic to the two dimensional sphere 
\begin{equation}
S^2 = \{(x_1,x_2,x_3)\in\mathbb{R}^3\; |\;{ x_1}{^2} + {x_2}{^2} + {x_3}{^2} = 1\}
\end{equation}
via ``stereographic projection''. We will use the variables $z$ and $w = \frac{1}{z}$ to represent the two standard coordinate charts on $\ti{\C}$ determined by the stereographic projection, where $w$ is a point at $\ity$. Any holomorphic map $\R :\ti{\C}\rightarrow\ti{\C}$ on the Riemann sphere can be expressed as a rational function i.e as the quotient $\R(z) = P(z)/Q(z)$ of two polynomials with complex coefficients and having no common roots. Hence there is a one-to-one correspondence between rational functions and holomorphic maps. The poles of the rational function are the points of $\ti{\C}$ which are mapped to infinity. Some excellent sources for most of the material dealing with  complex  dynamics  are the textbooks by Beardon~\cite{beardon}, Carleson and Gamelin~\cite{carleson and gamelin} and Milnor~\cite{milnor}, as well as expository articles by Blanchard~\cite{blanchard} and Blanchard and Chiu~\cite{complex dynamics}.


\section{Rational Maps}\label{ch1,sec2}

\subsection{The Extended Complex Plane}

The extended complex plane denoted by $\ti{\C}$ is obtained by adjoining a point $\ity$ to ${\C}.$ Thus  $\ti{\C}=\C\cup \{\infty\}$. It is homeomorphic to the two dimensional sphere 
\[
S^2 = \{(x_1,x_2,x_3)\in\mathbb{R}^3\; |\;{ x_1}{^2} + {x_2}{^2} + {x_3}{^2} = 1\}.
\]
 Let $\pi\; : \C\,\rightarrow\,S^{2} , \; z \rightarrow z^{\ast}, z^{\ast}=\frac{2x}{(|z|^2+1)}, \frac{2y}{(|z|^2+1)}, \frac{(|z|^2-1)}{(|z|^2+1)}, z=x+iy$ be the stereographic projection of $\C$ into the sphere $ S^2$.\;Define $\pi(\ity) = \xi\, (= (0,0,1)),$ then $\pi$ is a bijective map from $\ti{\C}$ to $S^2.\;$This bijection gives a metric $\sigma$ on $\ti{\C}$ defined as :
\begin{equation}
\begin{split}
\notag
\sigma(z,w) 
&= |\pi(z) - \pi(w)|\\
&= |z^{\ast} - w^{\ast}|\\
&= \dfrac{2|z -w|}{(1 + |z|^2)^{1/2}\,(1 + |w|^2)^{1/2}}\;\; \forall\; z,w \in \C.
\end{split}
\end{equation}
\begin{equation}
\Rightarrow\sigma(z,w) = \dfrac{2|z -w|}{(1 + |z|^2)^{1/2}\,(1 + |w|^2)^{1/2}}\;\; \forall\; z,w \in \C.
\end{equation}
If $w=\ity$ then
\begin{equation}
\begin{split}
\notag
 \sigma(z,\ity)
&= \lim_{w\rightarrow \ity} \sigma(z,w)\\
&= \dfrac{2}{(1 +|z|^2)^{1/2}}\cdot
\end{split}
\end{equation}
$\sigma$ is called the chordal metric on $\ti{\C}$  (since $\sigma(z,w)$ is the Euclidean length of the chord joining $z^{\ast}$ to  $w^{\ast}$). There is an alternative metric on $\ti{\C}$, viz. the spherical metric $\sigma_{0}$, which is equivalent to the chordal metric $\sigma$. The spherical distance $\sigma_{0}$ between $z$ and $w$ in $\ti{\C}$ is the Euclidean length of the shortest path on $S^2$ (an arc of great circle) between $z^{\ast}$ and $w^{\ast}$.\ If the chord joining $z^{\ast}$ and $w^{\ast}$ subtends an angle $\theta$ at the origin, then
\[
\sigma_{0}(z,w) = \theta,\;\sigma(z,w) = 2\,{\sin{\frac{\theta}{2}}},\; \Rightarrow\sigma(z,w) = 2\,{\sin}({\sigma_{0}(z,w)}/2).
\]
The following inequality connects the spherical and chordal metric :
\begin{equation}
\dfrac{2}{\pi}\,\sigma_{0}(z,w)\,\leqslant\,\sigma(z,w)\,\leqslant\,\sigma_{0}(z,w).
\end{equation}

\begin{definition}
The degree, $deg\, \R$ of any continuous map $\R : \ti\C \rightarrow \ti\C$ is a homotopy invariant (see Appendix \ref{A}), i.e if $\; \R\thickapprox \R'$ where $ \R' : \ti\C \rightarrow \ti\C$ is another continuous map, then $deg\,\R = deg\,\R'$. $deg\,\R$ measures how many times $\R$ wraps the sphere $S^2$ around itself.\,The degree $d$ of $\R = P/Q,$ where $P$ and $Q$ are coprime polynomials, is then equal to the maximum of degrees of $P$ and $Q$. Equivalently, the degree of $\R$ is the number (counted with multiplicity) of inverse images of any point of $\ti{\C}$.
\end{definition}

The Fatou - Julia theory applies to rational maps $\R$ whose degree is atleast two. A {\emph{dynamical system}} is formed by the repeated application (iteration) of the map $\R : \ti{\C}\rightarrow\ti{\C}$.\\

\textbf{Notation}: The symbol $\,\R^{n}$ denotes the $n$  fold composition $\R^{n} = \R\circ \R\circ \R\circ\cdots\circ \R$  of $\R$ with itself.


\begin{definition}
 Let $U$ be an open subset of $\ti{\C}$ and $\mathcal{F} =\{f_{i}\,|\,i \in I\}$ a family of meromorphic functions defined on $U$. The family $\mathcal{F}$ is a normal family if every sequence $(f_n)$ contains a subsequence$ (f_{n_{j}})$ which converges locally uniformly on $U,$ i.e uniformly on each compact subset of $U$ (in the spherical metric on $\ti{\C}$). 
\end{definition}
We illustrate with some examples.
\begin{example}
 Consider $f_{n}(z) = z^n , \; n = 1,2,\ldots, \;z\in \D$. It can be seen easily that  $(f_n)$ is a normal family.
\end{example}
\begin{example}
 Let $f(z)=az,\; |a|<1.$ Define $f_n(z)=f^n(z),$ the nth iterate of $f$. Then $(f_n)$ forms a normal family of functions on any domain $U$ in ${\C},$ since $(f_n)$ converges  locally uniformly on $U$ to the constant function $O$.
\end{example}
The family $(f_{i}\,|\,i \in\mathcal I)$ is not normal at $z_0,$ if it fails to be normal in every neighbourhood of $z_0$.
For a  complete discussion on normal families one can refer to Ahlfor's book~\cite{complex analysis}.
\begin{definition}
 Let $(X,d)$  be a metric space. A family of functions $\{f_{i}\,: X\rightarrow X\}$ is  equicontinuous at a point $x_0$ if for any $\epsilon > 0,\; \exists\; \delta > 0$ such that 
\[
 d(x,x_0) < \delta\; \Rightarrow d(f_{i}(x),f_{i}(x_0)) < \epsilon\quad\forall\;i.
\]
 The family  $\{f_{i}\,: X\rightarrow X\}$ is  equicontinuous on $X$ if it is equicontinuous at every point of $X$.
\end{definition}
We illustrate with some examples.
\begin{example}
 Let $\{f_n\}$ be a finite set of functions defined on a metric space $E$ and let $x_0\in E$. The continuity of $\{f_n\}$ at $x_0$ (respectively at $E$) implies the equicontinuity of $\{f_n\}$ at $x_0$ (respectively at $E$).
\end{example}
\begin{example}
 Consider $f_{n}(z) = z^{2n},\;  n = 1,2,\ldots ,\;z\in \D$. It can be seen easily that $(f_n)$ is an equicontinuous family.
\end{example}
We state a classical theorem due to \emph{Arzel$\grave{\text{a}}$-Ascoli} which relates normal families to equicontinuous ones.
\begin{theorem}
\textbf{(Arzel$\grave{\textbf{a}}$-Ascoli)}
 Let $U$ be an open subset of $\ti{\C}$ and $\mathcal{F} =\{f_{i}\,|\,i \in I\}$ a family of meromorphic functions defined on $U$. Then $\mathcal{F}$ is a normal family if and only if  it is an equicontinuous family on every compact subset of $U.$
\end{theorem}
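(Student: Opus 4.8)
The statement is the classical Arzel\`a--Ascoli theorem transported to the compact metric space $(\ti{\C},\sigma)$, and I would prove the two implications separately.

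\emph{Normality implies equicontinuity on compacta.} I would argue by contradiction. Suppose $\mathcal{F}$ is normal but fails to be equicontinuous on some compact $K\subset U$; then there are a point $x_0\in K$, an $\epsilon>0$, a sequence $x_n\to x_0$ in $U$, and functions $f_n\in\mathcal{F}$ with $\sigma(f_n(x_n),f_n(x_0))\ge\epsilon$ for all $n$. By normality some subsequence $(f_{n_j})$ converges locally uniformly on $U$ to a limit $g$, which is a continuous map $U\to\ti{\C}$. Choosing a compact neighbourhood $V$ of $x_0$ in $U$ that contains all but finitely many of the $x_{n_j}$, uniform convergence on $V$ together with continuity of $g$ at $x_0$ gives, via
\[
\sigma(f_{n_j}(x_{n_j}),f_{n_j}(x_0))\le\sigma(f_{n_j}(x_{n_j}),g(x_{n_j}))+\sigma(g(x_{n_j}),g(x_0))+\sigma(g(x_0),f_{n_j}(x_0)),
\]
that the left-hand side tends to $0$, contradicting the bound $\epsilon$.

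\emph{Equicontinuity on compacta implies normality.} This is the substantive direction, and it is the classical diagonal argument. Fix a sequence $(f_n)\subset\mathcal{F}$. Since $U\subseteq\ti{\C}$ is second countable it has a countable dense subset $D=\{d_1,d_2,\dots\}$. Because $(\ti{\C},\sigma)$ is compact, $(f_n(d_1))$ has a convergent subsequence; refining successively on $d_2,d_3,\dots$ and passing to the diagonal, I obtain a subsequence, still denoted $(f_n)$, converging at every point of $D$. I then claim it converges uniformly on each compact $K\subset U$. Given $\epsilon>0$, equicontinuity on a compact neighbourhood of $K$ supplies a single $\delta>0$ with $\sigma(f_n(x),f_n(y))<\epsilon/3$ for all $n$ whenever $x,y$ lie in that neighbourhood and $\sigma(x,y)<\delta$; covering $K$ by finitely many $\sigma$-balls of radius $\delta$ centred at points of $D$ and using that $(f_n)$ is Cauchy at each of those finitely many centres, the standard $\epsilon/3$ estimate shows $(f_n)$ is uniformly Cauchy on $K$. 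Completeness of $(\ti{\C},\sigma)$ produces a limit $g\colon U\to\ti{\C}$; exhausting $U$ by an increasing sequence of compacta shows $g$ is defined and continuous on all of $U$ and that $f_n\to g$ locally uniformly. Hence every sequence in $\mathcal{F}$ has a locally uniformly convergent subsequence, i.e.\ $\mathcal{F}$ is normal.

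\emph{Where the difficulty lies.} The forward implication is a routine three-epsilon argument, provided one consistently uses the spherical metric rather than the Euclidean one. The real content is the converse; the point meriting attention is that, unlike in the usual Euclidean Arzel\`a--Ascoli, no separate pointwise-boundedness hypothesis is needed, because the target $\ti{\C}$ is itself compact and complete in $\sigma$ --- this is exactly what permits both the diagonal extraction and the passage to the limit for \emph{meromorphic} (not merely holomorphic) families. The remaining care is bookkeeping: upgrading ``equicontinuous on each compact set'' to a single modulus $\delta$ valid on a neighbourhood of a prescribed compact $K$, and arranging the compact exhaustion of $U$ so that one diagonal subsequence works simultaneously on every compact subset.
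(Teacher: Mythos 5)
The paper states this theorem without proof, treating it as a classical fact (referring the reader to the standard references), so there is nothing internal to compare against; I will assess your argument on its own. Your proof is correct. The forward direction is the expected three-epsilon contradiction, and you are right that the only care required is to use the spherical metric $\sigma$ throughout since the family is meromorphic. The converse is the standard diagonal extraction over a countable dense subset of $U$, followed by the $\epsilon/3$ argument to upgrade pointwise convergence on the dense set to uniform Cauchyness on each compact. Two small points worth making explicit rather than leaving implicit: first, when you claim that equicontinuity on a compact neighbourhood supplies a single modulus $\delta$, you are using the standard promotion from pointwise equicontinuity at every point of a compact set to uniform equicontinuity on it; second, the finitely many $\sigma$-balls should be taken small enough to lie inside the chosen compact neighbourhood of $K$, so that the single $\delta$ applies to all comparisons. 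Your closing remark is the genuinely important observation: because $(\ti{\C},\sigma)$ is compact and complete, no separate pointwise-boundedness hypothesis is needed, and this is precisely what makes the theorem hold for meromorphic (possibly $\infty$-valued) families rather than only for locally bounded holomorphic ones.
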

We shall also need the following sufficient condition for normality.
\begin{theorem}\label{1.2.9}
 Let $U$ be an open subset of $\C$ and let $\mathcal{F} =\{f_i : U\to\C\}$ be a family of holomorphic functions. If the family $\mathcal{F} $ is locally uniformly bounded on $U$ i.e bounded on each compact subset of $U$, then it is a normal family.
\end{theorem}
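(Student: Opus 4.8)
The statement to prove is Theorem~\ref{1.2.9}: a locally uniformly bounded family of holomorphic functions on an open set $U \subseteq \C$ is normal. This is Montel's classical normality criterion, and the natural route is via the Arzel\`a--Ascoli theorem stated just above, so the whole task reduces to showing that local uniform boundedness forces local equicontinuity. The plan is to fix a compact set $K \subseteq U$, thicken it slightly to a compact set $K' \subseteq U$ with $\mathrm{dist}(K, \partial K') = 2r > 0$, and use the Cauchy integral formula on circles of radius $2r$ to bound the derivatives $f_i'$ uniformly on $K$ in terms of the uniform bound $M$ for $\{f_i\}$ on $K'$.

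First I would set up the geometry: since $K$ is compact and contained in the open set $U$, there is $r>0$ so that the closed $2r$-neighbourhood $K'$ of $K$ is still a compact subset of $U$; by hypothesis there is $M$ with $|f_i(z)| \le M$ for all $z \in K'$ and all $i$. Next, for any $z_0 \in K$, applying the Cauchy estimate to $f_i$ on the circle $|\zeta - z_0| = 2r$ gives $|f_i'(z_0)| \le M/(2r)$. Then for two points $z, z_0 \in K$ with $|z - z_0| < r$, the straight segment between them lies in $K'$, so
\[
|f_i(z) - f_i(z_0)| = \left| \int_{[z_0, z]} f_i'(\zeta)\, d\zeta \right| \le \frac{M}{2r}\, |z - z_0|.
\]
This is a Lipschitz bound uniform in $i$, which immediately yields equicontinuity of $\mathcal{F}$ at every point of $K$: given $\epsilon > 0$, take $\delta = \min\{r, 2r\epsilon/M\}$. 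Since $K$ was an arbitrary compact subset of $U$, $\mathcal{F}$ is equicontinuous on every compact subset of $U$, and Arzel\`a--Ascoli then gives normality.

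One technical point worth handling carefully is the passage from the hypothesis (boundedness on compact subsets) to the genuinely uniform bound $M$ used above: I would note that $K'$ itself is a compact subset of $U$, so the hypothesis applies directly to $K'$ and produces the single constant $M$ valid for the whole family simultaneously. Another small point is to justify the existence of $r$: this is a standard Lebesgue-number / compactness argument, namely that $\mathrm{dist}(K, \C \setminus U) > 0$ when $K$ is compact and $U$ open with $K \subseteq U$ (and $U \ne \C$, the case $U = \C$ being trivial since then any $r$ works). Apart from that, the proof is routine — there is no real obstacle, the only thing to be slightly careful about is to phrase everything so that all estimates are independent of the index $i$, which is exactly what makes the family (rather than a single function) equicontinuous.

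I should also remark that limits of the locally uniformly convergent subsequences are automatically holomorphic (by Weierstrass's theorem on uniform limits of holomorphic functions), so the conclusion is consistent with the meromorphic-valued setting of the Arzel\`a--Ascoli statement; but strictly speaking this observation is not needed for the proof of normality itself, which only asserts existence of a locally uniformly convergent subsequence.
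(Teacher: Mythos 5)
The paper states Theorem~\ref{1.2.9} without proof (it is the standard sufficiency half of Montel's theorem, implicitly referred to the cited textbooks), so there is no proof in the paper against which to compare. Your argument is the standard one: reduce normality to equicontinuity via the Arzel\`a--Ascoli theorem, thicken the compact set, and obtain a uniform Lipschitz bound from the Cauchy estimate. This is correct in substance.

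There is one small imprecision worth fixing. You establish $|f_i'(z_0)|\le M/(2r)$ only for $z_0\in K$, by integrating over the circle $|\zeta-z_0|=2r\subset K'$. But when you then estimate
\[
|f_i(z)-f_i(z_0)|=\Bigl|\int_{[z_0,z]}f_i'(\zeta)\,d\zeta\Bigr|\le \frac{M}{2r}\,|z-z_0|,
\]
you are applying that derivative bound at points $\zeta$ on the segment, which lie in $K'$ but need not lie in $K$, so the bound has not actually been proved there. The repair is routine: for any $\zeta$ on the segment one has $|\zeta-z_0|<r$ with $z_0\in K$, hence the closed disk $|w-\zeta|\le r$ is contained in $|w-z_0|\le 2r\subset K'$, and the Cauchy estimate on the circle of radius $r$ about $\zeta$ gives $|f_i'(\zeta)|\le M/r$. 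The Lipschitz constant thus becomes $M/r$ rather than $M/(2r)$, which changes nothing in the equicontinuity conclusion or the choice of $\delta$. With that adjustment the proof is complete and correct.
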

The rational maps of degree one are the \textbf{M$\ddot{\text{o}}$bius maps}  $z\rightarrow \dfrac{az+b}{cz+d}\,,$ where the coefficients are complex numbers satisfying $\; ad-bc \neq 0$. These constitute the group of analytic homeomorphisms of $\ti{\C}$ onto itself.
\begin{definition}\label{d1}
\textbf{(Conjugacy)}
Two rational maps $R$ and $S$ are conjugate if and only if there is some M$\ddot{\text{o}}$bius map $g$ with $S=gRg^{-1}$. Clearly conjugacy is an equivalence relation.\\
We observe that, if $R\sim S$, then $R^n\sim S^n$.\\
Since $R\sim S,$ therefore $\exists$ a M$\ddot{\text{o}}$bius map $g$ such that $S=gRg^{-1}.$ Then $S^2= S\circ S=(gRg^{-1})\circ(gRg^{-1})=gR(g^{-1}g)Rg^{-1}=gR^2g^{-1}\;$( $g^{-1}g=Identity).$\\
By induction we have $S^n=gR^ng^{-1}$ which implies that $S^n\sim R^n$.\\
 Hence $R^n$ and $S^n$ are holomorphically, the same dynamical system. It means that a problem concerning $R$ can be transferred to a (possibly simpler) problem concerning a conjugate $S$ of $R$ and then the problem can be solved easily in terms of $S$. $R\sim S$ is exhibited by the commuting diagram
\[
\begin{CD}
\ti{\C} @ >R>>  \ti{\C}\\
@VVMV @VVMV\\
\ti{\C} @>S>>\ti{\C}
\end{CD}
\] 
where M is a M$\ddot{\text{o}}$bius transformation.
\end{definition}
We illustrate this with an example.
\begin{example}
Consider a quadratic polynomial $p(z)=az^2+2bz+d, \;a\neq 0.\; p(z) $ can be conjugated to some quadratic poynomial $p_c(z)=z^2+c,$ for some c to be determined later. This can be easily seen.
 Let $M(z)=az+b, a\neq 0$ be a M$\ddot{\text{o}}$bius map. Then
\begin{equation}
\begin{split}
\notag
M\circ p(z)
&=M(az^2+2bz+d)\\
&=a(az^2+2bz+d)+b\\
&=a^2z^2+2abz+ad+b
\end{split}
\end{equation}
and
\begin{equation}
\begin{split}
\notag
p_c\circ M(z)
&=p_c(az+b)\\
&=(az+b)^2+c\\
&=a^2z^2+2abz+b^2+c
\end{split}
\end{equation}
so that
\begin{equation}
\begin{split}
\notag
M\circ p(z)
&=p_c\circ M(z)\\
\iff a^2z^2+2abz+ad+b
&=a^2z^2+2abz+b^2+c\\
\iff ad+b
&=b^2+c\\
\iff c
&=ad+b-b^2.
\end{split}
\end{equation}
Thus it is sufficient to study the class of quadratic polynomials of the form $z\rightarrow z^2+c$ in order to understand the dynamics of all quadratic polynomials.
\end{example}

\section{The Fatou and Julia Sets}\label{ch1,sec3}

\emph{Fatou} and \emph{Julia} studied the iteration of rational maps $\R : \ti{\C}\,\rightarrow \ti{\C}$ under the assumption that $deg\,\R \geqslant 2$.
\begin{definition} A point $z_0 \in \ti{\C}$ is an element of the Fatou set $\F$ of $\R$ if $\exists$ a neighbourhood $U$ of $z_0$ in $\ti{\C}$  such that the family of iterates $ \{{\R^n}\vert_{U}\}$ is a normal family. The Fatou set of $\R$ is the maximal open subset of $\ti\C$ on which the family of iterates $\{\R^n\}$ is a normal family. The Julia set $\J$ is the complement of the Fatou set.
\end{definition}
\begin{definition}
A point $z_0$ is called a fixed point of a rational map ${\R}\;\text{if}\; {\R}(z_0)=z_0.$ The fixed points of a rational map ${\R}$ are the solutions of the equation $ P(z)-zQ(z)=0$ where ${\R}=P/Q,\; P$ and $Q$ being coprime polynomials.
\end{definition}
We illustrate with examples.
\begin{example}
 Consider $R(z)=z+z^2.$ Then
\begin{equation}
\begin{split}
\notag
R(z)
&=z\\
\Rightarrow z+z^2
&=z\\
\Rightarrow z^2
&=0\\
\Rightarrow z
&=0.
\end{split}
\end{equation}
Also $R({\ity})={\ity}.$ Therefore, the fixed points are $0$ and ${\ity}.$
\end{example}
\begin{example}
 Consider $R(z)=\dfrac{2z^2+z}{2+z^2}\cdot$ Then
\begin{equation}
\begin{split}
\notag
R(z)
&=z\\
\Rightarrow\dfrac{2z^2+z}{2+z^2}
&=z.
\end{split}
\end{equation}
 On solving we get $z=0,1.$ Therefore, the fixed points are $0$ and $1.$
\end{example}
\begin{example}
Consider $R(z)=\dfrac{1+z^2}{2z}\cdot\;$ Then $R(z)=z,\;\Rightarrow \dfrac{1+z^2}{2z}=z.$  On solving we get $z=1,-1.$ Also $R({\ity})={\ity}.$ Therefore, the fixed points are $1,-1,{\ity}.$
\end{example}
A rational map and its conjugate have the same number of fixed points, i.e if $z_0$ is a fixed point of a rational map $R$ and $g $ is a M$\ddot{\text{o}}$bius map, then $gRg^{-1}$ has the same number of fixed points at $g(z_0)$ as $R$ has at $z_0$.\\
Using the above invariance we can count the number of fixed points at ${\ity}. $\\  Let $R(\ity)=\ity$. Suppose $g$ is a M$\ddot{\text{o}}$bius map such that $g(\ity)=z_0$, where $z_0$ is some finite point. Then
\begin{equation}
\begin{split}
\notag
gRg^{-1}(z_0)
&=gR(\ity)\quad( g\; \text{is one-one})\\
&=g(\ity)\\
&=z_0.
\end{split}
\end{equation}
Thus $\ity$ is transferred to some finite fixed  point $z_0$ of $gRg^{-1}$ and the number of fixed points of the conjugate function at $g(\ity)=z_0$  gives the  number of fixed points of $R$ at $\ity.$\\
 We now find the number of fixed  points of a rational map of a given degree.
\begin{theorem} 
A rational map of degree $d$ has precisely $d+1$ fixed points in $\ti{\C}$.
\end{theorem}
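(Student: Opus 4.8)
The plan is to reduce, by a M\"obius conjugation, to the case where the point at infinity is not a fixed point, to count the (now necessarily finite) fixed points via the fundamental theorem of algebra, and then to transport the answer back to the general map using the conjugation-invariance of the fixed-point count recorded just before the theorem.

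Write $R=P/Q$ with $P,Q$ coprime polynomials and $d=\max(\deg P,\deg Q)$. Assuming, as we must, that $R$ is not the identity, the polynomial $P(z)-zQ(z)$ is not identically zero, so $R$ has only finitely many fixed points in $\ti{\C}$; hence there is a point $q$ with $R(q)\neq q$. Pick a M\"obius map $g$ with $g^{-1}(\ity)=q$ and set $S=gRg^{-1}$. Then $S(\ity)=gR(q)\neq g(q)=\ity$, so $\ity$ is not a fixed point of $S$, while $\deg S=\deg R=d$, since a conjugate is the same map read in a new coordinate chart and the number of preimages of a point is unchanged. By the invariance stated above, $R$ and $S$ have equally many fixed points, counted with multiplicity; so it suffices to prove the theorem under the additional hypothesis $R(\ity)\neq\ity$.

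So assume $R(\ity)\neq\ity$. Then $\deg P\le\deg Q$, hence $d=\deg Q$, and every fixed point of $R$ is finite. A finite point $z_0$ is fixed exactly when $P(z_0)-z_0Q(z_0)=0$; conversely, if $z_0$ is a root of $P(z)-zQ(z)$ then $Q(z_0)\neq0$ (otherwise $P(z_0)=0$, against coprimality), so $z_0$ is a genuine fixed point. Since $R(z)-z=\bigl(P(z)-zQ(z)\bigr)/Q(z)$ and $Q(z_0)\neq0$, the multiplicity of $z_0$ as a fixed point equals its order as a root of $P(z)-zQ(z)$. Finally $\deg\bigl(zQ(z)\bigr)=d+1>d\ge\deg P$, so $P(z)-zQ(z)$ has degree exactly $d+1$; by the fundamental theorem of algebra it has precisely $d+1$ roots counted with multiplicity, and we are done.

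The only step needing genuine care is the bookkeeping of multiplicities at $\ity$. Tackled directly, the argument splits into cases according to whether $\deg P$ is greater than, equal to, or less than $\deg Q$, and when $\deg P=\deg Q+1$ one must watch for cancellation between the leading coefficients of $P$ and of $zQ$: whenever it occurs, the finite fixed points that appear to be lost reappear as additional multiplicity of the fixed point at $\ity$, which is only visible after passing to the local coordinate $w=1/z$. The preliminary conjugation is exactly what lets us sidestep all of this and finish with a single appeal to the fundamental theorem of algebra.
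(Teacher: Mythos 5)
Your proof is correct and follows the same route as the paper: conjugate so that $\ity$ is not a fixed point, then count fixed points (with multiplicity) as roots of $P(z)-zQ(z)$, which has degree exactly $d+1$. You supply some scaffolding the paper leaves implicit --- explicitly exhibiting a non-fixed point to build the conjugation, verifying via coprimality that roots of $P-zQ$ are not poles, and tracking multiplicities --- but the argument is the same in substance.
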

\begin{proof}
 We know that any rational map $R$ is conjugate to a rational map $S$ which does not fix $\ity$, and the number of fixed points of $R$ and $S$ are the same. Thus we may assume that $R$ does not fix $\ity$, otherwise we use $S$.\\ Let $R=P/Q$, where $P$ and $Q$ are coprime polynomials. Let $z_0$ be any fixed point of $R$. As $Q(z_0)\neq 0$, the number of zeros of $R(z)-z$ at $z_0$ is the same as the number of zeros of $P(z)-zQ(z)$ at $z_0.$  Hence the number of fixed points of $R$ is the number of  solutions of the equation $P(z)-zQ(z)=0$ in ${\C}.\;$ Since $ R$ does not fix $\ity,$ therefore $deg\,P\leqslant deg\,Q=deg\,R.$ Thus the degree of $P(z)-zQ(z)$ is exactly $deg\,R+1=d+1$ and this completes the proof.
\end{proof}
\begin{definition} The {\it{forward orbit}} of $z_0$, denoted by $O^{+}(z_0)$, is the sequence $(z_n)$ given inductively by the relation $z_{n+1} = \R(z_n)$. If $z_n = z_0,$ for some $n$, i.e $\R^{n}(z_0) = z_0$ for some $n$, then $z_0$ is a periodic point and $O^{+}(z_0)$ is a periodic orbit. If $n$ is the first natural number such that $z_n = z_0$, then $n$ is called the period of the orbit. If $n = 1$, then $z_0$ is called a fixed point.
\end{definition}
\begin{definition}\label{d2}
\textbf{(Multiplier)}  To each fixed point $z_0$ of a rational map $\R$, we assign a complex number ${\la}$ called the multiplier $m(\R,z_0$) of $\R$ at $z_0$. It is equal to $\R'(z_{0})$ if $z_{0}\neq \ity$ and we shall see that it equals $\dfrac{1}{\R'(z_{0})}$ if $z_{0} = \ity$. If $z_0$ is a periodic point  of period n, then it is a fixed point of $\R^n$ and therefore $m(\R,z_0)=(\R^n)'(z_0).$  
\end{definition}
We illustrate with examples.
\begin{example}
 For the rational function $R(z)=\dfrac{2z^2+z}{2+z^2}\,,$ we had seen that $0$ and $1$ are the fixed points. Now $R'(z)=\dfrac{(2+z^2)(4z+1)-(2z^2+z)(2z)}{(2+z^2)^2}\cdot$\\ Therefore ${\la}$ at $0$  equals $\dfrac{1}{2}$ and ${\la}$ at $1$ equals $1.$
\end{example}
\begin{example}
 Consider $R(z)=z+z^2.$ Then
\begin{equation}
\begin{split}
\notag
 R(z)
&=z\\
\Rightarrow z+z^2
&=z\\
\Rightarrow z^2
&=0\\
\Rightarrow z
&=0.
\end{split}
\end{equation}
 Also $R(\ity)=\ity.$ So the fixed points of $R$ are $0$ and $\ity$. $R'(z)=1+2z.$ So ${\la}$ at $0$ equals $1$ and ${\la}$ at $\ity$ equals $0.$
\end{example}

Now we compute the multiplier at $\ity$ of a rational map $R$.\\
\textbf{\underline{Multiplier at $\ity$ }}\\
Suppose $R(z)=\dfrac{a_0+a_1z+...     +a_nz^n}{b_0+b_1z+....     +b_mz^m}$\,, where $a_nb_m\neq 0$ and $n>m.$ Clearly $R$ fixes $\ity$. $m(R,\ity)$ is the derivative of $S : z\rightarrow \dfrac{1}{R(1/z)}$ at $0$.\\
\begin{equation}
\begin{split}
\notag
S(z)
&=\dfrac{b_0+\dfrac{b_1}{z}+....    +\dfrac{b_m}{z^m}}{a_0+\dfrac{a_1}{z}+....    +\dfrac{a_n}{z^n}}\\
&=(b_0+\dfrac{b_1}{z}+....    +\dfrac{b_m}{z^m})(a_0+\dfrac{a_1}{z}+....    +\dfrac{a_n}{z^n})^{-1}.
\end{split}
\end{equation}
\begin{equation}
\begin{split}
\notag
 S'(z)
&=\dfrac{(\dfrac{-b_1}{z^2}+\cdots +\dfrac{-mb_m}{z^{m+1}})}{(a_0+\dfrac{a_1}{z}+\cdots   +\dfrac{a_n}{z^n})}+\dfrac{(b_0+\dfrac{b_1}{z}+\cdots   +\dfrac{b_m}{z^m})}{(a_0+\dfrac{a_1}{z}+\cdots +\dfrac{a_n}{z^n})^2}(-1)(\dfrac{-a_1}{z^2}+\cdots +\dfrac{-na_n}{z^{n+1}})
\end{split}
\end{equation}
\[=\dfrac{1}{z^{m+1}}\B(\dfrac{-b_1z^{m-1}+\cdots  -mb_m}{\dfrac{1}{z^n}(a_0z^n+\cdots +a_n)}\B)+\dfrac{1}{z^m}\B(\dfrac{(b_0z^m+\cdots +b_m)}{\dfrac{1}{z^{2n}}(a_0z^n+\cdots +a_n)^2}\B)\dfrac{1}{z^{n+1}}(a_1z^{n-1}+\cdots +   na_n)
\]
\[
=\dfrac{-mb_m}{a_n}+\dfrac{b_mna_n}{a_n^2},\quad\text{if}\; n=m+1,\;\;\text{and equals}\\
\;\,0,\quad\text{if}\; n>m+1.
\]
Thus 
 \begin{equation}
\notag
 S'(z)=
\begin{cases}
\dfrac{b_m}{a_n}    &\text{if $n=m+1$ \quad and}\\
0    &\text{if $n>m+1$}
\end{cases}
\end{equation}

Now
\begin{equation}
\begin{split}
\notag
R'(z)
&=\dfrac{a_1+\cdots  +na_nz^{n-1}}{b_0+b_1z+\cdots +b_mz^m}+\dfrac{(a_0+a_1z+\cdots   +a_nz^n)}{(b_0+b_1z+\cdots    +b_mz^m)^2}\,(-1)\,(b_1+\cdots   mb_mz^{m-1})\\
&=\dfrac{z^{n-1}}{z^m}\dfrac{({\dfrac{a_1}{z^{n-1}}+\cdots +  na_n})}{({\dfrac{b_0}{z^m}+\cdots +  b_m})}-z^n\dfrac{(\dfrac{a_0}{z^n}+\cdots +   a_n)}{{z^{2m}}{(\dfrac{b_0}{z^m}+\cdots  +b_m})^{2}}\,z^{m-1}\,(\dfrac{b_1}{z^{m-1}}+\cdots   +mb_m).
\end{split}
\end{equation}


Then by continuity
\[
      R'(\ity)=\lim_{z\rightarrow\ity}R'(z)
\]
$$=\lim_{z\rightarrow\ity}\B(\dfrac{z^{n-1}}{z^m}\dfrac{({\dfrac{a_1}{z^{n-1}}+\cdots +   na_n})}{({\dfrac{b_0}{z^m}+\cdots +   b_m})}-z^n\dfrac{(\dfrac{a_0}{z^n}+\cdots +    a_n)}{{z^{2m}}{(\dfrac{b_0}{z^m}+\cdots +b_m})^{2}}\,z^{m-1}\,(\dfrac{b_1}{z^{m-1}}+\cdots   +mb_m)\B)$$

\begin{equation}
\notag
=
\begin{cases}
\dfrac{na_n}{b_m}\,\dfrac{-a_n\,mb_m}{b_m^2}    &\text{if $ n=m+1$ \quad and}\\
\ity    &\text{if $n>m+1$}.
\end{cases}
\end{equation}
Since
\begin{equation}
\begin{split}
\notag
m(R,\ity)
&=S'(0)\\
&=\dfrac{1}{R'(\ity)}
\end{split}
\end{equation}
 we get that the multiplier $m(R,z_0)$ of a rational map $R$ at a fixed point $z_0$ is given by 
\begin{equation}
\notag
m(R,z_0)=
\begin{cases}
R'(z_0)\,    &\text{if $z_0\neq \ity$ \quad and}\\
\dfrac{1}{R'(\ity)}\,    &\text{if $z_0=\ity$}.
\end{cases}
\end{equation}
\begin{definition}\label{d3}
\textbf{(Critical Points)}
A point $z_0$ is a critical point of a rational map $\R$ if $\R$ is not locally injective at $z_0$. The set of  critical points of $\R$ consists of solutions of the equation $\R'(z)=0$ and of poles of $\R$ of order atleast 2. The image of a critical point is called a critical value of $\R.$\\
  If $deg\,\R=p$\, and $w$ is not a critical value, then clearly $\R^{-1}(w)$ consists of $p$ distinct points, say $z_1,z_2,...,z_p.$ As the $z_j\text{'s}, \;1\leqslant j\leqslant p$ are not critical points, we can find neighbourhoods $U$ of $w$ and $U_1,\ldots ,U_p$ of $z_1,z_2,\ldots,z_p$ such that 
\[
\R : U_j\to U
\]
is a bijection. Thus for each $j$, the restriction $R_j$ of the map $\R$ to $U_j$ has an inverse
\[
R_j^{-1} : U\to U_j
\]
and are called the \emph{branches} of $\R^{-1}$ at $w$.
\end{definition}
\begin{example}
Consider $\R(z)=z^2.$ Then $\R'(z)=2z$ and so
\begin{equation}
\begin{split}
\notag
\R'(z)
&=0\\
\Rightarrow 2z
&=0\\
\Rightarrow z
&=0.
\end{split}
\end{equation} 
Thus $z=0$ is a critical point of $\R$. It is the only critical point of  $\R$ besides $\ity$.
\end{example}

\subsection{Classification of fixed points of $\R$}\label{s1}
We now give the classification for the fixed points of a rational map $\R$. \\
If $z_{0}$ is a fixed point of $\R$ and $\la= \R'(z_{0}),$ the multiplier of $\R$ at $z_0$, then
\begin{enumerate}
\item\ $z_0$ is an attracting fixed point if $0<|\la|<1$;
\item\ superattracting fixed point if $\la = 0$;
\item\ repelling fixed point if $|\la|>1$;
\item\ indifferent fixed point if $|\la| = 1$;
\begin{enumerate}
 \item\ Rationally indifferent fixed point if $\la$ is a root of unity;
 \item\  Irrationally indifferent fixed point if $ |\la| = 1$, but $\la$ is not a root of unity.
\end{enumerate}
\end{enumerate}
A superattracting fixed point is a critical point of $\R$ whereas an attracting fixed point is not.\ Thus $\R$ has a local inverse near $z_0$, if $z_0$ is attracting , but not if $z_0$ is superattracting.\\
We illustrate with examples.
\begin{example}
 For the rational function $R(z)=\dfrac{2z^2+z}{2+z^2}\,,$ we had seen that $0$ and $1$ are the fixed points.  $R'(z)=\dfrac{(2+z^2)(4z+1)-(2z^2+z)(2z)}{(2+z^2)^2}\cdot$ Now ${\la}$ at $0$ equals $\frac{1}{2}$ and ${\la}$ at $1$ equals $1.$ Thus $0$ is an attracting fixed point while $1$ is rationally indifferent fixed point.
\end{example}
\begin{example}
 Consider $R(z)=z+z^2.$ Then
\begin{equation}
\begin{split}
\notag
R(z)
&=z\\
\Rightarrow z+z^2
&=z\\
\Rightarrow z^2
&=0\\
\Rightarrow z
&=0.
\end{split} 
\end{equation}
Also $R(\ity)=\ity.$ So the fixed points of $R$ are $0$ and $\ity$. $R'(z)=1+2z.$ So ${\la}$ at $0$ equals $1$ and ${\la}$ at $\ity=0.$ Thus $0$ is a rationally indifferent fixed point whereas $\ity$ is superattracting fixed point.
\end{example}
\begin{example}
Consider $R(z)=z^2-\dfrac{z}{2}+\dfrac{1}{2}\,\cdot$ Then
\begin{equation}
\begin{split}
\notag
 R(z)
&=z\\
\Rightarrow z^2-\dfrac{z}{2}+\dfrac{1}{2}
&=z\\
\Rightarrow z^2-\dfrac{3z}{2}+\dfrac{1}{2}
&=0\\
\Rightarrow 2z^2-3z+1
&=0\\
\Rightarrow (z-1)(2z-1)
&=0\\
\Rightarrow z
&=1,\frac{1}{2}\cdot
\end{split}
\end{equation}
 Also $R(\ity)=\ity.\;$ Therefore $1,\dfrac{1}{2}$ and $\ity$ are the fixed points of $R$. $R'(z)=2z-\dfrac{1}{2}\cdot$
\begin{equation}
\begin{split}
\notag
m(R,1)
&=R'(1)\\
&=2-\dfrac{1}{2}\\
&=\dfrac{3}{2}>1.
\end{split}
\end{equation}

\begin{equation}
\begin{split}
\notag
m(R,\dfrac{1}{2})
&=R'(\dfrac{1}{2})\\
&=2\dfrac{1}{2}-\dfrac{1}{2}\\
&=\dfrac{1}{2}<1.
\end{split}
\end{equation}

\begin{equation}
\begin{split}
\notag
m(R,\ity)
&=\dfrac{1}{R'(\ity)}\\
&=0.
\end{split}
\end{equation}
Thus 1 is repelling, $\dfrac{1}{2}$ is attracting and $\ity$ is superattracting fixed point of $R$.
\end{example}
\begin{example}
Consider $R(z)=z^2-2.$ Then
\begin{equation}
\begin{split}
\notag
R(z)
&=z\\
\Rightarrow z^2-2
&=z\\
\Rightarrow  z^2-z-2
&=0\\
\Rightarrow z
&=-1,2.
\end{split}
\end{equation} 
Also $R(\ity)=\ity. $ Therefore  $-1,2 $ and $\ity$ are the  fixed points. Now  $R'(z)=2z.$
\begin{equation}
\begin{split}
\notag
m(R,-1)
&=R'(-1)\\
&=-2\\
\Rightarrow |\la|
&=2>1.
\end{split}
\end{equation}
\begin{equation}
\begin{split}
\notag
m(R,2)
&=R'(2)\\
&=4>1.
\end{split}
\end{equation}
\begin{equation}
\begin{split}
\notag
m(R,\ity)
&=\dfrac{1}{R'(\ity)}\\
&=0.
\end{split}
\end{equation}
Thus -1 and 2 are repelling fixed points whereas $\ity$ is a superattracting fixed point.
\end{example}
\begin{remark}\label{1.3.17}
 If $f$ is a polynomial with $deg\,f=d\geqslant 2,$ then $\ity$ is always a superattracting fixed point with multiplier $\la=0.$
\end{remark}
\begin{proof}
It follows from the definition of multiplier at $\ity.$
\end{proof}
\begin{remark}\label{1.3.18}
 If $R(\ity)=\ity,$ we can conjugate $R$ by a M$\ddot{\text{o}}$bius map so that $\ity$ is transferred to a finite fixed point say $z_0$.
\end{remark}
\begin{proof} Suppose $g$ is a M$\ddot{\text{o}}$bius map which sends $\ity$ to a finite  point say $z_0$. Then $g^{-1}(z_0)=\ity\;(g\;\text{is one-one}).$ Now
\begin{equation}
\begin{split}
\notag
 gRg^{-1}(z_0)
&=gR(\ity)\\
&=g(\ity)\\
&=z_0 \\
\Rightarrow gRg^{-1}(z_0)
&=z_0.
\end{split}
\end{equation}
Thus $z_0$ is a fixed point of the conjugate $gRg^{-1}$ of $R$.
\end{proof}
We state the following result which we will use later on.
\begin{theorem}\label{1.3.19}
Let $\R$ be a non-constant rational map, and $p\in\mathbb Z^+.$ Then $F(\R^p)=\F$ and $J(\R^p)=\J.$
\end{theorem}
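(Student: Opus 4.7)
Since $J(\R^p)$ and $\J$ are the complements in $\ti{\C}$ of $F(\R^p)$ and $\F$ respectively, it suffices to establish $F(\R^p)=\F$. I will prove the two inclusions separately.

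The inclusion $\F\subseteq F(\R^p)$ is essentially trivial. If $z_0\in\F$, then by definition there is a neighborhood $U$ of $z_0$ on which $\{\R^n\}_{n\geq 0}$ is normal. The sequence of iterates of $\R^p$ is $\{(\R^p)^n\}_{n\geq 0}=\{\R^{pn}\}_{n\geq 0}$, which is a subsequence of $\{\R^n\}_{n\geq 0}$. Any sequence extracted from it is therefore also a sequence extracted from $\{\R^n\}$, so it admits a locally uniformly convergent subsequence on $U$. Hence $\{\R^{pn}\}$ is normal on $U$ and $z_0\in F(\R^p)$.

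For the harder inclusion $F(\R^p)\subseteq\F$, suppose $z_0\in F(\R^p)$, and choose a neighborhood $U$ of $z_0$ on which $\{\R^{pn}\}_{n\geq 0}$ is normal. Every integer $n\geq 0$ may be written uniquely as $n=pk+r$ with $0\leq r\leq p-1$, so
\[
\{\R^n\}_{n\geq 0}=\bigcup_{r=0}^{p-1}\mathcal{F}_r,\qquad \mathcal{F}_r=\{\R^r\circ\R^{pk}:k\geq 0\}.
\]
I will argue that each subfamily $\mathcal{F}_r$ is normal on $U$. Given a sequence in $\mathcal{F}_r$, extract a sub-sequence $\{\R^r\circ\R^{pk_j}\}$; by normality of $\{\R^{pn}\}$ on $U$, pass to a further sub-sequence for which $\R^{pk_{j_i}}\to g$ locally uniformly on $U$ for some continuous $g:U\to\ti{\C}$. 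Since $\R^r:\ti{\C}\to\ti{\C}$ is continuous on the compact space $\ti{\C}$, it is uniformly continuous in the chordal metric, and therefore $\R^r\circ\R^{pk_{j_i}}\to \R^r\circ g$ locally uniformly on $U$. Hence $\mathcal{F}_r$ is normal on $U$.

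Finally, a sequence in the finite union $\bigcup_{r=0}^{p-1}\mathcal{F}_r=\{\R^n\}$ must have, by the pigeonhole principle, infinitely many terms in some single $\mathcal{F}_r$; applying normality of that subfamily extracts a locally uniformly convergent subsequence. Thus $\{\R^n\}$ is normal on $U$, so $z_0\in\F$, completing the proof. The only subtle step is the uniform-continuity argument guaranteeing that post-composition with the fixed map $\R^r$ preserves local uniform convergence in the chordal metric; this is where the compactness of $\ti{\C}$ is essential.
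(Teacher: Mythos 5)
Your proof is correct and complete. Note that the paper merely states Theorem \ref{1.3.19} without proof (``We state the following result which we will use later on''), so there is no argument in the text to compare against; your treatment --- the trivial subsequence observation for $\F\subseteq F(\R^p)$, and for the converse the decomposition of $\{\R^n\}$ by residue classes mod $p$, the uniform continuity of the fixed map $\R^r$ on the compact sphere to justify post-composition preserving locally uniform convergence, and the pigeonhole step for the finite union --- is the standard argument, and each step is justified.
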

\begin{definition}\label{d4}
\textbf{Completely Invariant (Fully Invariant) Sets :}\; Let $g : X \rightarrow X$ be a map. A subset $E$ of $X$ is
\begin{enumerate}
\item\ forward invariant if$\;  g(E) = E$;
\item\ backward invariant if $\; g^{-1}(E) = E$;
\item\ completely invariant if $g(E) = E = g^{-1}(E)$.
\end{enumerate}
Clearly if $g$ is surjective then backward invariance and complete invariance coincide, for suppose $g$ is  surjective and $E$ is backward invariant, then $g^{-1}(E)=E.$ Applying $g$ on both sides, $g( g^{-1}(E))=g(E)$ and as $g$ is surjective therefore $E=g(E).$ Thus $E$ is forward invariant and hence  is completely invariant.\\
We will soon see that the Fatou set and Julia set of any rational map are completely invariant sets. 
\end{definition}
\begin{corollary}\label{1.3.21}
If $g : X\rightarrow X$ is surjective and $E\subset X$ is completely invariant, then so is $E^c=X\smallsetminus E.$
\end{corollary} 
\begin{proof}
It is sufficient to show that E is backward invariant. We know from topology that $g^{-1}$ preserves differences of sets, therefore
\begin{equation}
\begin{split}
\notag
g^{-1}(X\smallsetminus E)
&=g^{-1}(X)\smallsetminus g^{-1}(E)\\
&=X\smallsetminus E\quad(g^{-1}(E)=E).
\end{split}
\end{equation}
 Thus $E^c=X\smallsetminus E$ is backward invariant and hence completely invariant.
\end{proof}
\begin{theorem}\label{1.3.22}
 Let $g : X\rightarrow X$ be surjective, continuous and open where X is a topological space and suppose that E is completely invariant. Then so are $ E^{\circ}, \partial E$ and $\overline{E}$ where $ E^{\circ}$ and $\overline{E}$ denotes interior E and closure E respectively.
\end{theorem}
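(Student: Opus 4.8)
The plan is to reduce everything to verifying backward invariance. Recall from the discussion preceding Corollary~\ref{1.3.21} that for a surjective map backward invariance and complete invariance coincide; hence it suffices to prove $g^{-1}(A)=A$ for each of $A=E^{\circ},\overline{E},\partial E$. I will also use freely that taking preimages commutes with complements, finite intersections and set differences; combined with Corollary~\ref{1.3.21}, this shows that complements, intersections and differences of completely invariant sets are again completely invariant (under a surjective $g$).

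First I would treat the interior. Since $g$ is continuous and $E^{\circ}$ is open, $g^{-1}(E^{\circ})$ is open, and $g^{-1}(E^{\circ})\subseteq g^{-1}(E)=E$, so being an open subset of $E$ it is contained in $E^{\circ}$; thus $g^{-1}(E^{\circ})\subseteq E^{\circ}$. Conversely, since $g$ is open, $g(E^{\circ})$ is open, and $g(E^{\circ})\subseteq g(E)=E$, so $g(E^{\circ})\subseteq E^{\circ}$; applying $g^{-1}$ gives $E^{\circ}\subseteq g^{-1}\big(g(E^{\circ})\big)\subseteq g^{-1}(E^{\circ})$. Hence $g^{-1}(E^{\circ})=E^{\circ}$, so $E^{\circ}$ is completely invariant. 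This is the single place where both the continuity and the openness of $g$ are genuinely needed.

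Next, the closure. By Corollary~\ref{1.3.21}, $E^{c}=X\smallsetminus E$ is completely invariant; by the previous paragraph $(E^{c})^{\circ}$ is then completely invariant; and by Corollary~\ref{1.3.21} once more so is its complement. But the complement of the interior of the complement is the closure, i.e. $\big((E^{c})^{\circ}\big)^{c}=\overline{E}$, so $\overline{E}$ is completely invariant. Finally, $\partial E=\overline{E}\smallsetminus E^{\circ}$ (equivalently $\partial E=\overline{E}\cap\overline{E^{c}}$) is a difference of two completely invariant sets, hence completely invariant.

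I do not expect a real obstacle here: the argument is purely formal set-topology. The only points demanding care are the bookkeeping of which hypothesis --- surjectivity for the reduction to backward invariance, continuity and openness for the interior --- is invoked at each step, and the standard identity $\overline{E}=\big((X\smallsetminus E)^{\circ}\big)^{c}$ that lets the closure and boundary cases be deduced from the interior case together with Corollary~\ref{1.3.21}.
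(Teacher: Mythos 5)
Your proof is correct, and for the closure case it takes a genuinely different and in fact cleaner route than the paper's. Your interior argument is the same as the paper's (continuity for $g^{-1}(E^\circ)\subseteq E^\circ$, openness for the reverse), and your boundary argument is essentially the paper's as well, via $\partial E=\overline{E}\cap\overline{E^c}$. Where you diverge is in handling $\overline{E}$: the paper proves its invariance directly by a sequential argument (take $x\in\overline{E}$, pick $(x_n)\subset E$ with $x_n\to x$, and push the sequence through $g$), whereas you deduce it formally from the interior case together with Corollary~\ref{1.3.21} via the duality $\overline{E}=\big((X\smallsetminus E)^\circ\big)^{c}$. This buys something real: the paper's sequential characterization of closure is only guaranteed in first-countable spaces (it is fine in the intended application $X=\ti{\C}$, but the theorem as stated is for an arbitrary topological space), while your derivation is purely formal set-topology and works in full generality with no extra hypothesis. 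It also makes the logical structure more transparent — the interior case is the only place continuity and openness of $g$ are used, and the closure and boundary cases follow by Boolean manipulation alone.
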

\begin{proof}
\begin{enumerate}
\item\ $E^{\circ}\subset E$ is open and  $g$ is continuous, therefore $g^{-1}( E^{\circ})$ is an open subset of $g^{-1}(E)=E$ which implies that
\begin{equation}
 g^{-1}( E^{\circ})\subset E^\circ\quad (E^\circ\;\text {is the maximal open subset contained in E}).
\end{equation}
 Since $g$ is open, therefore $g(E^\circ)$ is an open subset of $g(E)=E$ and so $g(E^\circ)\subset E^\circ$ which implies that
\begin{equation}
 E^\circ\subset g^{-1}(E^\circ).
\end{equation}
 From the above two equations we get that $g^{-1}(E^\circ)=E^\circ.$ Hence $E^\circ$ is completely invariant.
\item\ Let $x\in \overline{E},\;\Rightarrow \exists$ a sequence $(x_n)\subset E$ such that
\begin{equation}
\begin{split}
\notag
x
&=\lim_{n\to\ity}x_n\\
\Rightarrow g(x)
&=g(\lim_{n\rightarrow{\ity}}x_n)\\
&=\lim_{n\rightarrow{\ity}}(g(x_n))\quad(g \text{ is continuous}).
\end{split}
\end{equation}
 Now
\begin{equation}
\begin{split}
\notag
 g(x_n)
&\in g (E)\\
&=E\;\;\forall\,n\\
\Rightarrow \lim_{n\rightarrow{\ity}}g(x_n)
&\in\overline{E}\\
\Rightarrow g(x)
&\in \overline{E}\\
\Rightarrow g(\overline{E})
&\subset\overline{E}\\
\Rightarrow \overline{E}
&\subset g^{-1}(\overline{E}).
\end{split}
\end{equation}
Conversely,
 we show that $g^{-1}(\overline{E})\subset \overline{E}$.\\ Let $x\notin \overline{E},\\\Rightarrow \forall$ sequence$(x_n)\subset E, $
\begin{equation}
\begin{split}
\notag
\lim_{n\rightarrow{\ity}}x_n
&\neq x\\
\Rightarrow g(\lim_{n\rightarrow{\ity}}x_n)
&\neq g(x)\\
\Rightarrow \lim_{n\rightarrow{\ity}}(g(x_n))
&\neq g(x) \quad(g\;\text {is continuous}).
\end{split}
\end{equation}
 Now 
\begin{equation}
\begin{split}
\notag
g(x_n)\in g(E)
&=E \;\;\forall\, n\\
\Rightarrow g(x)
&\notin \overline{E}\\
\Rightarrow x
&\notin g^{-1}(\overline{E})\\
\Rightarrow g^{-1}(\overline{E})
&\subset\overline{E}.
\end{split}
\end{equation}
 Hence we conclude that $ g^{-1}(\overline{E})=\overline{E}$ so that $\overline{E}$ is completely invariant.
\item\ Finally we show that $\partial E$ is completely invariant.\\ We know that $\partial E= \overline{E}\cap\overline{(X\smallsetminus E)}$.
\begin{equation}
\begin{split}
\notag
g^{-1}(\partial E)
&=g^{-1}(\overline{E}\cap\overline{(X\smallsetminus E)})\\
&=g^{-1}(\overline E)\cap g^{-1}(\overline{(X\smallsetminus E)})\quad( g^{-1}\; \text{preserves intersection})\\
&=\overline{E}\cap\overline{(X\smallsetminus E)}\quad\text{(by part (2))}\\
&=\partial E.
\end{split}
\end{equation}
\end{enumerate}
\end{proof}
\begin{theorem}\label{1.3.23}
 Let ${\R}$ be any rational map. Then ${\F}$ and ${\J}$ are completely invariant under ${\R}$.
\end{theorem}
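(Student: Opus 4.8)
The plan is to prove that $\F$ is completely invariant and then read off the corresponding statement for $\J$. Being a non-constant rational map, $\R$ is continuous, open and surjective as a self-map of $\ti\C$; so once $\F$ is completely invariant, $\J=\ti\C\smallsetminus\F$ is completely invariant by Corollary~\ref{1.3.21}. For $\F$, surjectivity of $\R$ lets us reduce complete invariance to backward invariance $\R^{-1}(\F)=\F$ (as observed in Definition~\ref{d4}): applying $\R$ to that identity yields $\R(\F)=\R(\R^{-1}(\F))=\F$. To prove $\R^{-1}(\F)=\F$ I would establish the local equivalence: for every $v\in\ti\C$, the family $\{\R^n\}$ is equicontinuous at $v$ if and only if it is equicontinuous at $\R(v)$. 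Granting this: if $z_0\in\F$, then $\F$ is an open neighbourhood of $z_0$ on which $\{\R^n\}$ is normal, hence equicontinuous at every point (Arzel$\grave{\text{a}}$--Ascoli); the local equivalence moves equicontinuity to every point of the open set $\R(\F)$; a standard finite-subcover argument turns pointwise equicontinuity on an open set into uniform equicontinuity on its compact subsets, so $\{\R^n\}$ is normal on the open set $\R(\F)$ by Arzel$\grave{\text{a}}$--Ascoli, whence $\R(\F)\subseteq\F$. Symmetrically, if $\R(z_0)\in\F$, choose an open $W\subseteq\F$ containing $\R(z_0)$; then $V=\R^{-1}(W)$ is an open neighbourhood of $z_0$, every point $v\in V$ satisfies $\R(v)\in W\subseteq\F$ so $\{\R^n\}$ is equicontinuous at $\R(v)$ and hence (local equivalence) at $v$, so $\{\R^n\}$ is equicontinuous, hence normal, on $V$, giving $V\subseteq\F$ and $z_0\in\F$. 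Thus $\R(\F)\subseteq\F$ and $\R^{-1}(\F)\subseteq\F$, i.e.\ $\R^{-1}(\F)=\F$; then $\R(\F)=\F$ by surjectivity, and Corollary~\ref{1.3.21} disposes of $\J$.

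So the whole argument rests on the local equivalence, and this is where the only real subtlety appears. The implication ``equicontinuous at $\R(v)$ $\Rightarrow$ equicontinuous at $v$'' is routine: given $\epsilon>0$, take $\delta'\le\epsilon$ witnessing equicontinuity at $\R(v)$, then use continuity of $\R$ at $v$ to find $\delta\le\epsilon$ with $\sigma(y,v)<\delta\Rightarrow\sigma(\R y,\R v)<\delta'$, and deduce $\sigma(\R^n y,\R^n v)<\epsilon$ for all $n$ by treating the first iterate separately. The reverse implication, ``equicontinuous at $v$ $\Rightarrow$ equicontinuous at $\R(v)$'', is the delicate one: $v$ may be a critical point of $\R$, so there is no local inverse and one cannot simply reverse the previous argument. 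The device I would use is to avoid inverses entirely and exploit that $\R$ is an \emph{open} map: given $\epsilon>0$ and $\delta>0$ with $\sigma(y,v)<\delta\Rightarrow\sigma(\R^m y,\R^m v)<\epsilon$ for all $m$, the image $\R(B(v,\delta))$ is open and contains $\R(v)$, hence contains a ball $B(\R(v),\delta')$; every point $w$ of that ball equals $\R(y)$ for some $y\in B(v,\delta)$, and then $\sigma(\R^n w,\R^n\R(v))=\sigma(\R^{n+1}y,\R^{n+1}v)<\epsilon$ for all $n$, which is precisely equicontinuity at $\R(v)$.

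The main obstacle, then, is this forward direction of the local equivalence at a critical point, and it is handled by working with equicontinuity rather than with local branches of $\R^{-1}$, together with the open-mapping property of non-constant rational maps; everything else is bookkeeping with $\epsilon$--$\delta$ estimates, with images and preimages of open sets under a continuous open map, and with the two cited facts (Arzel$\grave{\text{a}}$--Ascoli and Corollary~\ref{1.3.21}). As a remark, the single inclusion $\R^{-1}(\F)\subseteq\F$ can also be obtained directly from the factorisation $\R^{n+1}|_{\R^{-1}(W)}=(\R^n|_W)\circ(\R|_{\R^{-1}(W)})$ and the stability of normality under composition with a fixed continuous map, without passing through equicontinuity; but the equicontinuity route handles both inclusions at once.
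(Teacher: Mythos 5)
Your proposal is correct and follows essentially the same route as the paper: reduce complete invariance of $\F$ to backward invariance via surjectivity, establish the two inclusions of $\R^{-1}(\F)=\F$ by the local equivalence of equicontinuity at $v$ and at $\R(v)$ (continuity of $\R$ for one direction, openness for the other), and invoke Corollary~\ref{1.3.21} to handle $\J$. The only difference is cosmetic: you package the key step as a standalone local equivalence lemma and are slightly more careful about the bookkeeping between pointwise equicontinuity, compact equicontinuity, and normality, whereas the paper folds the same estimates inline and uses the ``add one more function'' trick instead of requiring $\delta'\le\epsilon$ to cover the first iterate.
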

\begin{proof}
 It suffices to prove that ${\F}$ is backwards invariant, as a rational map $\R :\ti\C\to\ti\C$ is always surjective.\\ Let $z_0\in {\R}^{-1}({\F}),\;\Rightarrow {\R}(z_0)\in {\F}$. Let $w_0={\R}(z_0).$ Since $\{\R^n\}$ is equicontinuous at $w_0,\;
\Rightarrow\text{for}\;\; \epsilon >0, \; \exists \;\delta >0 \;$ such that 
\begin{equation}\label{eq1}
\sigma_0(w,w_0)<\delta\;\Rightarrow \sigma_0({\R}^n(w),{\R}^n(w_0))<\epsilon\quad\forall\;n.
\end{equation}
 Since ${\R}$ is continuous at $z_0$, so for
 $\delta>0,\;\exists \;\delta_1 >0$ such that
\begin{equation}
\begin{split}
\notag
 \sigma_0(z,z_0)
&<\delta_1\\
\Rightarrow \sigma_{0}({\R}(z),{\R}(z_0)=w_0)
&<\delta\\
\Rightarrow \sigma_0({\R}^n({\R}(z)),{\R}^n({\R}(z_0)))
&<\epsilon\;\;\forall\,n\quad\text{(by (\ref{eq1}))}\\
\Rightarrow \sigma_0({\R}^{n+1}(z),{\R}^{n+1}(z_0))
&<\epsilon \;\;\forall\,n.
\end{split}
\end{equation}
$\Rightarrow \{\R^{n+1}:n\geqslant 1\}$ is equicontinuous at $z_0$. Adding one more function ${\R}$, we get $\{\R^{n}:n\geqslant 1\}$ is equicontinuous at $z_0. \;$ Since $z_0\in {\R}^{-1}(\F)$ was arbitrary, $\;$ therefore  $\{{\R}^{n}:n\geqslant 1\}$ is equicontinuous on ${\R}^{-1}(\F). \;$ Since ${\F}$ is open, ${\R}^{-1}({\F})$ is open, and ${\F}$ is the maximal open subset on which $ \{{\R}^{n}:n\geqslant 1\}$ is equicontinuous, hence
${\R}^{-1}({\F})\subset {\F}$.\\
Conversely,
we show that ${\F}\subset{\R}^{-1}({\F})$. \\Let $z_0\in {\F}$. Let ${\
R}(z_0)=w_0.\;$ Since $z_0\in {\F}$, so for
$\epsilon>0,\;\exists \;\delta>0$ such that
\begin{equation}\label{eq2}
 \sigma_0(z,z_0)<\delta\;\;\Rightarrow \sigma_0({\R}^{n+1}(z),{\R}^{n+1}(z_0))<\epsilon\quad\forall\; n.
\end{equation}
 Consider $$N=\{z\;|\;\sigma_0(z,z_0)<\delta\}.$$ N is an open neighbourhood of $z_0.\;$ Since ${\R}$ is an open map,  therefore ${\R}(N)$ is an open neighbourhood of ${\R}(z_0)=w_0$. If $w\in {\R}(N)$ then $w={\R}(z)$ for some $z\in N.$
\begin{equation}
\begin{split}
\notag
\sigma_0({\R}^{n}(w),{\R}^{n}(w_0))
&=\sigma_0({\R}^{n+1}(z),{\R}^{n+1}(z_0))\\
&<\epsilon\;\;\forall\,n\quad\text{(by (\ref{eq2}))}.
\end{split}
\end{equation}
$\Rightarrow \{\R^n\}$ is equicontinuous at $w_0,\; \Rightarrow w_0\in{\F},$ i.e ${\R}(z_0)\in {\F},$ i.e $ z_0\in {\R}^{-1}({\F}).\;$ As $z_0\in {\F}$ was arbitrary, 
therefore ${\F}\subset{\R}^{-1}({\F}).$\\
Hence we conclude that ${\R}^{-1}({\F})={\F}.$\\
By Corollary \ref{1.3.21}, complement of a completely invariant set is completely invariant, so that
  ${\J}$ is completely invariant.
\end{proof}
\begin{theorem}\label{1.3.24}
 Let $\R$ be a rational map with $deg\,\R\geqslant 2$ and suppose that a finite set $E$ is completely invariant under $\R$. Then $E$ has atmost two elements.
\end{theorem}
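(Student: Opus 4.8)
The plan is to use complete invariance to force every point of $E$ to be a totally ramified point of a suitable iterate of $\R$, and then to contradict the bound on the number of critical points of a rational map of fixed degree.

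First I would reduce to the case in which $\R$ fixes every point of $E$. Indeed, $\R(E)=E$ together with the finiteness of $E$ makes $\R|_E$ a bijection, hence a permutation of $E$; if $p\in\mathbb{Z}^+$ is the order of this permutation, then $\R^p$ fixes every point of $E$. Moreover $E$ is still completely invariant under $\R^p$ (iterate the relations $\R(E)=E$ and $\R^{-1}(E)=E$), and $\deg\R^p=(\deg\R)^p\geqslant 2$, so it suffices to prove the statement for $\R^p$. Replacing $\R$ by $\R^p$, and then conjugating by a M\"{o}bius map (which preserves both complete invariance and the degree), I may assume that $\R(a)=a$ for every $a\in E$ and that $\ity\notin E$.

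Now fix $a\in E$ and set $d=\deg\R$. Since $\R^{-1}(\{a\})\subseteq\R^{-1}(E)=E$ and $\R$ is the identity on $E$, the only point of $E$ mapped to $a$ is $a$ itself, so $\R^{-1}(\{a\})=\{a\}$. Because the degree is the number of preimages of any point counted with multiplicity, $\R$ takes the value $a$ only at $z=a$, and does so with multiplicity $d$; equivalently the local degree of $\R$ at $a$ is $d$, so $a$ is a critical point contributing $d-1$ to the total multiplicity of critical points of $\R$. Summing over $E$ and using that a rational map of degree $d$ has exactly $2d-2$ critical points counted with multiplicity (the Riemann--Hurwitz relation $\sum_z(\deg_z\R-1)=2d-2$ for the sphere), we obtain $|E|\,(d-1)\leqslant 2d-2=2(d-1)$. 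Since $d\geqslant 2$ we may divide by $d-1>0$ and conclude $|E|\leqslant 2$.

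The only non-routine ingredient here is the count $2d-2$ of critical points, i.e.\ the Riemann--Hurwitz formula; this is the real crux. Everything else is bookkeeping with complete invariance, together with the small amount of care (a conjugation) needed to keep $\ity$ away from $E$ so that ``local degree'' at a point of $E$ has its usual meaning. A variant that sidesteps quoting Riemann--Hurwitz would instead count the zeros of $P'Q-PQ'$ for $\R=P/Q$, but this amounts to the same computation.
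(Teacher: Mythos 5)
Your proposal is correct and follows essentially the same strategy as the paper: use complete invariance and finiteness to make $\R$ a permutation of $E$, pass to an iterate $\R^p$ that fixes $E$ pointwise, observe that each $a\in E$ must then be a totally ramified point of $\R^p$ (contributing $d-1$ to the critical multiplicity), and compare with the Riemann--Hurwitz bound of $2d-2$ critical points. Your write-up is in fact a bit more careful than the paper's (the explicit conjugation to move $\ity$ off $E$, and the cleaner arithmetic dividing $|E|(d-1)\leqslant 2(d-1)$ by $d-1>0$), but the underlying idea is identical.
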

\begin{proof} 
Suppose $E$ has $k$ elements. Since $E$ is completely invariant, therefore $\R(E)=E$ and $\R^{-1}(E)=E.$ Since $E$ is a finite set, therefore $\R$ is a permutation of $E$. So $\;\exists\;$ some $n\in\mathbb{N}$ such that $\R^n=I$, the identity map on E. Let $deg\, \R^n=d$ on $\ti\C$. For each $w\in E$, the equation $\R^n(z)=w$ has $d$ solutions say $z_1,z_2,\ldots,z_d$ in $\ti\C$ out of which atmost $d-1$ can be critical points of $\R^n\;( \R^n\vert_ E=I,$ one of the solution say $z_j$ lies in $E$ and as $I$ is injective, $z_j$ is not a critical point). Since $E$ has $k$ elements, therefore  the number of critical points  can be atmost $k(d-1)$ and so $k(d-1)\leqslant 2d-2\,$ (since, the number of critical points of $\R^n$ is atmost $2d-2$ by Theorem \ref{1.5.21}, to be shown later).
$\;\Rightarrow d(k-2)\leqslant k-2$. If $k-2>0$ i.e $k>2,$ then $d\leqslant 1$, a contradiction to the fact that $d\geqslant 2.\;$ Therefore $ k-2\leqslant 0\;$ i.e $k\leqslant 2.$ Hence $E$ has atmost two elements.
\end{proof}



\section{Periodic points}\label{ch1,sec4}

\subsection{Classification of periodic points of $\R$}\label{s2}
If $z_{0}$ is a periodic point of $\R$ of period $n$ and $\la= (\R^n)'(z_{0})$ be the multiplier of $\R$ at $z_0$, then
\begin{enumerate}
\item\ $z_0$ is an attracting periodic point if $0<|\la|<1$;
\item\ superattracting periodic point if $\la = 0$;
\item\ repelling periodic point if $|\la|>1$;
\item\ neutral (indifferent) periodic point if $|\la| = 1;$
\begin{enumerate}
\item\ Rationally indifferent periodic point if $\la$ is a root of unity;
\item\ Irrationally indifferent periodic point if $|\la|=1$, but $\la$ is not a root of unity.
\end{enumerate}
\end{enumerate}
We illustrate with examples.
\begin{example}
Consider the map $ R(z)=z^2+c\;\;$\emph{(Douady's Rabbit)}\\
where $c$ satifies $c^3+2c^2+c+1=0$ and \emph{Im}$\,c>0.$\\
We shall see that 0 is a superattracting periodic point of period 3.\\
We have $R(0)=c$. Then $R^2(0)
=R(c)
=c^2+c.$
\begin{equation}
\begin{split}
\notag
R^3(0)
&=R(c^2+c)\\
&=(c^2+c)^2+c\\
&=c^4+c^2+2c^3+c\\
&=c(c^3+2c^2+c+1)\\
&=0\quad(c\neq 0).
\end{split}
\end{equation}
Hence the orbit of 0 is
\begin{equation}
0\to c\to c^2+c.
\end{equation}
$R(z)=z^2+c;\;R^2(z)=(z^2+c)^2+c;\;R^3(z)=((z^2+c)^2+c)^2+c.$\\
$(R^3)'(z)=2\,((z^2+c)^2+c)\,2\,(z^2+c)\,2z$ and so\\
$(R^3)'(0)=0.$\\
Hence $0$ is a superattracting periodic point of period $3.$
\end{example}
\begin{example}
 Consider $R(z)=z^2-1.$ The point $z_0= 0$ is a periodic point of period 2, for 
\begin{equation}
0\to -1\to 0\to -1\to\cdots
\end{equation}
 $\Rightarrow\; 0\to -1.$\\
Now 
$R^2(z)
=R(z^2-1)
=(z^2-1)^2-1$.\\
$(R^2)'(z)=4z(z^2-1).\;(R^2)'(0)=0,$ which implies that the multiplier $\la$ at $0$ equals $0.$ Thus $0$ is a superattracting periodic point of period 2.\\
Consider $z_0=-1.$
The orbit of $z_0=-1$ is
\begin{equation}
 -1\to 0\to -1\to 0\to\cdots
\end{equation}
$\Rightarrow \;-1\to 0.$ Thus -1 is a periodic point of period 2.\\ Now $(R^2)'(-1)
=4\,(-1)\,(0)
=0.$
Hence $-1$ is a superattracting period $2$ point.
\end{example}
\begin{example}
 Consider $R(z)=z^2-2$ and consider $z_0=0.$ The orbit of 0 is
\begin{equation}
0\to -2\to 2\to 2\to\cdots 
\end{equation}
 In this case $0$ is not a periodic point. 
\end{example}
\begin{example}
Consider $R(z)=\dfrac{1+z^2}{2z}\cdot$ Then 
\begin{equation}
\begin{split}
\notag
R(z)
&=z\\
\Rightarrow \dfrac{1+z^2}{2z}
&=z\\
\Rightarrow 1+z^2
&=2z^2\\
\Rightarrow z^2
&=1\\
\Rightarrow z
&=1,{-}1\text{ are the two fixed points.}
\end{split}
\end{equation}
The fixed points of $R^2$ are given by
\begin{equation}
\begin{split}
\notag
R^2(z)
&=z\\
\Rightarrow R(\dfrac{1+z^2}{2z})
&=z\\
\Rightarrow \dfrac{1+(\dfrac{1+z^2}{2z})^2}{2(\dfrac{1+z^2}{2z})}
&=z\\
\Rightarrow \dfrac{4z^2+(1+z^2)^2}{4z(1+z^2)}
&=z\\
\Rightarrow {4z^2+(1+z^2)^2}
&={4z^2(1+z^2)}\\
\Rightarrow 4z^2(1+z^2-1)
&=(1+z^2)^2\\
\Rightarrow 4z^4
&=1+z^4+2z^2\\
\Rightarrow 3z^4-2z^2-1
&=0\\
\Rightarrow 3z^4-3z^2+z^2-1
&=0\\
\Rightarrow (z^2-1)(3z^2+1)
&=0\\
\Rightarrow z
&=1,-1, \dfrac{i}{\sqrt 3},\dfrac{-i}{\sqrt 3}\cdot
\end{split}
\end{equation}
Thus $\dfrac{i}{\sqrt 3},\dfrac{-i}{\sqrt 3}$ are the two periodic points of period 2.
\end{example}
\begin{definition}
 A point $z_0$ is called eventually periodic if for some $n \in \mathbb{Z}^{+}, R^n(z_0)$ is a periodic point.\\
A point $z_0$ is called pre-periodic if it is eventually periodic but not periodic.
\end{definition}
We illustrate with examples.
\begin{example}
Consider $$f : [0,1]\to [0,1]$$ defined as
\begin{equation}
\notag
f(x)=
\begin{cases}
2x &\text{if $0\leqslant x \leqslant \dfrac{1}{2}$ \quad and}\\
-2x+2 &\text{if $\dfrac{1}{2}\leqslant x \leqslant 1$}
\end{cases}
\end{equation}
We claim that $\;\dfrac{3}{8}$ is pre-periodic.\\
Since $\;\dfrac{3}{8}\to\dfrac{3}{4}\to\dfrac{1}{2}\to 1\to 0\to 0\to\cdots$\\
$\Rightarrow\; R^4(\dfrac{3}{8})$ is periodic of period $1,$ but $\dfrac{3}{8}$ is not periodic which proves the claim.\\
We show that $\; \dfrac{4}{17}$ is periodic.\\
Since $\;\dfrac{4}{17}\to \dfrac{8}{17}\to \dfrac{16}{17}\to \dfrac{2}{17}\to \dfrac{4}{17}\cdot$\\
$\Rightarrow \dfrac{4}{17}$ is periodic of period $4.$\\
We now show that $\;\dfrac{2}{3}$ is a fixed point.\\
Since
\begin{equation}
\begin{split}
\notag
 f( \dfrac{2}{3})
&=-2 \dfrac{2}{3}+2\\
&= \dfrac{-4}{3}+2\\
&= \dfrac{2}{3},
\end{split}
\end{equation}
so $\dfrac{2}{3}$ is a fixed point.
\end{example}
\begin{example}
Consider $R(z)= z^2-2.$ Then
$0$ is a preperiodic point, since $\; 0\to -2\to 2\to 2\to\cdots$
$\;\Rightarrow R^2(0)$ is periodic of period $1$, but $0$ is not a periodic point.
\end{example}
\begin{example}
Consider $R(z)=z^2-1.$ Then
$1$ is an eventually periodic point,
since
$\;1\to 0\to -1\to 0\to -1\to\cdots  $
$\;\Rightarrow R(1)$ is periodic of period $2$, but $1$ not periodic.
\end{example}
\begin{example}
Consider $R(z)=z^2+z-1.$ Then
$0$ is eventually periodic,
since $0\to -1\to -1\to\cdots \;\Rightarrow R(0)$ is periodic of period $1$, but $0$ is not a periodic point.\\
Also $-2$ is eventually periodic since,
\begin{equation}
 -2\to 1\to 1\to\cdots
\end{equation}
$\Rightarrow R(-2)$ is periodic of period $1$, but $-2$ is not a periodic point.
\end{example}

\begin{definition}\label{d5}
\textbf{(Basin of attraction)}$\;$ Let $z_0$ be an attracting fixed point of a rational map $\R$.
The {\it{basin of attraction}} of $z_0$,  denoted by $\mathcal{A}$ (or $\A(z_0)$),  is defined as the set of all points $\{z:\  \R^n(z) \rightarrow z_0 , \ n \rightarrow \ity\}$.\\
The component of   this set $\mathcal{A}$ containing $z_0$ is called the local basin of attraction of $z_0$ or the immediate basin of attraction of $z_0$ denoted by $\mathcal{A}^{\ast}(z_0)$.
\end{definition}
\begin{proposition}
 $\mathcal{A}$ is an open set.
\end{proposition}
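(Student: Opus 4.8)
The plan is to show that $\mathcal{A} = \mathcal{A}(z_0)$ is open by exhibiting, for each $w \in \mathcal{A}$, an open neighbourhood of $w$ contained in $\mathcal{A}$. The key idea is that membership in $\mathcal{A}$ is ``detected'' after finitely many iterates, and near the fixed point the attracting behaviour is controlled directly by the multiplier.

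First I would establish the local statement: since $z_0$ is an attracting fixed point with multiplier $\lambda = \R'(z_0)$, $|\lambda| < 1$, there is a disc $D = \{z : \sigma_0(z,z_0) < r\}$ (a neighbourhood of $z_0$, with $z_0 \neq \ity$ after conjugating by a M\"obius map if necessary, as in Remark \ref{1.3.18}) on which $|\R(z) - z_0| \leqslant \kappa |z - z_0|$ for some constant $\kappa$ with $|\lambda| < \kappa < 1$; this follows from the Taylor expansion $\R(z) = z_0 + \lambda(z - z_0) + O((z-z_0)^2)$ by shrinking $r$. Consequently $\R(D) \subseteq D$ and $\R^n(z) \to z_0$ for every $z \in D$, so $D \subseteq \mathcal{A}$, and in particular $\mathcal{A}$ contains an open neighbourhood of $z_0$.

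Next I would handle an arbitrary point $w \in \mathcal{A}$. By definition $\R^n(w) \to z_0$, so there is an integer $N$ with $\R^N(w) \in D$. Since $\R^N$ is continuous (indeed a rational, hence continuous, map of $\ti\C$), the set $V = (\R^N)^{-1}(D)$ is open, it contains $w$, and for every $z \in V$ we have $\R^N(z) \in D$, whence $\R^{N+k}(z) = \R^k(\R^N(z)) \to z_0$ as $k \to \ity$ by the local statement. Therefore $V \subseteq \mathcal{A}$, and $V$ is an open neighbourhood of $w$. As $w \in \mathcal{A}$ was arbitrary, $\mathcal{A}$ is open.

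The only real subtlety — and hence the step I would be most careful about — is the local estimate at $z_0$: one must pass from the infinitesimal data $|\lambda| < 1$ to a genuine forward-invariant neighbourhood on which iterates converge, which requires choosing $\kappa \in (|\lambda|, 1)$ and then shrinking the radius $r$ so that the higher-order terms in the Taylor expansion are dominated. The bookkeeping with the chordal versus Euclidean metric is routine since the two are comparable near any finite point (see the inequality $\tfrac{2}{\pi}\sigma_0 \leqslant \sigma \leqslant \sigma_0$ and the local Lipschitz equivalence of $\sigma$ with the Euclidean metric away from $\ity$), so I would phrase the estimate in the Euclidean coordinate and only translate back at the end. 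Everything else — continuity of $\R^N$, openness of preimages of open sets, and the ``eventually enters $D$'' observation — is immediate.
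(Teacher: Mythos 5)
Your proof is correct, and it takes a genuinely different and more robust route than the paper's. You argue directly that $\mathcal{A}$ is open: first you construct a forward-invariant disc $D$ about $z_0$ on which $\R$ is a strict contraction (choosing $\kappa\in(|\lambda|,1)$ and shrinking the radius so the Taylor remainder is dominated), giving $D\subseteq\mathcal{A}$; then for any $w\in\mathcal{A}$ you pick $N$ with $\R^N(w)\in D$ and observe that $V=(\R^N)^{-1}(D)$ is an open neighbourhood of $w$ contained in $\mathcal{A}$. The paper instead attempts to show the complement $B=\mathcal{A}^c$ is closed via a sequential argument: for $z_m\to z_1$ with each $z_m\in B$, it writes ``$\R^n(z_n)\to\R^n(z_1)$'' by continuity, notes that $z_n\in B$ gives ``$\R^n(z_n)\nrightarrow z_0$,'' and concludes $\R^n(z_1)\nrightarrow z_0$. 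That reasoning conflates the index of the approximating sequence with the iteration index: continuity of each fixed $\R^k$ gives $\R^k(z_m)\to\R^k(z_1)$ as $m\to\infty$, which says nothing about a diagonal sequence, and ``$\R^n(z_n)\nrightarrow z_0$'' is not even what $z_n\in B$ asserts (membership in $B$ is a statement about the whole orbit of $z_n$, not a single iterate). Your approach sidesteps these pitfalls entirely, is the standard argument in the literature, and as a byproduct exhibits a forward-invariant round disc about $z_0$ inside $\mathcal{A}^{\ast}(z_0)$ --- which is essentially the local ingredient that Result \ref{1.4.15} goes on to reprove.
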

\begin{proof}
We show that  $ B=\mathcal{A}^{c}$ is closed where $B=\{z:\; \R^n(z) \nrightarrow z_0 , \ n \rightarrow \ity\}$.\\
Let $z_1\in\overline B,$ then $\exists$\, a sequence $(z_n)\subset B\;$ such that $\;z_n\to z_1, \;\Rightarrow \R^n(z_n)\to  \R^n(z_1)$ (\;$\R^n$ are continuous).
As $z_n\in B, \;\Rightarrow \R^n(z_n) \nrightarrow z_0. $ 
Since $\R^n(z_n)\to  \R^n(z_1),\;\Rightarrow \R^n(z_1)\nrightarrow z_0$ and so 
$z_1\in B.\;$ Hence $B$ is closed so that
 basin of attraction $\mathcal{A}$ is an open set.
\end{proof}
\begin{proposition}\label{1.4.12}
Let $z_0$ be an attracting fixed point of a rational map $\R$. Then the basin of attraction $\A$ is completely invariant.
\end{proposition}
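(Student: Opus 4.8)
The plan is to show directly that $\A$ is forward invariant and backward invariant under $\R$, which together give complete invariance. Since $\R$ is surjective, it would in fact suffice to establish backward invariance, but the forward direction is so short that I would record it anyway.

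First I would prove $\R(\A) \subset \A$, and in fact $\R(\A) = \A$. Take $z \in \A$, so $\R^n(z) \to z_0$ as $n \to \ity$. Then $\R^{n}(\R(z)) = \R^{n+1}(z) \to z_0$ as well (it is a subsequence, or rather a tail, of the convergent sequence $\R^n(z)$), so $\R(z) \in \A$. For the reverse inclusion, note $z_0$ itself lies in $\A$ and $\R(z_0) = z_0$; more usefully, given $w \in \A$ choose (by surjectivity of $\R$) a point $z$ with $\R(z) = w$; then $\R^{n}(z) = \R^{n-1}(w) \to z_0$, so $z \in \A$ and $w = \R(z) \in \R(\A)$.

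Next I would prove $\R^{-1}(\A) = \A$. The inclusion $\A \subset \R^{-1}(\A)$ is immediate from $\R(\A) \subset \A$. For the inclusion $\R^{-1}(\A) \subset \A$: let $z \in \R^{-1}(\A)$, so $\R(z) \in \A$, i.e. $\R^n(\R(z)) \to z_0$. But $\R^n(\R(z)) = \R^{n+1}(z)$, so the sequence $(\R^m(z))_{m \geq 1}$ converges to $z_0$ (it agrees from the second term on with a sequence known to converge to $z_0$, and convergence is unaffected by dropping the first term), hence $z \in \A$. Combining, $\R(\A) = \A = \R^{-1}(\A)$, which is exactly the definition (Definition \ref{d4}) of $\A$ being completely invariant.

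There is essentially no obstacle here: the only thing to be a little careful about is the bookkeeping of indices — that $\R^{n+1}(z) \to z_0$ as $n \to \ity$ is genuinely equivalent to $\R^m(z) \to z_0$ as $m \to \ity$ — and the one genuine appeal to an external fact, namely that a rational map $\R : \ti\C \to \ti\C$ is surjective, which is used only for $\R(\A) = \A$ (and is not even needed for complete invariance in the sense of Definition \ref{d4} once one has $\R(\A) \subset \A$ and $\R^{-1}(\A) = \A$, since for surjective $g$ backward invariance already implies complete invariance, as noted after Definition \ref{d4}). So the cleanest write-up would be: prove $\R^{-1}(\A) = \A$ by the index-shift argument above, then invoke surjectivity of $\R$ together with the remark following Definition \ref{d4} to conclude $\R(\A) = \A$ as well, hence $\A$ is completely invariant.
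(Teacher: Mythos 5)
Your proof is correct and takes essentially the same approach as the paper: both reduce the problem to showing $\R^{-1}(\A) = \A$ via the index-shift observation $\R^n(\R(z)) = \R^{n+1}(z)$, together with surjectivity of $\R$. If anything your write-up is slightly more careful, since the paper simply asserts that $\A$ is forward invariant "by definition" whereas you actually check $\R(\A) \subset \A$ (and, optionally, the reverse inclusion using surjectivity).
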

\begin{proof} It is sufficient to show that $\R^{-1}(\A)=\A\quad(\R$ is surjective). By definition of $\A,$ it is forward invariant, so $\R(\A)=\A,\;\Rightarrow\A\subset\R^{-1}(\A).$\\
 Now let 
\begin{equation}
\begin{split}
\notag
z
&\in\R^{-1}(\A)\\
\Rightarrow\R(z)
&\in\A\\
\Rightarrow\lim_{n\to\ity}\R^n(\R(z))
&=z_0\quad(\text{by definition})\\
\Rightarrow\lim_{n\to\ity}\R^{n+1}(z)
&=z_0\\
\Rightarrow z
&\in\A\\
\Rightarrow\R^{-1}(\A)
&\subset\A\quad(z\in\R^{-1}(\A)\; \text{was arbitrary}).
\end{split}
\end{equation}
Thus $\R^{-1}(\A)=\A$ and hence $\A$ is completely invariant.
\end{proof}
Analogously we can define the basin of attraction of a (super) attracting periodic point.\\
Let $z_0$ be a (super) attracting periodic point of period n. Let $$O^+(z_0)=\{z_0, \R(z_0),\ldots,\R^{n-1}(z_0)\}$$ be the orbit of $z_0$. The basin of attraction of the orbit $O^+(z_0)$ is the collection of all points $z$ for which $\{\R^{jn}\}$ converges to some point of the orbit.\\
The local basin of a (super) attracting periodic orbit $O^+(z_0)=\{z_0, \R(z_0),\ldots, \R^{n-1}(z_0)\},$ is the union of the components of ${\F}$ which contains points of the orbit.\\
\begin{example}\label{eg1}
 Consider $R(z)=z^2.$ Then  $0$ and $\ity$ are the two fixed points of $R$. The basin of attraction of 0 is the unit disk ${\D}$ (which is connected and hence has only one component). So the basin of attraction of 0 coincides with its local basin of attraction.\\
The basin of attraction of $\ity$ equals $\ti{\C}\smallsetminus \overline{\D}$ (which is also connected and hence has only one component). Again the basin of attraction of $\ity$ coincides with its local basin of attraction.
\end{example}
We state \emph{Vitali's} theorem.
\begin{theorem}
 Let $\{f_1,f_2,\ldots  \}$ be a family of analytic maps normal in a domain $D$ and which converges pointwise to some function $f$ on some non-empty open subset $W$ of $D$. Then $f$ extends to a function $\Phi$ which is analytic on $D$, and $f_n\to \Phi$ locally uniformly on $D$.
\end{theorem}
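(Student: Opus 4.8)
The plan is to use normality to single out a unique candidate limit, and then bootstrap subsequential convergence into genuine locally uniform convergence. First I would recall two classical facts about holomorphic functions that I will invoke freely: the theorem of Weierstrass that a locally uniform limit of holomorphic functions on a domain is again holomorphic, and the identity theorem, that two holomorphic functions on a \emph{connected} domain which agree on a nonempty open subset agree everywhere. Since $D$ is a domain (hence connected) and $W\subset D$ is nonempty and open, both are available; connectedness of $D$ is used in an essential way.

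The first step: because $\{f_n\}$ is normal in $D$, every subsequence of $(f_n)$ has a further subsequence converging locally uniformly on $D$ (in the spherical metric). If $g$ is the limit of such a subsequence $(f_{n_k})$, then $g(z)=\lim_k f_{n_k}(z)=f(z)$ for each $z\in W$, since $f_n\to f$ pointwise on $W$; in particular $g$ is not the constant $\infty$, so by Weierstrass $g$ is holomorphic on $D$. The second step is to show all these subsequential limits coincide: any two of them are holomorphic on $D$ and agree with $f$ on the nonempty open set $W$, hence agree with each other on all of $D$ by the identity theorem. Call this common holomorphic function $\Phi$. By construction $\Phi|_W=f$, so $\Phi$ is the claimed analytic extension of $f$.

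The third step, which I regard as the main point, is to upgrade to full locally uniform convergence. Suppose for contradiction that $f_n\to\Phi$ does not hold locally uniformly on $D$. Then there exist a compact set $K\subset D$, a number $\epsilon>0$, and a subsequence $(f_{n_k})$ with $\sup_{z\in K}\sigma_0\big(f_{n_k}(z),\Phi(z)\big)\geq\epsilon$ for all $k$. Applying normality to $(f_{n_k})$ yields a sub-subsequence converging locally uniformly on $D$; by the second step its limit must be $\Phi$, so the supremum over $K$ of the spherical distance to $\Phi$ along that sub-subsequence tends to $0$, contradicting the lower bound $\epsilon$. Hence $f_n\to\Phi$ locally uniformly on $D$, completing the proof.

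The conceptual crux is the identity-theorem step: it is what forces the pointwise limit on $W$ to propagate across all of $D$ and makes the candidate limit $\Phi$ canonical. The contradiction argument in the third step is the routine-but-essential device that converts ``every subsequence has a sub-subsequence converging to $\Phi$'' into actual convergence; I do not anticipate any genuine obstacle there. The only hypotheses doing real work are the normality of $\{f_n\}$ (used twice) and the connectedness of $D$ (used in the identity theorem).
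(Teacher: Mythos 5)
Your proof is correct and is the standard textbook argument for Vitali's theorem: use normality to extract locally uniformly convergent subsequences, pin down the unique subsequential limit $\Phi$ via the identity theorem on the connected domain $D$, and then convert ``every subsequence has a sub-subsequence converging to $\Phi$'' into genuine convergence by contradiction. The only point worth making explicit is the one you gloss with ``in particular $g$ is not the constant $\infty$'': this relies on reading $f$ as $\C$-valued (finite) on $W$, which is implicit in the claim that $f$ extends to an \emph{analytic} $\Phi$; under that reading the step is fine. Note that the paper does not actually prove this statement --- it records it as Vitali's theorem and immediately passes to an application --- so there is no in-text argument to compare against; your proof supplies exactly the standard missing derivation.
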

We give a nice application of Vitali's theorem.
\begin{result}\label{1.4.15}
If $z_0$ is a (super) attracting fixed point of a rational map $\R$, then $z_0\in {\F},$ the Fatou set of $\R$.
\end{result}
\begin{proof}
As  $z_0$ is a (super) attracting fixed point of $\R,\;\Rightarrow |\R'(z_0)|<1$. Choose a number $\alpha$ such that $|\R'(z_0)|<\alpha<1.$ Since $\R$ is analytic at $z_0,\;\Rightarrow\;$ for some open neighbourhood $D$ of  $z_0 \;\R$ has a Taylor expansion $\R(z)=\R(z_0)+\R'(z_0)(z-z_0)\;$(On neglecting the higher order terms).\\
$\;\Rightarrow |\R(z)-\R(z_0)|=|\R'(z_0)||(z-z_0)|$.\\
 Now
\begin{equation}
\begin{split}
\notag
|\R(z)-z_0|
&=|\R(z)-\R(z_0)|\quad(\R(z_0)=z_0)\\
&=|\R'(z_0)||z-z_0|\\
&<\alpha |z-z_0|\\
&<|z-z_0|\quad( \alpha<1).
\end{split}
\end{equation} 
$\Rightarrow \R(z)\in D\;\;\forall z\in D$ and so $\R(D)\subset D$.\\
Thus each $\R^n$ maps $D$ into itself. Thus $\{\R^n\;\vert \;n\in \mathbb{N}\}$ is locally uniformly bounded on $D.$ By Theorem \ref{1.2.9}, $\{\R^n\;\vert \;n\in \mathbb{N}\}$ is normal on $D$. This implies that $z_0\in {\F}$ and $\R^n\to z_0$ uniformly on $D$.\\ Let $E$ be the component of ${\F}$ containing $z_0$ on which $\{\R^n\}$ is normal. Then by Vitali's theorem, $\R^n\to z_0$ locally uniformly on $E$.
\end{proof}
The above statement for (super) attracting fixed points generalize immediately to (super) attracting periodic points.
\begin{result}\label{1.4.16}
If $z_0$ is a (super) attracting periodic point of $\R$ of period $m$, then $z_0\in \F.$
\end{result}
\begin{proof}
We have $\R^m(z_0)=z_0,\;\Rightarrow z_0$ is a (super)attracting fixed point of $\R^m.$ From above result we have $z_0\in F(\R^m)=\F$ ( by Theorem \ref{1.3.19} ). Hence $z_0\in \F$. 
\end{proof}
\begin{proposition}\label{1.4.17}
 If $O^{+}(z_0)$ is a (super) attracting periodic orbit (of period n) then it is contained in $\F$.
\end{proposition}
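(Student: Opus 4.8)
The plan is to deduce this immediately from Result~\ref{1.4.16} together with the complete invariance of the Fatou set (Theorem~\ref{1.3.23}). Write $z_j=\R^j(z_0)$ for $0\leqslant j\leqslant n-1$, so that $O^{+}(z_0)=\{z_0,z_1,\ldots,z_{n-1}\}$.

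First I would note that each $z_j$ is again a periodic point of $\R$, of period dividing $n$, since $\R^n(z_j)=\R^n(\R^j(z_0))=\R^j(\R^n(z_0))=\R^j(z_0)=z_j$. A short chain-rule computation then shows that the multiplier is the same at every point of the orbit: $(\R^n)'(z_j)=\prod_{i=0}^{n-1}\R'(\R^i(z_j))=\prod_{i=0}^{n-1}\R'(z_i)=(\R^n)'(z_0)$, the product being cyclic and hence independent of the base point (if $\ity$ happens to lie on the orbit, one first conjugates by a M$\ddot{\text{o}}$bius map as in Remark~\ref{1.3.18}, which changes neither the orbit structure nor the multiplier). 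Consequently $|(\R^n)'(z_j)|=|(\R^n)'(z_0)|$, so each $z_j$ is a (super)attracting periodic point of $\R$; Result~\ref{1.4.16} then yields $z_j\in\F$ for every $j$, and therefore $O^{+}(z_0)\subseteq\F$.

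An even shorter route, which I would actually prefer to present, applies Result~\ref{1.4.16} only to $z_0$ to conclude $z_0\in\F$, and then invokes Theorem~\ref{1.3.23}: since $\F$ is completely invariant it is in particular forward invariant, $\R(\F)=\F$, so $z_j=\R^j(z_0)\in\F$ for every $j\geqslant 0$, and hence the whole orbit lies in $\F$.

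There is essentially no genuine obstacle here. The one point that needs a word of justification is that membership in the Fatou set propagates along the orbit, and this is immediate either from forward invariance of $\F$ or, in the first approach, from the invariance of the multiplier under a cyclic shift of the orbit; everything else is just unpacking the definitions of periodic orbit and basin.
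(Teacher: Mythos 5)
Your proposal is correct, and both of your routes succeed. Your first argument (show each $z_j$ is itself a (super)attracting periodic point by noting the multiplier is a cyclic product, hence base-point independent, and then apply Result~\ref{1.4.16} to each $z_j$) is essentially what the paper does, just with the multiplier computation spelled out explicitly where the paper compresses it to ``(by above).'' Your preferred second route — apply Result~\ref{1.4.16} once to get $z_0\in\F$ and then push the whole orbit into $\F$ by the forward invariance furnished by Theorem~\ref{1.3.23} — is a genuinely different and leaner deduction: it sidesteps the chain-rule/multiplier observation entirely and instead leans on a global structural fact about $\F$ that has already been established. The tradeoff is that the paper's approach yields slightly more information along the way (each $z_j$ sits in its own Fatou component $F_j$ on which $\R^{kn}\to z_j$ locally uniformly, which is what the paper records in its proof), whereas the forward-invariance route gives exactly the stated conclusion and nothing more. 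Either is a legitimate proof of the proposition as stated.
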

\begin{proof}
 Let  $O^{+}(z_0)=\{z_0,z_1,\ldots,z_{n-1}\}$ be the forward orbit of $z_0$. Then each $z_j$, lies in a component say $F_j$ of $\F\;$ (by above) and as $k\to\ity,\R^{kn}\to z_j,$ locally uniformly on $F_j$.
\end{proof}
\begin{proposition}\label{1.4.18}
 If $z_0$ is a repelling fixed point of a rational map $\R$, then $z_0\in \J$.
\end{proposition}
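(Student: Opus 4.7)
My plan is the classical contradiction argument: assume $z_0 \in \F$ and exploit the fact that the derivatives $(\R^n)'(z_0)$ blow up.

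First, by Remark \ref{1.3.18}, I may conjugate by a M\"obius map and assume without loss of generality that $z_0 \neq \ity$, since normality is preserved under M\"obius conjugation and the multiplier is invariant (so the fixed point remains repelling with the same $|\la|>1$). Suppose, for the sake of contradiction, that $z_0\in\F$. Then there is an open neighbourhood $U$ of $z_0$ on which $\{\R^n\}$ is a normal family, so some subsequence $\{\R^{n_k}\}$ converges locally uniformly on $U$, in the spherical metric, to a meromorphic limit $g:U\to\ti{\C}$.

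Since $z_0$ is a fixed point, $\R^{n_k}(z_0)=z_0$ for all $k$, and hence $g(z_0)=z_0$, which is finite. By continuity of $g$, there is a smaller disk $V\subset U$ centred at $z_0$ on which $g$ takes values in a bounded region of $\C$ (say, a Euclidean disk around $z_0$ of radius $1$). Spherical convergence to a limit function whose image avoids $\ity$ is equivalent to Euclidean locally uniform convergence on $V$ for all sufficiently large $k$. Thus $\R^{n_k}\to g$ uniformly on compact subsets of $V$ in the ordinary Euclidean sense, so by the Weierstrass theorem on convergence of holomorphic functions, the derivatives converge:
\begin{equation}
(\R^{n_k})'(z_0)\longrightarrow g'(z_0)\in\C.
\end{equation}

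On the other hand, by the chain rule applied iteratively at the fixed point,
\begin{equation}
(\R^{n})'(z_0)=\prod_{j=0}^{n-1}\R'(\R^{j}(z_0))=\bigl(\R'(z_0)\bigr)^{n}=\la^{n},
\end{equation}
so $|(\R^{n_k})'(z_0)|=|\la|^{n_k}\to\ity$ because $|\la|>1$. This contradicts the finiteness of $g'(z_0)$, completing the proof. The only delicate point, which I have handled above, is ensuring that the spherical convergence given by normality translates into ordinary Euclidean convergence of the derivatives; this works precisely because $g(z_0)=z_0$ is a finite value and so $g$ stays away from $\ity$ on a small neighbourhood.
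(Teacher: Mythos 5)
Your proof is correct. It shares with the paper the essential computation $(\R^{n})'(z_0)=\la^{n}$ via the chain rule at the fixed point, but the way you derive the contradiction is different and, frankly, tighter. The paper writes the first-order Taylor expansion of $\R^{n}$ at $z_0$ ``neglecting the higher order terms'' and concludes $|\R^{n}(z)-\R^{n}(z_0)|=|\la|^{n}|z-z_0|\to\ity$, i.e.\ it contradicts equicontinuity directly by an expansion estimate; as written this equality only holds to first order, so the paper's argument is really a sketch of why no subsequence of iterates can be equicontinuous at $z_0$. You instead extract a locally uniformly convergent subsequence from the assumed normality, observe that the limit is finite at $z_0$ (ruling out the constant-$\ity$ limit and converting spherical into Euclidean convergence near $z_0$), and invoke the Weierstrass theorem to force convergence of $(\R^{n_k})'(z_0)$, which is incompatible with $|\la|^{n_k}\to\ity$. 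What your route buys is precisely the rigour the paper's Taylor-expansion shortcut elides: the passage from ``derivatives blow up'' to ``normality fails'' is justified by a genuine convergence theorem rather than by a dropped remainder term. Your preliminary reduction to $z_0\neq\ity$ by M\"obius conjugation is also a point the paper glosses over.
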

\begin{proof}
 We have $\R(z_0)=z_0$ and $|\la|=|\R'(z_0)|>1.$ We shall see that no sequence of iterates of $\R$ can converge uniformly near $z_0$.
We know by \emph{Chain Rule},
\begin{equation}
\begin{split}
\notag
(\R^n)'(z_0)
&=\prod_{k=0}^{n-1}\R'(\R^k(z_0))\\
&=\R'(z_0)\,\R'(\R(z_0))\,\R'(\R^2(z_0))\ldots\R'(\R^{n-1}(z_0))\\
&=\R'(z_0)\,\R'(z_0)\,\R'(z_0)\ldots\R'(z_0)\quad( z_0\; \text{is a fixed point of}\;\R)\\
&=\la^n.
\end{split}
\end{equation}
Since $\R^n$ is analytic at $z_0,\;\Rightarrow$ for some open neighbourhood $D$ of $z_0 \;\R$ has a Taylor expansion,
\[
\R^n(z)=\R^n(z_0)+(\R^n)'(z_0)(z-z_0)\;(\text{On neglecting the higher order terms})
\]
\begin{equation}
\begin{split}
\notag
\Rightarrow|\R^n(z)-\R^n(z_0)|
&=|(\R^n){'}(z_0)||z-z_0|\\
&=|\la|^n|z-z_0|\quad(\text{by above})\\
&\to\ity\quad\text{as}\quad n\to\ity\quad(|\la|>1).
\end{split}
\end{equation}
$\Rightarrow z_0\notin \F.$ Hence $z_0\in \J$.
\end{proof}
The above proposition for repelling fixed points generalizes immediately to repelling periodic points.

Now we discuss a  \emph{linearization} problem due to \emph{Koenigs}. \\
First we state a result on infinite products which we will be using in the proof.
\begin{result}\label{1.4.19}
Suppose  the functions $f_1,f_2,\ldots$ are holomorphic in a domain $D$ and the series $\sum_{m=0}^{\ity}|f_m(z)|$ is locally uniformly convergent on $D$. Then $\lim_{n\to\ity}\prod_{m=1}^{n}(1+f_m(z))$ exists locally uniformly in $D$.
\end{result}
\begin{theorem}
\ Let $z_0$ be an attracting fixed point of a rational map $R$ with $R'(z_0)=a$ and let $B$ be the $local\;basin$ of $z_0$. Then there exists a unique analytic homeomorphism $g :B\to D_r$ for some $r$, where $D_r$ is some disk with centre $0$ and radius $r$  with $g(z_0)=0$ and $g'(z_0)=1$ and such that the diagram
\[
\begin{CD}
{B} @ >R>>  {B}\\
@VVgV @VVgV\\
D_r @>z\to az>>D_r
\end{CD}
\]
is commutative i.e $$gR(z)=ag(z).$$
\end{theorem}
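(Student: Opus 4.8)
The plan is to prove the Koenigs linearization theorem by constructing the conjugating map $g$ as a locally uniform limit of suitably normalized iterates of $R$, and then propagating it over the whole immediate basin $B$ by the functional equation. I would work first in a small disk $D$ around $z_0$, which (after conjugating so that $z_0\neq\ity$ if necessary) we may assume lies in $\C$; by Result \ref{1.4.15} we already know $R(D)\subset D$ and $R^n\to z_0$ uniformly there once $D$ is chosen small enough that $|R(z)-z_0|<\alpha|z-z_0|$ for some fixed $\alpha$ with $|a|<\alpha<1$.

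First I would define $g_n(z)=\dfrac{R^n(z)-z_0}{a^n}$ on $D$ (normalized so that $g_n(z_0)=0$ and $g_n'(z_0)=1$, using $(R^n)'(z_0)=a^n$), and show that $(g_n)$ converges locally uniformly on $D$ to a holomorphic function $g$. The estimate needed is that, writing $R(z)=z_0+a(z-z_0)+O((z-z_0)^2)$, one gets $g_{n+1}(z)/g_n(z)=1+f_n(z)$ with $\sum |f_n(z)|$ locally uniformly convergent on $D$: this is where the bound $|R^n(z)-z_0|\le C\alpha^n$ and the quadratic remainder combine to give $|f_n(z)|\le \mathrm{const}\cdot(\alpha^2/|a|)^n$ or a comparable geometric bound, so Result \ref{1.4.19} applies and the infinite product $\prod(1+f_n)$ converges locally uniformly; hence $g=\lim g_n$ exists, is holomorphic, satisfies $g(z_0)=0$, $g'(z_0)=1$, and is therefore a biholomorphism from a possibly smaller disk onto a disk $D_r$. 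The functional equation is immediate from the definition: $g(R(z))=\lim \dfrac{R^{n+1}(z)-z_0}{a^n}=a\lim\dfrac{R^{n+1}(z)-z_0}{a^{n+1}}=a\,g(z)$.

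Next I would extend $g$ from the small disk to all of the immediate basin $B$. Given any $z\in B$, some iterate $R^k(z)$ lies in the small disk (by definition of $B$ and uniform convergence there); I would then set $g(z)=a^{-k}g(R^k(z))$ and check this is well-defined (independent of $k$, using $gR=ag$) and analytic, giving an analytic map $g:B\to\C$ with $gR=ag$. Injectivity on $B$ would follow by a standard argument: if $g(z)=g(w)$ then $g(R^k z)=g(R^k w)$ for large $k$ with both iterates in the small disk where $g$ is injective, so $R^kz=R^kw$; one then pulls back using that $R$ is a local homeomorphism away from critical points and that the orbit stays in $B$, forcing $z=w$. The image $g(B)$ is then a domain in $\C$ invariant under $w\mapsto aw$, hence (being open, connected, containing $0$) must be a disk $D_r$ or all of $\C$; since $B\neq\ti\C$ the image is a genuine disk $D_r$. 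Uniqueness follows because any two such conjugacies $g_1,g_2$ give $h=g_2g_1^{-1}$ commuting with $w\mapsto aw$ with $h(0)=0$, $h'(0)=1$, and expanding $h$ in a power series and comparing coefficients of $h(aw)=ah(w)$ forces $h(w)=w$.

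The main obstacle I expect is the extension-and-injectivity step over the full immediate basin rather than the initial construction near $z_0$, which is a routine infinite-product estimate. Extending $g$ requires care that the definition $g(z)=a^{-k}g(R^kz)$ is consistent and that it produces a globally defined single-valued analytic function on all of $B$ (not just a multivalued germ), and proving $g$ is a \emph{homeomorphism onto} $D_r$ — as opposed to merely a local biholomorphism — needs the observation that $g$ has no critical points on $B$ (differentiate $gR=ag$ and use that $R$ has none on $B$ except possibly at a superattracting center, which is excluded here since $a=R'(z_0)\neq 0$) together with the invariance argument identifying $g(B)$. I would handle this by a connectedness argument: the set where $g$ is a local biholomorphism is open and closed in $B$ and nonempty, hence all of $B$, and then the conjugacy with a linear map upgrades local injectivity to global injectivity.
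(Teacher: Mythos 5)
Your construction of $g$ near $z_0$ is essentially the paper's own Koenigs argument: conjugate so that $z_0$ is finite, form the normalized iterates $(R^n(z)-z_0)/a^n$, estimate the successive ratios and invoke the infinite-product result (Result~\ref{1.4.19}) to get locally uniform convergence, then read off the functional equation $gR=ag$ and the normalization $g(z_0)=0$, $g'(z_0)=1$. The paper packages this slightly differently — it writes $R(z)=az(1+\phi(z))$ with $\phi(0)=0$, shows $|\phi(R^m z)|\leqslant \alpha^m|z|$, and expands $R^{n+1}(z)/a^{n+1}=z\prod_{m=0}^n(1+\phi(R^m z))$ — but the estimate and the conclusion are the same, and your uniqueness argument by comparing power-series coefficients of a map commuting with $w\mapsto aw$ is also standard and fine.

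The gap is in your extension-and-injectivity step, and it is a real one. You assert that ``$R$ has no [critical points] on $B$ except possibly at a superattracting center, which is excluded here since $a=R'(z_0)\neq0$.'' That is false, and the paper proves the opposite: Theorem~\ref{1.5.19} shows that the immediate basin $\mathcal A^\ast(z_0)$ of any attracting fixed point of a rational map of degree $\geqslant 2$ contains at least one critical point. Hence $R\colon B\to B$ is generally not injective, and then no $g$ satisfying $gR=ag$ (with $a\neq0$) can be a bijection of $B$ onto anything: $R(z)=R(w)$ would give $ag(z)=ag(w)$, hence $g(z)=g(w)$, hence $z=w$, forcing $R$ to be injective on $B$. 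So your pull-back argument (``$R^kz=R^kw$ ... forcing $z=w$'') breaks exactly where the stated theorem overclaims. What one actually obtains is a biholomorphism from a small neighborhood of $z_0$ onto a disk, extended by $g(z)=a^{-k}g(R^kz)$ to an analytic, generally non-injective map $g\colon B\to\C$. The paper's own proof silently sidesteps the issue — after computing $g'(0)=1$ it simply asserts ``This shows that $g$ is conformal in $B$'' and never returns to the homeomorphism claim — but your attempt to supply the missing step runs into the genuine obstruction, and it cannot be repaired without weakening the statement to a local conjugacy near $z_0$.
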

\begin{proof}
 We may assume that $z_0=0$, for let $h$ be a M$\ddot{\text{o}}$bius map such that $h(z_0)=0.$ Then 
\begin{equation}
\begin{split}
\notag
hRh^{-1}(0)
&=hR(z_0)\quad(h\;\text{is one-one})\\
&=h(z_0)\quad(R(z_0)=z_0)\\
&=0.
\end{split}
\end{equation}
Thus 0 is a fixed point of the conjugate $hRh^{-1}$ of $R$. So, we may assume that $z_0=0$.\\
If $g$ satisfies $gR(z)=ag(z)$, near the origin, then for each n, $gR^n(z)=a^ng(z),$ so that
\begin{equation}\label{1.4.6}
\dfrac{R^n(z)}{a^n}=g(z)\B(\dfrac{g(R^n(z))}{R^n(z)}\B)^{-1}.
\end{equation}
Now $R^n(z)\to 0$ as $n\to\ity,\, z\in B.$
Also $g(0)=0, \;\Rightarrow$ it is $(\dfrac{0}{0})$ form. By \emph{L\,-\,Hospital Rule} we have
\begin{equation}
\begin{split}
\notag
\dfrac{R^n(z)}{a^n}
&=g(z)\B(\lim_{n\to\ity}\dfrac{g'(R^n(z))}{(R^n)^{'}(z)}(R^n)'(z)\B)^{-1}\\
&=g(z)(g'(0))^{-1}\\
&=g(z)\quad( g'(0)=1)
\end{split}
\end{equation}
\begin{equation}\label{1.4.7}
\Rightarrow \dfrac{R^n(z)}{a^n}\to g(z) \;\;\;\text{as}\;n\to\ity.
\end{equation}
Using this representation, we can establish the existence of $g$, viz. by proving that $\dfrac{R^n(z)}{a^n}$ converges to some limit as $n\to \ity$.\\
We observe that if this limit exists on some neighbourhood $W$ of $0$ then it exists throughout $B$ since, given $z\in B$ say $R^m(z)$ is in $W$, then
\begin{equation}
\begin{split}
\notag
\dfrac{R^{n+m}(z)}{a^{n+m}}
&=a^{-m}\B(\dfrac{R^n(R^m(z))}{a^n}\B)\\
&\to a^{-m}g(z)\quad\text{as}\;n\to\ity\quad(\text{ by (\ref{1.4.7})}).
\end{split}
\end{equation}
We also observe that if such a function $g$ exists, then it is unique, since limit is unique (in a Hausdorff space).\\
We now give the formal proof.\\
Let $N$ be a disk with centre $0$ and let $\alpha$ satisfies $|a|<\alpha<1$ and $|R(z)|\leqslant\alpha|z|$ (using Taylor expansion of $R$ around 0). Define the function $\phi$ on N by 
\begin{equation}\label{1.4.8}
R(z)=az(1+\phi(z)).
\end{equation}
$\Rightarrow \dfrac{R(z)}{az}
=1+\phi(z),\;
\Rightarrow\dfrac{R(z)}{az}-1
=\phi(z).$ 
Now $\lim_{z\to 0}\dfrac{R(z)}{az}$ is an indeterminate form, 
\begin{equation}
\begin{split}
\notag
\Rightarrow \lim_{z\to 0}\dfrac{R(z)}{az}
&=\lim_{z\to 0}\dfrac{R'(z)}{a}\quad (\text{using L\,-\,Hospital Rule})\\
&=\dfrac{a}{a}\quad( R'(0)=a)\\
&=1.
\end{split}
\end{equation}
$\Rightarrow \lim_{z\to 0}\dfrac{R(z)}{az}-1=0=\phi(0),\;
\Rightarrow \phi(0)=0.$
Choosing $N$ to be small enough, we may assume that $\phi$ is analytic on $\overline{N}.$ There exists\; $M>0$ such that $|\phi(z)|\leqslant M|z|\;$( a continuous function on a compact set is bounded). On $N$ we have, 
\begin{equation}
\begin{split}
\notag
|\phi(R^m(z))|
&\leqslant M|R^m(z)|\quad( R(N)\subset N,\;\Rightarrow\; R^m(z)\;\in N\; \text{for}\;z\in N)\\
&\leqslant\alpha^{m}|z|.
\end{split}
\end{equation}
Hence
\begin{equation}\label{1.4.9}
|\phi(R^m(z))|\leqslant\alpha^{m}|z|.
\end{equation}
For $z\in N,$
\begin{equation}
\begin{split}
\notag
 R^{n+1}(z)
&=R(R^n(z))\\
&=aR^n(z)\B(1+\phi(R^n(z))\B)\quad( R(N)\subset N)\\
&=aR(R^{n-1}(z))\B(1+\phi(R^n(z))\B)\\
&=a(aR^{n-1}(z))\B(1+\phi(R^{n-1}(z))\B)\B(1+\phi(R^n(z))\B)\\
&=a^2R(R^{n-2}(z))\B(1+\phi(R^{n-1}(z))\B)\B(1+\phi(R^n(z))\B)\\
&=a^3R^{n-2}(z)\B(1+\phi(R^{n-2}(z))\B)\B(1+\phi(R^{n-1}(z))\B)\B(1+\phi(R^n(z))\B)\\
&=\cdots\\
&=\cdots\\
&=\cdots\\
&=a^{n+1}z\B(1+\phi(z)\B)\B(1+\phi(R(z))\B)\B(1+\phi(R^2(z))\B)\cdots\B(1+\phi(R^n(z))\B)
\end{split}
\end{equation}
\begin{equation}\label{1.4.10}
\Rightarrow \dfrac{R^{n+1}(z)}{a^{n+1}}=z\prod_{m=0}
^{n}\B(1+\phi(R^m(z)\B).
\end{equation}
The expression (\ref{1.4.10}) together with expression (\ref{1.4.9}) and standard Result \ref{1.4.19} on infinite products ensures that the infinite product 
\[
g(z)=z\prod_{m=0}^{\ity}\B(1+\phi(R^m(z)\B)
\]
exists and is analytic on $N$. By (\ref{1.4.10}), $g$ satisfies (\ref{1.4.7}) on $N$. Also $g(0)=0.$
\begin{equation}
\begin{split}
\notag
g'(z)
&=\prod_{m=0}^{\ity}\B(1+\phi(R^m(z)\B)+z\prod_{m=0}^{\ity}\B(1+\phi(R^m(z)\B)'\\
\Rightarrow g'(0)
&=\prod_{m=0}^{\ity}\B(1+\phi(0)\B)+0\quad( R^m(0)=0)\\
&=1\quad( \phi(0)=0).\\
\end{split}
\end{equation}
This shows that $g$ is conformal in $B$.
Again from (\ref{1.4.10}) we have 
\begin{equation}
\begin{split}
\notag
g(R(z))
&=\lim_{n\to \ity}\dfrac{R^{n+1}(R(z))}{a^{n+1}}\\
&=a\lim_{n\to \ity}\dfrac{R^{n+2}(z)}{a^{n+2}}\\
&=ag(z)\quad z\in N
\end{split}
\end{equation}
and hence holds throughout $B$ (by \emph{Identity Theorem}).
\end{proof}
\begin{definition}
We say that an analytic map $f$ is linearizable at an attracting fixed point $z_0$, if there is some function $g$ that is analytic near $z_0$, with $g(z_0)=0, g'(z_0)=1$ and for all $z$ sufficiently close to origin, $gfg^{-1}(z)=f'(z_0)\,z$.
\end{definition}
The above theorem can be interpreted in the context of repelling fixed points, for if $z_0$ is a repelling fixed point of a rational map $R$, then there is a branch of $R^{-1}$ for which $z_0$ is an attracting fixed point. Applying the above theorem to this branch, we get that $R$ is locally conjugate to $z\to R'(z_0)\,z$ in some neighbourhood of $z_0$.

Next we deal with neutral (indifferent) fixed points. \\
Suppose $R(z_0)=z_0$ and $|\la|=|R'(z_0)|=1.$ Let $U$ be an open neighbourhood of $z_0$. If we conjugate $\{R\vert_ U\}$ to its derivative $z\to \la \,z,$ where $\la=R'(z_0)$, then the commutative diagram 
\[
\begin{CD}
{U} @ >R>>  {U}\\
@VV\phi V @VV\phi V\\
D_r @>z\to\la z>>D_r
\end{CD}
\] 
 gives a functional equation
\[
\phi\circ R(z)=\la\,\phi(z)\tag{SFE}
\]
where $D_r$ is a disk with origin as centre and radius $r$ and $\phi : U\to D_r$ is an analytic homeomorphism. This equation is called \emph{Schroder Functional Equation}\;(SFE).\\
Geometrically this equation means that $z_0$ is the centre of a disk on which the map $z\to \la\,z$ is a rotation, i.e $\;\exists\;$ an analytic homeomorphism $\phi : U\to D_r$ such that the diagram
\[
\begin{CD}
{U} @ >R>>  {U}\\
@VV\phi V @VV\phi V\\
D_r @>z\to e^{i\theta} z>>D_r
\end{CD}
\] 
commutes, with $\phi(z_0)=0$ and $\theta\in\mathbb{R}$.

We use \emph{Schwarz Lemma} (see Appendix \ref{A}) to investigate the dynamics of $R$ in a neighbourhood of a neutral fixed point $z_0$.
\begin{proposition}\label{1.4.22}
 Let $\R$ be a rational function with a neutral fixed point at $z_0$. Then $z_0\in\F$ if and only if $\R$ is locally conjugate to its derivative $z\to\la z$ on some neighbourhood of $z_0$, where $\la=\R'(z_0),\;|\la|=1.$
\end{proposition}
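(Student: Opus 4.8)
The plan is to prove the two implications separately: the ``if'' direction by pushing normality of the family $\{w\mapsto\la^{n}w\}$ through the conjugating homeomorphism, and the harder ``only if'' direction by a Ces\`aro averaging construction that manufactures a linearising coordinate directly from the iterates of $\R$.

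For the ``if'' direction, suppose $\phi:U\to D_r$ is an analytic homeomorphism of a neighbourhood $U$ of $z_0$ onto a disc $D_r$ centred at $0$ with $\phi(z_0)=0$ and $\phi\circ\R\circ\phi^{-1}(w)=\la w$ on $D_r$. Since $|\la|=1$ we have $\la D_r=D_r$, hence $\R(U)=U$ and $\R^{n}\vert_U=\phi^{-1}\circ(w\mapsto\la^{n}w)\circ\phi$ for every $n$. The family $\{w\mapsto\la^{n}w\}$ is uniformly bounded by $r$ on $D_r$, hence normal by Theorem \ref{1.2.9}; composing on either side with the fixed homeomorphisms $\phi,\phi^{-1}$ preserves uniform convergence on compacta, so $\{\R^{n}\vert_U\}$ is normal and $z_0\in\F$.

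For the converse I assume $z_0\in\F$. After conjugating by a M\"obius map (cf.\ Remark \ref{1.3.18}; conjugacy affects neither membership in $\F$ nor the multiplier) I may take $z_0=0$, so $\la=\R'(0)$ with $|\la|=1$, in particular $\la\neq0$. Since $0\in\F$ there is a disc $N$ around $0$ on which $\{\R^{n}\}$ is normal; as $\R^{n}(0)=0$ for every $n$, no subsequential limit on $N$ can be $\equiv\ity$, and from this one deduces that each $\R^{n}$ is holomorphic on $N$ and that $\{\R^{n}\}$ is locally uniformly bounded there. Now set
\[
\psi_n(z)=\frac1n\sum_{k=0}^{n-1}\la^{-k}\R^{k}(z).
\]
These are holomorphic on $N$, satisfy $\psi_n(0)=0$ and $\psi_n'(0)=\frac1n\sum_{k=0}^{n-1}\la^{-k}\la^{k}=1$, and inherit local uniform boundedness, hence form a normal family by Theorem \ref{1.2.9}. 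A telescoping computation gives
\[
\psi_n(\R(z))-\la\,\psi_n(z)=\frac1n\B(\la^{-(n-1)}\R^{n}(z)-\la z\B),
\]
whose right side tends to $0$ locally uniformly near $0$ because $|\la|=1$ and $\{\R^{n}\}$ is bounded there. Extracting a locally uniformly convergent subsequence $\psi_{n_j}\to\phi$ and letting $j\to\ity$ (using Weierstrass' theorem to pass derivatives to the limit, giving $\phi'(0)=1$) yields $\phi(0)=0$, $\phi'(0)=1$ and $\phi(\R(z))=\la\,\phi(z)$ near $0$; since $\phi'(0)\neq0$, $\phi$ restricts to an analytic homeomorphism of some neighbourhood of $0$ onto a disc, which is precisely the asserted local conjugacy.

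The main obstacle is the bookkeeping in the converse: one must first extract genuine local boundedness and holomorphy of the iterates from mere spherical normality on $N$ (this is exactly where the hypothesis $\R^{n}(z_0)=z_0$ is used), and then one must ensure that $\psi_n$, the difference $\psi_n\circ\R-\la\,\psi_n$, and the limit $\phi$ are all controlled on one common neighbourhood of $z_0$ on which $\R$ is defined. The averaging device is the real idea: it handles the rationally- and irrationally-indifferent cases simultaneously (so no appeal to Siegel's theorem or to the Leau--Fatou flower is required) and automatically normalises $\phi'(z_0)$ to $1$.
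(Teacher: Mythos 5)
Your ``if'' direction is essentially the paper's: both obtain normality of $\{\R^n\vert_U\}$ by transporting the obviously bounded family $\{w\mapsto\la^n w\}$ on $D_r$ back through $\phi$. The ``only if'' direction, however, takes a genuinely different route. The paper applies the Riemann mapping theorem to the entire Fatou component $U$ containing $z_0$ (passing to the universal cover of $U$ when $U$ is not simply connected), and then uses the Schwarz lemma to conclude that $g=\phi\R\phi^{-1}:\D\to\D$, having $g(0)=0$ and $|g'(0)|=1$, must be the rotation $w\mapsto\la w$. You instead build the linearising coordinate locally from the Ces\`aro averages $\psi_n(z)=\frac1n\sum_{k=0}^{n-1}\la^{-k}\R^{k}(z)$; the telescoping identity $\psi_n\circ\R-\la\psi_n=\frac1n\bigl(\la^{-(n-1)}\R^n-\la\,\mathrm{id}\bigr)$ is correct, and a normal-family limit $\phi$ of the $\psi_{n_j}$ then satisfies $\phi\circ\R=\la\phi$, $\phi(0)=0$, $\phi'(0)=1$. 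Your approach is more elementary in that it never invokes the Riemann mapping theorem or a universal cover, is entirely local to $z_0$, and normalises $\phi'(z_0)=1$ automatically; it also sidesteps the paper's silent appeal to forward invariance $\R(U)\subset U$ of the whole component and the fact that ``locally uniformly bounded'' on $U$ must be read spherically. The one step worth tightening is the passage from ``$z_0\in\F$'' to ``$\{\R^n\}$ locally uniformly bounded (hence pole-free) near $z_0$'': saying that no subsequential limit is $\equiv\ity$ does not by itself preclude poles drifting through the disc along a subsequence. The clean argument is via equicontinuity directly -- because $\R^n(z_0)=z_0$ for all $n$ and $\{\R^n\}$ is spherically equicontinuous at $z_0$, the iterates send a small spherical ball around $z_0$ into a fixed small spherical ball around $z_0$, which is a bounded subset of $\C$; this gives uniform boundedness and holomorphy at once. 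With that repair, and the finishing remarks you already flag (choosing a common invariant neighbourhood on which the functional equation is inherited by the limit, and shrinking so that $\phi$ is a biholomorphism onto a $\la$-invariant disc), the argument is complete.
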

The same statement also holds for a neutral periodic point of period n if $\R$ is replaced by $\R^n$ in the definition of (SFE).
\begin{proof}
 Recall the definition of local conjugacy. Let $U\subset\ti{\C}$ be open and $\R: U\to U$ is a rational function with a fixed point at $z_0$. Suppose there is an analytic homeomorphim $\phi: U\to V$, where V is a neighbourhood of 0 with $\phi(z_0)=0$ and $f: V\to V$ is another rational map. We say that $\R$ is locally analytic conjugate to $f,$ if the diagram 
\[
\begin{CD}
{U} @ >\R>>  {U}\\
@VV\phi V @VV\phi V\\
V @>f >>V
\end{CD}
\] 
is commutative, i.e $\phi^{-1}f\phi=\R$.\\
Suppose there is a local conjugacy $\phi$ between $\R$ and $g:z\to\la z$ so that
$\; \phi^{-1}g\phi=\R.$
\[
\begin{CD}
{U} @ >\R>>  {U}\\
@VV\phi V @VV\phi V\\
V @>g >>V
\end{CD}
\] 
Then ${\R^n\vert_ {U}}=(\phi^{-1}g^n\phi)\vert_ {U}$ is locally uniformly bounded on $U$ and therefore is a normal family by Theorem \ref{1.2.9}. Hence $z_0\in \F.$\\
Conversely,
suppose that $z_0\in \F$, and let $U$ be the component of $\F$ that contains $z_0$. Suppose that $U$ is simply connected. Now $U\subset\ti{\C}$ and $z_0\in U$, therefore by Riemann mapping theorem,\;$\exists$ a unique conformal isomorphism $\phi$ from $U$ onto $\D$ with $\phi(z_0)=0$ and $\phi'(z_0)\neq 0.$ Let $g=\phi\R\phi^{-1}.$ Then
\begin{equation}
\begin{split}
\notag
g(0)
&=\phi\R\phi^{-1}(0)\\
&=\phi\R(z_0)\quad( \phi\;\text{is one-one})\\
&=\phi(z_0)\quad( \R(z_0)=z_0)\\
&=0.
\end{split}
\end{equation}
Also
\begin{equation}
\begin{split}
\notag
g\,\phi(z)
&=\phi\,\R(z)\\
\Rightarrow g'(\phi(z))\,\phi'(z)
&=\phi'(\R(z))\R'(z)\quad(\text{on differentiating both sides})\\
\Rightarrow g'(\phi(z_0))\,\phi'(z_0)
&=\phi'(\R(z_0))\,\R'(z_0)\quad(\text{on putting}\;z=z_0)\\
\Rightarrow g'(0)\,\phi'(z_0)
&=\phi'(z_0)\,\la\quad( \R'(z_0)=\la, \;|\la|=1)\\
\Rightarrow g'(0)
&=\la\quad( \phi'(z_0)\neq 0)\\
\Rightarrow  |g'(0)|
&=|\la|\\
&=1.
\end{split}
\end{equation}
Then
\begin{equation}
g=\phi\R\phi^{-1}: \D\to\D
\end{equation}
satisfies the hypothesis of Schwarz Lemma, and so $g(z)=\la z$ for some $\la$ with $|\la|=1,$ i.e $\phi\R\phi^{-1}(z)=\la z.$ Thus $\R$ is locally conjugate to its derivative on some neighbourhood of $z_0.$\\
If $U$ is not simply connected, then we consider $\ti U$, the universal cover of $U$ and apply the same reasoning to $\ti U$ as $\ti U$ is simply connected. ( $U\cap \J=\emptyset,$ so $U$ misses atleast 3 points and the \emph{Uniformization Theorem} (see Appendix \ref{A}) gives that the universal covering $\ti U$ of $U$ is conformally equivalent to $\D$).
\end{proof}
\begin{corollary}\label{1.4.23}
If $z_0$ is a neutral fixed point and $\la$ is a root of unity, then $$Schroder\;Functional\;Equation\; (SFE) $$ does not have a solution.
\end{corollary}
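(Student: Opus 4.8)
The plan is to argue by contradiction: a solution of (SFE) would force an iterate of $\R$ to be the identity map on a non-empty open set, which contradicts the standing hypothesis $\deg\R\geqslant 2$.

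Assume (SFE) has a solution, i.e.\ there is an open neighbourhood $U$ of $z_0$ with $\R(U)\subseteq U$ and an analytic homeomorphism $\phi:U\to D_r$ satisfying $\phi(z_0)=0$ and $\phi\circ\R(z)=\la\,\phi(z)$ for all $z\in U$. Since $\la$ is a root of unity, choose $q\in\mathbb{Z}^+$ with $\la^q=1$. First I would iterate the commuting square $q$ times: because the top arrow is $\R:U\to U$, the map $\R^q:U\to U$ is well defined, and $\phi\circ\R^q(z)=\la^q\,\phi(z)=\phi(z)$ for every $z\in U$. As $\phi$ is injective, this yields $\R^q(z)=z$ for all $z\in U$.

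Next I would globalise. The set $U$ is open (being carried homeomorphically onto the disk $D_r$), so the rational function $\R^q$ agrees with the identity on a non-empty open subset of $\ti\C$; by the Identity Theorem $\R^q$ is the identity map on all of $\ti\C$. But $\deg(\R^q)=(\deg\R)^q\geqslant 2^q\geqslant 2$, whereas the identity map has degree $1$ --- a contradiction. Hence (SFE) has no solution when $\la$ is a root of unity.

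I do not expect a serious obstacle here; the one point deserving care is the bookkeeping in the iteration step: one must invoke the fact, built into the definition of (SFE) via the commuting square $U\xrightarrow{\R}U$, that $\R$ genuinely maps $U$ into $U$, so that $\R^q$ is defined on all of $U$ and the relation $\phi\circ\R^q=\la^q\phi$ holds on all of $U$ rather than on a shrinking subneighbourhood. Equivalently, the argument can be packaged as follows: a solution $\phi$ exhibits $\R|_U$ as conjugate to the rotation $z\mapsto\la z$, whose $q$-th iterate is the identity, whence $\R^q|_U=\mathrm{id}$; the contradiction with $\deg\R\geqslant 2$ then follows exactly as above.
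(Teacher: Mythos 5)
Your proof is correct and follows essentially the same route as the paper: iterate the conjugacy relation $q$ times to force $\R^q$ to agree with the identity on the open set $U$, extend to $\ti\C$ by the Identity Theorem, and derive a contradiction from $\deg\R^q\geqslant 2>1=\deg(\mathrm{id})$. Your phrasing (using injectivity of $\phi$ directly to conclude $\R^q|_U=\mathrm{id}$) is if anything slightly cleaner than the paper's, which routes through the statement $\phi\R^q\phi^{-1}=I$ and the invariance of degree under conjugacy, but the underlying argument is identical.
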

\begin{proof}
 Suppose $\la^k=1$ for some $k\in\mathbb Z^+$ and suppose (SFE) has a solution $\phi$ in a neighbourhood of $z_0.$ Then we have
\[
\begin{CD}
{U} @ >\R>>  {U}\\
@VV\phi V @VV\phi V\\
D_r @>g >>D_r
\end{CD}
\] 

\begin{equation}
\begin{split}
\notag 
\phi\R(z)
&=\la\phi(z)\quad(\text{where}\;g(z)=\la z)\\
\Rightarrow\phi\R\phi^{-1}(z)
&=\la z\\
\Rightarrow \phi\R^k\phi^{-1}(z)
&=\la^k z\quad(\R^k(U)\subset U)\\
&=z\quad( \la^k=1)\\
&=Iz, I\;\text{is the identity map}.
\end{split}
\end{equation}
$\Rightarrow \phi\R^k\phi^{-1}=I.\;$ Since $\R^k\sim \phi\R^k\phi^{-1},\;\Rightarrow \R^k\sim I$ on $U$ and so $\R^k\sim I$ on $\ti{\C}$
 by Identity theorem. Since $deg\,\R$ is invariant under conjugacy, therefore $deg\,I=deg\,\R^k.$
But $deg\,I=1$ and $deg\,\R^k>1,$ we get a contradiction. Hence (SFE) does not have a solution.
\end{proof}
\begin{corollary}\label{1.4.24}
 Suppose $\la=e^{2\pi i\theta},\theta\in \mathbb{Q},$ then $z_0\in \J.$
\end{corollary}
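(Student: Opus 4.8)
The plan is to obtain this at once from Proposition~\ref{1.4.22} together with Corollary~\ref{1.4.23}, via a contradiction argument. First I would record that the hypothesis $\theta\in\mathbb Q$ places us in the rationally indifferent case: writing $\theta=p/q$ in lowest terms gives $\la^q=e^{2\pi i p}=1$, so $\la$ is a root of unity, and since $|\la|=1$ the point $z_0$ is a neutral (indeed rationally indifferent) fixed point, so the hypotheses of Proposition~\ref{1.4.22} are in force.

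Now suppose, for contradiction, that $z_0\in\F$. By Proposition~\ref{1.4.22}, membership of $z_0$ in $\F$ forces $\R$ to be locally analytically conjugate to its derivative $z\mapsto\la z$ on some neighbourhood $U$ of $z_0$; equivalently, there is an analytic homeomorphism $\phi$ defined near $z_0$ with $\phi(z_0)=0$ satisfying the Schr\"oder Functional Equation $\phi\circ\R=\la\,\phi$. But Corollary~\ref{1.4.23} says precisely that when $\la$ is a root of unity no such $\phi$ can exist: a solution would yield $\phi\R^q\phi^{-1}=\la^q\,\mathrm{Id}=\mathrm{Id}$ near $z_0$, hence $\R^q\sim I$ on $\ti\C$ by the Identity Theorem, and therefore $\deg\R^q=1$, contradicting $\deg\R\geqslant 2$. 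This contradiction shows $z_0\notin\F$, and since $\J=\ti\C\setminus\F$ we conclude $z_0\in\J$.

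I do not expect a genuine obstacle here, since the entire content is carried by the two preceding results. The only points needing a word of care are (i) checking that $\theta\in\mathbb Q$ really does land us in the case handled by Corollary~\ref{1.4.23}, and (ii) noting that Proposition~\ref{1.4.22} is being applied in its fixed-point form; for a rationally indifferent periodic point of period $n$ one would run the identical argument with $\R$ replaced by $\R^n$, using the remark immediately following Proposition~\ref{1.4.22}.
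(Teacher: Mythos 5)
Your proof is correct and follows essentially the same route as the paper: compute that $\la$ is a root of unity, assume $z_0\in\F$, invoke Proposition~\ref{1.4.22} to produce a solution of the Schr\"oder equation, and derive a contradiction from the fact that no such solution can exist when $\la$ is a root of unity. The only cosmetic difference is that you cite Corollary~\ref{1.4.23} directly for the impossibility, whereas the paper re-runs that corollary's argument inline.
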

\begin{proof}
 Suppose $z_0\notin\J.$ Let $\theta=\dfrac{p}{q}\;\text{where}\;(p,q)=1.$ Then\\
$\la
=e^{2\pi i\frac{p}{q}},\;
\Rightarrow \la^q
=e^{2\pi ip}
=cos 2\pi p+i\,sin 2\pi p
=1.$\\
Since $z_0\in\F,$ therefore by Theorem \ref{1.4.22}, (SFE) has a solution $\phi$ in a neighbourhood $U$ of $z_0$ with $\phi(z_0)=0.$
\[
\begin{CD}
{U} @ >\R>>  {U}\\
@VV\phi V @VV\phi V\\
D_r @>g >>D_r
\end{CD}
\] 
Now
\begin{equation}
\begin{split}
\notag
\phi\R(z)
&=\la\phi(z)\quad(\text{where}\; g(z)=\la z)\\
\Rightarrow\phi\R\phi^{-1}(z)
&=\la z\\
\Rightarrow \phi\R^q\phi^{-1}(z)
&=\la^q z\quad( \R^q(U)\subset U)\\
&=z\quad(\la^q=1)\\
&=Iz, I\;\text{is the identity map}.
\end{split}
\end{equation}
$\Rightarrow \phi\R^q\phi^{-1}=I.$ Since $\R^q\sim \phi\R^q\phi^{-1},\;\Rightarrow \R^q\sim I$ on $U$ and so $\R^q\sim I$ on $\ti{\C}$
by Identity theorem.
Since $deg\,\R$ is invariant under conjugacy, therefore $deg\,I=deg\,\R^q.$ But $deg\,I=1$ and $deg\,\R^q>1,$ we get a contradiction. Hence $z_0\in \J.$
\end{proof}
\begin{definition}\label{d6}
\textbf{(Parabolic periodic point)}
 A periodic point $z_0=\R^n(z_0)$ is called parabolic if $\la=(\R^n)'(z_0)$ is some root of unity but $\R^k\neq I\;\; \forall\; k\in \mathbb N.$ 
\end{definition}
\begin{example}
 Consider $\R(z)=z^2-z.$ Then $0$ and $2$ are the fixed points of $\R(z)$ with multiplier $-1$ and $3$ respectively. So $0$ is a parabolic fixed point, since the multiplier $\la=-1$ of 0 is a root of unity and $\R^k\neq I\;\; \forall\; k\in \mathbb N.$ 
\end{example}
\begin{corollary}\label{1.4.27}
 Every parabolic periodic point of $\R$ belongs to  $\J.$
\end{corollary}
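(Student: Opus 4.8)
The plan is to reduce the statement to the fixed-point case already handled in Corollaries~\ref{1.4.23} and~\ref{1.4.24}, using Theorem~\ref{1.3.19} which identifies $\J$ with $J(\R^n)$. Let $z_0$ be a parabolic periodic point of period $n$, so that $z_0$ is a fixed point of $g=\R^n$ and $\la=(\R^n)'(z_0)=g'(z_0)$ is a root of unity, say $\la^k=1$. First I would observe that the definition of parabolic guarantees $\R^j\neq I$ on $\ti\C$ for every $j\in\mathbb N$; in particular $g^k=\R^{nk}\neq I$, so the obstruction exploited in Corollary~\ref{1.4.23} is genuinely available here.

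Next I would argue by contradiction: suppose $z_0\notin\J$. By Theorem~\ref{1.3.19}, $\J=J(\R^n)=J(g)$, so $z_0\notin J(g)$, i.e.\ $z_0\in F(g)$. Since $g'(z_0)=\la$ with $|\la|=1$ and $z_0$ is a neutral fixed point of $g$, Proposition~\ref{1.4.22} (applied to $g$ in place of $\R$) tells us that $g$ is locally conjugate on some neighbourhood $U$ of $z_0$ to the rotation $z\mapsto\la z$; equivalently the Schröder functional equation for $g$ has an analytic solution $\phi\colon U\to D_r$ with $\phi(z_0)=0$. But $\la$ is a root of unity, so this is exactly the situation forbidden by Corollary~\ref{1.4.23}: iterating the conjugacy $k$ times gives $\phi\, g^k\phi^{-1}(z)=\la^k z=z$ on $U$, whence $g^k\sim I$ on $U$ and, by the Identity Theorem, $g^k\sim I$ on all of $\ti\C$; taking degrees yields $\deg I=\deg g^k$, contradicting $\deg g^k=(\deg\R)^{nk}>1$. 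Hence $z_0\in\J$.

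The only point requiring care — and the one I expect to be the main obstacle — is making sure every cited result genuinely applies to $g=\R^n$ rather than to $\R$ itself. Proposition~\ref{1.4.22} and Corollary~\ref{1.4.23} are stated for a general rational map, and the excerpt explicitly notes after Proposition~\ref{1.4.22} that ``the same statement also holds for a neutral periodic point of period $n$ if $\R$ is replaced by $\R^n$,'' so this is legitimate; one just has to check that $\deg(\R^n)=(\deg\R)^n\geqslant 2$, which is immediate since $\deg\R\geqslant 2$. A cleaner alternative, if one prefers not to invoke the local conjugacy machinery, is to run the Koenigs-type computation directly: near $z_0$ the iterates $g^{mk}$ would have to converge while their derivatives satisfy $(g^{mk})'(z_0)=\la^{mk}=1$, and one extracts a contradiction with a subsequence of $g^{mk}$ tending to the identity yet $\deg g>1$; but routing everything through Corollary~\ref{1.4.23} is shorter and keeps the argument uniform with the fixed-point case.

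Finally I would remark that this corollary, together with Result~\ref{1.4.15}, Result~\ref{1.4.16}, Proposition~\ref{1.4.17}, Proposition~\ref{1.4.18} and Corollary~\ref{1.4.24}, completes the dynamical placement of periodic points: (super)attracting periodic orbits lie in $\F$, while repelling and all rationally indifferent (hence parabolic) periodic points lie in $\J$, leaving only the irrationally indifferent case (Siegel versus Cremer points) genuinely subtle.
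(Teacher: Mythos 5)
Your proof is correct and follows essentially the same route as the paper: assume $z_0\in\F$, invoke Proposition~\ref{1.4.22} for $g=\R^n$ to get the Schr\"{o}der conjugacy, iterate $k$ times to obtain $\phi g^k\phi^{-1}=I$ on a neighbourhood, and extend by the Identity Theorem. The only cosmetic difference is the final step: you close with the degree count $\deg g^k=(\deg\R)^{nk}>1=\deg I$ (as in Corollaries~\ref{1.4.23} and~\ref{1.4.24}), whereas the paper closes by noting that $\R^{nk}\sim I$ directly violates the clause $\R^k\neq I$ in the definition of a parabolic point — both are legitimate and equivalent here since $\deg\R\geqslant 2$.
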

\begin{proof}
Let $z_0$ be a  parabolic periodic point of $\R$ and 
 suppose $z_0\notin\J,\;\Rightarrow z_0\in \F.$ By Theorem \ref{1.4.22}, (SFE) has a solution $\phi$ in a neighbourhood $U$ of $z_0$ with $\phi(z_0)=0.$ 
\[
\begin{CD}
{U} @ >\R^n>>  {U}\\
@VV\phi V @VV\phi V\\
D_r @>g >>D_r
\end{CD}
\] 
$\phi\R^n(z)=\la\phi(z)\quad(\text{where}\; g(z)=\la z).\\
\Rightarrow\phi\R^n\phi^{-1}(z)=\la z.$ Since $z_0$ is a parabolic periodic point, so  $\la^k=1$ for some $k\in\mathbb{N}.$ Then
\begin{equation}
\begin{split}
\notag
\phi\R^{nk}\phi^{-1}(z)
&=\la^k z\quad(\R^{nk}(U)\subset U)\\
&=z\\
&=Iz.
\end{split}
\end{equation}
$\Rightarrow \phi\R^{nk}\phi^{-1}=I.$ Since $\R^{nk}\sim \phi\R^{nk}\phi^{-1},\;\Rightarrow \R^{nk}\sim I$ on $U$ and so $\R^{nk}\sim I$ on $\ti{\C}$
by Identity theorem, which is a contradiction to $z_0$ being a parabolic periodic point. Therefore $z_0\in\J.$\\
Hence every parabolic periodic point belongs to $\J.$
\end{proof}
We now consider the case of  a rationally indifferent cycle .
\begin{theorem}
 If $deg\,\R\geqslant 2,$ then every rationally indifferent cycle of $\R$ lies in $\J.$
\end{theorem}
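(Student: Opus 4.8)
The plan is to reduce the statement about a rationally indifferent cycle to the case of a parabolic fixed point, already handled in Corollary \ref{1.4.27}, by passing to an iterate of $\R$. Let $\{z_0,z_1,\ldots,z_{n-1}\}$ be a rationally indifferent cycle, so $z_0$ is periodic of period $n$ and $\la=(\R^n)'(z_0)$ is a root of unity. First I would record, via the Chain Rule exactly as in Proposition \ref{1.4.18}, that the multiplier is constant along the cycle, $(\R^n)'(z_j)=(\R^n)'(z_0)=\la$ for every $j$; thus each $z_j$ is a fixed point of $g:=\R^n$ with $g'(z_j)=\la$ a root of unity. Next, since $deg\,\R\geqslant 2$ we have $deg\,g^k=(deg\,\R)^{nk}>1=deg\,I$ for every $k\in\mathbb{N}$, and $deg$ is a conjugacy (indeed homotopy) invariant, so $g^k\neq I$ for all $k$; hence each $z_j$ is a \emph{parabolic} fixed point of $g$ in the sense of Definition \ref{d6}.

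Now I would invoke Theorem \ref{1.3.19} to identify $J(g)=J(\R^n)=\J$, so membership in $\J$ can be tested at the level of $g$. Applying Corollary \ref{1.4.27} to the map $g=\R^n$ (which has degree $\geqslant 2$) at its parabolic fixed point $z_j$ gives $z_j\in J(g)=\J$ for every $j$, which is the assertion. Equivalently, one can invoke complete invariance of $\J$ (Theorem \ref{1.3.23}): once $z_0\in\J$, the whole forward orbit $\{z_0,\ldots,z_{n-1}\}$ lies in $\J$.

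Alternatively, and more self-containedly, one can argue by contradiction in one stroke, mirroring the proofs of Corollaries \ref{1.4.24} and \ref{1.4.27}. If some point $z_0$ of the cycle lay in $\F$, then since $z_0\in F(\R^n)=\F$ and $\la=e^{2\pi i p/q}$ is a root of unity, Theorem \ref{1.4.22} applied to $\R^n$ produces an analytic homeomorphism $\phi$ on the Fatou component $U\ni z_0$ (using the universal cover of $U$ if $U$ is not simply connected) conjugating $\R^n$ to $z\mapsto\la z$; since $\R^{nq}(U)\subset U$, raising to the $q$-th power yields $\phi\,\R^{nq}\,\phi^{-1}=I$ on $U$, hence $\R^{nq}\sim I$ on $U$ and so on all of $\ti\C$ by the Identity Theorem, forcing $1=deg\,I=deg\,\R^{nq}=(deg\,\R)^{nq}\geqslant 2^{nq}$, a contradiction. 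Complete invariance of $\J$ then promotes $z_0\in\J$ to the entire cycle.

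There is no serious obstacle here: the content is entirely the reduction to $\R^n$ together with the degree bound $deg\,\R^{nq}>1$, which is exactly what rules out the excluded possibility ``$\R^k=I$'' in the definition of parabolic. The only points that need a little care are (i) checking that the multiplier is genuinely well-defined on the cycle, so that ``rationally indifferent cycle'' makes sense, and (ii) being sure the linearization machinery of Theorem \ref{1.4.22} and Corollary \ref{1.4.27} is applied to $\R^n$ rather than $\R$ — which is legitimate precisely because $F(\R^n)=\F$ and $J(\R^n)=\J$ by Theorem \ref{1.3.19}.
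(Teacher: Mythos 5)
Your proof is correct and follows essentially the same route as the paper's: reduce to $g=\R^n$, linearize via the Schr\"oder functional equation (Theorem \ref{1.4.22}), raise to the $q$-th power, and derive a contradiction from $\deg\R^{nq}>1$; the paper then also carries the conclusion from one point of the cycle to the rest as you do. Your first variant is a mild streamlining: you notice that once $\deg\R\geqslant 2$ forces $\R^k\neq I$ for all $k$, a rationally indifferent cycle \emph{is} a parabolic cycle in the sense of Definition \ref{d6}, so the theorem is already contained in Corollary \ref{1.4.27}; the paper instead reruns that argument from scratch. The extra care you take in checking the multiplier is constant along the cycle is a detail the paper glosses over but which is indeed needed for the statement to make sense.
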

\begin{proof}
 We recall from Subsection \ref{s2} that $z_0$ lies in a rationally indifferent cycle of $\R$ of length $m,$ if $\R^m(z_0)=z_0$ and $\la=(\R^m)'(z_0)$ is some root of unity.\\
 First suppose that $z_0$ is a rationally indifferent fixed point of $\R$, and $z_0\notin \J, \;\Rightarrow z_0\in\F$. Suppose $\la^l=1$ for some $l\in\mathbb Z^+.$ Since $ z_0\in \F$, by Theorem \ref{1.4.22}, (SFE) has a solution $\phi$ in a neighbourhood $U$ of $z_0$ with $\phi(z_0)=0.$
\[
\begin{CD}
{U} @ >\R>>  {U}\\
@VV\phi V @VV\phi V\\
D_r @>g >>D_r
\end{CD}
\] 
Now
\begin{equation}
\begin{split}
\notag
\phi\R(z)
&=\la\phi(z)\quad(\text{where}\; g(z)=\la z)\\
\Rightarrow\phi\R\phi^{-1}(z)
&=\la z\\
\Rightarrow \phi\R^l\phi^{-1}(z)
&=\la^l z\quad( \R^l(U)\subset U)\\
&=z\quad( \la^l=1)\\
&=Iz, I\;\text{is the identity map}.
\end{split}
\end{equation}
$\Rightarrow \phi\R^l\phi^{-1}=I.$ Since $\R^l\sim \phi\R^l\phi^{-1},\;\Rightarrow \R^l\sim I$ on $U$ and so $\R^l\sim I$ on $\ti{\C}$, which is a contradiction to $deg\,\R\geqslant 2.$ Hence $z_0\in \J$.\\
Now if $z_0$ is any point of a rationally indifferent cycle of $\R$ of length $m$, then by above argument $z_0\in J(\R^m)=\J$ (by Theorem \ref{1.3.19}).\\
Hence every rationally indifferent cycle of $\R$ lies in $\J.$
\end{proof}

\section{Applications of Montel's theorem}\label{ch1,sec5}

It was \textbf{P. Montel} who first formulated the notion of a normal family of meromorphic functions and proved the criterion for normality that bears his name. This theorem has great applications in complex dynamics. Fatou and Julia used Montel's theorem as a key ingredient for studying the dynamical behaviour of the iterates of a rational function $\R.$ This section deals with the global consequences of Montel's theorem.\\
We state Montel's theorem.
\begin{theorem}\label{1.5.1}
[\textbf{Montel}]
Let $D$ be a domain in $\ti{\C}.$ Let $a,b$ and $c$ be three distinct points in $\ti\C$ and  let $U=\ti\C\smallsetminus\;\{a,b,c\}$  be another domain. Then the family $\mathfrak{F}=\{f_\la : D\to U\;\vert\;\la\in \Lambda\} $ of all meromorphic functions  is a normal family.
\end{theorem}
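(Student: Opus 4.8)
The plan is to prove Montel's theorem (Theorem~\ref{1.5.1}) by reducing normality of a family mapping into the thrice-punctured sphere $U = \ti\C\smallsetminus\{a,b,c\}$ to a Schwarz--Pick type contraction estimate coming from the hyperbolic metric on $U$. First I would normalize: after post-composing with a M\"obius map (which does not affect normality, since M\"obius maps are uniformly continuous on $\ti\C$ in the spherical metric), we may assume $\{a,b,c\} = \{0,1,\infty\}$, so every $f_\la$ maps $D$ into $\C\smallsetminus\{0,1\}$. The key structural fact I would invoke is the Uniformization Theorem (available via the Appendix): the universal cover of $\C\smallsetminus\{0,1\}$ is the unit disk $\D$, so $\C\smallsetminus\{0,1\}$ carries a complete hyperbolic metric $\rho_U$ of curvature $-1$, and every holomorphic map $f_\la : D \to U$ is distance-nonincreasing from the hyperbolic (Poincar\'e) metric on any disk inside $D$ to $\rho_U$ on $U$.

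The main steps in order: (1) fix a point $z_0 \in D$ and a small disk $\D(z_0,r) \subset D$; on this disk the Poincar\'e metric is comparable to the Euclidean metric, with an explicit bound. (2) For a sequence $(f_n)$ drawn from $\mathfrak F$, consider the images $f_n(z_0)$; passing to a subsequence (using compactness of $\ti\C$) we may assume $f_n(z_0) \to p \in \ti\C$. Two cases arise. If $p \notin \{a,b,c\}$, then near $p$ the metric $\rho_U$ is comparable to Euclidean, and the Schwarz--Pick contraction forces the $f_n$ to have uniformly bounded hyperbolic derivative near $z_0$, hence to be uniformly Lipschitz on a slightly smaller disk; equicontinuity at $z_0$ follows, and one can then run a normal-families/diagonal argument (or just cite Arzel\`a--Ascoli) to extract a locally uniformly convergent subsequence. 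If instead $p \in \{a,b,c\}$, I claim $f_n \to p$ locally uniformly: because $\rho_U$ blows up logarithmically near the puncture while the Poincar\'e metric of $\D(z_0,r)$ stays finite, the image $f_n(\D(z_0,r/2))$ must be trapped in a hyperbolic ball about $f_n(z_0)$ of fixed hyperbolic radius, and such balls shrink (in the spherical metric) to the puncture as $f_n(z_0) \to p$. (3) Finally, patch: normality is a local property, and $D$ can be covered by such disks, so a diagonal argument over a countable dense set of centers yields a subsequence converging locally uniformly on all of $D$ (to a holomorphic map into $\ti\C$, possibly the constant $a$, $b$, or $c$), which is exactly the definition of a normal family in the spherical metric.

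The hard part will be making the dichotomy at step~(2) quantitative and uniform in $\la$: I need an explicit estimate of the form ``if $\sigma(f_\la(z_0), \{a,b,c\})$ is small then $f_\la$ is spherically close to the nearest puncture throughout $\D(z_0,r/2)$,'' and this rests entirely on controlling the hyperbolic metric $\rho_U$ near the punctures of $\C\smallsetminus\{0,1\}$ from below (it behaves like $\frac{|dw|}{|w|\,\log(1/|w|)}$ near $w=0$). That asymptotic, together with completeness of $\rho_U$, is what prevents a holomorphic image of a fixed Poincar\'e disk from ``escaping'' past a puncture, and it is the analytic heart of the argument. An alternative I would keep in reserve, should one wish to avoid uniformization, is the classical route via the elliptic modular function $\lambda$: lift each $f_\la$ through the covering $\lambda : \D \to \C\smallsetminus\{0,1\}$ to a family of maps into $\D$ (using simple connectedness of disks in $D$ and the monodromy theorem), apply the easy normality of uniformly bounded families (Theorem~\ref{1.2.9}) to the lifts, and push the limit back down; this trades the metric estimate for a covering-space lifting argument but reaches the same conclusion.
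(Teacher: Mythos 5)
The paper \emph{states} Montel's theorem (Theorem~\ref{1.5.1}) without proof, so there is no argument in the text against which your proposal can be compared; you are supplying a proof where the paper gives none. That said, your proposal is correct and is one of the two standard routes to the three-point normality criterion.

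Your main line of attack --- endow $\C\smallsetminus\{0,1\}$ with its complete hyperbolic metric $\rho_U$ via the Uniformization Theorem, apply the Schwarz--Pick inequality to maps $f_\la : \D(z_0,r)\to U$ to trap $f_\la\bigl(\D(z_0,r/2)\bigr)$ in a hyperbolic ball of fixed radius $R$ about $f_\la(z_0)$, and then split into two cases according to whether the center points stay away from the punctures or drift into them --- is the modern proof, and the analytic heart is exactly where you locate it: the lower bound $\rho_U(w)\gtrsim \dfrac{1}{|w|\log(1/|w|)}$ near each puncture, together with completeness, forces the spherical diameter of a hyperbolic $R$-ball to shrink to zero as its center approaches a puncture. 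Two small points you should make explicit when writing it up. First, in the ``$p\notin\{a,b,c\}$'' branch, the Schwarz--Pick bound only gives control of $\rho_U(f_n(z),f_n(w))$; to convert to equicontinuity in the spherical metric you need to observe that for $n$ large the images $f_n\bigl(\D(z_0,r/2)\bigr)$ all lie in a single compact subset $K\subset U$, on which $\rho_U$ and the spherical metric are bi-Lipschitz --- this uniformity is what closes the argument. Second, in the ``$p\in\{a,b,c\}$'' branch, the limit is a constant in $\ti\C$, which is allowed under the paper's definition of normality (convergence in the spherical metric to any $\ti\C$-valued function, not necessarily into $U$), so no further comment is needed; but it is worth stating since the limit leaves $U$. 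Your reserve route via lifting through the elliptic modular function $\lambda:\D\to\C\smallsetminus\{0,1\}$ and applying the easy Theorem~\ref{1.2.9} to the lifts is Montel's original argument; it trades the explicit metric estimate for a monodromy/lifting step, and the subtlety there is to handle the case where the lifted limit touches $\partial\D$ (which corresponds precisely to your punctured case), so the two proofs are genuinely parallel in where the difficulty sits.
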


Now we discuss some of the properties of the Julia set $\J$ of a rational map $\R.$
\begin{result}\label{1.5.2}
 Let $\R$ be a rational map and let $z_0\in\J$. If U is a neighbourhood of $z_0,$ then the set 
\begin{equation}E_{U}=\ti{\C}\setminus\bigcup_{n>0}\R^n(U)
\end{equation}
 contains at most two points.
\end{result}
\begin{proof}
Suppose $E_{U}$ contains more than 2 points, say it contains 3 points. In that case the family $\{\R^n(U)\}=\{\R(U),\R^2(U),\ldots\R^n(U),\ldots\}$ omits 3 values in $\ti{\C}$ and hence $\{\R^n\vert_ U\}$ is a normal family by Montel's theorem. This implies that  $z_0\in\F,$ which is a contradiction to the fact that  $z_0\in\J.$  Hence the set $E_{U}$ contains atmost two points.
\end{proof}
Such points are called \emph{exceptional points}.\\
We show that the set  $E_{U}$ is backward invariant under $\R$ and hence completely invariant as $\R$ is surjective.\\
\begin{equation}
\begin{split}
\notag
E_{U}
&=\ti{\C}\setminus\bigcup_{n>0}\R^n(U)\\
\Rightarrow\R^{-1}(E_{U})
&=\R^{-1}(\ti{\C}\setminus\bigcup_{n>0}\R^n(U))\\
&=\R^{-1}(\ti{\C})\setminus\R^{-1}(\bigcup_{n>0}\R^n(U))\quad(\R^{-1}\; \text{preserves difference of sets})\\
&=\ti{\C}\setminus\bigcup_{n>1}\R^{n-1}(U)\\
&=\ti{\C}\setminus\bigcup_{n>0}\R^{n}(U)\quad(\text{on replacing }n\to n-1)\\
&=E_{U}.
\end{split}
\end{equation}
Thus $E_{U}$ is backward invariant and hence completely invariant under $\R$.

Consider a polynomial $p(z)$. Then by Remark \ref{1.3.17}, $\ity$ is always a superattractive fixed point of $p(z)$ with $p^{-1}(\ity)=\{\ity\}.\quad$(i.e $\ity$ is a fixed point whose only inverse image is itself). 
\begin{proposition}\label{1.5.3}
 Let $R$ be a rational map which fixes $z_0$ and $R^{-1}(z_0)=\{z_0\}.$ Then $R$ can be conjugated by a M$\ddot{o}$bius transformation $\phi$ such that $\phi(\ity)=z_0,$ and the result is a polynomial.
\end{proposition}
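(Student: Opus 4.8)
The plan is to use transitivity of the M\"obius group on $\ti\C$ to move $z_0$ to $\infty$, and then to observe that the hypothesis $R^{-1}(z_0)=\{z_0\}$ forces the conjugated map to have no finite poles, hence to be a polynomial.

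First I would choose a M\"obius transformation $\phi$ with $\phi(\infty)=z_0$; such a $\phi$ exists since the group of M\"obius maps acts transitively on $\ti\C$ (explicitly one may take $\phi(z)=z_0+1/z$ when $z_0\in\C$, and $\phi=\mathrm{id}$ when $z_0=\infty$). Set $S=\phi^{-1}R\phi$. Because $\phi$ is a bijection of $\ti\C$, we get $S(\infty)=\phi^{-1}\big(R(\phi(\infty))\big)=\phi^{-1}\big(R(z_0)\big)=\phi^{-1}(z_0)=\infty$, so $S$ fixes $\infty$; and $S^{-1}(\infty)=\phi^{-1}\big(R^{-1}(\phi(\infty))\big)=\phi^{-1}\big(R^{-1}(z_0)\big)=\phi^{-1}(\{z_0\})=\{\infty\}$, where the key step uses the hypothesis $R^{-1}(z_0)=\{z_0\}$.

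Next I would write $S=P/Q$ with $P$ and $Q$ coprime polynomials and show $Q$ is a nonzero constant. If $Q$ had a root $a\in\C$, then coprimality would give $P(a)\neq0$, so $a$ would be a (finite) pole of $S$, i.e.\ $S(a)=\infty$ with $a\neq\infty$ — contradicting $S^{-1}(\infty)=\{\infty\}$. Hence $Q$ has no finite zeros and is a nonzero constant, so $S=P/Q$ is a polynomial, as claimed. (Since $\deg R\geqslant 2$ we have $\deg P\geqslant 2>0=\deg Q$, so $\infty$ is genuinely a pole of $S$, consistent with $\infty$ being the unique — in fact superattracting, by Remark \ref{1.3.17} — fixed point whose only preimage is itself.)

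The argument is essentially bookkeeping with conjugation; the only point that needs a little care is invoking coprimality of $P$ and $Q$, which is exactly what ensures that a finite zero of $Q$ is a true pole of $S$ rather than a removable singularity. I do not expect any serious obstacle.
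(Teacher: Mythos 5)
Your proof is correct and follows essentially the same route as the paper: conjugate by a M\"obius map sending $\infty$ to $z_0$, check that the conjugate $S$ fixes $\infty$ with $S^{-1}(\infty)=\{\infty\}$, and conclude $S$ has no finite poles and is therefore a polynomial. The only difference is cosmetic: you spell out the ``no finite poles $\Rightarrow$ polynomial'' step via the coprime representation $S=P/Q$, whereas the paper asserts it directly; your parenthetical invoking $\deg R\geqslant 2$ is not needed (the proposition does not assume it) but does no harm since it is outside the main chain of reasoning.
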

\begin{proof}
 Let $\phi(z)=\dfrac{az+b}{cz+d},\;\; ad-bc\neq 0$ be a M$\ddot{\text{o}}$bius transformation. Then 
\begin{equation}
\begin{split}
\notag
\phi(\ity)
&=z_0\\
\Rightarrow \dfrac{a}{c}
&=z_0\\
\Rightarrow a
&=cz_0.
\end{split}
\end{equation}
So the M$\ddot{\text{o}}$bius transformation takes the form $\phi(z)=\dfrac{cz_0z+b}{cz+d}\cdot$ Then\\
\begin{equation}
\begin{split}
\notag
\phi^{-1}R\phi(\ity)
&=\phi^{-1}R(z_0)\\
&=\phi^{-1}(z_0)\\
&=\ity\quad( \phi\;\text{is one-one}).
\end{split}
\end{equation}
Thus $\ity$ is a fixed point of the conjugate $Q=\phi^{-1}R\phi$, whose only inverse image is itself, for
\begin{equation}
\begin{split}
\notag
Q^{-1}(\ity)
&=(\phi^{-1}R\phi)^{-1}(\ity)\\
&=\phi^{-1}R^{-1}\phi(\ity)\\
&=\phi^{-1}R^{-1}(z_0)\\
&=\phi^{-1}(z_0)\quad( R^{-1}(z_0)=\{z_0\},\;\text{by assumption})\\
&=\{\ity\}\quad( \phi\;\text{is one-one}).
\end{split}
\end{equation}
Thus there are no poles of $Q$ in ${\C}$. Hence $R$ is conjugate to a polynomial $Q$.
\end{proof}
Now we give an example of an exceptional point.
\begin{example} Let $P$ be any polynomial. We have seen that $\ity$ is a superattracting fixed point of $P$ and therefore $\ity\in F(P)$ (by Result \ref{1.4.15}), so that $J(P)\subset{\C}.$ Let $z_0\in J(P)$ and $U$ be a neighbourhood of $z_0.$ Then
\begin{equation}
\begin{split}
\notag
\bigcup_{n>0}P^n(\C)
&={\C}\\
\Rightarrow \ti{\C}\smallsetminus\bigcup_{n>0} P^n(\C)
&=\ity.
\end{split}
\end{equation}
So we have 
\begin{equation}
\begin{split}
\notag
U
&\subset\C\\
\Rightarrow P^n(U)
&\subset P^n(\C)\\
\Rightarrow\bigcup_{n>0} P^n(U)
&\subset\bigcup_{n>0} P^n(\C)\\
\Rightarrow(\bigcup_{n>0} P^n(\C))^c
&\subset(\bigcup_{n>0} P^n(U))^c\quad(\text{ taking complement both sides})\\
\Rightarrow\ti\C\smallsetminus\bigcup_{n>0} P^n(\C)
&\subset\ti\C\smallsetminus\bigcup_{n>0} P^n(U)\\
\Rightarrow\ity
&\in\ti\C\smallsetminus\bigcup_{n>0} P^n(U).
\end{split}
\end{equation}
Hence $\ity$ is an exceptional point of the polynomial $P$.\\
The set of exceptional points of a rational map $\R$ is denoted by $E(\R)$.
\end{example}
\begin{theorem}\label{1.5.5}
 A rational map $\R$ with $deg\,\R\geqslant 2$, has atmost two exceptional points. 
\begin{enumerate}
\item\ If $E(\R)=\{z_0\}$ then $\R$ is conjugate to a polynomial with $z_0$ corresponding to $\ity.$
\item\ If $E(\R)=\{z_1,z_2\}$ where $z_1\neq z_2$, then 
 $\R$ is conjugate to some map $z\to z^{\pm{d}},$ where $z_1, z_2$ correspond to $0$ and $\ity$ respectively.
\end{enumerate}
\end{theorem}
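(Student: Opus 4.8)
The plan is to derive the entire statement from four facts already in hand: Result~\ref{1.5.2} (for any neighbourhood $U$ of a point of $\J$, the complement $E_U$ of $\bigcup_{n>0}\R^n(U)$ has at most two points), the remark immediately following it (each $E_U$ is completely invariant under $\R$), Theorem~\ref{1.3.24} (a finite completely invariant set has at most two elements), and Proposition~\ref{1.5.3} (a rational map fixing a point whose sole preimage is that point is conjugate to a polynomial). I would begin with the cardinality claim. If $a_1,\dots,a_m$ are distinct exceptional points, then each lies in some $E_{U_i}$, and the union $E_{U_1}\cup\cdots\cup E_{U_m}$ is finite (each summand has at most two points) and completely invariant (a union of completely invariant sets is completely invariant); Theorem~\ref{1.3.24} then forces this union, and hence $m$, to be $\leqslant 2$. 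Thus $E(\R)$ has at most two points and is itself finite and completely invariant.

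For part~(1), suppose $E(\R)=\{z_0\}$. Since the singleton $\{z_0\}$ is completely invariant, $\R(z_0)=z_0$ and $\R^{-1}(z_0)=\{z_0\}$, which is exactly the hypothesis of Proposition~\ref{1.5.3}; conjugating by a M\"obius map $\phi$ with $\phi(\ity)=z_0$ turns $\R$ into a polynomial $Q=\phi^{-1}\R\phi$ (of degree $\deg\R$, since conjugacy preserves degree) with $z_0$ corresponding to $\ity$.

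For part~(2), suppose $E(\R)=\{z_1,z_2\}$ with $z_1\neq z_2$. Conjugating by a M\"obius map $\phi$ with $\phi(0)=z_1$ and $\phi(\ity)=z_2$, I reduce to a rational map $S=\phi^{-1}\R\phi$ of degree $d=\deg\R\geqslant 2$ whose exceptional set is $\{0,\ity\}$. Since $\{0,\ity\}$ is completely invariant, $S$ either fixes both points or interchanges them; in the first case necessarily $S^{-1}(0)=\{0\}$ and $S^{-1}(\ity)=\{\ity\}$, in the second $S^{-1}(0)=\{\ity\}$ and $S^{-1}(\ity)=\{0\}$. Writing $S=P/Q$ in lowest terms, the prescribed preimages of $\ity$ force $Q$ to be a nonzero constant in the first case and a monomial $az^{d}$ in the second, and then the prescribed preimages of $0$ force $P$ to be $cz^{d}$ in the first case and a nonzero constant in the second; in both cases $S(z)=cz^{d}$ or $S(z)=cz^{-d}$ with $c\neq 0$. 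A final conjugation by $z\mapsto\mu z$, with $\mu$ chosen so that $\mu^{d-1}=c^{-1}$ (first case) or $\mu^{d+1}=c$ (second case) — possible because $d\geqslant 2$ — clears the constant and leaves $z\mapsto z^{\pm d}$; unwinding the two conjugacies, $z_1$ corresponds to $0$ and $z_2$ to $\ity$.

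The only genuinely substantial step is the cardinality bound, where the dynamical input (Montel's theorem, via Result~\ref{1.5.2}) meets the Riemann--Hurwitz count of critical points (buried inside Theorem~\ref{1.3.24}); everything afterwards is elementary algebra. The points to watch in part~(2) are the clean dichotomy ``fixed versus interchanged'' for a completely invariant pair, and the observation that the final monomial conjugation needs $d-1\geqslant 1$, i.e.\ $\deg\R\geqslant 2$ — exactly the standing hypothesis.
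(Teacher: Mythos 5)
Your proof is correct and takes essentially the same route as the paper: the cardinality bound via Theorem~\ref{1.3.24}, part (1) via Proposition~\ref{1.5.3}, and part (2) by conjugating the exceptional pair to $\{0,\ity\}$ and reading off the monomial form. You fill in several details the paper glosses over (building the completely invariant set $E_{U_1}\cup\cdots\cup E_{U_m}$ to justify the cardinality step, stating the fixed-pair versus swapped-pair dichotomy explicitly, determining $P$ and $Q$ from the prescribed fibres, and exhibiting the scaling $z\mapsto\mu z$ that clears the constant, which the paper leaves as ``can be eliminated by conjugation''), but the underlying argument is the same.
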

\begin{proof} We know that $E(\R)$ is completely invariant, so by Theorem \ref{1.3.24}, $E(\R)$ has atmost two elements. Clearly,  $E(\R)$ consists of either one fixed point, two fixed points or an orbit of period two.
\begin{enumerate}
\item\ If  $E(\R)=\{z_0\},$ then by Proposition \ref{1.5.3}, $\R$ can be conjugated to a polynomial $Q=\phi^{-1}\R\phi,$ where $\phi$ is a M$\ddot{\text{o}}$bius transformation with $\phi(\ity)=z_0\;$(since $E=\{z_0\}$ is completely invariant, therefore $\R^{-1}(z_0)=\{z_0\}$). 
Hence $\R$ is conjugate to a polynomial $Q(z)=\phi^{-1}\R\phi(z)$ with the fixed point $z_0$ of $\R$ corresponding to the fixed point $\ity$ of $Q$.
\item\ $E(\R)=\{z_1,z_2\}$ where $z_1\neq z_2$.
Firstly, let $z_1, z_2$ be fixed points of $\R$ and let $\phi$ be a M$\ddot{\text{o}}$bius  transformation such that $\phi(\ity)=z_1$ and $\phi(0)=z_2.$ Then
\begin{equation}
\begin{split}
\notag
\phi^{-1}\R\phi(\ity)
&=\phi^{-1}\R(z_1)\\
&=\phi^{-1}(z_1)\\
&=\ity\quad(\phi\;\text{is one-one})
\end{split}
\end{equation}
and
\begin{equation}
\begin{split}
\notag
\phi^{-1}\R\phi(0)
&=\phi^{-1}\R(z_2)\\
&=\phi^{-1}(z_2)\\
&=0\quad(\phi\;\text{is one-one}).
\end{split}
\end{equation}
Thus $\R$ is conjugate to some polynomial $P(z),$ where $P(z)=\phi^{-1}\R\phi(z)$, and $0$,\,$\ity$ being the fixed points of $P(z)$. Then $0$ is a fixed point of multiplicity $d$ for $P(z),$ as no other points can map to $0$ (as $E(\R)$ is completely invariant). Also $\ity$ is a fixed point of multiplicity $d$ for $P(z),$ as no other points can map to $\ity \;$(as $E(\R)$ is completely invariant). Hence $P(z)=kz^d,$ where the constant $k$ can be eliminated by conjugation.\\
Similarly it can be seen easily that if $0$ and $\ity$ form an orbit of period two, then $P(z)=kz^{-d},$ where the constant $k$ can be eliminated by conjugation.
\end{enumerate}
\end{proof}
As a consequence of this theorem we have,
\begin{remark}\label{1.5.6}
If $\R$ is a rational map with $deg\,\R\geqslant 2$, then $E(\R)\subset\F.$
\end{remark}
\begin{proof}
 Following the proof of previous theorem we get that, in the first case $E(\R)$ consists of a superattractive fixed point which belongs to $\F$ by Result \ref{1.4.15}. Hence  $E(\R)\subset\F$.\\
In the second case we have two subcases. In subcase one $E(\R)$ consists of two superattractive fixed points which belongs to $\F$ by  Result \ref{1.4.15}. Hence  $E(\R)\subset\F$.\\
In subcase two $E(\R)$ consists of an orbit $\{z_1,z_2\}$ of period 2, and $\R$ is conjugate to some map $ p : z\to z^{{-}{d}}$ where $z_1, z_2$ correspond to $0$ and $\ity$ respectively. Then
\begin{equation}
\begin{split}
\notag
p(z)
&=z^{-d}\\
\Rightarrow p^2(z)
&={z^{-d}}^{-d}\\
&=z^{d^2}.
\end{split}
\end{equation}
So $p^2(z)=z^{d^2}.$ Thus $E(\R)$ consists of a superattractive orbit of period 2, and hence belongs to $\F$ by  Result \ref{1.4.16}. Hence  $E(\R)\subset\F$.
\end{proof}
\begin{corollary}
 Let $\R$ be a rational map with $deg\,\R\geqslant 2$ and let $int\,(\J)\neq\emptyset$ where $int\,\J$ denotes interior of $\J$, then $\J=\ti\C.$
\end{corollary}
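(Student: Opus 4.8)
The plan is to exploit the complete invariance of the Julia set together with Result~\ref{1.5.2}, which severely limits how small the forward orbit of an open set meeting $\J$ can be. First I would pick a point $z_0$ in the interior of $\J$ and choose an open neighbourhood $U$ of $z_0$ with $U\subseteq\J$; such a $U$ exists precisely because $z_0$ is an interior point of $\J$.

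The key observation is that $\R^n(U)\subseteq\J$ for every $n>0$. Indeed, by Theorem~\ref{1.3.23} the set $\J$ is completely invariant, so in particular $\R(\J)=\J$ and hence $\R^n(\J)=\J$ for all $n$; since $U\subseteq\J$, this gives $\R^n(U)\subseteq\R^n(\J)=\J$. Consequently $\bigcup_{n>0}\R^n(U)\subseteq\J$, and taking complements in $\ti\C$,
\[
\F=\ti\C\smallsetminus\J\;\subseteq\;\ti\C\smallsetminus\bigcup_{n>0}\R^n(U)=E_U .
\]
Since $z_0\in\J$ and $U$ is a neighbourhood of $z_0$, Result~\ref{1.5.2} applies and tells us that $E_U$ contains at most two points. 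Therefore $\F$ contains at most two points.

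To finish, I would invoke the topology of the sphere: $\F$ is an open subset of $\ti\C$, and a nonempty open subset of $\ti\C$ is infinite (each point has a neighbourhood homeomorphic to an open subset of $\C$). Hence a finite open set must be empty, so $\F=\emptyset$ and $\J=\ti\C$.

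I do not anticipate any genuine obstacle here; the corollary is a quick consequence of Result~\ref{1.5.2}. The only points requiring a moment's care are the inclusion $\R^n(U)\subseteq\J$, which is exactly where forward invariance of $\J$ (Theorem~\ref{1.3.23}) enters, and the elementary topological remark that an open subset of $\ti\C$ cannot be finite and nonempty.
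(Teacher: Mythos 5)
Your proof is correct and follows essentially the same approach as the paper: both arguments hinge on forward invariance of $\J$ together with Result~\ref{1.5.2}, which says the forward orbit of a neighbourhood of a Julia point covers the sphere with at most two exceptions. The only (minor) difference is in how you finish: the paper observes that $\J$ is a closed set containing the dense set $\bigcup_{n>0}\R^n(U)$, hence $\J=\ti\C$, whereas you observe dually that $\F\subset E_U$ is a finite open subset of $\ti\C$ and hence empty. These are equivalent ways of closing the argument, and your version is, if anything, slightly more careful since you reason directly with $E_U$ rather than identifying $\ti\C\smallsetminus\bigcup_{n>0}\R^n(U)$ with the full exceptional set $E(\R)$.
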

\begin{proof}
 Suppose $z_0\in int\,(\J)$. Then $\;\exists\,$ a neighbourhood $U$ of $z_0$ such that $$z_0\in U\subset int\,(\J)\subset \J.$$
By Theorem \ref{1.3.23}, $\J$ is forward invariant so that
\begin{equation}
\begin{split}
\notag
\J
&\supset\bigcup_{n>0}\R^n(U)\\
&=\ti\C\smallsetminus E(\R)\\
\Rightarrow\J
&\supset\overline{\ti\C\smallsetminus E(\R)}\quad(\J\;\text{is closed})\\
&=\ti\C.
\end{split}
\end{equation}
$\Rightarrow\J\supset\ti\C\supset\J,\;\Rightarrow\J=\ti\C$.\\
Thus if $int\,(\J)\neq\emptyset,$ then $\J=\ti\C.$
\end{proof}

Recall (Definition \ref{d5}) of basin of attraction $\A$ of an attracting fixed point $z_0$ of $\R$.\\
We show $\J$ equals the boundary of $\mathcal{A}$.
\begin{corollary}\label{1.5.8}
 Let $\mathcal{A}$ be the basin of attraction of an  attracting fixed point $z_0$ of $\R$. Then $\partial{\mathcal{A}}=\J$.
\end{corollary}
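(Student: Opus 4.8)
The plan is to prove the two inclusions $\partial\A\subseteq\J$ and $\J\subseteq\partial\A$ separately, using the complete invariance of $\A$ (Proposition \ref{1.4.12}) together with the global consequence of Montel's theorem recorded in Result \ref{1.5.2}.

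First I would show $\partial\A\subseteq\J$. Since $\A$ is open (it is a union of Fatou components, by Result \ref{1.4.15} and the fact that $\A$ is completely invariant and contains no Julia points), no boundary point of $\A$ lies in $\A$; so it suffices to rule out a boundary point $z$ lying in a Fatou component $U$ \emph{different} from a component of $\A$. If $z\in\partial\A\cap\F$, let $U$ be the Fatou component containing $z$; then $U$ meets $\A$ (because $z\in\overline{\A}$ and $U$ is open), and since $\A$ is a union of components of $\F$, we would get $U\subseteq\A$, forcing $z\in\A$, contradicting that $z$ is a boundary point of the open set $\A$. Hence $z\in\J$.

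Next I would show $\J\subseteq\partial\A$. Fix $z_0\in\J$ and an arbitrary neighbourhood $U$ of $z_0$; I must show $U$ meets $\A$ (then $z_0\in\overline{\A}$, and since $z_0\in\J$ is not in the open set $\A$, we get $z_0\in\partial\A$). By Result \ref{1.5.2}, the complement $E_U=\ti\C\setminus\bigcup_{n>0}\R^n(U)$ contains at most two points. Since the attracting fixed point $z_0'$ (rename to avoid clash — call it $p$) has a whole neighbourhood inside $\A$, and moreover backward iterates of $p$ form an infinite set (as $\deg\R\geq 2$, unless $p$ is exceptional, but an exceptional point is superattracting by Remark \ref{1.5.6}, not merely attracting — so here $p$ has infinitely many preimages), the set $\bigcup_{n>0}\R^n(U)$, missing at most two points, must contain some point of $\A$: indeed it contains some preimage $q$ of $p$ under an iterate, and by complete invariance of $\A$ such a $q$ lies in $\A$. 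Tracing back, $\R^n(U)$ meets $\A$ for some $n$, and since $\R^{-1}(\A)=\A$ (Proposition \ref{1.4.12}), pulling back gives $U\cap\A\neq\emptyset$. Therefore $z_0\in\partial\A$.

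The main obstacle is the bookkeeping in the second inclusion: one must be careful that ``$\bigcup_{n>0}\R^n(U)$ omits at most two points'' genuinely forces this union to hit the open set $\A$, and the cleanest way is to note that $\A$ is a nonempty open set and the only way a set omitting at most two points can miss a nonempty open set is impossible — so $\bigcup_{n>0}\R^n(U)\cap\A\neq\emptyset$ directly. Then complete invariance $\R^{-n}(\A)=\A$ transfers this back to $U\cap\A\neq\emptyset$. A secondary subtlety is confirming that $\A$ really is a union of full Fatou components (used in the first inclusion), which follows because $\A$ is open, completely invariant, and disjoint from $\J$ (any point of $\A$ lies in $\F$ since iterates converge locally uniformly to $p$ near it).
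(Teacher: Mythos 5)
Your proof is correct, and for the inclusion $\J\subseteq\partial\A$ it is essentially the paper's argument: invoke Result~\ref{1.5.2} to see that $\bigcup_{n>0}\R^n(U)$ omits at most two points, hence meets the nonempty open set $\A$, then transfer back along complete invariance (Proposition~\ref{1.4.12}) to conclude $U\cap\A\neq\emptyset$. (Your digression about exceptional points versus preimages of $p$ is harmless but unnecessary; as you note yourself, the omitting-two-points remark already settles it.)

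For the reverse inclusion $\partial\A\subseteq\J$ you take a genuinely different route. The paper argues informally that a limit of $\{\R^n\vert_U\}$ near a boundary point would exhibit a ``jump discontinuity'' between $\A$ and its complement, contradicting normality; this conveys the right idea but is not made precise. You instead establish the structural fact that $\A$ is a union of full Fatou components (which follows from Vitali's theorem as in Result~\ref{1.4.15}: if $z\in\A$ lies in a component $V$ of $\F$, pointwise convergence $\R^n\to p$ on the open set $\A\cap V$ promotes to locally uniform convergence on all of $V$, so $V\subseteq\A$), and then note that a boundary point of a union of components of an open set cannot itself lie in any component. This is cleaner and more rigorous than the paper's heuristic, at the modest cost of having to record the component-saturation of $\A$ explicitly.
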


\begin{proof}
 Firstly we show that $\J\subset\partial{\mathcal{A}}$.\\
 So let $z_1\in\J$ and $N$ be a neighbourhood of  $z_1$. By Result \ref{1.5.2} we know that the set
\begin{equation}
E_N=\ti\C\smallsetminus\bigcup_{n>0}\R^n(N)
\end{equation}
 consists of atmost two points which lie in $\F,$ by Remark \ref{1.5.6}. So $\bigcup_{n>0}\R^n(N)$ intersects $\mathcal{A}\;$(since $\bigcup_{n>0}\R^n(N)$ can miss at the most 2 points)
and hence some $\R^n(N)$ intersects $\mathcal{A}$.\\
 We claim that $N\cap\A\neq\emptyset.$\\
 Since $\R^n(N)\cap \A\neq\emptyset,$ let $w\in\R^n(N)\cap \A.$ So $w\in\R^n(N)$ and $w\in\A.$ As $w\in\R^n(N),$ so $w=\R^n(z)$ for some $z\in N,\;\Rightarrow z=\R^{-n}(w)\subset O^{-}(w)$, the backward orbit of $w.$ Since $w\in\A$ and $\A$ is completely invariant (by Proposition \ref{1.4.12}),\;$\Rightarrow z\in\A$ and hence $ N\cap\A\neq\emptyset$, which proves the claim.\\
  Now $z_1\in N$ and $N\cap\A\neq\emptyset,\;\Rightarrow z_1 \in\overline{\A}$ and so $\J\subset\overline{\A}\; (z_1\in\J$ was arbitrary). Since $\;\J\cap\A=\emptyset,\;\Rightarrow \J\subset\partial\A\;\;(\overline{\A}=\A\cup\partial\A).$\\
For the reverse inclusion,
let $z\in\partial\A$ and $U$ be a neighbourhood of $z.$ Then $U$ contains points of $\A$ and $\A^{c}.$ Any limit of iterates $\{\R^n\vert_U\}$ has a jump discontinuity between $\A$ and $\partial\A\;$(otherwise $\A$ will contain points which are outside $\A$, a contradiction). Therefore $z\notin\F$ and so $z\in\J$ which implies that $\partial\A\subset\J.$\\ Hence we conclude that $\partial\A=\J.$
\end{proof}
We illustrate with examples.
\begin{example}\label{eg2}
 Consider the polynomial $p(z)=z^2-2.$ Then by Remark \ref{1.3.17}, $\ity$ is a superattracting fixed point of $p.$ Define $\;h : \{\xi:|\xi|>1\}\to \ti\C\smallsetminus\,[-2,\,2]$ as\;
$h(\xi)=\xi+\dfrac{1}{\xi}.$ It can be easily seen that $h$ is a well defined conformal mapping of $ \{\xi:|\xi|>1\}$ onto $\ti\C\smallsetminus\,[-2,\,2].$ Then we have,
\begin{equation}
\begin{split}
\notag
ph(\xi)
&=p(\xi+\dfrac{1}{\xi})\\
&=(\xi+\dfrac{1}{\xi})^2 -2\\
&=\xi^2+\dfrac{1}{\xi^2}\\
&=h(\xi^2)\\
\Rightarrow h^{-1}ph(\xi)
&=\xi^2.
\end{split}
\end{equation}
Thus $p(z)$ is conjugate to the map $g:\xi\to\xi^2.$ As the dynamics of two conjugate maps are same, therefore the dynamics of $p(z)$ on  $\ti\C\smallsetminus\,[-2,\,2]$ is same as those of $g$ on $ \{\xi:|\xi|>1\}$. Since the iterates of any $\xi,\, |\xi|>1$ under $g$ tends to $\ity$, so will the iterates of any $z\in\ti\C\smallsetminus\,[-2,\,2]$ under $p$.\\
We claim that $[-2,\,2]$ is invariant under $p$.\\
The boundary of $[-2,\,2]$ is $\{-2,2\}.$ Now $p(-2)=2\in [-2,\,2],$ and $p(2)=2\in  [-2,\,2].$ Since $p$ is analytic, therefore $p(-2<z<2)\subset -2\leqslant z <2\;$(as $p(0)=-2$). Therefore $[-2,\,2]$ is invariant under $p$.\\
 Now, the basin of attraction of $\ity$ for $p,$ viz.  $\A(\ity)=\{z\, |\, p^n(z)\to\ity\}=\ti\C\smallsetminus\,[-2,\,2].$ Therefore by Corollary \ref{1.5.8}, $\partial\A(\ity)=J(p),$ the Julia set of $p$ so that $J(p)=[-2,\,2].$
\end{example}
\begin{example}\label{eg3}
 Consider $\R(z)=z^2.$ Then\;$\R^n(z)=z^{2^{n}}$ converges to 0 in $\{z\,:\,|z|<1\}$ and to $\ity$ in $\{z\,:\,|z|>1\}.$ This implies that $\A(0)=\{z\,:\,|z|<1\}$ and $\A(\ity)=\{z\,:\,|z|>1\}.$ So by Corollary \ref{1.5.8}, $\partial\A(0)=\partial\A(\ity)=\{z\,:\,|z|=1\}.$ Therefore  Julia set of $\R$  is the unit circle.
\end{example}
\begin{corollary}\label{1.5.11}
If $z_0\in\ti\C\smallsetminus\;E(\R)$, then 
\begin{equation}
\J\subset\{\emph{accumulation points of}\;\bigcup_{n\geqslant 0}\R^{-n}(z_0) \}.
\end{equation}
 Consequently, if $z_0\in\J$ then
\begin{equation}
\J=\emph{closure}\;\B(\bigcup_{n\geqslant 0}\R^{-n}(z_0)\B).
\end{equation}
\end{corollary}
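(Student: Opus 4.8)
The plan is to derive both assertions from Result~\ref{1.5.2} (a consequence of Montel's theorem) together with the complete invariance of $\J$. Write $T=\bigcup_{n\geqslant 0}\R^{-n}(z_0)$, so that $z_0\in T$. First I would fix $w\in\J$ and an arbitrary open neighbourhood $N$ of $w$ in $\ti\C$. By Result~\ref{1.5.2} the set $E_N=\ti\C\smallsetminus\bigcup_{n>0}\R^n(N)$ has at most two points, and by the very definition of $E(\R)$ every such point is exceptional, so $E_N\subseteq E(\R)$. Since $z_0\notin E(\R)$ we get $z_0\in\bigcup_{n>0}\R^n(N)$; hence there are $n\geqslant 1$ and $z\in N$ with $\R^n(z)=z_0$, i.e.\ $z\in N\cap\R^{-n}(z_0)\subseteq N\cap T$. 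Thus every neighbourhood of every point of $\J$ meets $T$, which already gives $\J\subseteq\overline{T}$.

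Next I would upgrade ``$N$ meets $T$'' to ``$w$ is an accumulation point of $T$'', i.e.\ rule out $w$ being an isolated point of $T$. Using that $\J$ has no isolated points, pick $w'\in(\J\cap N)\smallsetminus\{w\}$ and a neighbourhood $N'\subseteq N$ of $w'$ with $w\notin N'$; applying the previous paragraph to $w'$ and $N'$ produces a point of $T$ inside $N'\subseteq N$ that is distinct from $w$. Hence every punctured neighbourhood of $w$ meets $T$, so $w$ lies in the derived set $T'$ of $T$, proving $\J\subseteq T'$. (Alternatively one may note that $T$ is infinite when $z_0\notin E(\R)$ — a finite $T$ would be completely invariant, hence of cardinality at most two by Theorem~\ref{1.3.24} and contained in $E(\R)$, contradicting $z_0\notin E(\R)$ — and then run the neighbourhood argument using preimages $\R^{-n}(z_0)$ of level $n$ large enough to avoid any prescribed finite subset of $T$.)

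For the ``consequently'' clause, assume $z_0\in\J$ (note $z_0\in\J$ is automatically non-exceptional, since $E(\R)\subseteq\F$ by Remark~\ref{1.5.6}, so the first part applies). By Theorem~\ref{1.3.23} the Julia set is completely invariant, so $\R^{-n}(z_0)\subseteq\J$ for all $n\geqslant 0$ and therefore $T\subseteq\J$; as $\J$ is closed, $\overline{T}\subseteq\J$. Combining this with $\J\subseteq T'\subseteq\overline{T}$ from the first part yields $\J=\overline{T}=\text{closure}\,\B(\bigcup_{n\geqslant 0}\R^{-n}(z_0)\B)$. I expect the only genuine obstacle to be the accumulation-point refinement of the second paragraph: a plain application of Result~\ref{1.5.2} only shows that $T$ is dense near $\J$, and one needs the extra input that Julia points are not isolated points of $T$ (via perfectness of $\J$, or via infiniteness of the backward orbit of a non-exceptional point); the remainder is a direct appeal to Montel's theorem, to the definition of $E(\R)$, and to complete invariance of $\J$.
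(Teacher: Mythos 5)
Your proposal is correct and follows essentially the same route as the paper: fix $w\in\J$ and a neighbourhood $N$, invoke Result~\ref{1.5.2} to conclude $E_N\subseteq E(\R)$, deduce $z_0\in\bigcup_{n>0}\R^n(N)$ and hence $N\cap\bigcup_{n\geqslant 0}\R^{-n}(z_0)\neq\emptyset$, then use backward invariance and closedness of $\J$ for the second assertion. The one place where you do more work than the paper is exactly the spot you flagged: the paper asserts immediately that $w$ is an \emph{accumulation} point, glossing over the step from ``every neighbourhood meets $T$'' to ``every punctured neighbourhood meets $T$''; your appeal to perfectness of $\J$ (or to the infinitude of the backward orbit) cleanly closes that small gap, so your version is a tightened form of the same argument rather than a different one.
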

\begin{proof}
Let $w\in\J$ and $U$ be any neighbourhood of $w.$ Since$\;z_0\in\ti\C\smallsetminus\;E(\R)=\bigcup_{n>0}\R^n(U),\;\Rightarrow z_0\in\bigcup_{n>0}\R^n(U)$ and so $z_0\in\R^n(U)$ for some $n>0$.$\;\Rightarrow \R^{-n}(z_0)\in U$.$\;\Rightarrow\;\text{for}\;\epsilon>0,  \;\exists \;n_0\in\mathbb N\,$ such that $|\R^{-n}(z_0)-w|<\epsilon, \;\forall\;n\geqslant n_0,$ which implies that $w $ is an accumulation point of $\,\bigcup_{n\geqslant 0}\R^{-n}(z_0)$. Hence $\J\subset\{$accumulation points of\;$\bigcup_{n\geqslant 0}\R^{-n}(z_0)\}$.\\
If $z_0\in\J$, then
\begin{equation}
\begin{split}
\notag
\bigcup_{n\geqslant 0}\R^{-n}(z_0)
&\subset\J\quad(\J\;\text{ is backward invariant, by Theorem \ref{1.3.23}})\\
\Rightarrow \emph{closure}\,\{\bigcup_{n\geqslant 0}\R^{-n}(z_0)\}
&\subset\J\quad(\J\;\text{ is closed}).
\end{split}
\end{equation}
Hence $\J=\emph{closure}\,\{\bigcup_{n\geqslant 0}\R^{-n}(z_0)\}.$
\end{proof}
Thus we have shown that iterated preimages of any point $z_0\in\J$ is dense in $\J$.
\begin{theorem}\label{1.5.12}
$\J$ contains no isolated points.
\end{theorem}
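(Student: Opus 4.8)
The plan is to read the statement off directly from Corollary \ref{1.5.11}, used with a base point taken \emph{inside} the Julia set. The only preliminary fact needed is that $\J$ avoids the exceptional set: since $deg\,\R\geqslant 2$, Remark \ref{1.5.6} gives $E(\R)\subset\F$, so $\J\cap E(\R)=\emptyset$, and in particular every point of $\J$ lies in $\ti\C\setminus E(\R)$.

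First I would fix an arbitrary $w\in\J$. Since $w\in\ti\C\setminus E(\R)$, Corollary \ref{1.5.11} applies with $z_0=w$ and shows that $\J$ is contained in the set of accumulation points of the backward orbit $\bigcup_{n\geqslant 0}\R^{-n}(w)$. Because $w$ itself belongs to $\J$, this says precisely that $w$ is an accumulation point of $\bigcup_{n\geqslant 0}\R^{-n}(w)$, i.e.\ every neighbourhood of $w$ meets $\bigcup_{n\geqslant 0}\R^{-n}(w)$ in a point distinct from $w$. Next I would invoke backward invariance of the Julia set: by Theorem \ref{1.3.23}, $\R^{-1}(\J)=\J$, hence by iteration $\R^{-n}(w)\subset\J$ for every $n$, so $\bigcup_{n\geqslant 0}\R^{-n}(w)\subset\J$. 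Combining the two observations, every neighbourhood of $w$ contains a point of $\J\setminus\{w\}$; that is, $w$ is not isolated in $\J$. As $w$ was arbitrary, $\J$ has no isolated points (the case $\J=\emptyset$ being vacuous).

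I do not anticipate any real difficulty; the single point that wants care is the legitimacy of feeding a point of $\J$ into Corollary \ref{1.5.11} as its base point, which is exactly why the observation $\J\cap E(\R)=\emptyset$ is recorded at the outset. For completeness one could instead phrase this as a proof by contradiction — if $w\in\J$ had a neighbourhood $U$ with $U\cap\J=\{w\}$, then no preimage of $w$ other than $w$ would lie in $U$, contradicting that $w$ accumulates $\bigcup_{n\geqslant 0}\R^{-n}(w)$ — or unwind the argument of Corollary \ref{1.5.11} directly via Result \ref{1.5.2}, using that a small neighbourhood $U$ of $w$ has $\bigcup_{n>0}\R^n(U)$ omitting at most two (necessarily Fatou) points. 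Either route is routine once $\J\cap E(\R)=\emptyset$ is in hand.
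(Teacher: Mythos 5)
There is a genuine gap, and it is exactly the one the paper's proof is structured to avoid. You invoke Corollary~\ref{1.5.11} with $z_0=w$ and read it as saying that every neighbourhood of $w$ meets $\bigcup_{n\geqslant 0}\R^{-n}(w)$ in a point \emph{distinct from} $w$. But look at what the proof of Corollary~\ref{1.5.11} actually delivers: given a neighbourhood $U$ of $w\in\J$, it produces some $n>0$ and some $z\in U$ with $\R^n(z)=z_0$. When $z_0=w$, nothing forces $z\neq w$: if $w$ happens to be periodic with period dividing $n$, then $z=w$ is a perfectly admissible solution, and there is no control over which $n$ works. Equivalently, $w$ always lies in $\bigcup_{n\geqslant 0}\R^{-n}(w)$, so being in the closure of that set is automatic and gives no information; what is needed is that $w$ is a \emph{limit} point, and that is precisely what fails to follow for periodic $w$. (The corollary's statement uses the phrase ``accumulation points,'' but its proof only establishes the closure version --- which is the only version used elsewhere in the paper.) Your two ``alternative routes'' inherit the same hole: ``no preimage of $w$ other than $w$ lies in $U$'' is not a contradiction, and unwinding via Result~\ref{1.5.2} again produces a preimage that might be $w$ itself.

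The paper's proof fixes this by never feeding $w$ itself into Corollary~\ref{1.5.11}. It first splits into the non-periodic and periodic cases. If $w$ is not periodic, $w\notin\R^{-n}(w)$ for any $n\geqslant 1$, so any preimage found is automatically $\neq w$; the paper even replaces the base point by some $z_1\in\R^{-1}(w)$ with $z_1\notin O^{+}(w)$ and applies the density of $O^{-}(z_1)$, so that any $v\in U\cap O^{-}(z_1)$ satisfies $v\neq w$ (otherwise $z_1=\R^m(w)\in O^{+}(w)$). If $w$ is periodic of period $n$, the paper first shows $\R^{-n}(w)\neq\{w\}$ (else $w$ would be a superattracting fixed point of $\R^n$, contradicting $w\in\J$), picks $z_1\neq w$ in $\R^{-n}(w)$, verifies $z_1\notin O^{+}(w)$ using minimality of the period, and then runs the same density argument. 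To repair your proof you would need to incorporate this change of base point (or some equivalent device); as written, the argument only works for non-periodic $w$.
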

\begin{proof}
As $\J$ is non-empty, let $z_0\in\J$ and $U$ be a neighbourhood of $z_0.$
 First assume that $z_0$ is not periodic. Therefore $\R^n(z_0)\neq z_0\;\,\forall\,n\in\mathbb N,\;\Rightarrow z_0\notin\R^{-n}(z_0)\;\forall\;n\in\mathbb N.$ In particular $z_0\notin\R^{-1}(z_0).$ Let $z_1\in\R^{-1}(z_0)\subset\J\;(\J$ is backward invariant, by Theorem \ref{1.3.23}),$\;\Rightarrow\R(z_1)=z_0.$\\
We claim that $z_1\notin O^{+}(z_0)$ i.e 
\begin{equation}\label{eq3}
z_1\neq\R^n(z_0)\;\;\forall\;n\in\mathbb N.
\end{equation}
Let if possible $z_1=\R^n(z_0)$ for some n,$\;\Rightarrow z_0=\R(z_1)=\R^{n+1}(z_0)$, a contradiction to $z_0$ being non-periodic. Hence the claim.\\
As $z_1\in\J$, therefore by Corollary \ref{1.5.11}, backward iterates of $z_1$ are dense in $\J$. Now $U$ is any neighbourhood of $z_0\in\J,\;\Rightarrow U\cap\{\bigcup_{n\geqslant 0}\R^{-n}(z_1)\}\neq\emptyset.$ Let $w\in U\cap\{\bigcup_{n\geqslant 0}\R^{-n}(z_1)\},\;\Rightarrow w\in\R^{-n}(z_1)$ for some $n>0$ and so $w\in\J$ which implies that $w\in U\cap\J$ and $w\neq z_0 \;(\text{using}\;(\ref{eq3}))$. Thus $U\cap\J$ is a neighbourhood of $z_0\in\J$ which contains some other point $w\neq z_0.$ Hence $z_0$ is an accumulation point of $\J.$ Since $z_0\in\J$ was arbitrary, therefore every point of $\J$ is an accumulation point of $\J$. \\
Now assume that $z_0$ is periodic with period n.\\
If $z_0$ were the only solution of the equation
\begin{equation}
\R^n(z)=z_0,
\end{equation}
then by Proposition \ref{1.5.3}, we can conjugate $\R^n$ to some polynomial say, $Q$  and the fixed point $z_0$ of $\R^n$ corresponds to the fixed point $\ity$ of $Q$. This implies that $z_0$ is a superattracting fixed point of $\R^n,$ a contradiction to $z_0\in\J.$ Hence there is $z_1\neq z_0$ with $\R^n(z_1)=z_0.$ Again $z_1\notin O^{+}(z_0),$ for if  $\R^m(z_0)=z_1$ for some m, then by periodicity of $z_0,\;\;0\leqslant m<n,$ and 
\begin{equation}
\begin{split}
\notag
\R^m(z_0)
&=\R^{n+m}(z_0)\\
&=\R^n(z_1)\\
&=z_0
\end{split}
\end{equation}
a contradiction to minimality of n. As done earlier $\R^m(w)=z_1,$ for some $w\in U\cap\J,$ and $w\neq z_0$. Hence $z_0$ is an accumulation point of $\J$ and so $\J$ contains no isolated points. Thus $\J$ is a \emph{perfect set} (and so is uncountable).
\end{proof}
\begin{theorem}\label{1.5.13}
Let $\R$ be a rational map, $deg\,\R\geqslant 2,$ and let $E$ be a closed, completely invariant subset of $\ti\C.$ Then either :
\begin{enumerate}
\item\ E has atmost two elements and $E\subset E(\R)\subset\F;$ or
\item\ E is infinite and $E\supset\J.$
\end{enumerate}
\end{theorem}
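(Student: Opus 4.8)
The plan is to split on whether $E$ is finite or infinite, and to handle both alternatives through a single mechanism: Corollary~\ref{1.5.11}, which tells us that the backward orbit of any point outside $E(\R)$ accumulates onto all of $\J$.

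First I would record the elementary propagation step. Since $E$ is completely invariant it is in particular backward invariant, so $\R^{-1}(E)=E$ and hence, by induction on $n$, $\R^{-n}(z_0)\subseteq E$ for every $z_0\in E$ and every $n\geq 0$. Thus the full backward orbit $\bigcup_{n\geq 0}\R^{-n}(z_0)$ lies in $E$, and because $E$ is closed, every accumulation point of this backward orbit lies in $E$ as well. Feeding this into Corollary~\ref{1.5.11} gives the key implication: \emph{if there exists $z_0\in E\smallsetminus E(\R)$, then $\J\subseteq E$.}

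With this in hand the dichotomy is quick. If $E$ is infinite, then since $E(\R)$ is finite (Theorem~\ref{1.5.5} gives $|E(\R)|\leq 2$) we may pick $z_0\in E\smallsetminus E(\R)$, and the key implication yields $\J\subseteq E$, which is alternative~(2). If instead $E$ is finite, then Theorem~\ref{1.3.24} applies---a finite completely invariant set of a rational map of degree $\geq 2$ has at most two elements---so $E$ has at most two elements; it then remains to show $E\subseteq E(\R)\subseteq\F$. The inclusion $E(\R)\subseteq\F$ is exactly Remark~\ref{1.5.6}. For $E\subseteq E(\R)$, suppose toward a contradiction that some $z_0\in E\smallsetminus E(\R)$; the key implication would force $\J\subseteq E$, so $\J$ would be finite, contradicting Theorem~\ref{1.5.12}, by which $\J$ is perfect and hence infinite. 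Therefore $E\subseteq E(\R)$, giving alternative~(1).

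I do not expect a serious obstacle here, since the substantive content is already packaged into Corollary~\ref{1.5.11}, Theorem~\ref{1.3.24}, and Theorem~\ref{1.5.12}. The only points needing care are the bookkeeping that backward invariance of $E$ passes to the entire backward orbit of each of its points (so that the accumulation-set conclusion of Corollary~\ref{1.5.11} is trapped inside the closed set $E$), and the explicit use of Theorem~\ref{1.5.12} to know $\J$ is infinite, which is precisely what makes the two alternatives mutually exclusive.
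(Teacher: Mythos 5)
Your proof is correct, and it takes a genuinely different route from the paper's. The paper splits cleanly on finite vs.\ infinite and handles the infinite case by applying Montel's theorem (Theorem~\ref{1.5.1}) directly to the open complement $\Omega=E^{c}$: since $E$ is infinite, $\Omega$ misses three points, so $\{\R^{n}|_{\Omega}\}$ is normal, giving $\Omega\subset\F$ and hence $\J\subset E$. For the finite case the paper cites Theorem~\ref{1.3.24} (to get $|E|\leqslant 2$) and Theorem~\ref{1.5.5}, asserting that the elements are exceptional. You instead unify both cases through the single observation that backward invariance plus closedness trap all accumulation points of $\bigcup_{n\geqslant 0}\R^{-n}(z_0)$ inside $E$, so Corollary~\ref{1.5.11} forces $\J\subset E$ as soon as $E$ contains any non-exceptional point; alternative~(2) then follows immediately for infinite $E$, and for finite $E$ the same implication would make $\J$ finite, contradicting Theorem~\ref{1.5.12}. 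Since Corollary~\ref{1.5.11} is itself a consequence of Montel, the two proofs rest on the same foundations, but your version gives a sharper justification of the inclusion $E\subset E(\R)$ in the finite case (which the paper leaves somewhat implicit in its appeal to Theorem~\ref{1.5.5}), at the cost of needing the perfectness of $\J$ (Theorem~\ref{1.5.12}) as an extra ingredient.
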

\begin{proof}
Clearly, $E$ is either finite or infinite.\\
 If $E$ is finite, then by Theorem \ref{1.3.24} and Theorem \ref{1.5.5}, $E$ has atmost two elements which are exceptional points. Therefore $E\subset E(\R)\subset\F.$\\
Suppose now that $E$ is infinite. As $E$ is completely invariant,  by Corollary \ref{1.3.21},   $E^c=\Omega\;$(say) is also completely invariant and $\Omega$ is an open set. Hence $\R^n(\Omega)=\Omega\;\;\forall\;n.$ Let $a, b$ and $c$ be any three distinct points in $E$. The family $\{\R^n\vert_\Omega : n\in\mathbb{N}\}$  is meromorphic in $\Omega$ and misses $a,b,c.$ By Montel's theorem,\;$\{\R^n\vert_\Omega : n\in\mathbb{N}\}$ is normal in $\Omega$. Therefore $\Omega\subset\F$, so that ${\Omega}^c\supset{\F}^c$ i.e $E\supset\J$ and this completes the proof.
\end{proof}
\begin{remark}\label{1.5.14}
The above theorem says that $\J$ is the smallest closed, completely invariant set with atleast three points. This property is referred as \emph{minimality} of $\J.$
\end{remark}
\begin{theorem}\label{1.5.15}
Let $\R$ be a rational map, $deg\;\R\geqslant 2.$ Then 
\begin{equation}
\J\subset \emph{closure}\,\{\emph{periodic points of}\;\R\}.
\end{equation}
\end{theorem}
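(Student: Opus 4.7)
The plan is to prove the stronger statement that every point of $\J$ is a limit of periodic points of $\R$. Fix $z_0\in\J$ and an arbitrary neighbourhood $U_0$ of $z_0$; I want to exhibit a periodic point of $\R$ inside $U_0$. Arguing by contradiction, suppose $U_0$ contains no periodic point of $\R$.

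First I would refine the site of the argument. Since the set of critical values of $\R$ is finite and $\J$ has no isolated points (Theorem \ref{1.5.12}), I may choose $z_1\in U_0\cap\J$ which is not a critical value of $\R$. As no point of $U_0$ is periodic, $z_1\neq \R(z_1)$, so $z_1\notin\R^{-1}(z_1)$; combined with $deg\,\R\geq 2$ and $z_1$ not being a critical value, this lets me pick a point $w\in\R^{-1}(z_1)$ with $w\neq z_1$ and define a holomorphic local inverse $\phi$ of $\R$ on a small disk $U\subset U_0$ around $z_1$ with $\phi(z_1)=w$. Shrinking $U$, I may arrange $\overline{\phi(U)}\cap\overline{U}=\emptyset$ and that $U$ still contains no periodic points of $\R$.

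Consider the family of meromorphic maps on $U$
\[
h_n(z)=\frac{\R^n(z)-\phi(z)}{\R^n(z)-z},\qquad n\geq 1.
\]
Each $h_n$ omits the three values $0,1,\infty$ on $U$: $h_n(z)=\infty$ forces $\R^n(z)=z$, and $h_n(z)=0$ forces $\R^n(z)=\phi(z)$ and hence $\R^{n+1}(z)=\R(\phi(z))=z$, both ruled out by the absence of periodic points in $U$; and $h_n(z)=1$ forces $\phi(z)=z$, ruled out by $\phi(U)\cap U=\emptyset$. Montel's theorem (Theorem \ref{1.5.1}) then makes $\{h_n\}$ a normal family on $U$.

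Finally I would transfer normality from $\{h_n\}$ back to $\{\R^n\}$. Solving the defining identity for $\R^n$ gives
\[
\R^n(z)=\frac{h_n(z)\,z-\phi(z)}{h_n(z)-1},
\]
so, given any subsequence of $\{\R^n\}$, I may extract a sub-subsequence $(n_k)$ along which $h_{n_k}$ converges locally uniformly on $U$ (in the spherical metric) to some limit $h$, and the formula above then produces a locally uniform spherical limit of $\R^{n_k}$: near any point where $h\not\equiv 1$ locally the limit equals $\frac{h(z)\,z-\phi(z)}{h(z)-1}$, while near any point where $h\equiv 1$ locally, the rearrangement $(h_n-1)\R^n=h_n z-\phi$ together with the fact that $\phi(z)-z$ is bounded away from $0$ on $\overline{U}$ (since $\overline{\phi(U)}\cap\overline{U}=\emptyset$) forces $\R^{n_k}\to\infty$ locally uniformly in the spherical metric. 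Either outcome exhibits $\{\R^n\}$ as normal at $z_1\in\J$, contradicting $z_1\in\J$. The main obstacle is exactly this last transfer step: one must carefully use spherical convergence to absorb the degenerate possibility that the subsequential limit $h$ takes the value $1$, and the non-vanishing of $\phi(z)-z$ on $\overline{U}$ is precisely what rescues the argument in that case.
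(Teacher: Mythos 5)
Your overall strategy --- assume no periodic points near $z_1\in\J$, build an auxiliary family out of $\R^n$ and ``forbidden targets'', and invoke Montel --- is the classical one and is close in spirit to the paper's proof (which uses three branches of $\R^{-2}$ as the targets). However, there is a concrete gap in your Montel step: the family $\{h_n\}$ does \emph{not} omit the value $1$. Writing $\R^n=P/Q$ with $P,Q$ coprime, one has $h_n=(P-\phi Q)/(P-zQ)$, so at any pole $z_p\in U$ of $\R^n$ (where $Q(z_p)=0$ and $P(z_p)\neq 0$) you get $h_n(z_p)=P(z_p)/P(z_p)=1$; your implication ``$h_n(z)=1\Rightarrow\phi(z)=z$'' is valid only where $\R^n(z)\neq\ity$. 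Such poles genuinely occur: unless $\ity$ is an exceptional point, Corollary \ref{1.5.11} shows that $\bigcup_{n}\R^{-n}(\ity)$ accumulates at every point of $\J$, in particular at $z_1$, so $U$ contains poles of $\R^n$ for infinitely many $n$. Hence $\{h_n\}$ omits only the two values $0$ and $\ity$, and Theorem \ref{1.5.1} cannot be applied.

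The missing ingredient is a third moving target whose graph is disjoint from the other two. Since $z_1$ is neither a critical value nor a fixed point, $\R^{-1}(z_1)$ contains two distinct points $w_1,w_2\neq z_1$ (the paper passes to $\R^{-2}$ to guarantee enough preimages), giving branches $\phi_1,\phi_2$ on a small disk $U$ with $\overline{U},\overline{\phi_1(U)},\overline{\phi_2(U)}$ pairwise disjoint. The cross-ratio $g_n=\dfrac{(\R^n-\phi_1)(z-\phi_2)}{(\R^n-\phi_2)(z-\phi_1)}$ then honestly omits $0,1,\ity$: one checks $(\R^n-\phi_1)(z-\phi_2)-(\R^n-\phi_2)(z-\phi_1)=(\R^n-z)(\phi_1-\phi_2)$, so $g_n=1$ forces $\R^n(z)=z$; $g_n\in\{0,\ity\}$ forces $\R^n=\phi_i$ and hence $\R^{n+1}(z)=z$; and at a pole of $\R^n$ the value is $(z-\phi_2)/(z-\phi_1)\notin\{0,1,\ity\}$ by disjointness of the branches. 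With this repair your transfer-back argument (which is correct, including the degenerate case where the limit is identically $1$) goes through, and your proof becomes a legitimate, slightly more explicit variant of the paper's: the paper quotes Montel directly for three moving holomorphic targets $S_1,S_2,S_3$, whereas your cross-ratio reduction to the three-constant-value version of Montel is exactly how that extension is justified.
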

\begin{proof}
Let $N$ be any open set in $\ti\C.$ Then $N\cap\J$ is open in $\J.$ Assume that $N\cap\J\neq\emptyset\;$(otherwise take any other open set which intersects $\J$).\\
 We show that $N$ contains some periodic point of $\R.$\\
Let $w\in N\cap\J,$ and we assume that $w$ is not a critical value of $\R^2\;$(otherwise replace it with another nearby point of $\J$ which is not a critical value). As $deg\;\R^2\geqslant 4$ and $w$ is not a critical value of $\R^2$, then by the discussion following Definition \ref{d3}, we get that\;$\R^{-2}(w)$ has atleast $4$ distinct points. Let $w_1,w_2$ and $w_3$ be three of them distinct from $w.$ Let $N_0, N_1, N_2$ and $N_3$ be neighbourhoods of $w,w_1,w_2$ and $w_3\;$(with pairwise disjoint closures, this is possible as $\ti\C$ is a normal space) such that $N_0\subset N$ and for $1\leqslant j\leqslant 3$
\begin{equation}\label{eq4}
\R^2 : N_j\to N_0
\end{equation}
is a homeomorphism. Also let for $1\leqslant j\leqslant 3$,
\begin{equation}
S_j : N_0\to N_j
\end{equation}
be the inverse of  (\ref{1.5.8}).\\
We claim that $\;\exists\;$ some $z_0\in N_0,$ some $j\in\{1,2,3\}$ and some $n\geqslant 1,$ such that
\begin{equation}\label{eq5}
\R^n(z_0)=S_j(z_0).
\end{equation}
Suppose the claim is not true.\\
Then $\R^n(z)\neq S_j(z)\;\forall z\in N_0,\, j\in\{1,2,3\}$ and \,$\forall\;n\geqslant 1.$\;This implies that $\{\R^n\vert_{N_0}\}$ misses atleast 3 values, say $S_1(z), S_2(z)$ and $S_3(z).$ Hence by Montel's theorem\;$\{\R^n\vert_{N_0}\}$ forms a normal family so that  $N_0\subset\F.$ But $N_0\cap\J\neq\emptyset\;\{\text{as}\; w\in N_0\cap\J\},\;\Rightarrow w\in\F\cap\J,$ a contradiction. This proves the claim.\\
 Hence $\;\exists\;$ some $z_0\in N_0,$ some $j\in\{1,2,3\}$ and some $n\geqslant 1,$ such that 
(\ref{eq5}) holds. Then $\R^2(\R^n(z_0))=\R^2 S_j(z_0)=z_0\;(\R^2 S_j=I\,\,\text{on}\,\,N_0),\;\Rightarrow \R^{n+2}(z_0)=z_0$ and so $z_0$ is a periodic point contained in $N_0\subset N.$ This implies that $N\cap\{\emph{periodic points\,of}\;\R\}\neq\emptyset,$ so that $w\in \emph{closure}\,\{\emph{periodic points of}\;\R\}.$  Therefore  $\J\subset \emph{closure}\,\{\emph{periodic points of}\;\R\}.$
\end{proof}
\begin{remark}\label{1.5.16}
Since $\J$ is an infinite set, the above theorem says that $\R$ has infinitely many periodic points.
\end{remark}
\begin{definition}\label{d7}
 Let $\R :\ti{\C}\rightarrow\ti{\C}$ be a rational map with $deg\,\R=d \geqslant 2$ and let $z \in \ti{\C}$. The \emph{`deficiency'} $\delta_{z}$ of $z$ is defined as
\begin{equation}
\delta_{z}= d - (\emph{the cardinality of}\;\, \R^{-1}(z)).
\end{equation}
 For any set $A$, we define the total deficiency of $\R$ over $A$ as 
\begin{equation}
\delta_{A} = \Sigma_{z \in A}\;{\delta_z}.
\end{equation}
\end{definition}
The total deficiency of $\R$ is the sum of all the deficiencies for all values of $\R.$
\begin{proposition}\label{1.5.18}
 Let $\R :\ti{\C}\rightarrow\ti{\C}$ be a rational map  with $deg\,\R= d \geqslant 2$ and let $z\in\ti\C.$ Then $\delta_{z}\neq 0$ if and only if $z$ is a critical value of $\R.$
\end{proposition}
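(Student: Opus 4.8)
The plan is to express the deficiency as a sum of local ramification indices minus one, from which both implications can be read off immediately. First I would invoke the fact already recorded in the Definition of degree in the excerpt: for a rational map $\R$ of degree $d$, every value $w\in\ti\C$ has exactly $d$ preimages when counted with multiplicity. Writing $e_z=e_z(\R)\geqslant 1$ for the local multiplicity (ramification index) of $\R$ at a point $z\in\R^{-1}(w)$ — i.e.\ the order of vanishing at $z$ of $\R(\cdot)-w$ in suitable local coordinates — this statement reads $\sum_{z\in\R^{-1}(w)}e_z=d$. Since $|\R^{-1}(w)|=\sum_{z\in\R^{-1}(w)}1$, subtracting the two sums yields the key identity
\[
\delta_w=d-|\R^{-1}(w)|=\sum_{z\in\R^{-1}(w)}(e_z-1).
\]

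Next I would use Definition \ref{d3}: a point $z$ is a critical point of $\R$ exactly when $\R$ fails to be locally injective there, which is equivalent to $e_z\geqslant 2$ (this covers both the case $\R'(z)=0$ in a finite chart and the case of a pole of order at least $2$). Consequently every summand $e_z-1$ above is a non-negative integer, so $\delta_w\geqslant 0$ for all $w$; moreover $\delta_w\neq 0$ holds if and only if at least one summand is strictly positive, if and only if some $z\in\R^{-1}(w)$ has $e_z\geqslant 2$, if and only if some preimage of $w$ is a critical point of $\R$ — which is precisely the assertion that $w$ is a critical value of $\R$. This settles both directions at once, and in fact gives the sharper quantitative statement $\delta_w=\sum_{z\in\R^{-1}(w)}(e_z-1)$.

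The only item needing care is the justification of $\sum_z e_z=d$ together with the identification ``$e_z\geqslant 2\iff z$ a critical point'' in the cases where $z$ or $w$ equals $\ity$ or where $z$ is a pole; this is handled exactly as in Remark \ref{1.3.18} and the computation of the multiplier at $\ity$, by conjugating with a M\"obius map so that both $z$ and $w$ lie in the finite chart, where $\R$ becomes locally $\zeta\mapsto\zeta^{e_z}\cdot(\text{unit})$. I do not anticipate any real obstacle beyond this bookkeeping — the whole content of the proposition is the single displayed identity, and the equivalence is then a one-line consequence.
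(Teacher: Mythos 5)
Your proof is correct and rests on the same underlying fact as the paper's proof, namely the observation after Definition~\ref{d3} that the preimages of any value, counted with multiplicity (local degree), sum to $d$. What you add is the explicit bookkeeping identity $\delta_w=\sum_{z\in\R^{-1}(w)}(e_z-1)$, which delivers both implications in one stroke. This is worth noting because the paper's own proof is actually slightly loose at exactly this point: after recalling that a non-critical value has $d$ distinct preimages, it asserts ``Therefore if $z$ is a critical value of $\R$, then cardinality $\{\R^{-1}(z)\}<d$,'' which is logically the converse of the preceding sentence, not its consequence. The missing step is precisely the one your $e_z$ makes visible --- a critical preimage has local degree at least $2$, so the preimage cardinality drops strictly below $d$. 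So yours is the same route, written out carefully enough to close the gap; no change of approach, just a cleaner and more rigorous presentation, together with the bonus quantitative formula for $\delta_w$.
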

\begin{proof}
We know from (Definition \ref{d3}) that if $z$ is not a crtical value of $\R$, then $\R^{-1}(z)$ consists of $d$ distinct points. Therefore if $z$ is a critical value of $\R$, then cardinality\,$\{\R^{-1}(z)\}<d.$ Therefore $d-$cardinality\,$\{\R^{-1}(z)\}\neq 0$ if and only if $z$ is a critical value of $\R$, i.e $\delta_{z}\neq 0$ if and only if $z$ is a critical value of $\R.$
\end{proof}
\begin{theorem}\label{1.5.19}
 Let $\R$ be a rational map of $deg\, d \geqslant 2$. If $z_0$ is an attracting fixed point, then the local basin of attraction $\A^{\ast}(z_0)$ contains atleast one critical point.
\end{theorem}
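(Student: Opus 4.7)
The plan is to argue by contradiction: assume $\A^{\ast}(z_0)$ contains no critical point of $\R$, and construct a non-constant entire function from $\C$ to $\A^{\ast}(z_0)$, which will contradict Little Picard. First I would reduce to the case $z_0 \neq \infty$ by conjugating with a M\"obius map (this does not affect critical points, by Definition~\ref{d3} being local). Set $a = \R'(z_0)$; the superattracting case $a=0$ is immediate since then $z_0$ itself is a critical point inside $\A^{\ast}(z_0)$, so I may assume $0<|a|<1$. Koenigs' theorem supplies an analytic homeomorphism $g:U\to D_r$ on a neighbourhood $U\subset\A^{\ast}(z_0)$ of $z_0$, with $g(z_0)=0$, $g'(z_0)=1$, and $g\circ\R=a\,g$ on $U$. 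Let $\psi_0=g^{-1}:D_r\to U$.

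Next I would extend $\psi_0$ inductively to a holomorphic map $\psi:\C\to\A^{\ast}(z_0)$ using the functional equation
\begin{equation}
\notag
\psi(w)=S\bigl(\psi(aw)\bigr),
\end{equation}
where $S$ denotes an appropriate local branch of $\R^{-1}$. Since $|a|<1$, the domain $D_{r/|a|^{n+1}}$ is a strict enlargement of $D_{r/|a|^{n}}$, so iterating this formula produces $\psi_n:D_{r/|a|^{n}}\to\ti\C$. At each stage the image $\psi_n(D_{r/|a|^n})$ is connected, contains $z_0$, and lies inside $\R^{-n}(U)\subset\F$, which by connectedness forces it to lie in the single Fatou component $\A^{\ast}(z_0)$. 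The no-critical-point hypothesis is used precisely here: because $\A^{\ast}(z_0)$ contains no critical point of $\R$, the map $\R$ is locally invertible at every point of the image of $\psi_n$, so local branches of $\R^{-1}$ exist; and since $D_{r/|a|^{n+1}}$ is simply connected, monodromy shows that these local branches patch into a single-valued holomorphic $\psi_{n+1}$ extending $\psi_n$. Taking a direct limit yields a holomorphic $\psi:\C\to\A^{\ast}(z_0)$ with $\psi(0)=z_0$ and $\psi'(0)=1$, hence non-constant.

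Finally I would extract the contradiction. The Julia set $\J$ is contained in the complement of $\A^{\ast}(z_0)$ and is an infinite (indeed perfect) set by Theorem~\ref{1.5.12}, so $\A^{\ast}(z_0)$ omits at least three points of $\ti\C$. After the initial normalization $\infty\notin\A^{\ast}(z_0)$ may be arranged, and then $\psi$ is an entire function on $\C$ omitting at least two distinct finite values; by Little Picard (equivalently, by Montel's Theorem~\ref{1.5.1} applied to the singleton family $\{\psi\}$), $\psi$ must be constant, contradicting $\psi'(0)=1$. The main obstacle is the inductive extension step: one has to verify carefully that at each level the branches of $\R^{-1}$ chosen along different paths in $D_{r/|a|^{n+1}}$ agree, that the new function remains in the same Fatou component, and that no critical value of $\R$ is ever encountered along the pullback. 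All three follow from the standing hypothesis that $\A^{\ast}(z_0)$ is critical-point-free, which is the crux of the argument.
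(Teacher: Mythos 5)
Your argument is correct, but it reaches the contradiction by a different mechanism than the paper. The paper also exploits the absence of critical points to keep extending inverse branches, but it keeps the source disk $U_0$ fixed and iterates the local inverse $f$ of $\R$ fixing $z_0$, producing an increasing chain of image domains $U_n=f^n(U_0)\subset\A^{\ast}(z_0)$; if no critical value is ever met, the family $\{f^n|_{U_0}\}$ has all its images in $\F$ and is therefore normal, which is incompatible with $z_0$ being a \emph{repelling} fixed point of $f$ (Proposition \ref{1.4.18}). You instead globalize the Koenigs coordinate: you enlarge the source disks $D_{r/|a|^n}$ in the linearizing plane, patch the inverse branches by monodromy (the key point, correctly identified, being that the pulled-back points stay in the completely invariant, connected, critical-point-free component $\A^{\ast}(z_0)$, so continuation never obstructs), and obtain a non-constant holomorphic $\psi:\C\to\A^{\ast}(z_0)$ with $\psi'(0)=1$, killed by Picard/Montel since $\A^{\ast}(z_0)$ omits the infinite set $\J$. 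The two proofs are two parametrizations of the same pullback construction, but your version concentrates the contradiction in one clean classical theorem and avoids the paper's somewhat informal step of speaking of ``the Fatou set of $f$'' for a map $f$ that is only locally defined; the paper's version, on the other hand, does not need the Koenigs linearization at all and so applies verbatim without first disposing of the superattracting case. One small remark: with the paper's conventions ``attracting'' already means $0<|\la|<1$, so your $a=0$ case is not needed, though it is harmless; and your appeal to ``$\A^{\ast}(z_0)$ equals the Fatou component of $z_0$'' is justified by Result \ref{1.4.15} together with complete invariance of $\A$ (Proposition \ref{1.4.12}).
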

\begin{proof}
Since $z_0$ is an attracting fixed point, therefore the multiplier $\la$ of $z_0$ satisfies $0<|\la|<1.$ Let $U_0=\Delta(z_0,\epsilon)$ be a small disk invariant under $\R$ on which the analytic branch $f$ of $\R^{-1}$ satisfying $f(z_0)=z_0$ is defined (as $0<|\la|<1,\;\exists$ a neighbourhood of $z_0$ in which $\R$ is one-one and by \emph{Inverse function theorem}$,\,f$ satisfying $f(z_0)=z_0$ is defined, since $f(z_0)=f(\R(z_0))=I(z_0)=z_0$ ). We have $U_0\subset\A^{\ast}(z_0)$ and $\R(U_0)\subset U_0.$ Also $\R^{-1}(U_0)\subset\A^{\ast}(z_0)$ i.e $\;f(U_0)\subset\A^{\ast}(z_0)$ and $f$ is one-one on $U_0.$ So $f : U_0\to f(U_0)=U_1\;$(say) is a homeomorphism. Since simple connectedness is preserved under a homeomorphism, therefore $U_1=f(U_0)$ is simply connected. Since $\R(U_0)\subset U_0,\;\Rightarrow U_0\subset\R^{-1}(U_0)=f(U_0)=U_1$ and so $U_0\subset U_1.$ Proceeding on similar lines, we construct $U_{n+1}=f(U_n)\supset U_n$ and extend $f$ analytically to $U_{n+1}$\;(by analytic continuation). If the procedure does not terminate, then we obtain a sequence $\{f^n: U_0\to U_n\}$ of analytic functions on $U_0$ such that 
\begin{equation}
\{f^n\vert_ {U_0}\}_{n>0}\;\subset\A^{\ast}(z_0)\;\subset\F.
\end{equation}
Therefore $\{f^n\vert_ {U_0}\}$ is normal on $U_0$ and so  $U_0\subset F(f)$, the Fatou set of $f$ which is a contradiction, since $z_0\in U_0$ is a repelling fixed point of $f$, therefore $z_0\in J(f),$ the Julia set of $f$ by Proposition \ref{1.4.18} (since $z_0$ is an attracting fixed point of $\R$, therefore it is a repelling fixed point of the analytic branch $f$ of $\R^{-1}$). Thus we get an $N$ such that we cannot extend $f$ to $U_n\;\forall\;n>N.$ This implies, there is a critical point $p\in\A^{\ast}(z_0)$ such that critical value $\R(p)\in U_n.$  Hence $\A^{\ast}(z_0)$ contains atleast one critical point.
\end{proof}
\begin{remark}\label{1.5.20}
The above theorem can be extended to the case when $z_0$ is an attracting periodic point of period $n>1.$\\
 The multiplier $\la=(\R^n)'(z_0)$ satisfies $|\la|<1.$ If $\{z_0,z_1,\ldots,z_{n-1}\}$ is the cycle of $z_0,$ then each $z_j$ is a fixed point of $\R^n.$ The above argument shows that each component of $\A^{\ast}(z_0)$  contains a critical point of $\R^n.$ By Chain Rule we have the identity
\begin{equation}
\begin{split}
\notag
(\R^n)'(z)
&=\prod_{k=0}^{n-1}\R'(\R^k(z))\\
&=\R'(z)\,\R'(\R(z))\cdots\R'(\R^{n-1}(z))\;\;\forall\;z\in\ti\C.
\end{split}
\end{equation}
If $z$ is a critical point of $\R^n$ then $(\R^n)'(z)=0,\;\Rightarrow \prod_{k=0}^{n-1}\R'(\R^k(z))=0$ and so $\R'(\R^k(z))=0$ for some $0\leqslant k\leqslant n-1.$ Therefore $\R^k(z)$ is a critical point of $\R$ and $\R^k(z)\in\A^{\ast}(z_0)$. Hence $\A^{\ast}(z_0)$ contains atleast one critical point.
\end{remark}
\begin{theorem}\label{1.5.21}
 Let $\R$ be a rational map with $deg\,\R= d\geqslant 2$. Then there are atmost $2d-2$ critical points counting multiplicity.
\end{theorem}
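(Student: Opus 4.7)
The plan is to reduce to a calculation on a polynomial derivative by first normalizing $\R$ via a Möbius conjugation, and then counting zeros by degree. Since conjugacy by a Möbius transformation preserves the number of critical points and their multiplicities (critical points are intrinsic to the map up to biholomorphic change of coordinate), I may freely pre- and post-compose with Möbius maps to simplify the situation at $\infty$.

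First, I would arrange that $\R(\infty)\neq\infty$ and that $\infty$ is neither a critical point nor a critical value of $\R$. This is possible because the critical values form a finite set and so do their preimages, so a generic point can be sent to $\infty$. Writing $\R=P/Q$ with $P,Q$ coprime polynomials, this normalization forces $\deg Q=d$, $\deg P\leqslant d$, and all roots of $Q$ simple; moreover, to keep $\infty$ non-critical we may assume either $\deg P=d$ or $\deg P=d-1$ (in the latter case $\R(\infty)=0$ with local degree $d-\deg P=1$).

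Next, I would identify the critical points in $\C$ as the zeros of the polynomial
\[
H(z)=P'(z)Q(z)-P(z)Q'(z),
\]
using $\R'(z)=H(z)/Q(z)^2$. Since $P$ and $Q$ are coprime and the zeros of $Q$ are simple, one checks that no zero of $Q$ can be a zero of $H$ (at a simple root $z_0$ of $Q$, $H(z_0)=-P(z_0)Q'(z_0)\neq 0$); so the critical points of $\R$ in $\C$ are exactly the zeros of $H$, and the multiplicity of a critical point $z_0$ (namely the order of vanishing of $\R'$ at $z_0$) equals the order of vanishing of $H$ at $z_0$.

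The final step is the degree count. If $\deg P=d$, then $P'Q$ and $PQ'$ each have degree $2d-1$ and the leading terms $d\,a_d b_d z^{2d-1}$ cancel in the difference, giving $\deg H\leqslant 2d-2$. If $\deg P=d-1$, then $\deg(P'Q)\leqslant 2d-3$ and $\deg(PQ')\leqslant 2d-2$, again yielding $\deg H\leqslant 2d-2$. Hence $H$ has at most $2d-2$ zeros counted with multiplicity, and since $\infty$ contributes no critical point by our normalization, $\R$ has at most $2d-2$ critical points in $\ti\C$ counted with multiplicity. The main obstacle in this proof is the bookkeeping at $\infty$: one must justify the normalizing conjugation and track how $\deg P$ versus $\deg Q$ interacts with whether $\infty$ is critical; once the normalization is in place, the rest is a one-line leading-coefficient computation.
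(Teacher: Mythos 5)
Your proposal is correct and follows essentially the same route as the paper's proof: normalize by a M\"obius conjugation so that $\infty$ is neither critical nor a critical value, write $\R=P/Q$, and bound the number of zeros of the numerator $P'Q-PQ'$ of $\R'$ by $2d-2$ via the leading-coefficient cancellation. Your version is in fact slightly more careful than the paper's, since you verify that zeros of $Q$ do not contribute and you treat both cases $\deg P=d$ and $\deg P=d-1$, but the underlying argument is the same.
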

\begin{proof}
As degree of a rational map is invariant under conjugation, it suffices to prove the theorem for any conjugate $S$ of $\R$ for which $\ity$ is neither a critical point nor a critical value. Let $z_0\in\ti\C$ such that $\R(z_0)\neq z_0.$ 
We construct a M$\ddot{\text{o}}$bius transformation $g$ such that $g(\ity)=z_0$ and $g^{-1}(\R(z_0))=0.$\\
 Suppose $g(z)=\dfrac{az+b}{cz+d}\,,\;ad-bc\neq 0.$ Since $g(\ity)=z_0,\;\Rightarrow\dfrac{a}{c}=z_0$ and so $a=cz_0.$ Then $g(z)=w,\;\Rightarrow\dfrac{az+b}{cz+d}=w,\;\Rightarrow az+b=cwz+wd,\;\Rightarrow z(a-cw)=wd-b$ and so $ z=\dfrac{dw-b}{a-cw}\cdot$ Therefore
\begin{equation}\label{eq6}
g^{-1}(w)=\dfrac{dw-b}{a-cw}\quad(\text{since}\;\R(z_0)\neq z_0,\;\Rightarrow\; g^{-1}\;\text{is well defined}).
\end{equation}
Now $g^{-1}(\R(z_0))=0,\;\Rightarrow d\,\R(z_0)-b=0,\;\Rightarrow b=\R(z_0)\,d.$ So
\begin{equation}
g(z)=\dfrac{cz_0z+\R(z_0)\,d}{cz+d}
\end{equation}
is the required M$\ddot{\text{o}}$bius transformation such that  $g(\ity)=z_0$ and $g^{-1}(\R(z_0))=0.$\\
Then $g^{-1}\R g(\ity)=g^{-1}\R(z_0)=0.$ Let us call the conjugate  $g^{-1}\R g$ of $\R$ as $\R$ itself. Then $\R(\ity)=0.$ Therefore $\R(z)=\dfrac{P(z)}{Q(z)},$ where $P$ and $Q$ are coprime polynomials, has the form
\begin{equation}
\begin{split}
\notag
\R(z)
&=\dfrac{P(z)}{Q(z)}\\
&=\dfrac{\alpha z^{d-1}+\cdots}{\beta z^d+\cdots}\;\;\alpha,\beta \neq 0\\
\Rightarrow \R'(z)
&=\dfrac{Q(z)P'(z)-P(z)Q'(z)}{Q(z)^2}\\
&=\dfrac{(\beta z^d+\cdots)((d-1)\alpha z^{d-2}+\cdots)-(\alpha z^{d-1}+\cdots)(\beta dz^{d-1}+\cdots)}{Q(z)^2}\\
&=\dfrac{(\alpha\beta(d-1)z^{2d-2}+\cdots-\alpha\beta dz^{2d-2}+\cdots)}{Q(z)^2}\\
&=\dfrac{-\alpha\beta z^{2d-2}+\cdots}{Q(z)^2}
\end{split}
\end{equation}
Then $\R'(z)=0$ implies that
\begin{equation}\label{eq7}
-\alpha\beta z^{2d-2}+\cdots=0.
\end{equation}
Since $\alpha\beta\neq 0,$ equation (\ref{eq7}) has atmost $2d-2$ solutions counting multiplicity in $\ti\C.$ Hence there are atmost $2d-2$ critical points in $\ti\C$ counting multiplicity.
\end{proof}
\begin{corollary}
Let $\R$ be a rational map with $deg\,\R=d \geqslant 2$. Then the number of attracting fixed points of $\R$ is atmost $2d-2.$
\end{corollary}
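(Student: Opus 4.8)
The plan is to exhibit an injection from the set of attracting fixed points of $\R$ into the set of critical points of $\R$, and then to read off the bound from Theorem \ref{1.5.21}, which says there are at most $2d-2$ critical points. First I would fix an attracting fixed point $z_0$ of $\R$; by Theorem \ref{1.5.19} the immediate basin $\A^{\ast}(z_0)$ contains at least one critical point, and I would choose one such point and denote it $c(z_0)$. Doing this for every attracting fixed point defines a map $z_0\mapsto c(z_0)$ from the set of attracting fixed points of $\R$ to the (finite) set of critical points of $\R$.

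The next step is to check that this map is injective, which amounts to showing that distinct attracting fixed points have disjoint immediate basins. Indeed, by Definition \ref{d5} we have $\A^{\ast}(z_0)\subset\A(z_0)=\{z:\R^n(z)\to z_0\}$, so it suffices to see that $\A(z_0)\cap\A(z_1)=\emptyset$ whenever $z_0\neq z_1$. If some $z$ lay in both, then $\R^n(z)\to z_0$ and $\R^n(z)\to z_1$ as $n\to\ity$; since $\ti\C$ is Hausdorff, limits are unique, forcing $z_0=z_1$, a contradiction. Hence the immediate basins $\A^{\ast}(z_0)$ attached to distinct attracting fixed points are pairwise disjoint, and since $c(z_0)\in\A^{\ast}(z_0)$ the assignment $z_0\mapsto c(z_0)$ is injective.

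Finally, Theorem \ref{1.5.21} bounds the number of critical points of $\R$ (counted with multiplicity, hence in particular the number of distinct critical points) by $2d-2$; composing this with the injection produced above shows that $\R$ has at most $2d-2$ attracting fixed points, as claimed. I do not expect a genuine obstacle in this argument: the one place that needs a word of care is the disjointness of the immediate basins, and that reduces immediately to uniqueness of limits in a Hausdorff space; everything else is a direct appeal to Theorems \ref{1.5.19} and \ref{1.5.21}.
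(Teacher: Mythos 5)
Your proposal is correct and follows essentially the same route as the paper: invoke Theorem \ref{1.5.19} to place a critical point in each immediate basin, then appeal to the $2d-2$ bound of Theorem \ref{1.5.21}. You are slightly more careful than the paper in spelling out why the resulting assignment is injective (disjointness of immediate basins via uniqueness of limits), a detail the paper leaves implicit, but this is the same argument, not a different one.
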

\begin{proof}
If $z_0$ is an attracting fixed point of $\R$, then by Theorem \ref{1.5.19}, the local basin of attraction\;$\A^{\ast}(z_0)$ contains atleast one critical point of $\R.$ As there are atmost $2d-2$ critical points of  $\R$ counting multiplicity, therefore there can be atmost $2d-2$ attracting fixed points of  $\R$.
\end{proof}
\begin{corollary}
Let $\R$ be a rational map with $deg\,\R= d \geqslant 2$. Then the number of attracting periodic orbits of $\R$ is atmost $2d-2.$
\end{corollary}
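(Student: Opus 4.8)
The plan is to run the very same argument as in the proof of the preceding corollary, but with attracting periodic orbits in place of attracting fixed points, invoking Remark~\ref{1.5.20} instead of Theorem~\ref{1.5.19}. So first I would recall that if $z_0$ is an attracting periodic point of period $m$ with cycle $\{z_0,z_1,\ldots,z_{m-1}\}$, then each $z_j$ is an attracting fixed point of $\R^m$, and by Remark~\ref{1.5.20} the immediate basin of attraction $\A^{\ast}(z_0)$ contains at least one critical point of $\R$.

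The decisive step is to check that the immediate basins belonging to \emph{different} attracting periodic orbits are pairwise disjoint, so that the critical points they capture are genuinely distinct. This is where a little care is needed: one argues that if a point $z$ has $\{\R^{km}(z)\}$ converging to a point of one cycle, then no subsequence of its forward orbit can converge to a point of a second, disjoint cycle, since limits in the Hausdorff space $\ti\C$ are unique and the two cycles are disjoint finite sets. Hence the full basins of attraction of two distinct attracting periodic orbits are disjoint, and a fortiori so are the immediate basins.

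Granting this, I would assign to each attracting periodic orbit $\mathcal O$ of $\R$ a critical point $c_{\mathcal O}$ lying in the immediate basin of some point of $\mathcal O$, which exists by Remark~\ref{1.5.20}; the disjointness just established makes the assignment $\mathcal O\mapsto c_{\mathcal O}$ injective. Since by Theorem~\ref{1.5.21} the map $\R$ has at most $2d-2$ critical points counted with multiplicity, and hence at most $2d-2$ distinct critical points, it follows that $\R$ has at most $2d-2$ attracting periodic orbits. The only real obstacle is the disjointness of the basins; everything else is a direct citation of Remark~\ref{1.5.20} and Theorem~\ref{1.5.21}.
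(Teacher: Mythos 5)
Your proof is correct and follows the same route as the paper's, citing Remark~\ref{1.5.20} to produce a critical point of $\R$ in the immediate basin of each attracting cycle and Theorem~\ref{1.5.21} to bound the total number of critical points by $2d-2$. The one thing you add beyond the paper — an explicit verification that the basins of distinct attracting cycles are disjoint, which is cleanest to see by passing to the common subsequence $\{\R^{km_1m_2}(z)\}$ and invoking uniqueness of limits — is a step the paper leaves implicit, and it is a legitimate and worthwhile point to spell out.
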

\begin{proof}
If $z_0$ is an attracting periodic point of $\R$, then by  Remark \ref{1.5.20}, the local basin of attraction$\;\A^{\ast}(z_0)$ contains atleast one critical point of $\R.$ As there are atmost $2d-2$ critical points of  $\R$ counting multiplicity, therefore there can be atmost $2d-2$ attracting periodic orbits of $\R$.
\end{proof}
\section{ The Structure of Fatou set}\label{ch1,sec6}
\begin{definition}\label{d8}
\textbf{(Connectivity of a domain)}
The connectivity of a domain $D$ denoted by $c(D)$ is defined as the number of components of boundary $\partial{D}$ of $D$. If $D$ is simply connected, $c(D)=1.$ $D$ has finite connectivity n, if $D^c$, the complement of $D$ has exactly n components. $D$ has infinite connectivity if  $D^c$ has infinitely many components.
\end{definition}
\begin{proposition}\label{1.6.2}
A domain $D$ is simply connected if and only if its boundary $\partial D$ is connected.
\end{proposition}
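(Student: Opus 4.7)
The plan is to derive the proposition from two standard topological facts about the Riemann sphere $\ti\C\cong S^2$. Namely: (I)~a domain $D\subset\ti\C$ is simply connected in the usual topological sense if and only if its complement $\ti\C\setminus D$ is connected, and (II)~for any such domain $D$, the number of connected components of $\partial D$ equals the number of connected components of $\ti\C\setminus D$. Given (I) and (II), the proposition is immediate, since $\partial D$ is connected iff it has a single component iff $\ti\C\setminus D$ has a single component iff $D$ is simply connected.

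Fact (I) is classical and can be proved either via Alexander duality on $S^2$ or, more concretely, from the Riemann mapping / uniformization theorem together with a boundary analysis. The substantive part of the argument is fact (II). To establish it, I would let $\{K_\alpha\}$ be the components of $\ti\C\setminus D$ (each closed and connected) and proceed in four steps. First, I would verify the identity $\partial K_\alpha=K_\alpha\cap\partial D$: any $p\in\partial K_\alpha$ has every neighbourhood meeting $\ti\C\setminus K_\alpha$, and since distinct components of the closed set $\ti\C\setminus D$ are separated, no neighbourhood of $p$ can be contained entirely in other components, so every neighbourhood must meet $D$, forcing $p\in\partial D$. Second, the pieces $\partial K_\alpha$ are pairwise disjoint (inherited from disjointness of the $K_\alpha$) and their union is $\partial D$, giving the decomposition $\partial D=\bigsqcup_\alpha\partial K_\alpha$.

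Third, I would show that each $K_\alpha$ is non-separating in $\ti\C$, i.e.\ that $\ti\C\setminus K_\alpha$ is connected. For if $\ti\C\setminus K_\alpha=U\sqcup V$ were a nontrivial separation with $D\subset U$ (possible by connectedness of $D$), then $V$ would be a nonempty open subset of $\ti\C\setminus D$ disjoint from $D$. By connectedness of each $K_\beta$ with $\beta\ne\alpha$, $V$ must be a union of entire components $K_\beta$; picking a boundary point $q\in\partial K_\beta\subset\partial D$ lying in $V$ would exhibit an open neighbourhood $V$ of $q$ disjoint from $D$, contradicting $q\in\partial D$. Fourth, I would invoke the topological lemma that a non-separating continuum in $S^2$ has connected boundary, concluding that each $\partial K_\alpha$ is connected. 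Then $\partial D=\bigsqcup_\alpha\partial K_\alpha$ is exactly the component decomposition of $\partial D$, which proves (II).

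The principal obstacle is this fourth step: that a non-separating continuum $K$ in $S^2$ has connected boundary. The cleanest route is through Alexander duality. Writing $\ti\C\setminus\partial K=K^{\circ}\sqcup(\ti\C\setminus K)$, both pieces are open sets whose complements in $\ti\C$ are connected, so by fact (I) each is simply connected; hence $\tilde H_1(\ti\C\setminus\partial K)=0$, and Alexander duality on $S^2$ gives $\tilde H^0(\partial K)=0$, i.e.\ $\partial K$ is connected. A direct argument is also possible: from a supposed separation $\partial K=A\sqcup B$ one uses normality of $\ti\C$ to enclose $A$ and $B$ in disjoint open neighbourhoods and then promotes this to a separation of either $K$ or of $\ti\C\setminus K$, contradicting one of the known connectedness properties.
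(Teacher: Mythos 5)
Your proposal is correct in substance, but it follows a genuinely different (and far more substantive) route than the paper. The paper's proof is a one-line unwinding of Definition \ref{d8}: there the connectivity $c(D)$ is \emph{defined} as the number of components of $\partial D$ and the clause ``if $D$ is simply connected, $c(D)=1$'' is built into the definition, so the proposition is treated as essentially tautological. You instead prove the real topological content — that on $\ti\C$ simple connectivity is equivalent to connectedness of the complement, and that the components of $\partial D$ are precisely the boundaries of the components of $\ti\C\smallsetminus D$ — which is what makes the statement usable later (e.g.\ in Theorem \ref{1.6.5}). Three spots deserve tightening. First, ``distinct components are separated'' does not by itself give $\partial K_\alpha\subset\partial D$, since components of a closed set need not be relatively open; the clean argument uses local connectedness of $\ti\C$: if $p\in K_\alpha$ had a neighbourhood inside $\ti\C\smallsetminus D$, the component of $p$ in the interior of $\ti\C\smallsetminus D$ would be an open neighbourhood of $p$ contained in $K_\alpha$, contradicting $p\in\partial K_\alpha$. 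Second, in the duality step $K^{\circ}$ need not be connected (two disks joined by an arc), so Fact (I) must be applied to each component $W$ of $K^{\circ}$ after checking that $\ti\C\smallsetminus W$ is connected; this holds because it is the union of $\overline{\ti\C\smallsetminus K}$ with the closures of the remaining components of $K^{\circ}$, each of which meets $\partial K\subset\overline{\ti\C\smallsetminus K}$. Third, to see that the $\partial K_\alpha$ really are the components of $\partial D$ (not merely connected pieces partitioning it), note that any connected subset of $\partial D\subset\ti\C\smallsetminus D$ lies in a single $K_\alpha$, hence in $K_\alpha\cap\partial D=\partial K_\alpha$. With these repairs your argument is complete and proves more than the paper records.
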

\begin{proof}
$D$ is simply connected if and only if number of components of $\partial D$ is 1, i.e if and only if  $\partial D$ is connected.
\end{proof}
Let $\R$ be a rational map with $deg\,\R= d \geqslant 2$. We will study the structure of Fatou set $\F$ by studying the completely invariant components of $\F.$ We state a result (see~\cite{beardon}).
\begin{result}\label{1.6.3}
Let $D$ be an open subset of $\ti\C.$ Then  $\ti\C\smallsetminus D$ is connected if and only if each component of $D$ is simply connected.
\end{result} 
As a consequence of this result we have :
\begin{result}\label{1.6.4}
Let $\R$ be a rational map. Then $\J$ is connected if and only if each component of $\F$ is simply connected.
\end{result}
\begin{theorem}\label{1.6.5}
Let $\R$ be a rational map with $deg\,\R= d \geqslant 2$ and $F_0$ be a completely invariant component of $\F.$ Then
\begin{enumerate}
\item\ $\partial F_0=\J,$
\item\ $F_0$ is either simply connected or infinitely connected,
\item\ all other components of $\F$ are simply connected, and
\item\ $F_0$ is simply connected if and only if $\J$ is connected.
\end{enumerate}
\end{theorem}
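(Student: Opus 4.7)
The plan is to treat the four parts in order; parts (1) and (2) carry the technical content, while (3) and (4) reduce quickly to Result \ref{1.6.3} and Result \ref{1.6.4}.

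For (1), first note that $F_0\subset\F$ is open, so $\partial F_0\subset\J$. Since $F_0$ is completely invariant and $\R$ is surjective, continuous and open, Theorem \ref{1.3.22} makes $\partial F_0$ completely invariant. Being closed, it falls under the dichotomy of Theorem \ref{1.5.13}: the at-most-two-elements alternative would place $\partial F_0$ inside $\F$, yielding $\partial F_0\subset\F\cap\J=\emptyset$ and hence $F_0=\ti\C$, $\J=\emptyset$, contradicting infinitude of $\J$ (Remark \ref{1.5.16}). Therefore $\partial F_0\supset\J$, and together with $\partial F_0\subset\J$ this gives $\partial F_0=\J$.

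The hard part is (2). I would argue by contradiction, assuming $F_0$ has finite connectivity $n$ with $2\leqslant n<\infty$, so that by (1) we have $\J=J_1\sqcup\cdots\sqcup J_n$ into non-empty components, each closed and (since there are only finitely many) also open in $\J$. Continuity of $\R$ and $\R(\J)=\J$ then force $\R(J_i)\subset J_{\sigma(i)}$ for some $\sigma\in S_n$, and replacing $\R$ by a power $\R^m$ with $\sigma^m=\mathrm{id}$, each $J_i$ becomes completely invariant under $\R^m$: the forward inclusion is immediate, and for the backward direction, any $x\in\R^{-m}(J_i)\cap J_j$ with $j\neq i$ would give $\R^m(x)\in J_i\cap J_j=\emptyset$, a contradiction. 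Now $\deg\R^m\geqslant 2$ and $J(\R^m)=\J$ by Theorem \ref{1.3.19}, so Theorem \ref{1.5.13} applied to $\R^m$ forces each $J_i$ either into $\F$ (impossible as $\emptyset\neq J_i\subset\J$) or to contain all of $\J$ (forcing $n=1$, contradicting $n\geqslant 2$). Either way we reach a contradiction.

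For (3), let $F_1$ be a component of $\F$ other than $F_0$; by Result \ref{1.6.3} it suffices to show $\ti\C\setminus F_1$ is connected. I would decompose
\[
\ti\C\setminus F_1 \;=\; \overline{F_0}\,\cup\,\bigcup_k\overline{F_k},
\]
where $F_k$ ranges over the remaining Fatou components, using part (1) to write $\overline{F_0}=F_0\cup\J$. Each closure is connected, and each $\overline{F_k}$ meets the connected set $\overline{F_0}$ via the non-empty boundary $\partial F_k\subset\J\subset\overline{F_0}$, so the union is connected. Part (4) is then immediate from Result \ref{1.6.4}: if $\J$ is connected, every Fatou component, in particular $F_0$, is simply connected; conversely, if $F_0$ is simply connected, part (3) propagates simple connectivity to every other Fatou component, and Result \ref{1.6.4} then yields $\J$ connected.
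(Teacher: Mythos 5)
Your proof is correct and runs on the same machinery as the paper's: the minimality dichotomy of Theorem \ref{1.5.13} applied after passing to a suitable power of $\R$, together with Results \ref{1.6.3} and \ref{1.6.4}. The differences are only cosmetic — in (1) you apply the dichotomy directly to the completely invariant closed set $\partial F_0$ where the paper applies minimality to $\overline{F_0}$, and in (2) you make the finitely many components of $\J=\partial F_0$ completely invariant under $\R^m$ and invoke Theorem \ref{1.5.13} on each, where the paper runs the identical argument on the components of $F_0^c$; both variants go through.
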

\begin{proof}
\begin{enumerate}
\item\ Since $F_0$ is completely invariant, so is $\overline{F_0}$. By minimality of $\J$, we have $\J\subset\overline{F_0}\,$ i.e $\J\subset F_0\cup\partial F_0.$ Since $\J\nsubseteq F_0,$ so $\J\subset\partial F_0.$\\
For the reverse inclusion,
 let $z\in\partial F_0$ and $U$ be a neighbourhood of $z.$ Then $U$ contains points of $F_0$ and $F_0^{c}.$ Any limit of iterates $\{\R^n\vert_U\}$ has a jump discontinuity between $F_0$ and $\partial F_0$ (otherwise $F_0$ will contain points which are outside $F_0$, a contradiction). Therefore $z\notin\F$ and so $z\in\J.$ As $ z\in\partial F_0$ was arbitrary, therefore $\partial F_0\subset\J.\;$ Hence $\partial F_0=\J.$
\item\ If $F_0$ is infinitely connected, our assertion gets proved, so suppose $F_0$ is not infinitely connected. Suppose $c(F_0)=c,$ and let $E_1,\ldots,E_c$ be the components of $F_0^c.$ By Corollary \ref{1.3.21}, $F_0^c$ is completely invariant and so we can find an integer $m>0$ such that each $E_j$ is completely invariant under $\R^m.$ By Theorem \ref{1.5.12}, $\J$ is infinite and so one of $E_j$ say $E_1$ is infinite. By the minimality of $J(\R^m)$ we have $\J=J(\R^m)\subset E_1$ (see Remark \ref{1.5.14} and Theorem 1.3.19). Thus each $E_j$ intersects $\J\;$(since $\partial F_0=\J),$ so that $ E_1=E_2=\cdots=E_c.$ Thus $c=1$ and hence $F_0$ is simply connected.
\item\ We have $\overline{F_0}= F_0\cup\partial F_0=F_0\cup\J\;(\text{by part}\,(1)).$ Since $F_0$ is connected, so is $\overline{F_0}$. By Result \ref{1.6.3}, the components of $(\overline F_0)^c$ are simply connected. But these components are just the components of $\F$ other than $F_0.$ Hence all components of $\F$ other than $F_0$ are simply connected.
\item\ By Proposition \ref{1.6.2}, $F_0$ is simply connected if and only if $\partial F_0$  is connected, i.e if and only if $\J$ is connected (by part (1)).
\end{enumerate}
\end{proof}
We state the following  theorems which we will use (see~\cite{beardon}).
\begin{theorem}\label{1.6.6}
 Let $F_0$ and $ F_1$ be components of the Fatou set $\F$ of a rational map $\R$ and suppose that $\R$ maps $F_0$ into $F_1$.Then for some integer $m,\;\R$ is an $m$-fold map of $F_0$ onto $F_1$ and 
\begin{equation}\label{eq8}
 \chi(F_0) + \delta_{\R}(F_0) = m \chi (F_1) 
\end{equation}
where $\chi (F_i)\; i=1,2 $ is the Euler characteristic of $F_i$.\\ 
Expression (\ref{eq8}) is called \emph{Riemann - Hurwitz relation}.
\end{theorem}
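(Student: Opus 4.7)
The plan is to recognise $\R\vert_{F_0}\colon F_0\to F_1$ as a proper holomorphic branched covering of finite degree $m$, and then apply the classical triangulation proof of the Riemann--Hurwitz formula.

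First, I would establish that $\R$ maps $F_0$ \emph{onto} $F_1$. Since $\R$ is an open map, $\R(F_0)$ is open in $F_1$. To check it is also closed in $F_1$, let $w_n=\R(z_n)\in\R(F_0)$ with $w_n\to w\in F_1$ and $z_n\in F_0$; by compactness of $\ti\C$, a subsequence $z_{n_k}$ converges to some $z\in\overline{F_0}$. If $z\in\partial F_0$, Theorem \ref{1.3.23} gives $\partial F_0\subset\J$ and therefore $\R(z)\in\J$, contradicting $\R(z)=w\in F_1$; hence $z\in F_0$ and $w\in\R(F_0)$. Connectedness of $F_1$ then forces $\R(F_0)=F_1$. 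Exactly the same argument shows $\R\vert_{F_0}$ is \emph{proper}: preimages in $F_0$ of a compact $K\subset F_1$ cannot accumulate on $\partial F_0$, since such accumulation points would land under $\R$ in $\J\cap K=\emptyset$. A proper, non-constant holomorphic map between connected Riemann surfaces is a branched covering of some finite degree $m$, and $m\leqslant d$ because the full fiber $\R^{-1}(w)\subset\ti\C$ has at most $d$ elements.

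Next I would run the triangulation proof of Riemann--Hurwitz. Choose a finite triangulation $T$ of $F_1$ (or, rigorously, of a large compact sub-surface exhausting $F_1$) with vertex set $V$, edge set $E$, face set $F$, arranged so that every critical value of $\R\vert_{F_0}$ is a vertex of $T$ and every open edge and open face is evenly covered by $\R\vert_{F_0}$. The local branches of $(\R\vert_{F_0})^{-1}$ lift $T$ to a triangulation $T'$ of $F_0$ in which each open edge and each open face has exactly $m$ preimages, while a vertex $v$ has $\lvert\R^{-1}(v)\cap F_0\rvert=m-\delta_v$ preimages, with $\delta_v:=m-\lvert\R^{-1}(v)\cap F_0\rvert$ nonzero only at the critical values. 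Counting cells,
\begin{align*}
\chi(F_0)
&=\sum_{v\in V}\bigl(m-\delta_v\bigr)-m\lvert E\rvert+m\lvert F\rvert\\
&=m\bigl(\lvert V\rvert-\lvert E\rvert+\lvert F\rvert\bigr)-\sum_{v\in V}\delta_v\\
&=m\,\chi(F_1)-\delta_{\R}(F_0),
\end{align*}
which rearranges to the Riemann--Hurwitz identity $\chi(F_0)+\delta_{\R}(F_0)=m\,\chi(F_1)$.

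The main obstacle is that $F_0$ and $F_1$ are generally not compact, so one has to be careful that the Euler characteristic used is finite and that a single finite triangulation suffices. In the finitely connected case, using $\chi(U)=2-c(U)$ (Definition \ref{d8}) the proof goes through on a compact exhaustion, because critical points are finite in number (Theorem \ref{1.5.21}) and can be enclosed once and for all inside a sufficiently large compact sub-surface, after which the covering over the complementary annular ends is unramified and contributes zero to the Euler characteristic balance. The infinitely connected case must be interpreted as an equality of possibly infinite quantities, in which case the identity still follows term-by-term from the same lifting argument applied to any compact sub-triangulation.
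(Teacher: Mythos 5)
The paper does not prove this statement at all: it is listed under ``We state the following theorems which we will use (see~\cite{beardon})'' and is simply quoted from Beardon's book, so there is no in-paper argument to compare yours against. Your proposal is the standard proof and is essentially sound: the open-plus-closed argument for surjectivity and properness is correct (the key facts being that $\partial F_0\subset\J$, since $F_0$ is a component of the open set $\F$, and that $\J$ is forward invariant by Theorem \ref{1.3.23}), a proper non-constant holomorphic map between Riemann surfaces is indeed a finite-degree branched covering, and the cell-counting computation is the classical derivation of Riemann--Hurwitz once the critical values are placed at vertices. The one place you are too quick is the non-compact case, which is the generic situation here: for an infinitely connected component the paper's Euler characteristic $\chi(U)=2-c(U)$ is $-\infty$, and ``the identity follows term-by-term'' is not an argument --- one must either adopt the convention that the relation reads $-\infty=-\infty$ (which is what Beardon does, using that $c(F_0)\geqslant c(F_1)$ so both sides degenerate together), or restrict the statement to finite connectivity. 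Even in the finitely connected non-compact case your compact-exhaustion step should say explicitly that the exhausting sub-surface is chosen as a deformation retract of $F_0$ (respectively $F_1$) capturing all boundary components and all critical points, so that its Euler characteristic equals that of the domain and the covering over the complementary ends is an unramified covering of annuli, contributing zero. With those points made precise, the argument is complete.
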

\begin{theorem}\label{t0}
With the same assumptions as in above theorem, $c(F_0)\geqslant c(F_1).$
\end{theorem}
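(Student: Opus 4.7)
The plan is to combine the Riemann--Hurwitz relation (\ref{eq8}) of Theorem \ref{1.6.6} with the standard topological identity $\chi(D) = 2 - c(D)$, valid for any planar domain $D \subset \ti{\C}$ of finite connectivity. (A simply connected domain is topologically a disk, so $\chi = 1$ and $c = 1$; an annulus has $\chi = 0$ and $c = 2$; more generally a sphere with $c$ disjoint closed disks removed has Euler characteristic $2 - c$.)

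First I would substitute $\chi(F_0) = 2 - c(F_0)$ and $\chi(F_1) = 2 - c(F_1)$ into the Riemann--Hurwitz relation and solve for $c(F_0) - c(F_1)$, obtaining
\[
c(F_0) - c(F_1) = (m-1)\bigl(c(F_1) - 2\bigr) + \delta_{\R}(F_0).
\]
Since $\delta_{\R}(F_0)$ is a sum of non-negative local ramification contributions, $\delta_{\R}(F_0) \geqslant 0$, and it remains only to check that the right-hand side is non-negative. I would then split on the value of $c(F_1)$: if $c(F_1) \geqslant 2$, then $(m-1)(c(F_1) - 2) \geqslant 0$ (as $m \geqslant 1$), so combined with $\delta_{\R}(F_0) \geqslant 0$ the desired inequality $c(F_0) \geqslant c(F_1)$ follows at once. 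If instead $c(F_1) = 1$, then the conclusion $c(F_0) \geqslant 1 = c(F_1)$ is automatic, because every proper non-empty subdomain of $\ti{\C}$ has connectivity at least one.

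I do not expect any genuine obstacle; the argument is essentially an unpacking of Riemann--Hurwitz. The only mild subtlety is that (\ref{eq8}) tacitly assumes finite Euler characteristic, so one is implicitly restricted to the case where $F_0$ and $F_1$ both have finite connectivity; the infinitely connected case lies outside the scope of Theorem \ref{1.6.6} and would require separate treatment, for instance via an exhaustion argument.
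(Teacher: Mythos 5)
Your derivation is correct, and the algebra checks out: substituting $\chi(F_i)=2-c(F_i)$ into (\ref{eq8}) does give $c(F_0)-c(F_1)=(m-1)\bigl(c(F_1)-2\bigr)+\delta_{\R}(F_0)$, and your case split on $c(F_1)$ closes the argument cleanly (the case $c(F_1)=1$ indeed needs the separate trivial observation, since the right-hand side can be negative there). There is no proof in the paper to compare against: Theorem \ref{t0} is stated without proof, with a citation to Beardon, just as Theorem \ref{1.6.6} is. So what you have done is deduce one cited result from the other, which is legitimate but inherits the tacit finiteness hypothesis of the Riemann--Hurwitz relation, as you note. For completeness it is worth knowing that the source proves the inequality by a direct topological argument -- one shows that each component of $\ti{\C}\smallsetminus F_1$ receives, in an injective fashion, at least one component of $\ti{\C}\smallsetminus F_0$ under a suitable correspondence induced by $\R$ -- and that argument makes no finiteness assumption, so it also covers the case where $F_1$ is infinitely connected (where one must conclude $c(F_0)=\ity$ as well, something your Euler-characteristic computation cannot deliver without the exhaustion argument you allude to). Since the paper does invoke the dichotomy ``simply connected or infinitely connected'' for invariant components in Theorem \ref{1.6.5}, the infinite case is not entirely academic here, so your caveat is the right one to flag.
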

We now compute the number of components of the Fatou set\;$\F$.\\
Let $\R$ be a rational map with $deg\,\R=d\geqslant 2.$ Suppose first that $F_1,\ldots,F_k$ are completely invariant components of $\F.$ Applying Theorem \ref{1.6.5} \,part (3) to each of the components $F_j$ in turn, we get that every  $F_j$ is simply connected and therefore  $\chi (F_j)=1$ for each j. Applying Theorem \ref{1.6.6} to each $F_j,$ we get\\
$\chi(F_j) + \delta_{\R}(F_j)=m\chi (F_j),\;\Rightarrow  \delta_{\R}(F_j)
=(d-1) \chi(F_j)$ ($m=d,$ since $F_j$ is completely invariant),\;
$\Rightarrow \delta_{\R}(F_j)
=d-1$ ($\chi(F_j) =1$).
Thus we find that if $D$ is a simply connected completely invariant component of $\F$, then the total deficiency of the map $\R : D\to D$ is $d-1.$ Then
\begin{equation}
\begin{split}
\notag
\sum_{j=1}^{k}\delta_{\R}(F_j)
&=\delta_{\R}(F_1)+\delta_{\R}(F_2)+\cdots+\delta_{\R}(F_k)\\
&=d-1+d-1+\cdots+d-1\\
&=k(d-1).\\
\Rightarrow k(d-1)
&=\sum_{j=1}^{k}\delta_{\R}(F_j)\\
&\leqslant \delta_{\R}(\ti\C)\\
&=2d-2\quad(\text{total deficiency of}\;\R\;\text{is always}\;2d-2\;\text{by Theorem \ref{1.5.21}}).\\
\Rightarrow (k-2)\,d
&\leqslant k-2.\\
\Rightarrow k
&\leqslant 2\quad(k>2,\;\Rightarrow d\leqslant 1,\;\text{a contradiction}).
\end{split}
\end{equation}
Thus we conclude that $\F$ contains atmost two completely invariant components.
\begin{corollary}\label{1.6.7}
Let $\R$ be a rational map with $deg\,\R= d \geqslant 2$. Then the Fatou set  contains atmost two different completely invariant simply connected components.
\end{corollary}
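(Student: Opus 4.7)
The plan is to invoke the Riemann--Hurwitz relation (Theorem \ref{1.6.6}) together with the global bound on the total deficiency coming from Theorem \ref{1.5.21}; in fact essentially all the work has already been carried out in the displayed computation immediately preceding the corollary, and the argument below is the cleanest way to extract the conclusion.

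First I would let $F_1, \ldots, F_k$ be any list of distinct completely invariant simply connected components of $\F$. Since each $F_j$ is simply connected, its Euler characteristic satisfies $\chi(F_j) = 1$. Complete invariance gives $\R(F_j) = F_j$ and $\R^{-1}(F_j) = F_j$, so every point of $F_j$ has all $d = \deg\R$ of its $\R$-preimages (counted with multiplicity) inside $F_j$. In particular the restriction $\R|_{F_j}: F_j \to F_j$ is a branched covering of degree exactly $d$.

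Next, applying Theorem \ref{1.6.6} to $F_0 = F_1 = F_j$ with $m = d$, the Riemann--Hurwitz relation yields
\[
\chi(F_j) + \delta_{\R}(F_j) = d\,\chi(F_j),
\]
so $\delta_{\R}(F_j) = d - 1$. Since the $F_j$ are pairwise disjoint and deficiencies are non-negative, summing over $j$ and using Theorem \ref{1.5.21} (which provides $\delta_{\R}(\ti\C) \leqslant 2d - 2$) gives
\[
k(d-1) = \sum_{j=1}^{k} \delta_{\R}(F_j) \leqslant \delta_{\R}(\ti\C) \leqslant 2d - 2,
\]
whence $(k-2)(d-1) \leqslant 0$, and therefore $k \leqslant 2$ since $d \geqslant 2$.

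The only step that requires genuine care is the identification of the covering degree of $\R|_{F_j}$ as precisely $d$ rather than some smaller divisor of $d$; this is exactly where complete invariance (as opposed to mere forward invariance) is essential, for otherwise some preimages of points in $F_j$ could escape into a different Fatou component and the local degree count would drop. Once this is granted the remainder is bookkeeping with Euler characteristics and the critical-point bound $2d - 2$, so I do not foresee any further obstacle.
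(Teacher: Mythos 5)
Your proposal is correct and follows essentially the same route as the paper: applying the Riemann--Hurwitz relation to each completely invariant simply connected component to obtain deficiency $d-1$, then summing against the global bound $2d-2$ from Theorem \ref{1.5.21}. Your explicit remark on why complete invariance forces the covering degree to be exactly $d$ is a worthwhile clarification of a point the paper's displayed computation passes over silently.
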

\begin{proof}
From above theorem,  if $D$ is a simply connected completely invariant component of $\F$, then $\delta_{\R}(D)=d-1.$ So if there are more than two different simply connected completely invariant components, then total deficiency of $\R$ would exceed $2d-2$ which is not possible. Hence  $\F$ contains atmost two different simply connected completely invariant components.
\end{proof}
\begin{theorem}
Let $\R$ be a rational map with $deg\,\R=d\geqslant 2$. Then the number of components of the Fatou set can be $0, 1, 2$ or $\ity,$ and all cases occur.
\end{theorem}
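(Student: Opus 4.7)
The plan is to prove first that a finite component count must be at most two, and then to exhibit rational maps of degree at least two realising each of the values $0,1,2,\ity$.

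Suppose $\F$ has $k<\ity$ components $F_1,\ldots,F_k$. Since $\R$ is an open map (holomorphic and nonconstant), each $\R(F_j)$ is open and connected and, by complete invariance of $\F$ (Theorem \ref{1.3.23}), lies inside a single component of $\F$. This defines a self-map $\sigma$ on the finite set $\{F_1,\ldots,F_k\}$, which is surjective because $\R$ is surjective and $\R^{-1}(\F)=\F$, and hence a bijection. Pick $n$ with $\sigma^n=\mathrm{id}$; then $\R^n(F_j)\subseteq F_j$, no other component of $\F$ is mapped into $F_j$ by $\R^n$, and combining this with the fact that $\F=F(\R^n)$ is completely invariant under $\R^n$ (Theorems \ref{1.3.19} and \ref{1.3.23}) forces $(\R^n)^{-1}(F_j)=F_j$. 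Thus every $F_j$ is completely invariant under $\R^n$. Applying Theorem \ref{1.6.5}(3) to $\R^n$ with any $F_i$ in the role of the distinguished completely invariant component shows every other $F_j$ is simply connected; letting $i$ vary, every component is simply connected. Since $\deg\R^n=d^n\geqslant 2$, Corollary \ref{1.6.7} applied to $\R^n$ then yields $k\leqslant 2$.

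For realisability of each value I would use the following examples. For $k=0$, the Latt\`es map $\R(z)=(z^2+1)^2/(4z(z^2-1))$ has $\J=\ti\C$. For $k=1$, any quadratic $\R(z)=z^2+c$ with $c$ outside the Mandelbrot set (for instance $c=3$) has a Cantor Julia set, so $\F=\A(\ity)$ is the unique complementary component. For $k=2$, Example \ref{eg3} already exhibits $\R(z)=z^2$ with Fatou components $\{|z|<1\}$ and $\{|z|>1\}$. For $k=\ity$, the basilica polynomial $\R(z)=z^2-1$ has an attracting $2$-cycle $\{0,-1\}$ whose full basin decomposes into the two immediate basin components together with all their iterated pre-images, producing infinitely many Fatou components in addition to $\A(\ity)$.

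The main obstacle is the upgrade step: going from complete invariance of $\F$ as a whole to complete invariance of each individual $F_j$ under some iterate $\R^n$. This requires checking simultaneously that the permutation $\sigma^n$ is the identity on components and that no preimage of $F_j$ under $\R^n$ can lie in a different component, for which the global identity $\F=F(\R^n)$ is essential. Once this is established, Theorem \ref{1.6.5}(3) and Corollary \ref{1.6.7} applied to $\R^n$ finish the bound with no further work. A secondary concern is to justify the cited examples rigorously, in particular the Latt\`es identity $\J=\ti\C$ and the infinite component count of the basilica basin; both are classical and can be cited from the references already listed.
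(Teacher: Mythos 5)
Your proposal is correct and follows essentially the same route as the paper: finitely many components are permuted by $\R$, so some iterate $\R^n$ makes each one completely invariant, and Corollary \ref{1.6.7} applied to $\R^n$ gives $k\leqslant 2$ (you simply fill in the permutation and complete-invariance details that the paper leaves implicit). The only divergence is in two of the witnesses — the paper uses $z^2-2$ for one component and $\la z+z^2$ with a Siegel disk for infinitely many, where you use a Cantor-Julia-set quadratic and the basilica — but these are interchangeable standard examples.
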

\begin{proof}
Suppose $\F$ has finitely many components, say $F_1,\ldots,F_k.$ We can find an integer $m$ such that each $F_j$ is completely invariant under $\R^m$ (having same Fatou set as $\R$ by Theorem \ref{1.3.19}), and by Corollary \ref{1.6.7} (applied to $\R^m$) we get $k\leqslant 2.$
\end{proof}
We now illustrate through examples that all the cases in the above theorem occurs.
\begin{example}
 If $\R(z)=z^2-2,$ then by Example \ref{eg2} we have $\J= [-2,\,2]$ and $\F=\ti\C\smallsetminus [-2,\,2] $. So $\F$ has a single completely invariant component.
\end{example}
\begin{example}
 If $\R(z)=z^2,$ then by Example \ref{eg3}, $\J$ = unit circle and $\F=\{z : |z|<1\}\cup \{z : |z|>1\}.$ Thus $\F$ has two completely invariant components.
\end{example}
\begin{example} 
\emph{(Lattes map)}\;It is defined as $\R(z)=\dfrac{(z^2+1)^2}{4z(z^2-1)}\cdot$ This map has empty Fatou set, (see \textbf{Blanchard}~\cite{blanchard}) and hence $\F$ has no completely invariant component.
\end{example}
\begin{example}
 Consider $\R(z)=\la z+z^2.$ If $\la,\;|\la|=1$ is chosen as such to correspond to a \emph{Siegel disk} (to be defined in the next section), then we show that $\F$ has infinitely many components.\\
Let $F_0$ and $F_\ity$ be the components containing $0$ and $\ity$ respectively. Then $\R$ is conjugate to an irrational rotation of $F_0$, and there is an analytic homeomorphism $\phi : F_0\to \D$ such that the following diagram
\[
\begin{CD}
{F_0} @ >\R>>  {F_0}\\
@VV\phi V @VV\phi V\\
\D @>z\to \la z>>\D
\end{CD}
\] 
commutes, with $\phi(0)=0.$  Also there are no critical points of $\R$ in $F_0\;$(otherwise it will contradict the fact that $\phi$ is one-one). Therefore $\R$ is one-one on $F_0.$ Thus $F_0$ has a distinct preimage $F_1$ which is also distinct from $F_{\ity}.$ Hence $\F$ has infinitely many components.
\end{example}
\section{Classification of Forward Invariant Components}\label{ch1,sec7}
Here we give a complete analysis of the forward invariant components of the Fatou set of a rational map $\R$ with $deg\,\R=d\geqslant 2.$ The image of any component of $\F$ under $\R$ is again a component of $\F$ and the inverse image of any component of $\F$ is the disjoint union of atmost $d$ components.
\begin{definition}
A component $U$ of $\F$ is
\begin{enumerate}
\item\ periodic if for some positive integer n, $\R^{n}(U) = U$;
\item\ eventually periodic if for some positive integer $n$, $ \R^n(U)$ is periodic;
\item\ wandering if the sets $\R^n(U), n \geqslant 0 $ are pairwise disjoint.
\end{enumerate}
\end{definition}
\begin{definition}\label{d9}
 A forward invariant component $F_0$ of $\F$ is
\begin{enumerate}
\item\ an attracting component if it contains an  attracting fixed point $z_0$ of $\R$;
\item\ a superattracting component if it contains a superattracting fixed point $z_0$ of $\R$;
\item\ a parabolic component if there is a rationally indifferent fixed point $z_0$ of $\R$ on the boundary of $F_0$ and if $ \R^n \rightarrow z_0$ on $F_0$;
\item\ a \emph{Siegel disk} if $\R : F_0 \rightarrow F_0$ is analytically conjugate to an irrational rotation of the unit disk $ \D$ onto  itself;
\item\ a \emph{Herman ring} if $\R : F_0 \rightarrow F_0 $ is analytically conjugate to an irrational rotation of some annulus onto itself.
\end{enumerate}
\end{definition}
One of the problems which had defied solution for over 60 years is the following theorem, finally solved by \emph{Sullivan}. In the next chapter we shall give its complete proof.
\begin{theorem}
[\textbf{Sullivan}]
 Every component of the Fatou set of a rational map $\R$ is eventually periodic. Thus a rational function has no wandering domains.
\end{theorem}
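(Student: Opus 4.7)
The plan is to argue by contradiction: assume that a wandering component $U$ of $\F$ exists, so that $U, \R(U), \R^2(U),\ldots$ are pairwise disjoint components of $\F$, and derive a contradiction by producing too many rational maps of the given degree $d$. Following Sullivan, the strategy is to associate to each sufficiently small deformation of the conformal structure of $U$ a conjugate rational map of degree $d$, and then to show that this assignment is injective on an infinite-dimensional family of deformations; since the space $CP_0^{2d+1}$ of rational maps of degree $d$ is finite-dimensional, this is impossible.

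First, I would take a Beltrami differential $\mu_0$ on $U$ with $\|\mu_0\|_\infty < 1$ and spread it to all of $\ti\C$ by imposing $\R$-invariance: push $\mu_0$ forward under $\R^n$ to define $\mu$ on each $\R^n(U)$, and set $\mu\equiv 0$ off the forward orbit $\bigcup_{n\geqslant 0}\R^n(U)$. The wandering hypothesis makes the components $\R^n(U)$ pairwise disjoint, so there is no consistency issue, and because pushing forward by a holomorphic map does not inflate the supremum norm of a Beltrami coefficient, one has $\|\mu\|_\infty = \|\mu_0\|_\infty < 1$. By the Measurable Riemann Mapping Theorem of Ahlfors and Bers, there is a quasiconformal homeomorphism $\phi_\mu : \ti\C \to \ti\C$ whose Beltrami coefficient is $\mu$, unique up to post-composition by a M\"obius transformation. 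The $\R$-invariance of $\mu$ then forces the conjugate $\R_\mu = \phi_\mu\circ\R\circ\phi_\mu^{-1}$ to be $1$-quasiregular, hence holomorphic on $\ti\C$, and therefore a rational map of the same degree $d$.

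The decisive step is to show that the resulting assignment $\mu_0 \mapsto [\R_\mu]$, landing in the finite-dimensional moduli space of degree $d$ rational maps modulo M\"obius conjugacy, is injective on an infinite-dimensional family of choices of $\mu_0$. Since $U$ is hyperbolic and wandering, its space of conformal structures modulo its (finite-dimensional) group of M\"obius self-maps is genuinely infinite-dimensional, and distinct classes of deformations there give rise to non-conjugate quasiconformal deformations of $\R$ on $\ti\C$. The boundary behavior of $U$, where $\mu$ is only defined measurably, is controlled using Carath\'eodory's prime end theory, which provides the compactification of $U$ needed to separate deformations at the boundary.

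The main obstacle, and the essence of Sullivan's idea, is precisely this injectivity of the parameter space of deformations into $CP_0^{2d+1}$. Constructing the family $\R_\mu$ from $\mu_0$ is the routine application of Ahlfors--Bers, but verifying that sufficiently generic perturbations of $\mu_0$ yield rational maps which remain inequivalent after quotienting out the M\"obius ambiguity requires careful tracking of the deformation through the entire forward orbit and across the prime-end boundary of $U$. Once this injection is in hand, the contradiction is immediate: an infinite-dimensional space cannot embed into a finite-dimensional one, so $\F$ admits no wandering component, and hence every component of $\F$ is eventually periodic.
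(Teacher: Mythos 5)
Your overall strategy matches Sullivan's (and the paper's): spread a Beltrami coefficient over the forward orbit of the wandering domain by $\R$-invariance, solve the Beltrami equation via Ahlfors--Bers to get quasiconformal deformations $\R_\mu$ of the same degree, and contradict the finite-dimensionality of $CP_0^{2d+1}$. The construction of $\R_\mu$ and the fact that it is rational of degree $d$ are handled correctly.

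However, there is a genuine gap in the decisive injectivity step. You assert that because $U$ is hyperbolic and wandering, ``its space of conformal structures modulo its group of M\"obius self-maps is genuinely infinite-dimensional.'' This is false when $U$ has finite topological type, and in particular when $U$ is simply connected: such a $U$ is conformally the unit disk, whose Teichm\"uller space is a single point, so there are no nontrivial deformations of the conformal structure of $U$ as an abstract Riemann surface. The paper therefore splits into two cases (Proposition \ref{2.6.1}). Only in the infinite-topological-type case does the direct limit surface $U^\ity$ carry an infinite-dimensional deformation space (Proposition \ref{2.5.5}), and there your dimension count goes through as stated. In the finite-type case --- which you cannot exclude --- Sullivan's argument is entirely different: one builds a real-analytic family of diffeomorphisms of $\partial\D$ of dimension exceeding $4d+2$, transports their conformal distortions into $U$ via the Riemann map, and shows the resulting map $w\mapsto \R_w$ into $CP_0^{2d+1}$ must have a fibre of positive dimension; a nontrivial arc $\bar\varphi_t$ in that fibre then commutes with $\R$ on $\J$, and the rigidity result (Proposition \ref{2.7.1}: the centralizer of $\R$ on $\J$ injects continuously into a totally disconnected Cantor group, hence contains no nontrivial arc) forces $\bar\varphi_t=I$ on $\J$. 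The prime-end analysis of Section \ref{ch2,sec8} then shows $\bar\varphi_t$ acts trivially on the prime ends of $U$, contradicting the injectivity of the chosen family of boundary diffeomorphisms. Your proposal mentions prime ends only as ``controlling boundary behavior'' and never invokes the Cantor-group rigidity of the centralizer on $\J$, which is the mechanism that actually delivers injectivity in the finite-type case. Without the case split and without that rigidity argument, the proof does not close.
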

The corresponding statement for entire maps is false and the first constructive proof of an entire function having wandering domains was given by \textbf{I. N. Baker} (see ~\cite{baker}).\\
The following example is taken from \textbf{S. Zakeri} (see~\cite{zakeri}).
 \begin{example}
The map $z \rightarrow z + \sin(2\pi z)$ has wandering domains.
\end{example}
Having ruled out the possibility of wandering domains, we now know that every component of the Fatou set is either periodic or preperiodic.\\
 We now state the Classification theorem for the forward invariant components of $\F$ (see~\cite{beardon},~\cite{carleson and gamelin}).
\begin{theorem}
 Let $\R$ be a rational map with $deg\,\R=d\geqslant 2.$ A forward invariant component of $\F$ is one of the types (1) to (5) in Definition \ref{d9}.
\end{theorem}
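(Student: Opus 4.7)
The plan is to analyze the limiting behaviour of the normal family $\{\R^n|_{F_0}\}$ (normal since $F_0\subset\F$), passing to a suitable iterate of $\R$ (which does not change $\F$ or $\J$ by Theorem \ref{1.3.19}) so that $F_0$ is genuinely $\R$-invariant. Extract a subsequence $\R^{n_k}$ with $n_{k+1}-n_k\to\ity$ converging locally uniformly on $F_0$ to a holomorphic limit $g:F_0\to\ti\C$. The argument then splits on whether $g$ is constant.

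If $g\equiv z_0$, there are two sub-cases, distinguished by Theorem \ref{1.6.5} which identifies $\partial F_0$ with $\J$. If $z_0\in F_0$, then comparing $\R\circ \R^{n_k}=\R^{n_k+1}$ with $\R^{n_k}$ in the limit gives $\R(z_0)=z_0$; since $\R^n(z)\to z_0$ locally uniformly on $F_0$, the multiplier $\la=\R'(z_0)$ satisfies $|\la|\leqslant 1$, and the case $|\la|=1$ is excluded by Proposition \ref{1.4.22} (a linearisation near $z_0$ would conjugate $\R$ to a rotation, contradicting convergence of iterates). Hence $|\la|<1$ and $F_0$ is an attracting or superattracting component. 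If $z_0\in\partial F_0\subset\J$, the same composition argument still produces a fixed point $z_0$ with $\R^n\to z_0$ on $F_0$; then $z_0$ cannot be attracting (would lie in $\F$ by Result \ref{1.4.15}), cannot be repelling (iterates would diverge from $z_0$ by Proposition \ref{1.4.18}), and cannot be irrationally indifferent (Proposition \ref{1.4.22} would place $z_0\in\F$). Thus $\la$ must be a root of unity, yielding the parabolic case.

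If $g$ is non-constant, then by the Open Mapping Theorem $g(F_0)$ is open, and as a locally uniform limit of maps sending $F_0$ into itself one has $g(F_0)\subset\overline{F_0}$; openness of $g(F_0)$ together with $\partial F_0\subset\J$ forces $g(F_0)\subset F_0$. A further subsequential limit of $\{\R^{n_{k+1}-n_k}\}$ then yields a non-constant self-map of $F_0$ which composes with $g$ to the identity, so $\R|_{F_0}$ is a conformal automorphism of $F_0$ and the closure of $\{\R^n|_{F_0}\}$ is a compact abelian subgroup of $\mathrm{Aut}(F_0)$. Lifting to the universal cover of $F_0$ (which is $\D$, since $F_0$ omits the infinite set $\J$ of at least three points, via the Uniformisation Theorem), and invoking the classification of compact subgroups of $\mathrm{Aut}(\D)$, one concludes that $F_0$ is conformally either a disk on which $\R$ is an irrational rotation (Siegel disk) or an annulus on which $\R$ is an irrational rotation (Herman ring); rationality of the rotation number is ruled out because a rational rotation is periodic, forcing $\R^m=I$ on $F_0$ and hence on $\ti\C$, contradicting $deg\,\R\geqslant 2$.

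The main obstacle is the non-constant case: one must show that $\R|_{F_0}$ is an automorphism (not merely a covering map of higher degree or a proper non-injective self-map), and then restrict the conformal type of $F_0$ to a disk or an annulus, ruling out more complicated hyperbolic surfaces. This restriction comes from the compactness of the orbit closure in $\mathrm{Aut}(F_0)$, which forces $\mathrm{Aut}(F_0)$ to contain a circle subgroup; only planar domains of connectivity one or two admit such an automorphism group. The constant case, by contrast, reduces cleanly to the local fixed-point theory already developed in Sections \ref{ch1,sec3} and \ref{ch1,sec4}.
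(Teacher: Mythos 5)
The paper itself states this classification theorem without proof, deferring to Beardon and Carleson--Gamelin, so there is no internal proof to compare against; your sketch follows the standard route in those sources (analysis of subsequential limits of the normal family $\{\R^{n}\vert_{F_0}\}$, split according to whether a limit function is constant), and most of it is sound. In particular, the non-constant case correctly produces, via $g=h\circ g$ with $h=\lim \R^{\,n_{k+1}-n_k}$, that $\R\vert_{F_0}$ is an automorphism with precompact iterates, whence the Siegel/Herman alternative; and the constant-limit-in-$F_0$ case correctly yields an attracting or superattracting fixed point.

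The genuine gap is in the boundary sub-case. To exclude an irrationally indifferent multiplier at the fixed point $z_0\in\partial F_0$ you invoke Proposition \ref{1.4.22}, claiming it ``would place $z_0\in\F$.'' But that proposition is an equivalence: $z_0\in\F$ if and only if $\R$ is linearizable there. An irrationally indifferent fixed point need not be linearizable --- the Cremer examples cited in the paper's own introduction are precisely irrationally indifferent fixed points lying in $\J$ --- so your argument produces no contradiction and this case is not excluded. Ruling it out is in fact the hardest step of the whole classification; it requires the snail (lima\c{c}on) lemma: if $\R^{n_k}\to z_0$ on the invariant component $F_0$ with $z_0\in\partial F_0$ fixed and non-attracting, one joins $w$ to $\R(w)$ by an arc in $F_0$, observes that its forward images spiral into $z_0$, and an estimate on $\arg(\R^{n}(w)-z_0)$ forces $\la=1$. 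Without this (or an equivalent substitute) the parabolic case is not established. A second, smaller omission in the same sub-case: Definition \ref{d9}(3) demands convergence of the \emph{full} sequence $\R^{n}\to z_0$ on $F_0$, while your construction gives convergence only along the subsequence $n_k$; upgrading this requires either the connectedness of the cluster set of an orbit (consecutive iterates are a bounded hyperbolic, hence shrinking spherical, distance apart near $\partial F_0$) combined with finiteness of the fixed-point set, or the Leau--Fatou petal structure once $\la=1$ is known.
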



\chapter[Sullivan's proof]{Making Sullivan Readable}\label{ch2}

\section{Direct and Inverse Limits}\label{ch2,sec1}
A directed set $\mathcal A$ is a partially ordered set ($\mathcal A,\leqslant$) such that  for $\alpha,\beta \in\mathcal A, \; \exists \; \gamma \in \mathcal A$ with $\alpha \leqslant \gamma$ and $\beta \leqslant \gamma.$
\begin{example} \label{eg4}
 The collection of all subsets of a set S, partially ordered by inclusion. (i.e A $\leqslant$ B if A $\subset$ B)
\end{example}
\begin{example}\label{eg5}
The set of positive integers with the natural ordering $\leqslant$, is a directed set.
\end{example}
For each pair $\alpha,\beta$ with $\alpha \leqslant  \beta,$ assume that there is  a continuous map $\phi_{{\alpha}{\beta}} : Y_{\alpha} \rightarrow Y_{\beta}$  such that  $\alpha \leqslant  \beta \leqslant \gamma \Rightarrow$ 
$\phi_{{\alpha}{\gamma}} = \phi_{{\beta}{\gamma}} \circ \phi_{{\alpha}{\beta}}$. Then the family $\{Y_{\alpha}; \phi_{{\alpha}{\beta}}\}  $ of spaces $(Y_{\alpha})$ and connecting maps $\phi_{{\alpha}{\beta}}$ is called a direct  spectrum over  $\mathcal A$. The image of a $y_{\alpha} \in Y_{\alpha}$ under any connecting map is termed a successor of $y_{\alpha}.$\\
The set $X\cup Y$ is the {\it{free union}}  $X+Y$ of disjoint spaces $X,Y$ and we topologise it as :  $U\subset X+Y$ is open if and only if $U\cap X$ is open in $X$ and $U\cap Y$ is open in $Y.$\\
Each direct spectrum $\{Y_{\alpha}; \phi_{{\alpha}{\beta}}\}  $ gives a limit space in the following manner.\\
Let $ D = \Sigma (Y_{\alpha} | \alpha \in \mathcal A)$ be the free union of spaces. We say that $y_{\alpha} \in Y_{\alpha},$ $y_{\beta} \in Y_{\beta}$ in $D$ are equivalent  if  they have a common successor in the spectrum. It can be easily seen to be an equivalence relation.\\
 The quotient space $\Sigma {Y_{\alpha}}/{\sim}$ is called direct limit space of the spectrum and is denoted by $Y^{\ity}.$ \\
We illustrate it with a trivial example.
\begin{example}
Let $\mathcal A$ be a directed set and $Y$ be any space.  The direct spectrum $\{Y_{\alpha};\phi_{{\alpha}{\beta}}\}$ in which  $Y_{\alpha} = Y \;\forall\; {\alpha} \in \mathcal A$ and $\phi_{{\alpha}{\beta}} = \text{Identity} \;\forall\;{\alpha},{\beta} \in\mathcal A$ is called the trivial direct spectrum over $\mathcal A$ with space $Y.$ In this case $Y^{\ity}  \simeq Y.$ 
\end{example}
Let $\A$ be a directed set and $\mathfrak{a}=\{A_{\alpha} : \alpha\in\A \}$ be a collection of sets. Suppose that for $\alpha \leqslant  \beta$ in $\A$ there is a map $f_{{\beta}{\alpha}} : A_{\beta}\to \A_{\alpha}$ such that
\begin{enumerate}
\item\  $f_{{\beta}{\alpha}}\circ f_{{\gamma}{\beta}}=f_{{\gamma}{\alpha}},\quad \alpha \leqslant  \beta \leqslant \gamma $
\item\  $f_{{\alpha}{\alpha}}=I,\, \text{the identity}\;\forall\;\alpha\in\A.$
\end{enumerate}
Let $\mathcal{F}$ be the collection $\{f_{{\alpha}{\beta}}\}$ of all such transformations. The pair $\{\mathfrak{a},\mathcal{F}\}$ is called an \emph{inverse limit system} over the directed set $\A.$ We give two particular instances of such systems. \\
In the first instance, $\mathfrak{a}$ consists of topological spaces and each $f_{{\alpha}{\beta}}$ in  $\mathcal{F}$ is a continuous mapping. Consider the product space $\prod_{\alpha\in\A}A_{\alpha}$ and projection maps $\pi_{\beta} : \prod_{\alpha\in\A}A_{\alpha}\to A_{\beta}$ defined as
\begin{equation}
\pi_{\beta}(\{x_{\alpha}\})=x_{\beta}.
\end{equation}
The \emph{inverse limit space} $A_\ity$ of the system $\{\mathfrak{a},\mathcal{F}\}$ is defined as
\begin{equation}\label{eq9}
A_\ity=\B\{\{x_\alpha\}\in\prod_{\alpha\in\A}A_{\alpha} : f_{{\beta}{\alpha}}\pi_\beta(\{x_\alpha\})=x_\alpha\;\text{whenever}\;\alpha\leqslant\beta\B\}.
\end{equation}
$ A_\ity$ is a subspace of the product space $\prod_{\alpha\in\A}A_{\alpha}.$\\
In the second instance, $\mathfrak{a}$ consists of topological groups (see Appendix \ref{A}), and each $f_{{\alpha}{\beta}}$ in  $\mathcal{F}$ is a continuous homomorphism. The inverse limit\;group  $A_\ity$ of the system $\{\mathfrak{a},\mathcal{F}\}$ is defined as
\begin{equation}\label{eq10}
A_\ity=\B\{\{x_\alpha\}\in\prod_{\alpha\in\A}A_{\alpha} : f_{{\beta}{\alpha}}\pi_\beta(\{x_\alpha\})=x_\alpha\;\text{whenever}\;\alpha\leqslant\beta\B\}.
\end{equation}
$ A_\ity$ is a subgroup of the direct product group $\prod_{\alpha\in\A}A_{\alpha}.$ The binary operation in $A_\ity$  is defined by
\begin{equation}
\{x_\alpha\}\{y_\alpha\}=\{x_\alpha y_\alpha\},
\end{equation}
where the \emph{dot} on the right denotes the binary operation in $A_\alpha.$ Note that the element $\{e_\alpha :\alpha\in\A\}\in A_\ity,$ where $e_\alpha$ is the identity of $A_\alpha\;\forall\;\alpha\in\A.$ Thus the existence of $A_\ity$ is always ensured. It can be easily seen that $A_\ity$ forms a group under this binary relation.
\begin{definition}\label{d10}
[\textbf{Cantor group}]
It is an inverse limit of finite groups. Any topology on this group is discrete, therefore it is a totally disconnected group i.e each component is a single point.\\
For further details on direct and inverse limits, one can refer ~\cite{dugundji},~\cite{hocking and young}. 
\end{definition}
\section{ Riemann Surfaces}\label{ch2,sec2}
An $n-$dimensional manifold is a Hausdorff topological space $X$ such that every point  $a\in$ $X$ has an open neighbourhood which is homeomorphic to an open subset of ${\mathbb R}^{n}$.\\
 Let $X$ be a $1-$dimensional complex manifold. A complex chart on $X$ is a homeomorphism $\phi : U\rightarrow V$ of an open subset $U\subset $ X onto an open subset $V\subset {\C}$. Two complex charts $\phi_{i} : U_{i}\rightarrow V_{i} ,\, i = 1, 2$ are said to be holomorphically compatible if the transition map $\phi_{2}\circ \phi_{1}^{-1} : \phi_{1}(U_{1}\cap U_{2}) \rightarrow \phi_{2}(U_{1}\cap U_{2})$ is biholomorphic.\\
A complex atlas on $X$ is a system $\mathfrak{U} = \{\phi_{i} : U_{i} \rightarrow V_{i} ,\, i\in \mathcal I\}$ of charts which are holomorphically compatible and which cover $X$, i.e $\bigcup _{i\in \mathcal I}U_{i} = X.$\\
    Two complex atlases $\mathfrak{U}$ and $\mathfrak{U'}$ on $X$ are called analytically equivalent if every chart of $\mathfrak{U}$ is holomorphically compatible with every chart of $\mathfrak{U'}$.\\
 By a complex structure on a $1-$dimensional complex manifold $X$, we mean an equivalence class of analytically equivalent atlases on $X.$
\begin{definition}\label{d11}
A Riemann surface is a pair ($X,\Sigma$) where $X$ is a connected $1-$dimensio-nal complex manifold and $\Sigma$ is a complex structure on $X.$
One usually writes $X$ instead of ($X,\Sigma$) whenever it is clear which complex structure is meant. Hence a Riemann surface is a $1-$dimension complex analytic manifold.\\
We illustrate with examples.
 \end{definition}

\begin{example}
 [\textbf{The  complex plane ${\C}$}] The complex structure is defined by the atlas whose only chart is the identity map $I : {\C}\rightarrow {\C}.$
\end{example}
\begin{example}
 [\textbf{Domains}] Suppose $X$ is a Riemann surface and $Y\subset X$ is a domain i.e a non-empty connected open set. Then $Y$ has a natural complex structure which makes it a Riemann surface.The atlas consists of complex charts $\phi : U\rightarrow V$ on $X,$ with $U\subset Y.$ In particular, every domain $Y\subset {\C}$ is a Riemann surface.
\end{example}
\begin{example}
 [\textbf{The Riemann Sphere $\ti{\C}$}] We know that the topology on $\ti{\C}$ consists of the usual open sets $U\subset {\C}$ together with sets of the form $V\cup \{\ity\}$ where $V\subset {\C}$ is the complement of a compact set $K\subset {\C}$.\\
Let $U_1 = \ti{\C}\smallsetminus {\ity} = {\C}$ and\\
$U_2 = \ti{\C}\smallsetminus\{0\} = {\C^{\ast}}\cup\{\ity\}.$\\
We define maps $\phi_i : U_i\rightarrow {\C},\,i = 1, 2$ as follows :\\
$\phi_1$ is the identity map and
\begin{equation}
\notag
\phi_{2}(z) =
\begin{cases}
1/z  &\text{if z $\in {\C}^{\ast},$  and}\\
0   &\text{if z =$\ity$}
\end{cases}
\end{equation}
It can be easily checked that $\phi_1$ and $\phi_2$ are homeomorphisms. Since  $U_1$ and $U_2$ are connected and have non- empty intersection, $\ti{\C}$ is also connected.\\
The complex structure on $\ti{\C}$ is defined by the atlas consisting of the charts $\phi_i : U_i\rightarrow {\C},\, i =1, 2.$ For $\ti{\C}$ to be a Riemann surface we have to show that the two charts are holomorphically compatible and the transition map biholomorphic. But this is clear since $\phi_1(U_1\cap U_2) = \phi_2(U_1\cap U_2) = {\C^{\ast}}$\\
and $\phi_2 \circ\phi_1^{-1} :{\C^{\ast}}\rightarrow{\C^{\ast}},\, z\rightarrow 1/z$ is biholomorphic.\\
We now define holomorphic mappings between Riemann surfaces.
\end{example}
\begin{definition}\label{d12}
 Let $X$ and $Y$ be Riemann surfaces. A continuous mapping $f : X\rightarrow Y $ is  holomorphic if for every pair of charts $\psi_1 : U_1\rightarrow V_1 \;$ on $X$ and $\psi_2 : U_2\rightarrow V_2$ on $Y$ with $f(U_1)\subset U_2,$ the mapping $\psi_2\circ f\circ\psi_1^{-1} : V_1\rightarrow V_2$ is holomorphic in the usual sense.
\end{definition}
\section{Direct limits of Riemann Surfaces}\label{ch2,sec3}
Let
\[
\begin{CD}
\ R_{1} @>f_{1}>> \ R_{2} @>f_{2}>> \ R_{3} @>f_{3}>>\cdots
\end{CD}
\]
be a sequence of surjective analytic maps of Riemann surfaces $R_i.$ The Riemann surface $R^{\ity}$ represents the direct limit of the sequence $\{f_n\}$ if there are compatible analytic surjective maps  \[
\begin{CD}
\ R_{n} @>\pi_{n}>> \ R^ {\ity}
\end{CD}
\]
namely $\pi_{n+1} \circ f_n = \pi_n$,  so that some $\pi_n$ identifies two points if and only if they are identified in the sequence $\pi_{n}(x) = \pi_{n}(y)$ if and only if $g_{k}(x) = g_{k}(y)$ for some k where $g_{k} = \prod_{i=0}^{k}{f_{n+i}}$.
We illustrate this with an example.
\begin{example}
 Consider the direct system
\[
\begin{CD}
\ \C @>z\to z>> \C @>z\to e^z>> \C\smallsetminus \{0\} @>z\to z>> \C\smallsetminus \{0\} @>z\to z>>\cdots
\end{CD}
\]
Here $f_n(z)=z,\;n\neq 2$ and $f_2(z)=e^z.$\\
Define $\pi_n :\C\smallsetminus \{0\}\to\C\smallsetminus \{0\}$ as $\pi_n(z)=z,\;n\neq1, 2$;\\
 $\pi_1 :\C\to\C\smallsetminus \{0\}$ as $\pi_1(z)=e^z,$ and $\pi_2 :\C\to\C\smallsetminus\{0\}$ as $\pi_2(z)=e^z.$ Then $\;\forall\;n,\;\pi_n$ are analytic surjective maps.\\
For $n\geqslant 3,$ we have\\
$\pi_{n+1}\circ f_n(z)=z=\pi_n(z).$ For $n=1,\;\pi_2\circ f_1(z)=\pi_2(z)=e^z=\pi_1(z),\;\Rightarrow \pi_2\circ f_1(z)=\pi_1(z).$\\
For $n=2,\; \pi_3\circ f_2(z)=\pi_3(e^z)=e^z=\pi_2(z).$ Hence $\pi_{n+1}\circ f_n=\pi_n\;\forall\;n.$\\
Now for $n=2,\; \pi_n$ identifies $0$ and $2\pi i$ since $\pi_2(0)=e^0=1$ and $\pi_2(2\pi i)=e^{2\pi i}=1,\;\Rightarrow \pi_2(0)=\pi_2(2\pi i).$\\
We show that $g_k(0)=g_k(2\pi i)$ for some $k.$\\
Consider $k=1$ and $n=2.$ Then $g_1(0)=  \prod_{i=0}^{1}{f_{n+i}}=f_3\circ f_2(0)=f_3(1)=1$ and $g_1(2\pi i)=f_3\circ f_2(2\pi i)=f_3(1)=1$. Thus $g_1(0)=g_1(2\pi i).$ Hence the direct limit of the direct system given above is $\C\smallsetminus\{0\}.$
\end{example}
\begin{definition}\label{d13}
\textbf{(Hyperbolic Riemann Surface)} 
 A Riemann surface is hyperbolic if its universal covering space is conformally equivalent to the unit disk ${\D}$.\\
We illustrate with an example.
\end{definition}
\begin{example}
Every component of the Fatou set ${\F}$ of a rational map ${\R}$ is hyperbolic.\\
Let $U$ be a component of ${\F}$. $U$ has a natural complex structure which makes it a Riemann surface, viz. the atlas consists of  complex charts $\phi : V\rightarrow {\C}$ on $\ti{\C}$ with $V\subset U$.\\
{\underline{Reason for $U$ being hyperbolic :}}  Since ${\J}$ has atleast 3 points, $U$ misses atleast 3 points and by Uniformization theorem (see Appendix \ref{A}), the universal cover of $U$ is conformally equivalent to the unit disk ${\D}$. Therefore $U$ is hyperbolic.
\end{example}
\begin{definition}
 A Riemann surface is non-elementary if the fundamental group ( i.e the covering group acting on the disk) is not abelian.
\end{definition}
\begin{example}
 Consider the complex plane $\C$ and remove two disks $D_1$ and $D_2$ from it. The resulting space $\C\smallsetminus \{D_1,D_2\}$ is homeomorphic to \emph{figure eight space}. It is a hyperbolic Riemann surface since its universal covering space is conformally equivalent to $\D$ (see~\cite{milnor}).\,The fundamental group of figure eight space is a \emph{free group} on two generators, (see~\cite{massey}) hence it is non-abelian. Since homeomorphic spaces have isomorphic fundamental groups, therefore fundamental group of the space $\C\smallsetminus \{D_1,D_2\}$ is isomorphic to free group on two generators. Hence $\C\smallsetminus \{D_1,D_2\}$ is non-elementary.
\end{example}
We suppose that the Riemann surfaces $R_i$ are hyperbolic.
\begin{proposition}\label{2.3.6}
 If the maps $f_1, f_2,\ldots, $ are unbranched coverings and if one of the $R_i$ is non elementary then the direct limit $R^{\ity}$ exists. Furthermore, either
\begin{enumerate}
\item\ the fundamental group of $R^{\ity}$ is not finitely generated or
\item\ eventually the $f_i $ are isomorphisms.
\end{enumerate}
\end{proposition}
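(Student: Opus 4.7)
My plan is to realize the direct limit on a common universal cover. Because each $R_i$ is hyperbolic, I will write $R_i = \D/\Gamma_i$ with $\Gamma_i$ a Fuchsian group acting on $\D$. Each unbranched covering $f_i : R_i \to R_{i+1}$ lifts to an isometry $\D\to\D$; after a compatible choice of base points in the universal covers I may take every such lift to be the identity, which promotes the injection $(f_i)_* : \pi_1(R_i)\hookrightarrow \pi_1(R_{i+1})$ into an honest inclusion of subgroups of $\mathrm{Aut}(\D)$. I therefore obtain an ascending chain $\Gamma_1\subset\Gamma_2\subset\cdots$, and I take $\Gamma_\ity := \bigcup_i \Gamma_i$ with candidate limit $R^{\ity} := \D/\Gamma_\ity$.

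The hard part, and the only step in which the non-elementary hypothesis is really needed, is to prove that $\Gamma_\ity$ is discrete. Fix $N$ with $R_N$ non-elementary, so $\Gamma_N$ contains two hyperbolic elements $a,b$ whose fixed-point pairs on $\partial\D$ are disjoint. Arguing by contradiction, suppose there exist $\gamma_n\in\Gamma_\ity\smallsetminus\{1\}$ with $\gamma_n\to 1$. I would invoke Jorgensen's inequality for the two-generator subgroup $\langle\gamma_n,a\rangle$ sitting inside whichever $\Gamma_{i_n}$ happens to contain $\gamma_n$: the quantities $|\mathrm{tr}^2(\gamma_n)-4|$ and $|\mathrm{tr}([\gamma_n,a])-2|$ both go to $0$, so for large $n$ the only way to avoid violating Jorgensen inside the discrete group $\Gamma_{i_n}$ is that $\langle\gamma_n,a\rangle$ is elementary. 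Since $\gamma_n$ is eventually too near the identity to be an involution swapping the fixed points of $a$, this forces $\gamma_n$ into the centralizer $C(a)$, the $1$-parameter subgroup of hyperbolic elements coaxial with $a$. The same argument applied with $b$ puts $\gamma_n$ into $C(a)\cap C(b)=\{1\}$, the intersection being trivial because $a$ and $b$ have disjoint fixed sets; this contradicts $\gamma_n\ne 1$ and yields discreteness.

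Once $\Gamma_\ity$ is known to be discrete, $R^{\ity}=\D/\Gamma_\ity$ is a genuine hyperbolic Riemann surface, and the descents $\pi_n : R_n\to R^{\ity}$ of the projection $\D\to\D/\Gamma_\ity$ are analytic surjections satisfying $\pi_{n+1}\circ f_n = \pi_n$. Since $\Gamma_\ity$ is the ascending union of the $\Gamma_{n+k}$, two lifts lie in a common $\Gamma_\ity$-orbit iff they lie in a common $\Gamma_{n+k}$-orbit for some $k$; this translates to $\pi_n(x)=\pi_n(y)$ iff some composition $g_k$ identifies $x$ with $y$, which is precisely the universal property of the direct limit given at the start of the section.

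For the dichotomy, I assume $\pi_1(R^{\ity})\cong\Gamma_\ity$ is finitely generated. Each of the finitely many generators lies in some $\Gamma_{i_k}$, and taking $M=\max_k i_k$ forces $\Gamma_\ity\subseteq\Gamma_M$, whence $\Gamma_i=\Gamma_M$ for every $i\geq M$. Since $[\Gamma_{i+1}:\Gamma_i]$ equals the covering degree of $f_i$, this degree is $1$ from $M$ onward, so $f_i$ is an isomorphism for all $i\geq M$. Hence the only alternative to finite generation of $\pi_1(R^{\ity})$ is eventual stabilization, completing the dichotomy. The discreteness of $\Gamma_\ity$ is the one nontrivial step; the limit construction and the finite-generation alternative are essentially formal manipulations of Fuchsian groups once that is settled.
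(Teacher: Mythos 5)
Your proof follows the same architecture as the paper's: realize each $R_i$ as $\mathbb{D}/\Gamma_i$, observe that the unbranched coverings promote the maps $(f_i)_*$ to an ascending chain $\Gamma_1\subset\Gamma_2\subset\cdots$ of covering groups, set $\Gamma_\infty=\bigcup_i\Gamma_i$, take $R^{\infty}=\mathbb{D}/\Gamma_\infty$, and deduce the dichotomy from the fact that a finitely generated ascending union must stabilize (so that the coverings eventually have degree one). The single point where you genuinely diverge is the discreteness of $\Gamma_\infty$, which is indeed the crux. The paper settles it in one line by quoting the trichotomy that a subgroup of $PSL(2,\mathbb{R})$ is either elementary, contains elliptic elements, or is discrete: the unbranched hypothesis excludes elliptics and the non-elementary hypothesis excludes elementarity. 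You instead prove discreteness directly via J\o rgensen's inequality, extracting two hyperbolic elements $a,b$ of some non-elementary $\Gamma_N$ with disjoint axes, showing that any sequence $\gamma_n\to 1$ in $\Gamma_\infty\smallsetminus\{1\}$ would eventually be forced into $C(a)\cap C(b)=\{1\}$. This argument is correct (each $\langle\gamma_n,a\rangle$ sits inside a genuinely discrete $\Gamma_{i_n}$ with $i_n\geq N$, so failure of J\o rgensen's bound forces elementarity, and the near-identity condition rules out the axis-swapping involution). What your route buys is self-containedness and an explicit quantitative mechanism in place of a quoted classification theorem; what the paper's route buys is brevity, at the cost of leaning on the cited trichotomy. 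A minor bonus of your write-up is that you actually verify the universal property of the direct limit ($\pi_n(x)=\pi_n(y)$ iff some finite composition identifies $x$ and $y$), which the paper asserts without comment.
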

\begin{proof}
Note : For the topological facts used in the proof, see Appendix \ref{A}.\\
 Since the $R_{i}$'s are hyperbolic, therefore the universal covering surface of  $R_{i}$'s is conformally equivalent to the disk$\;{\D}$. Let $\pi : {\D}\rightarrow R_1$ be the universal cover of $R_1$.\\
We claim that  the composition 
\[
\begin{CD}
\D @>\pi>> \ R_1 @>f_{1}>> \ R_{2} @>f_{2}>>\cdots\ R_{n-1} @>f_{n-1}>> \ R_{n}
\end{CD}
\]
determines the universal covering of $R_{n}$.\\
We first prove that
\[
\begin{CD}
\D @>\pi>> \ R_1 @>f_{1}>> \ R_{2} 
\end{CD}
\]
determines the universal covering of $R_2$.

\begin{diagram}
\D &\rTo^{\pi} &R_{1} \\
&\rdTo &\dTo_{f_1} \\
& &R_{2}
\end{diagram}

 Let $x_0 \in R_2$ and $U$ be an open neighbourhood of $x_0$. Let $x$ be a point in the fibre over $x_0$. Since $f_1$ is a covering map, therefore for the neighbourhood $U$ of  $x_0.$ Then\,$\,\exists\,$ a disjoint collection $(U_{\alpha} : \alpha \in \Lambda)$ of open subsets of $R_1$ such that $f_1^{-1}(U) = \bigcup_{\alpha}U_{\alpha}$ and $f_{1}\vert U_{\alpha} : U_{\alpha}\rightarrow U$ is a homeomorphism. Let $U_{\alpha}$ be the sheet containing $x$. Since $\pi$ is a covering map, therefore for the open neighbourhood $U_{\alpha}$ of $x$, we get a disjoint collection $(V_{\beta} : \beta \in \Gamma)$ of  open subsets of $\D \;$  such that $\pi^{-1}(U_{\alpha}) = \bigcup_{\beta}V_{\beta}$ and $\pi \vert V_{\beta} : V_{\beta}\rightarrow U_{\alpha}$ is a homeomorphism. Now $(f_1 \circ \pi)^{-1}(U) = \pi^{-1}\circ f_1^{-1}(U) = \pi^{-1}(\bigcup_{\alpha}U_{\alpha}) = \bigcup_{\alpha}\pi^{-1}(U_{\alpha}) = \bigcup_{\alpha}\bigcup_{\beta}V_{\beta}$. \\
Thus each point $x_0 \in R_2$ has a neighbourhood $U$ such that $(f_1 \circ \pi)^{-1}(U) =  \bigcup_{\alpha}\bigcup_{\beta}V_{\beta}$. Since $f_1$ and $\pi$ are surjective so is $f_1 \circ \pi$. Since $f_1$ and $\pi$ are local homeomorphism, so is $f_1\circ \pi$. Hence $f_1\circ \pi$ is a covering map. Since $\D$ is simply connected, therefore $f_1\circ \pi$ is the universal covering of $R_2$.\\
Proceeding by induction we get that the composition 
\[
\begin{CD}
\D @>\pi>> \ R_1 @>f_{1}>> \ R_{2} @>f_{2}>>\cdots\ R_{n-1} @>f_{n-1}>> \ R_{n}
\end{CD}
\]
$f_{n-1}\circ\cdots\circ f_1\circ\pi : \D\rightarrow R_n$ determines the universal covering of $R_n$ which proves the claim.\\
We further claim that $\Gamma_{1}\subset \Gamma_2\subset\ldots$ , where $\Gamma_{i}$'s are fundamental group (covering group acting on the disk) of $R_{i}$'s.\\
We first show that $\Gamma_{1}\subset\Gamma_{2}.$\\
Let $h\in \Gamma_{1},\;\Rightarrow \; \pi\circ h = \pi.$ 
\begin{diagram}
\D &\rTo^{h} &\D \\
&\rdTo_{\pi}&\dTo_{\pi} \\
& &R_{1}
\end{diagram}
Now $(f_{1}\circ\pi)\circ h = f_{1}\circ(\pi\circ h) = f_{1}\circ\pi$ and so $ h\in\Gamma_{2}.$ Since $h\in \Gamma_{1}$ was arbitrary, therefore $\Gamma_{1}\subset \Gamma_{2}.$
\begin{diagram}
\D &\rTo^{\pi} &\D \\
&\rdTo_{f_{1}\circ \pi}&\dTo_{f_{1}\circ \pi} \\
& &R_{2}
\end{diagram}
Similarly it can be seen on similar lines that $\Gamma_{2}\subset\Gamma_{3}\subset\Gamma_{4}\subset\ldots$ Thus we get an increasing union of covering groups $\Gamma_{1}\subset\Gamma_{2}\subset\Gamma_{3}\subset\ldots$
Let
\begin{equation}\label{eq11}
\Gamma_{\ity} = \bigcup_{i=1}^{\ity}\Gamma_{i}
\end{equation}
As the covering maps are unbranched, therefore $\Gamma_{\ity}$ contains no elliptics. Since one of the $R_i$ is non-elementary, therefore $\Gamma_{\ity}$ is non-abelian$\;(\Gamma_{i}\subset\Gamma_{\ity})$. We know that a subgroup $\Gamma$ of $PSL(2,\mathbb R)$ is either elementary, contains elliptics or is discrete (see~\cite{katok}), and therefore $\Gamma_{\ity}$ is a discrete group. As the direct limit of Riemann surfaces is a Riemann surface, therefore $R^\ity$ is a Riemann surface having $\D$ as its universal covering surface and $\Gamma_{\ity}$ as fundamental group. Hence $\D\diagup \Gamma_{\ity}\cong R^\ity$ represents the direct limit.\\
If the fundamental group of $R^\ity$\;viz.$\;\Gamma_{\ity}$ is not finitely generated, then the assertion gets proved.\\
 So suppose that $\Gamma_{\ity}$  is finitely generated. Then $\exists\;m_0\in\mathbb N$ such that $\Gamma_{\ity}=\Gamma_n\;\forall\;n>m_0.$ Now  $\D\diagup \Gamma_{n}\cong R_n,\;\D\diagup \Gamma_{n+1}\cong R_{n+1},\ldots,$  implies that $R_n\cong R_{n+1},\;R_{n+1}\cong R_{n+2},\ldots$  Hence eventually the $f_i$ are isomorphisms.
\end{proof}
\section{ The case of the wandering annulus}\label{ch2,sec4}
Let $A_0$ be a stable region (Fatou component) of the rational map ${\R}$ which is an annulus. Define $A_{n+1} = {\R}(A_n),\, n= 0,1,2,\ldots$  We assume ${\R} : A_n \rightarrow A_{n+1}$ is a covering map and  the degree of ${\R} : A_n\rightarrow A_{n+1}$ is $d_n$. 
\begin{remark}\label{2.4.1}
 The image of an annulus $A_0$ under a covering map is again an annulus. This follows from the  Riemann\,-\,Hurwitz relation expression (\ref{eq8}).\\
 As $\partial{A_0}$ has two components, therefore $\chi(A_0) = 2 - 2 = 0$ (see Appendix \ref{A}).
If $D$ is a disk then $\chi(D) = 2 - 1 = 1\;$( disk is simply connected therefore $\partial{D}$ has one component). Had the image been a disk, then by Theorem \ref{t0},   $0\geqslant 1$ a contradiction. Thus the image cannot be a disk. Also, the image cannot be any domain with negative Euler characteristic, since it violates expression (\ref{eq8}). Hence the image has to be an annulus only.
\end{remark}
With notations as before we have :
\begin{proposition}\label{2.4.2}
 If the $A_k$ are pairwise disjoint, then for n sufficiently large $d_n = 1$.
\end{proposition}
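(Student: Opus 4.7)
The plan is to exploit the tower of coverings of annuli and convert it to an ascending chain of cyclic covering groups, then use the wandering hypothesis to force the chain to stabilize. First I would record the elementary modulus identity: if $f:A\to B$ is an unbranched covering of degree $d$ between hyperbolic annuli then $\mathrm{mod}(A)=d\cdot\mathrm{mod}(B)$. Applying this to each $R:A_n\to A_{n+1}$ inductively gives
\[
m_0 \;=\; d_0d_1\cdots d_{n-1}\cdot m_n, \qquad m_n:=\mathrm{mod}(A_n),
\]
so the sequence $(m_n)$ is non-increasing, with strict decrease exactly at those $n$ for which $d_n\geqslant 2$.

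Next I would set up the direct-limit picture of Section~\ref{ch2,sec3} for the sequence $A_0\xrightarrow{R}A_1\xrightarrow{R}A_2\xrightarrow{R}\cdots$, which consists of unbranched coverings of hyperbolic Riemann surfaces. Lifting to the common universal cover $\mathbb{H}$, the fundamental groups $\Gamma_n=\pi_1(A_n)\cong\mathbb Z$ sit inside $\mathrm{Aut}(\mathbb H)$ and fit into an ascending chain $\Gamma_0\subset\Gamma_1\subset\Gamma_2\subset\cdots$ with $[\Gamma_{n+1}:\Gamma_n]=d_n$. Writing $\Gamma_n=\langle\lambda_n\rangle$ with $\lambda_n$ loxodromic, one has $\lambda_n=\lambda_{n+1}^{d_n}$, so all $\lambda_n$ lie in a single one-parameter loxodromic subgroup with common axis, and the translation length of $\lambda_n$ equals $\pi/m_n$ (up to a normalization constant).

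I would then argue that the direct limit $A^{\infty}$ of the system $\{A_n,R\}$ exists as a Riemann surface: this is where the wandering hypothesis enters. Because the $A_k$ are pairwise disjoint components of the Fatou set of the single rational map $R$, they sit simultaneously inside the finite-area sphere $\tilde{\mathbb C}$ and are identified consistently by the dynamics, providing the compatible system of surjections $\pi_n:A_n\to A^{\infty}$ required by Section~\ref{ch2,sec3}. Once $A^{\infty}$ exists as a Riemann surface, its covering group $\Gamma_\infty=\bigcup_n\Gamma_n$ must be a discrete subgroup of $\mathrm{Aut}(\mathbb H)$. A discrete abelian loxodromic subgroup of $\mathrm{Aut}(\mathbb H)$ is infinite cyclic, hence finitely generated, so the ascending union $\bigcup_n\Gamma_n$ must terminate: there is $n_0$ with $\Gamma_n=\Gamma_\infty$ for all $n\geqslant n_0$, and then $d_n=[\Gamma_{n+1}:\Gamma_n]=1$, which is exactly the conclusion of the proposition.

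The main obstacle will be step three: verifying that the direct-limit Riemann surface $A^{\infty}$ genuinely exists and that its covering group is discrete. Proposition~\ref{2.3.6} is stated under the non-elementary hypothesis, and its proof used the fact that a non-elementary torsion-free subgroup of $\mathrm{PSL}(2,\mathbb R)$ is automatically discrete, a dichotomy which fails for the abelian groups $\Gamma_n$ arising here. Establishing discreteness therefore requires a genuinely dynamical input; the natural one is the disjointness of the $A_k$ in $\tilde{\mathbb C}$, which should preclude the scenario where $\lambda_n\to\mathrm{Id}$ (equivalently $m_n\to 0$) by showing that arbitrarily thin disjoint wandering annuli would accumulate onto a limit object that cannot coexist with the finite-dimensional structure of $R$.
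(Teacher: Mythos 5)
Your reduction of the proposition to a statement about the tower of cyclic covering groups is a legitimately different way to set up the problem from the paper's, but as written it contains one computational error and, more importantly, leaves the decisive step unproved. First, the modulus identity is reversed: for an unbranched degree-$d$ covering $f:A\to B$ of annuli (model: $z\mapsto z^d$ on round annuli) one has $\mathrm{mod}(B)=d\cdot\mathrm{mod}(A)$, not $\mathrm{mod}(A)=d\cdot\mathrm{mod}(B)$. Hence $m_n=d_0d_1\cdots d_{n-1}\,m_0$ is \emph{non-decreasing} and tends to $\ity$ when infinitely many $d_n\geqslant 2$, and the translation lengths of the generators $\lambda_n$ tend to $0$. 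The degeneration you must exclude is therefore that of arbitrarily \emph{thick} wandering annuli (equivalently $\lambda_n\to \mathrm{Id}$, $m_n\to\ity$), not "arbitrarily thin" ones; your closing heuristic is aimed at the wrong limit.

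The genuine gap is the one you yourself flag in your last paragraph and then do not close. Your argument needs the discreteness of $\Gamma_\ity=\bigcup_n\Gamma_n$, and Proposition \ref{2.3.6} cannot supply it because these groups are elementary (cyclic), which is exactly why the wandering-annulus case is treated separately in the paper. Saying that disjointness of the $A_k$ "should preclude" the degenerate scenario is not a proof; excluding that scenario \emph{is} the entire content of the proposition. The paper's argument is precisely the missing dynamical input: equicontinuity of $\{\R^n\}$ on the wandering component bounds the spherical length of the image curves $\R^n(S)$ of a core curve $S\subset A_0$, while their winding number in $A_n$ is $d_0d_1\cdots d_{n-1}$; if this product were unbounded, one complementary component $C_n$ of $A_n$ would be forced to have spherical diameter tending to $0$, and then the boundedness of $|\R'|$ on $\ti\C$ (the "non-explosion" property) makes a suitable subfamily $\{\R^{kn}\}$ equicontinuous at a point of the small boundary component — a point which lies in $\J$ by Corollary \ref{1.5.8} and Theorem \ref{1.3.19} — giving $\F\cap\J\neq\emptyset$, a contradiction. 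Some substitute for these steps (for instance: disjoint annuli in $\ti\C$ have uniformly bounded total spherical area, yet an essential annulus of modulus $m_n\to\ity$ forces a complementary component to shrink, followed by the same non-explosion contradiction) must be supplied before your group-theoretic framework yields the proposition.
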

\begin{proof}
 We shall prove it by a contradiction. The proof is spread into 4 parts.
\begin{enumerate}
\item[(i)] Let $n_1<n_2<\ldots,$ be a sequence of indices so that $d_{{n_{i}}}>1$. Since $\R : A_{{n_{i}}}\rightarrow A_{{n_{i+1}}}$ is a covering map, therefore there are no critical points of $\R$ in $A_{n_{i}}$ and so  the critical points lie in the components of the complement of $A_{{n_{1}}},A_{{n_{2}}},\ldots$ Since the $A_{{{n_{i}}}}$'s are pairwise disjoint, therefore the complement of say $A_{{n_{1}}},$ contains $A_{{n_{2}}},A_{{n_{3}}},\ldots,$ which are infinitely many. Similarly, the complement of $A_{{n_{2}}}$ contains $A_{{n_{1}}},A_{{n_{3}}},\ldots,$ and  so on. From Theorem \ref{1.5.21}, the number of critical points of $\R$ is atmost $2\,deg\,\R-2,$ so there is one critical point of $\R$ around which infinitely many of the $ A_{{n_{i}}}$  nests.\\
Recall that if $X$ is a connected space and $A\subset X,$ we say that $A$ separates $X$ if $X\smallsetminus A$ is not connected.
\item[(ii)]  Let $S$ be a smooth closed curve imbedded in the annulus $A_0$ which separates the boundary components of $A_0.$ Since $S\subset A_0$ and $A_0$ is a Fatou component, therefore $(\R^n\vert_S)$ forms an equicontinuous family and hence the arc length of $\R^{n}(S)$ is bounded in the spherical metric ( for otherwise $\R^{n}(S)$ need no longer be equicontinuous). One knows from topology that the winding number of a closed curve under a covering map from one space onto the other is the degree of the map. Since a covering map is a local homeomorphism, therefore it is a closed map, and hence $\R^{n}(S)$ is closed in  $A_n$. So the winding number of $\R^{n}(S)$ in $A_n$ is the product $d_0\,d_1\ldots d_{n-1}.$
\[
\begin{CD}
\ A_0 @>\R>> \ A_1 @>\R>> \ A_{2} @>\R>>\cdots\ A_{n-1} @>\R>> \ A_{n}
\end{CD}
\]
\[
\begin{CD}
\ A_0 @>\R^{n} >> \ A_{n}
\end{CD}
\]
Thus the winding number becomes arbitrarily large with n. In order that the arc length of $\R^{n}(S)$ be bounded in  $A_n$, one component of the complement of  $A_n$ must have arbitrarily small spherical diameter for sufficiently large n.
\item[(iii)]  One knows (see~\cite{dieudonne}) from Differential Calculus that $\R'(z)$ is a continuous linear mapping of $\ti{\C},$  and so $\R'(z)$ is bounded on $\ti{\C}.$ Thus $\, \exists \, M>0$ such that $|\R'(z)|\leqslant M.$ Let ${C_n}$ be the small complementary component of $A_n$ having arbitrarily small spherical diameter for sufficiently large n. We shall see that for this large n, ${C_n}$ maps under $\R$ to the small complementary component ${C_{n+1}}$ of $A_{n+1}$.\\
 Let $z_0\in C_n$. Since $\R$ is analytic at $z_0,$ so for some neighbourhood $D$ of $z_0, \R$ has a Taylor expansion
$\R(z)=\R(z_0)+\R'(z_0)(z-z_0)\;(\text {neglecting higher order terms})$
and so $|\R(z) - \R(z_0)| =|\R'(z_0)||z - z_0|$
\begin{equation}\begin{split}\notag
&\leqslant M|z-z_0|\\
&\leqslant M\sup_{z,z_0 \in C_n}|z - z_0|\\
&\leqslant M \frac{\epsilon}{M}\;\;(\text{diameter of }C_{n}\text{ is arbitrarily small,} \text{\;so it can be made less than }\frac{\epsilon}{M})\\
&\leqslant \epsilon.
\end{split}
\end{equation}
$\Rightarrow\sup_{\xi\in C_{n+1}}|\R(z) - \R(z_0)|\leqslant \epsilon$ ( any point $\xi \in  C_{n+1}$ is of the form $\R(z)$ for some $z\in C_{n}$).
Thus, for $n$ sufficiently large, the image of ${C_n} $ under ${\R}$ does not explode onto the sphere. This property will be called non-explosion property.
\item[(iv)]  Now some power $k$ of $\R$ carries one member $A_{n_{0}}$ of the nested family of annuli described in (i) to some other member $A_m$ with the non-explosion property of (iii). Let $z_0$ be a point on the small boundary component $C_{n_{0}}$ of $A_{n_{0}}.$ By a consequence of Corollary \ref{1.5.8} $C_{n_{0}}\subset J(\R^{k}) = {\J}$ (by Theorem \ref{1.3.19}) and so $z_0 \in {\J}$.
Also $\{\R^{{k}{n}}:\; n\in \mathbb{N}\}$ is an equicontinuous family at  $z_0$ (by  non-explosion property), therefore $z_0 \in F(\R^k) = {\F}$ (by Theorem \ref{1.3.19})
and so $z_0 \in {\F}\cap {\J}$, a contradiction. The contradiction arose because we had assumed that $\exists\,$ a sequence of indices $n_1<n_2<\ldots,$ with $d_{n_{i}}>1.$   Therefore  for $n$ sufficiently large $d_n = 1$ and this completes the proof.
\end{enumerate}
\end{proof}
\section{Riemann surfaces of infinite topological type}\label{ch2,sec5}
Let $X$ be a hyperbolic Riemann surface provided with its complete hyperbolic\,   (Poinca-r$\acute{e}$) metric (see Appendix \ref{A}). \emph{Geodesics} in $X$ are the shortest curves with respect to this metric. Equivalently, a geodesic is a curve which locally minimizes distance between pair of points.
We illustrate with examples.
\begin{example}
 Let $\mathbb H$ denotes the upper half plane. The geodesics in $\mathbb H$ are \emph{semicircles} and \emph{straight lines} orthogonal to the real axis $\mathbb R.$
\end{example}
\begin{example} 
Geodesics in the Riemann sphere $\ti\C$ are \emph{great circles}.
\end{example}

\begin{definition}
A subset $S\subset X$ is called \emph{geodesically convex} if any two of its points can be joined by a unique geodesic lying in $S.$
\end{definition}
Consider a maximal collection $\mathcal C$ of disjoint simple closed geodesics in $X.$ It is known that the closure of each component of $X\smallsetminus\mathcal C$ is geodesically convex and is conformally equivalent to sphere minus
\begin{enumerate}
\item\ two disks :\qquad \emph{``\,funnel"}
\item\ three disks :\qquad \emph{``\,pair of pants"}
\item\ two disks and one point :\qquad \emph{``\,degenerated pair of pants"}
\item\ one disk and two points :\qquad \emph{``\,degenerated pair of pants"}
\item\ three points :\qquad \emph{``\,degenerated pair of pants"}
\end{enumerate}
The basic idea behind this decomposition into five basic pieces is that whenever some component of $X\smallsetminus\mathcal C$ is not conformal to one of the basic pieces, then one could add a new simple closed geodesic to the collection $\mathcal C$, which contradicts the maximality of $\mathcal C$. 
\begin{definition}
A hyperbolic Riemann surface is of \emph{infinite topological type} if and only if $\mathcal C$ is infinite.\\
When one wants to recover the conformal structure of $X$ from its decomposition into the five basic pieces, it is sufficient to give the hyperbolic lengths of the boundary curves of each piece and the only condition required to build the hyperbolic surface is the equality of the lengths of curves to be glued together and a twist parameter at each gluing curve.\\
As a consequence of this we state the following proposition :
\end{definition}
\begin{proposition}\label{2.5.5}
If $X$ is a hyperbolic Riemann surface of infinite topological type, then the space of deformations of the hyperbolic structures of $X$ is infinite dimensional.
\end{proposition}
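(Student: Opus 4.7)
The plan is to exploit the basic-pieces decomposition described just before the proposition and count the independent real parameters (Fenchel--Nielsen coordinates) that it produces. The key observation, already flagged in the paragraph preceding the statement, is that once a maximal disjoint family $\mathcal{C}$ of simple closed geodesics in $X$ is fixed, the conformal (equivalently hyperbolic) structure on $X$ is completely recovered from two pieces of data attached to each curve $\gamma\in\mathcal{C}$: the hyperbolic length $\ell(\gamma)$ of $\gamma$, and a twist parameter $\tau(\gamma)$ prescribing how the two boundary circles are identified when the adjacent basic pieces are glued. The only compatibility constraint is that two boundary curves to be glued must have matching lengths, which is automatic once we assign a single length to each $\gamma\in\mathcal{C}$.

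First I would invoke the hypothesis that $X$ has infinite topological type; by the definition given just before the proposition, this is equivalent to $\mathcal{C}$ being infinite. Next I would enumerate $\mathcal{C}=\{\gamma_1,\gamma_2,\ldots\}$ and, for each $n$, describe a one-parameter family of hyperbolic deformations of $X$ that changes $\ell(\gamma_n)$ (or $\tau(\gamma_n)$) while leaving all other length and twist parameters fixed; concretely one performs a Fenchel--Nielsen length-twist deformation supported in a collar neighbourhood of $\gamma_n$, so the deformations associated with different curves are independent. Since these are carried out inside the basic pieces that are conformally standard (sphere minus up to three disks/points), each deformation is a genuine change of the hyperbolic structure and not a conformal automorphism.

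To finish, I would argue that the resulting map $(\ell(\gamma_n),\tau(\gamma_n))_{n\in\mathbb{N}}$ from the space of hyperbolic structures on $X$ into $\mathbb{R}^{\mathbb{N}}\times\mathbb{R}^{\mathbb{N}}$ is locally injective: two hyperbolic structures producing the same Fenchel--Nielsen data must glue to isometric surfaces. Because the image contains an open set in a space modelled on $\mathbb{R}^{\mathbb{N}}$ (or at least contains linearly independent one-parameter families indexed by $\mathbb{N}$), the deformation space cannot have finite dimension, which is the conclusion.

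The main obstacle is not the counting of parameters, which is essentially a bookkeeping exercise once the decomposition into basic pieces is available, but justifying that the length and twist deformations along distinct $\gamma_n$ are genuinely linearly independent directions in the deformation space; this requires the locality of the Fenchel--Nielsen construction (deformations are supported in arbitrarily thin collars around each $\gamma_n$ and so do not interact) together with the fact that on each basic piece there are no nontrivial conformal automorphisms that could absorb these changes. Once that locality is in hand, infinite-dimensionality follows directly from $|\mathcal{C}|=\infty$.
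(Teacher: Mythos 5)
Your proposal is correct and follows essentially the same route as the paper, which derives the proposition from the decomposition into basic pieces and the length/twist (Fenchel--Nielsen) parameters attached to each curve of the infinite maximal family $\mathcal{C}$; the paper itself only sketches this and defers the details to Thurston. Your additional attention to the independence and local injectivity of the length--twist deformations supplies precisely the justification the paper leaves to the reference.
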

For a more detailed discussion one can refer \textbf{Thurston} ~\cite{thurston}.
\section{The structure of a wandering stable region}\label{ch2,sec6}
Let $U_0$ be a wandering stable region (Fatou component). Let $U_1$ be the image of $U_0$ under ${\R}, U_2$ be the image of $U_1$ under ${\R}$ and so on. As $U_0$ is a wandering stable region, therefore $U_{i}$'s are pairwise disjoint. By Theorem \ref{1.5.21}, ${\R} $ has only finitely many critical points, so let $U_0,U_1,\ldots,U_k$ be the domains containing the critical points. We may discard finitely many of these $U_{i}$'s to eliminate critical points of ${\R} $. After relabling the  $U_{i}$'s, each 
\begin{diagram}
U_{n} &\rTo^{\R} &U_{n+1} 
\end{diagram}
is a finite unbranched covering ( as global degree of ${\R}$ is finite ).
\begin{proposition}\label{2.6.1}
\begin{enumerate} 
Either
\item\ from some n onwards,  $U_{n+i}$ has finite topological type and each 
\begin{diagram}
U_{n+i} &\rTo^{\R} &U_{n+i+1} 
\end{diagram}
is an isomorphism $i\geqslant 0$, or
\item\  the direct limit $U^{\ity} $ of 
\[
\begin{CD}
\ U_0 @>\R>> \ U_1 @>\R>> \ U_{2} @>\R>>\cdots
\end{CD}
\]
exists and has infinite topological type.
\end{enumerate}
\end{proposition}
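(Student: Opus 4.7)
The plan is to reduce Proposition~\ref{2.6.1} to Propositions~\ref{2.3.6} and~\ref{2.4.2} by performing a case analysis on the topological type of the components $U_n$.

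First, each $U_n$ is a hyperbolic Riemann surface since it is a component of $\F$ and hence misses the infinite set $\J$. Being a planar domain, $U_n$ has free fundamental group, so $U_n$ is elementary (abelian $\pi_1$) precisely when it is a disk or an annulus, and is non-elementary otherwise. Next, I would show that the type of $U_n$ is preserved by $\R : U_n \to U_{n+1}$. The Riemann--Hurwitz formula of Theorem~\ref{1.6.6}, applied to this unbranched covering of degree $d_n$, reads $\chi(U_n) = d_n\,\chi(U_{n+1})$. Since a planar domain satisfies $\chi \leqslant 1$, with $\chi = 1$ characterising disks and $\chi = 0$ characterising annuli, the sign and vanishing of $\chi(U_n)$ match those of $\chi(U_{n+1})$. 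Consequently, $U_n$ is a disk (respectively annulus, respectively non-elementary) exactly when $U_{n+1}$ is.

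This produces three mutually exclusive cases. In the disk case, $\chi(U_n) = \chi(U_{n+1}) = 1$ forces $d_n = 1$, so each $\R : U_n \to U_{n+1}$ is already an isomorphism and conclusion~(1) of the proposition holds. In the annulus case, Proposition~\ref{2.4.2} supplies some $N$ with $d_n = 1$ for $n \geqslant N$; since a degree-one unbranched covering is an isomorphism, conclusion~(1) again holds. In the non-elementary case, I would apply Proposition~\ref{2.3.6} to the tail of the sequence beginning at a non-elementary $U_n$: the hypotheses (hyperbolicity of each surface, unbranched coverings, some surface non-elementary) are satisfied, so the direct limit $U^\ity$ exists, and either the $\R : U_n \to U_{n+1}$ are eventually isomorphisms (conclusion~(1)) or $\pi_1(U^\ity)$ is not finitely generated (conclusion~(2)).

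The main obstacle is translating Proposition~\ref{2.3.6}'s algebraic alternative --- that $\pi_1(U^\ity)$ fails to be finitely generated --- into the geometric assertion that $U^\ity$ has infinite topological type in the sense of Section~\ref{ch2,sec5}. One direction is clear: a surface whose maximal collection $\mathcal C$ of disjoint simple closed geodesics is finite admits a finite decomposition into the five basic pieces, and hence has finitely generated $\pi_1$. For the converse, one argues that if $\mathcal C$ is infinite then successively adjoining geodesics of $\mathcal C$ to a growing subsurface produces free generators of $\pi_1$ of arbitrarily large rank, so $\pi_1(U^\ity)$ cannot be finitely generated. With this equivalence in hand, the dichotomy of Proposition~\ref{2.3.6} translates directly into the dichotomy asserted in Proposition~\ref{2.6.1}.
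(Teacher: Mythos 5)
Your proof is correct, but it routes the higher-connectivity case differently from the paper. The disk and annulus cases coincide with the paper's subcases (i) and (ii): Riemann--Hurwitz forces $d_n=1$ for disks, and Proposition \ref{2.4.2} handles annuli. Where you diverge is in everything beyond connectivity two. The paper continues the case-by-case Riemann--Hurwitz analysis (its ``genus $g>1$'' subcase), arguing that the covering degree must be $1$ because ``boundary goes to boundary''; this only treats surfaces of finite type one connectivity at a time, and the degree argument there is in fact shaky --- a pair of pants does admit a connected degree-two unbranched cover by a four-holed sphere, so counting boundary components does not by itself exclude $k=2$. You instead collapse all remaining cases into ``non-elementary'' (correctly, since a planar hyperbolic domain has free $\pi_1$, abelian exactly for disks and annuli) and invoke Proposition \ref{2.3.6}, whose hypotheses are satisfied after the finitely many domains containing critical values have been discarded. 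This buys you two things the paper's argument lacks: it covers the case where some $U_n$ already has infinitely generated $\pi_1$, which the paper's finite-genus enumeration never reaches, and it makes explicit the bridge --- which the paper silently assumes --- between the algebraic alternative of Proposition \ref{2.3.6} ($\pi_1(U^\ity)$ not finitely generated) and the geometric conclusion of the present proposition ($U^\ity$ of infinite topological type). Your sketch of that equivalence is adequate at the paper's level of rigour, though the converse direction is cleaner if phrased as: the incompressible subsurfaces obtained by adjoining geodesics of $\mathcal C$ give a strictly ascending chain of free subgroups exhausting $\pi_1$, and a finitely generated group cannot be such a union. One small omission: in the non-elementary case, conclusion (1) also demands that the $U_{n+i}$ have \emph{finite topological type}; you should note that when alternative (2) fails, $\Gamma_\ity$ is finitely generated, hence so is each $\Gamma_n=\Gamma_\ity$ for large $n$, and your equivalence then delivers finite type. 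With that sentence added the argument is complete.
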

\begin{proof}
 If the direct limit $U^{\ity} $ of
\[
\begin{CD}
\ U_0 @>\R>> \ U_1 @>\R>> \ U_{2} @>\R>>\cdots
\end{CD}
\]
exists and has infinite topological type, then the assertion gets proved.\\
So suppose this is not the case. We shall see that the other case holds. The proof is divided into 3 subcases.
\begin{enumerate}
\item[(i)] Suppose some $U_n$ is simply connected. 
\begin{diagram}
U_{n} &\rTo^{\R} &U_{n+1} 
\end{diagram}
From Riemann\,-\,Hurwitz relation (\ref{eq8}), we have $\chi(U_n) + \delta_{\R}(U_n) = m\,\chi(U_{n+1})$, where $m\geqslant 1$ is a finite positive integer (as $ deg\,\R$ is finite). Then $\chi(U_n)=2-1=1$. Since ${\R}$ is an unbranched covering map, therefore $\delta_{\R}(U_n) = 0$ and so $1=m\,\chi(U_{n+1})$. This is possible if and only if m=1 and $\chi(U_{n+1})=1$. So $U_{n+1}$ is simply connected. 
Since $m=1,$ so ${\R}$ is injective. Hence
\begin{diagram}
U_{n} &\rTo^{\R} &U_{n+1} 
\end{diagram}
is an isomorphism.\\
Similarly, it can be seen that all subsequent $U_{n+i},\,i\geqslant 0$ are simply connected and 
\begin{diagram}
U_{n+i} &\rTo^{\R} &U_{n+i+1} 
\end{diagram}
is an isomorphism.
\item[(ii)]  Suppose some $U_n$ is an annulus. From Riemann\,-\,Hurwitz relation (\ref{eq8}), we have $\chi(U_n) + \delta_{\R}(U_n) =k\,\chi(U_{n+1})$, where $k\geqslant 1$ is a finite positive integer ($deg\,\R$ is finite). Then $\chi(U_n)=2-2=0.$ Since ${\R}$ is an unbranched covering map, therefore $\delta_{\R}(U_n) = 0$ and so
$ 0=k\,\chi(U_{n+1})$. This is possible if and only if $\chi(U_{n+1})=0$ which implies that $U_{n+1}$ is also an annulus.\\
On a similar pattern, it can be seen that all subsequent $ U_{n+i},\, i\geqslant 0$ are annuli.\\
As seen in Section \ref{ch2,sec4}, we get that, eventually the maps 
\begin{diagram}
U_{n+i} &\rTo^{\R} &U_{n+i+1} 
\end{diagram}
are isomorphisms.\\
One knows (see ~\cite{hubbard}) that a hyperbolic Riemann surface is of finite type if and only if it is either of genus 0 with atleast 3 points removed, or of genus 1 with at least 1 point removed, or of genus more than 1 with any finite number of points removed.\\
In subcase (i), we had seen that from some $ n$ onwards, the Fatou components $U_{n+i},\, i\geqslant 0$ are simply connected and hence of genus 0. Also each $U_{n+i},\,i\geqslant 0$ misses atleast 3 points  ( as ${\J}$ contains atleast 3 points). Therefore $U_{n+i},\, i\geqslant 0$ has finite topological type.\\
In subcase (ii) we had seen that from some $ n$ onwards, the Fatou components $U_{n+i},\, i\geqslant 0$ are annuli and hence of genus 1. Also each of  $U_{n+i},\, i\geqslant 0$ misses atleast 1 point (infact atleast 3 points), therefore $U_{n+i},\, i\geqslant 0$ has finite topological type.
\item[(iii)] In this case we consider Fatou components of genus $g > 1$. We will consider the case $g =2$ and the case $g>2$ will follow by induction.\\
 We shall see that a genus 2 surface is mapped to a genus 2 surface under a covering map.\\
Let $U_n$ be of genus 2. So $c(U_n)=4$ ( there are 4 components of $\partial{U_n},$ the boundary of $U_n$) and so $\chi(U_n)=2-4= -2$. From Riemann\,-\,Hurwitz relation (\ref{eq8}), $\chi(U_n) + \delta_{\R}(U_n) =k\,\chi(U_{n+1})$, for some $k\geqslant 1,$ 
$\;\Rightarrow -2 = k\,\chi(U_{n+1})$ and so the only possibility for $k$ is 1 or 2.\\
If $k=1,$ then $\chi(U_{n+1})= -2$ and this proves our assertion.\\
Suppose $k=2,\;\Rightarrow \chi(U_{n+1})= -1,\;\Rightarrow 2 - c(U_{n+1})= -1,\;\Rightarrow c(U_{n+1})=3$ and so the number of components of $\partial{U_{n+1}}=3.$ Since $\R$ is a covering map which is a local homeomorphism, therefore ${\R}$ is  conformal. But under a conformal map boundary goes to boundary, and as $\partial{U_{n}}$ has 4 components, we arrive at a contradiction. Hence $k=1$ and  $\chi(U_{n+1})= -2$. So
\begin{diagram}
U_{n} &\rTo^{\R} &U_{n+1} 
\end{diagram}
is an isomorphism. Proceeding on similar lines we get
\begin{diagram}
U_{n+i} &\rTo^{\R} &U_{n+i+1} 
\end{diagram}
is an isomorphism and $U_{n+i}, \,i\geqslant 0$ has finite topological type.
\end{enumerate}
\end{proof}
\section{Rigidity of conjugacy on the Julia set}\label{ch2,sec7}
Let ${\R}$ be a rational mapping. Consider the set 
\begin{equation}\label{eq11}
C_{\R}= \{f : {\J}\rightarrow {\J}\; | \; f\,\text{is a homeomorphism}\;\text{and}\; f\circ {\R}={\R}\circ f\}.
\end{equation}
 $ f\circ {\R}$ is well defined, since ${\J}$ is completely invariant under $\R,$ therefore ${\R}\,({\J})={\J}$.\\ 
We assert that $C_{\R}$ is a subgroup under composition of mappings.\\
Since Identity, $I\in C_{\R}\; \Rightarrow C_{\R}\neq\emptyset$.
Let $f,g\in C_{\R},\;\Rightarrow \; f\circ{\R}={\R}\circ f.$\\
To show : $f\circ g^{-1}\in C_{\R}.$\\
Since compose of two homeomorphisms is again a homeomorphism, therefore $f\circ g^{-1}$ is a  homeomorphism. Also 
\begin{equation}
\begin{split}
\notag
(f\circ g^{-1})\circ {\R}
&=f\circ(g^{-1}\circ {\R})\\
&=f\circ({\R}\circ{g^{-1}})\; (g\;\text{ is a homeomorphism of}\; {\J}\;\text{ so is}\; g^{-1}) \\
&=({\R}\circ f)\circ{g^{-1}}\\
&={\R}\circ(f\circ g^{-1})
\end{split}
\end{equation}
$\Rightarrow f\circ g^{-1}$ commutes with ${\R}$. Hence $ f\circ g^{-1} \in C_{\R}$. Thus $ C_{\R}$ is a subgroup and this proves our assertion.\\
Let G be the group of homeomorphisms of ${\J}$ with the compact open topology. G then becomes a topological group, for the mapping $f\circ g\to fg^{-1}$ is continuous \;$\forall \;f, g\in G$.\\
We show  that with the compact open topology on G,  $ C_{\R}$ is a closed subspace of G.\\
Recall that (see ~\cite{munkres}) if X is a space and (Y,d) is a metric space, then the function space $Y^X$ has the following inclusion of topologies :
\begin{equation}
\emph{Uniform}\supset \emph{Compact convergence}\supset \emph{Pointwise convergence}.
\end{equation}
 If X is compact the first two coincide, and if X is discrete, the second two coincide. Also, on $\mathcal C(X,Y),$ the set of all continuous maps from X to Y, the compact open topology, and the topology of compact convergence coincide. 
Since ${\J}$ is a compact space therefore the uniform topology on G coincides with the topology of compact convergence which further coincides with compact  open topology.\\ Let $(f_{n})$ be a sequence in $ C_{\R}$ with $f_{n}\rightarrow f$. Since uniform limit of a sequence of continuous functions is continuous, so $f$ is continuous. Since G is a topological group, therefore  $x\rightarrow x^{-1}$ is continuous, and so $f\rightarrow f^{-1}$ is continuous. Hence $f$ is a homeomorphism. Also $f_n \circ {\R}={\R}\circ f_n \; \forall\; n\in \mathbb{N},\;\Rightarrow \lim_{n\rightarrow {\ity}}(f_n \circ {\R})=\lim_{n\rightarrow{\ity}}({\R}\circ f_n)$, so that by continuity we get 
$(\lim_{n\rightarrow{\ity}}f_n)\circ {\R}={\R}(\lim_{n\rightarrow{\ity}}f_n),$
$\;\Rightarrow f\circ{\R}={\R}\circ f$. 
Thus we have shown that $f$ is a homeomorphism of ${\J}$ and commutes with ${\R}$. Hence $f\in C_{\R}$. Thus in the compact open topology on G, $ C_{\R}$ is a closed subspace.
\begin{proposition}\label{2.7.1}
$ C_{\R}\;$ (with the induced compact open topology) admits a continuous injective homomorphism into a totally disconnected Cantor group (an inverse limit of finite groups).
\end{proposition}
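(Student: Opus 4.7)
The plan is to exploit the density of periodic points in $\J$ (Theorem \ref{1.5.15} and Remark \ref{1.5.16}) by making $C_{\R}$ act on this countable set, filtered by finite invariant subsets. For each positive integer $n$, set $F_n = \{z \in \J : \R^n(z) = z\}$. Each $F_n$ is finite (as the fixed points of a rational map form a finite set), and $F_m \subset F_n$ whenever $m \mid n$; moreover the union $F = \bigcup_n F_n$ is dense in $\J$.

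The key observation is that any $f \in C_{\R}$ restricts to a permutation of each $F_n$. Indeed, iterating $f \circ \R = \R \circ f$ yields $f \circ \R^n = \R^n \circ f$, so $z \in F_n$ implies $\R^n(f(z)) = f(\R^n(z)) = f(z)$, whence $f(z) \in F_n$; since $f$ is a homeomorphism of $\J$, it bijects $F_n$ to itself. This produces group homomorphisms $\phi_n : C_{\R} \to \mathrm{Sym}(F_n)$ into the finite symmetric groups, and these are compatible with the natural restriction maps $\mathrm{Sym}(F_n) \to \mathrm{Sym}(F_m)$ for $m \mid n$ (which are well-defined on the images of $\phi_n$ because those permutations preserve $F_m$ setwise). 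Passing to the inverse limit gives a single group homomorphism
\[
\phi : C_{\R} \longrightarrow \varprojlim_{n} \mathrm{Sym}(F_n),
\]
whose target is a Cantor group in the sense of Definition \ref{d10}.

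Injectivity is immediate from the density of $F$: if $\phi(f) = \phi(g)$, then $f$ and $g$ agree on every $F_n$ and hence on the dense set $F \subset \J$, so continuity forces $f = g$ on all of $\J$. For continuity, recall that the compact open topology on $C_{\R}$ coincides with uniform convergence since $\J$ is compact, while the inverse limit carries the topology induced by the discrete topologies on each finite $\mathrm{Sym}(F_n)$. It therefore suffices to show each $\phi_n$ is continuous. Set $\delta_n = \min\{\sigma(z,w) : z, w \in F_n,\, z \neq w\} > 0$, which is strictly positive since $F_n$ is finite. If $f_k \to f$ uniformly on $\J$, then for all sufficiently large $k$ we have $\sigma(f_k(z), f(z)) < \delta_n / 2$ for every $z \in F_n$; since $f_k(z) \in F_n$ and the only element of $F_n$ within distance $\delta_n / 2$ of $f(z)$ is $f(z)$ itself, we conclude $f_k(z) = f(z)$, so $\phi_n(f_k) = \phi_n(f)$ eventually. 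Thus $\phi_n$, and hence $\phi$, is continuous.

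The conceptual core is identifying the correct invariant filtration $\{F_n\}$ on which elements of $C_{\R}$ act through finite symmetric groups; once this is in place, the three properties (homomorphism, injectivity, continuity) fall out of density of periodic points, the commutation relation, and the compactness of $\J$ respectively. No serious technical obstacle arises beyond assembling these standard observations and matching the two topologies correctly.
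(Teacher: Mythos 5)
Your construction is correct in outline and follows the same skeleton as the paper's proof: restrict elements of $C_{\R}$ to an exhausting family of finite $\R$-invariant subsets of $\J$, map into the inverse limit of the resulting finite permutation groups, get injectivity from density of the union, and continuity from finiteness of each level together with uniform convergence on the compact set $\J$. The difference is the choice of filtration. You use $F_n=\{z\in\J : \R^n(z)=z\}$ ordered by divisibility, whereas the paper fixes one finite periodic set $A_1\subset\J$ and takes the backward-orbit filtration $A_{n+1}=\R^{-1}(A_n)\cup A_n$, whose union is dense in $\J$ directly by Corollary \ref{1.5.11}. Your continuity argument (via the minimal separation $\delta_n$ in each finite $F_n$) is cleaner and more convincing than the one in the paper.

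The one step you should shore up is the density of $F=\bigcup_n F_n$ in $\J$. Theorem \ref{1.5.15} asserts that $\J$ lies in the closure of the periodic points of $\R$ in $\ti\C$; those periodic points need not lie in $\J$ (attracting and superattracting cycles and Siegel centers sit in $\F$), while your $F_n$ consists only of periodic points that do lie in $\J$. So the theorem as cited does not literally give density of $F$ in $\J$. The statement you need is the classical one that the periodic points lying in $\J$ (equivalently, the repelling periodic points) are dense in $\J$; within this paper's toolkit the cheapest repair is either to argue that only finitely many periodic points lie in $\F$ and that deleting a finite set from a dense subset of the perfect set $\J$ (Theorem \ref{1.5.12}) preserves density, or simply to switch to the paper's backward-orbit filtration, for which density is already established. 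Everything else — the invariance $f(F_n)=F_n$ from $f\circ\R^n=\R^n\circ f$, the passage to the inverse limit (noting, as you do, that the restriction maps are only defined on the subgroups of permutations preserving the smaller level), injectivity, and continuity — is sound.
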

\begin{proof}
 Let $A_1$ be a finite set consisting of periodic points in ${\J}$ with lowest possible period (elements in $A_1$ are of same period ). Define by induction $A_{n+1}={\R}^{-1}(A_n)\cup A_n \;\forall\; n\in \mathbb{N}.$ Therefore $A_{n} \subset A_{n+1}\; \forall\; n \in \mathbb{N}.$  Let $g\in C_{\R}$, then $g{\R}={\R}g$.\\
 We observe that $g{A_n}={A_n}$.\\
We first show that $g{A_1}={A_1}.$\\
Let $x_0\in A_1,\;\Rightarrow {\R^{n}{x_0}}=x_0\quad$(say $A_1$ consists of periodic points of period n). Then
\begin{equation}
\begin{split}
\notag
g{x_0}
&=g{\R^{n}{x_0}}\\
&=(g{\R})({\R}^{n-1}{x_0})\\
&={\R}g\,({\R}^{n-1}{x_0})\quad (g{\R}={\R}g)\\
&={\R}\,(g{\R})\,({\R}^{n-2}{x_0})\\
&={\R^2}g\,({\R}^{n-2}{x_0})\\
&=\cdots\\
&= \cdots\\
&=\cdots\\
&={\R^n}g{x_0}\\
\Rightarrow g{x_0}
&\in A_1.
\end{split}
\end{equation}
Since $x_0 \in A_1$ was arbitrary therefore $gA_1\subset A_1.$\\
Conversely,
 we show that $A_1\subset gA_1\; i.e \; g^{-1}(A_1)\subset A_1$ ( $g$ is a homeomorphism).\\
Let $y_0\in g^{-1}(A_1),\;\Rightarrow g(y_0)\in A_1,\;\Rightarrow {\R}^{n}(g(y_0))=g(y_0),\;\Rightarrow g{\R}^{n}(y_0)=g(y_0)\;(\text{since}\;g{\R}^{n}={\R}^{n}g),\;
\Rightarrow {\R}^{n}(y_0)=y_0\;( g\; \text{is a homeomorphism and hence one-one})$
so that $y_0\in A_1.$ Since $ y_0\in g^{-1}(A_1)\; \text{was arbitray},$ therefore $ g^{-1}(A_1)\subset A_1.$ Hence $g{A_1}={A_1}$. By induction we have 
 $g{A_n}={A_n} \;\forall\; n\in \mathbb{N}$, which proves the assertion.\\
 Let $A=\bigcup_{n\in\mathbb N} A_n.$ From Corollary \ref{1.5.11}, the iterated pre-images of any point $z_0\in\J$ is dense in $\J,$ therefore $A$ is dense in $\J.$ Since $A_n$ is finite $\;\forall\; n\in \mathbb{N},$ therefore $g$ is a permutation of  $A_n$. As $g\in  C_{\R}$ was arbitrary, therefore $ C_{\R}$ is a subgroup of the group of permutations of $A_n\;\forall\;n\in\mathbb N.$ Let $B_m$ be the permutation group corresponding to each $A_m$ and let $\mathfrak a=\{B_n :n\in\mathbb N\}$ be the collection of permutation groups. For $m\geqslant n$ we have seen that $A_m\supset A_n.$ Define for  $m\geqslant n$
\begin{equation}
\mu_{{m}{n}} : B_m\to B_n
\end{equation}
as \,  $\mu_{{m}{n}}(f)=f\vert_{A_n}\;\;\forall\;f\in B_m.$\\
 For $l\geqslant m\geqslant n,$ we show that $\mu_{{m}{n}}\mu_{{l}{m}}=\mu_{{l}{n}}.$\\
$\forall\;g\in B_l$
\begin{equation}
\begin{split}
\notag
\mu_{{m}{n}}\mu_{{l}{m}}(g)
&=\mu_{{m}{n}}(g\vert_{A_m})\\
&=(g\vert_{A_m})\vert_{A_n}\\
&=g\vert_{A_n}\quad((g\vert_{A_m})\vert_{A_n}=g\vert_{A_n})\\
&=\mu_{{l}{n}}(g).\\
\Rightarrow \mu_{{m}{n}}\mu_{{l}{m}}
&=\mu_{{l}{n}}.
\end{split}
\end{equation}
Clearly, $\mu_{{n}{n}}=I,\;\forall\;n\in\mathbb N.$ Let $\mathcal F$ be the collection $\{\mu_{{m}{n}}\}$ of all such maps. Thus the pair $\{\mathfrak a,\mathcal F\}$ forms an inverse limit system over the directed set $\mathbb N$ (see Section~\ref{ch2,sec1}).\\
We claim that the inverse limit\;$B_\ity$ is
\begin{equation}
B_\ity=\B\{(f_1,f_2,\ldots) : \forall\;m,n\in\mathbb N,\;m\geqslant n\;\Rightarrow f_m\vert_{A_n}=f_n\B\}.
\end{equation}
Let $\pi_m :\prod_{n\in\mathbb N}  B_n\to B_m$ be the natural projection map. Then 
\begin{equation}
\mu_{{m}{n}}\pi_m(f_1,f_2,\ldots)=f_n,\;\forall\;(f_1,f_2,\ldots)\in\prod_{n\in\mathbb N}  B_n
\end{equation}
Suppose $f_m\vert_{A_n}\neq f_n,\;\Rightarrow\;\forall\;(f_1,f_2,\ldots)\in\prod_{n\in\mathbb N}  B_n$
\begin{equation}
\begin{split}
\notag
\mu_{{m}{n}}\pi_m(f_1,f_2,\ldots)
&=f_m\vert_{A_n}\\
&\neq f_n\\
\Rightarrow \mu_{{m}{n}}\pi_m(f_1,f_2,\ldots)
&\neq f_n\\
\Rightarrow (f_1,f_2,\ldots)
\notin B_\ity.
\end{split}
\end{equation}
Hence $ B_\ity=\B\{(f_1,f_2,\ldots) : \forall\;m,n\in\mathbb N,\;m\geqslant n\;\Rightarrow f_m\vert_{A_n}=f_n\B\}$ and it is the required Cantor group.\\
We now show that there is an injective homomorphism $\phi : C_R\to B_\ity$ which is continuous.\\
 Define
 $\phi : C_\R\to B_\ity$ as 
$\phi(f)=(f_1,f_2,\ldots)\;\;\forall\;f\in C_\R,$ such that $f_n=f\vert_{A_n}\;\forall\;n\in\mathbb N.$ Since $A_n\subset\J\;\forall\;n\in\mathbb N,$ therefore $f_n=f\vert_{A_n}\;\forall\;n\in\mathbb N$ are well defined. Clearly, $(f_1,f_2,\ldots )\in B_\ity$ since $(f_m\vert_{A_n})=(f\vert_{A_m})\vert_{A_n}=f\vert_{A_n}=f_n\;$ for $m\geqslant n.$ Hence $\phi$ is well defined.\\
\underline{$\phi$ is injective}\\
Let $f,g\in C_\R$ such that 
\begin{equation}
\begin{split}
\notag
\phi(f)
&=\phi(g)\\
\Rightarrow (f_1,f_2,\ldots)
&=(g_1,g_2,\ldots)\\
\Rightarrow f\vert_{A_n}
&=g\vert_{A_n}\;\;\forall\;n\in\mathbb N\\
\Rightarrow f\vert_{\cup A_n}
&=g\vert_{\cup A_n}\;\;\forall\;n\in\mathbb N\\
\Rightarrow f\vert_A
&=g\vert_A\quad(A=\cup_{n\in\mathbb N}\, A_n)\\
\Rightarrow f
&=g\quad(A\;\text{is dense in}\;\J\;\text{and}\;f,g\;\text{are continuous}).
\end{split}
\end{equation}
Hence $\phi$ is injective.\\
\underline{$\phi$ is a group homomorphism}\\
$\forall\;f,g\in C_\R$
\begin{equation}
\begin{split}
\notag
\phi(fg)
&=\B((fg)\vert_{A_1},\,(fg)\vert_{A_2}),\ldots\B)\\
&=\B(f\vert_{A_1}g\vert_{A_1},\,f\vert_{A_2}g\vert_{A_2},\ldots\B)\quad((fg)\vert_{A_n}=f\vert_{A_n}g\vert_{A_n})\\
&=\B(f\vert_{A_1},\,f\vert_{A_2},\ldots\B)\B(g\vert_{A_1},\,g\vert_{A_2},\ldots\B)\quad(\text{definition of binary operation in} \;B_\ity)\\
&=\phi(f)\phi(g).
\end{split}
\end{equation}
Hence $\phi$ is a group homomorphism.\\
\underline{$\phi$ is continuous}\\
We show that inverse image under $\phi$ of every closed set in $B_\ity$ is closed in $C_\R.\\
$ Since $B_\ity$ is a totally disconnected space, therefore $\{g\}\;\;\forall\;g\in B_\ity\;$ where $g=(g_1,g_2,\ldots)$ and $g_n=g\vert_{A_n},$ is closed  in $B_\ity$. Since a subspace of a Hausdorff space is Hausdorff, therefore $\J\subset\ti\C$ is Hausdorff. We know that the function space $C(X,Y)$ with the compact  open topology is Hausdorff if $Y$ is so, hence $C_\R=C(\J,\J)$ is Hausdorff. Therefore 1 point sets are closed in $C_\R.$ Since $\phi$ is one-one, therefore $\phi^{-1}(g_1,g_2,\ldots)=\{f\}$ for exactly one $f\in C_\R.$ Hence $\phi$ is continuous, and this completes the proof.
\end{proof} 
\begin{corollary}
 A topological conjugacy between two rational maps is unique on the Julia set upto to an element in a totally disconnected group of homeomorphisms.
\end{corollary}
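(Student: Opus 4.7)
The plan is to reduce the statement directly to Proposition \ref{2.7.1}. Suppose $\R_1$ and $\R_2$ are rational maps and that $h_1, h_2$ are two topological conjugacies between them, so each $h_i$ is a homeomorphism satisfying $h_i \circ \R_1 = \R_2 \circ h_i$; in particular every such $h_i$ carries $J(\R_1)$ homeomorphically onto $J(\R_2)$. I want to show that $h_2 = g \circ h_1$ on $J(\R_1)$ for some element $g$ lying in a totally disconnected group of homeomorphisms.

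The key step will be to form $g := h_2 \circ h_1^{-1} : J(\R_2) \to J(\R_2)$ and verify that it commutes with $\R_2$. Rewriting the conjugacy for $h_1$ as $h_1^{-1} \circ \R_2 = \R_1 \circ h_1^{-1}$, one computes
\[
g \circ \R_2 \;=\; h_2 \circ h_1^{-1} \circ \R_2 \;=\; h_2 \circ \R_1 \circ h_1^{-1} \;=\; \R_2 \circ h_2 \circ h_1^{-1} \;=\; \R_2 \circ g .
\]
Since a composition of homeomorphisms is a homeomorphism, $g$ is a self-homeomorphism of $J(\R_2)$ commuting with $\R_2$, i.e. $g$ lies in the group $C_{\R_2}$ (defined exactly as $C_\R$ was, with $\R_2$ in place of $\R$). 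By construction $h_2 = g \circ h_1$ on $J(\R_1)$, so the ambiguity in the conjugacy, measured on the Julia set, is precisely captured by an element of $C_{\R_2}$.

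Finally, Proposition \ref{2.7.1} applied to $\R_2$ produces a continuous injective homomorphism of $C_{\R_2}$ into an inverse limit of finite permutation groups, and any such Cantor group is totally disconnected (see Definition \ref{d10}). Composing, we see that $g$ lives in a totally disconnected group of homeomorphisms of $J(\R_2)$, which is exactly the content of the corollary. I do not anticipate any serious obstacle here: the only piece of the argument requiring care is the algebraic identity displayed above, while everything else is either immediate from the preceding proposition or from the definition of topological conjugacy.
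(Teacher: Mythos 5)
Your proposal is correct and is essentially the paper's own argument: both form the composite of one conjugacy with the inverse of the other and observe that it commutes with one of the rational maps, hence lies in the centralizer group treated in Proposition \ref{2.7.1}. The only (immaterial) difference is that you take $h_2\circ h_1^{-1}\in C_{\R_2}$ acting on $J(\R_2)$, whereas the paper takes $\psi^{-1}\phi\in C_{\R_1}$ acting on $J(\R_1)$.
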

\begin{proof}
Let $R_1$ and $R_2$ be two rational maps and $\phi$ and $\psi$ be topological conjugacies between them. Therefore $\phi R_{1}\phi^{-1}=R_2$ and $\psi R_1\psi^{-1}=R_2,\;\Rightarrow \phi R_{1}\phi^{-1}=\psi R_1\psi^{-1},\;\Rightarrow \psi^{-1}\phi R_1=R_1\psi^{-1}\phi,\; \Rightarrow \psi^{-1}\phi\in C_{R_1}$ and so $\psi^{-1}\phi=g$ for some $g\in C_{R_1}$ by above theorem, so that $\phi=\psi g$, and this completes the proof.
\end{proof}
\section{Prime ends}\label{ch2,sec8}
Throughout this section $U$ will denote a simply connected domain in $\ti\C$ with boundary $\partial U.$
\begin{definition}
[\textbf{Cross cut}]
A cross cut in $U$ is a simple arc $c$ in $U$ whose end points lie in $\partial U.$
\end{definition}
\begin{example}
 Let W be any bounded subset of $\C$. Let $L$ be a line segment in W and extend it in both directions till it hits $\partial W,$ then $L$ is a cross cut in W.
\end{example}
\begin{definition}\label{d14}
 If $X$ is a connected space and $A\subset X$, $A$ \emph{separates} $X$ if $X\smallsetminus A$ is not connected. $A$ separates $X$ into $n$ components if  $X\smallsetminus A$ has $n$ components.
\end{definition}
\begin{definition}\label{d15}
Let $X$ be a topological space and $\mathfrak C$ be a collection of subsets of $X$. $\mathfrak C$ has \emph{finite intersection property}, if for every finite subcollection $\{C_1,\ldots,C_n\}$ of $\mathfrak C$, $\bigcap_{i=1}^{n}C_i$ is non-empty.
\end{definition}
\begin{example}
 Let $X$ be a topological space and $\mathfrak C$ be a collection of nested closed sets. If each of the sets in this collection is non-empty, then it can seen easily that $\mathfrak C$ has the finite intersection property.
\end{example}
 We now state the following theorems from topology (see ~\cite{munkres}).
\begin{theorem}
[\textbf{Jordan separation theorem}]
Let $C$ be a simple closed curve in $S^2$, the \emph{unit 2 - sphere} in $\mathbb R^3.$ Then $C$ $separates\;S^2.$
\end{theorem}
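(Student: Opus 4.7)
The plan is to establish this classical Jordan separation statement via a reduced Mayer--Vietoris computation, reducing the problem for the simple closed curve $C$ to a non-separation statement for arcs. Identifying $S^2$ with $\ti\C$ via stereographic projection, showing that $C$ separates $S^2$ is equivalent to showing that $\widetilde{H}_0(S^2 \smallsetminus C) \neq 0$, i.e., that $S^2 \smallsetminus C$ fails to be connected.

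First I would decompose $C$ as the union of two arcs $A_1$ and $A_2$ meeting precisely at two common endpoints $p$ and $q$; any choice of two distinct points on $C$ produces such a decomposition. The key preliminary I would invoke is the \emph{non-separation theorem for arcs}: an arc $A \subset S^2$ does not separate $S^2$, i.e., $S^2 \smallsetminus A$ is connected. In fact, by Alexander duality applied to the contractible compactum $A$, one obtains the stronger vanishing $\widetilde{H}_i(S^2 \smallsetminus A) = 0$ for all $i$. This lemma is classical and is usually proved via a Borsuk-style nulhomotopy theorem, namely that every continuous map $A \to S^1$ extends to a map $S^2 \to S^1$, from which non-separation follows by a degree argument.

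Granting the non-separation lemma, I would apply the reduced Mayer--Vietoris sequence to the open cover
\[
S^2 \smallsetminus \{p,q\} \;=\; (S^2 \smallsetminus A_1) \cup (S^2 \smallsetminus A_2),
\]
whose intersection is precisely $S^2 \smallsetminus C$. Since $S^2 \smallsetminus \{p,q\}$ deformation retracts onto a great circle, $\widetilde{H}_1(S^2 \smallsetminus \{p,q\}) \cong \mathbb{Z}$, while the non-separation lemma gives $\widetilde{H}_0(S^2 \smallsetminus A_i) = \widetilde{H}_1(S^2 \smallsetminus A_i) = 0$ for $i=1,2$. The relevant segment
\[
\widetilde{H}_1(S^2 \smallsetminus A_1) \oplus \widetilde{H}_1(S^2 \smallsetminus A_2) \to \widetilde{H}_1(S^2 \smallsetminus \{p,q\}) \to \widetilde{H}_0(S^2 \smallsetminus C) \to \widetilde{H}_0(S^2 \smallsetminus A_1) \oplus \widetilde{H}_0(S^2 \smallsetminus A_2)
\]
then forces $\widetilde{H}_0(S^2 \smallsetminus C) \cong \mathbb{Z}$, so $S^2 \smallsetminus C$ has exactly two components, and in particular $C$ separates $S^2$.

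The hard part will be the non-separation theorem for arcs. Although intuitively obvious, an arc embedded in $S^2$ can be topologically wild (non-tame), so no direct geometric construction of a curve avoiding $A$ is available; the result requires a genuine algebraic-topological argument such as Borsuk's nulhomotopy theorem or Alexander duality to propagate the contractibility of $A$ into a connectedness statement about its complement. Once that lemma is in hand, the remainder of the argument is formal Mayer--Vietoris bookkeeping.
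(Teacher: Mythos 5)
The paper does not prove this statement at all: it is listed among results ``from topology'' and simply cited to Munkres, so there is no in-paper argument to compare against. Your proposal is a correct and standard proof --- it is essentially the Mayer--Vietoris argument one finds in Hatcher: the cover $(S^2\smallsetminus A_1)\cup(S^2\smallsetminus A_2)=S^2\smallsetminus\{p,q\}$ with intersection $S^2\smallsetminus C$ is set up correctly, the homotopy equivalence $S^2\smallsetminus\{p,q\}\simeq S^1$ gives $\widetilde H_1\cong\mathbb Z$, and the exact-sequence bookkeeping is right, yielding the stronger conclusion that $S^2\smallsetminus C$ has exactly two components. You are also right to identify the non-separation theorem for arcs as the genuine content; you leave it as a cited lemma (Alexander duality for the contractible compactum $A$, or Borsuk's nulhomotopy theorem), which is a reasonable division of labour given that wild arcs rule out any naive geometric argument, though strictly speaking your write-up is a reduction to that lemma rather than a self-contained proof. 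For what it is worth, Munkres' own treatment (the source the paper cites) reaches the same conclusion by a different route, using the fundamental group, covering-space theory, and the nulhomotopy lemma rather than homology; your homological version is shorter and gives the component count for free, at the cost of assuming singular homology and Mayer--Vietoris, machinery the dissertation itself never develops.
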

\begin{theorem}\label{2.8.7}
Let $X$ be a topological space. $X$ is compact if and only if for every collection $\mathfrak F$ of closed sets in $X$ having the finite intersection property,\;$\bigcap_{F\in\mathfrak F}F$ is non-empty.
\end{theorem}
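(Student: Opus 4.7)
The plan is to prove this via the standard duality between open covers and collections of closed sets, using De Morgan's laws to swap between the two pictures. The statement is a classical reformulation of compactness, so the real content is bookkeeping rather than any deep idea.

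First I would prove the forward direction. Assume $X$ is compact and let $\mathfrak F$ be a collection of closed subsets of $X$ with the finite intersection property. Suppose for contradiction that $\bigcap_{F\in\mathfrak F}F=\emptyset$. Taking complements via De Morgan, the family $\mathfrak G=\{X\smallsetminus F:F\in\mathfrak F\}$ consists of open sets, and
\[
\bigcup_{F\in\mathfrak F}(X\smallsetminus F)=X\smallsetminus\bigcap_{F\in\mathfrak F}F=X,
\]
so $\mathfrak G$ is an open cover of $X$. By compactness there is a finite subcover $X\smallsetminus F_1,\ldots,X\smallsetminus F_n$, and taking complements once more yields $F_1\cap\cdots\cap F_n=\emptyset$, contradicting the finite intersection property of $\mathfrak F$.

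Next I would prove the converse direction by the same duality, reversed. Assume every collection of closed sets in $X$ with the finite intersection property has nonempty total intersection, and let $\{U_\alpha:\alpha\in\Lambda\}$ be an arbitrary open cover of $X$. Suppose for contradiction that no finite subcollection covers $X$. Consider the family $\mathfrak F=\{X\smallsetminus U_\alpha:\alpha\in\Lambda\}$ of closed sets. For any finite subcollection $X\smallsetminus U_{\alpha_1},\ldots,X\smallsetminus U_{\alpha_n}$, the intersection
\[
\bigcap_{i=1}^{n}(X\smallsetminus U_{\alpha_i})=X\smallsetminus\bigcup_{i=1}^{n}U_{\alpha_i}
\]
is nonempty precisely because no finite subfamily of the $U_\alpha$ covers $X$. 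Thus $\mathfrak F$ has the finite intersection property, so by hypothesis $\bigcap_{\alpha}(X\smallsetminus U_\alpha)\neq\emptyset$. But this intersection equals $X\smallsetminus\bigcup_{\alpha}U_\alpha=\emptyset$ because $\{U_\alpha\}$ covers $X$, a contradiction. Hence every open cover admits a finite subcover, and $X$ is compact.

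There is essentially no obstacle here; the only thing to be careful about is handling the trivial cases (for instance, the empty family of closed sets vacuously has the finite intersection property, and $\bigcap_{F\in\emptyset}F$ is conventionally taken to be $X$, which is nonempty unless $X=\emptyset$, a case that can be treated separately or absorbed into the convention). Both implications are one-line applications of De Morgan combined with a proof by contradiction, so the argument is symmetric and the total write-up should be short.
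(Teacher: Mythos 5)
Your proof is correct and is precisely the standard De Morgan duality argument; the paper itself gives no proof, simply citing Munkres, where this is exactly the argument used. Your remark about the empty family and the convention $\bigcap_{F\in\emptyset}F=X$ is a reasonable bit of care but not needed for the way the theorem is applied later (to a nonempty nested family $\{\overline{B_n}\}$ in Proposition \ref{2.8.13}).
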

\begin{theorem}\label{2.8.8}
Let $X$ be a topological space and let $C_1\supset C_2\supset \ldots$ be a nested sequence of compact connected sets. Then $\bigcap_{n\in\mathbb N} C_n$ is connected.
\end{theorem}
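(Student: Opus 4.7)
The plan is to argue by contradiction using Theorem \ref{2.8.7}, applied to an auxiliary nested family of compact sets. I shall implicitly assume $X$ is Hausdorff (certainly the case in the complex-dynamics setting, where $X \subset \ti\C$), so that compact subsets are closed and disjoint compact subsets of $C_1$ can be separated by disjoint open neighborhoods via normality of compact Hausdorff spaces. The non-trivial case is when every $C_n$ is non-empty, so I shall assume this.

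First I would suppose, for contradiction, that $C := \bigcap_{n \in \mathbb{N}} C_n$ is disconnected, and write $C = A \cup B$ with $A, B$ non-empty, disjoint, and relatively closed in $C$. Since $C$ is closed in the compact space $C_1$, both $A$ and $B$ are compact. Using normality of $C_1$, I would produce disjoint open sets $U, V \subset X$ with $A \subset U$ and $B \subset V$.

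Next, I would introduce the sets $K_n := C_n \setminus (U \cup V)$. Each $K_n$ is closed in the compact set $C_n$, hence compact, and the chain $K_1 \supset K_2 \supset \cdots$ is nested. The crucial step, which I expect to be the main point of the argument, is verifying that every $K_n$ is non-empty: if some $K_n$ were empty, then $C_n \subset U \cup V$ would exhibit $C_n$ as the disjoint union of the two non-empty relatively open sets $C_n \cap U$ and $C_n \cap V$ (non-empty because $A \subset C_n \cap U$ and $B \subset C_n \cap V$), contradicting the connectedness of $C_n$.

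Finally, Theorem \ref{2.8.7} applied to the family $\{K_n\}$ of closed subsets of the compact space $C_1$ guarantees that $\bigcap_n K_n \neq \emptyset$. Any point $x$ in this intersection lies in $\bigcap_n C_n = C \subset U \cup V$, contradicting $x \in K_n \subset C_n \setminus (U \cup V)$. Hence $C$ is connected. The main obstacle, as indicated, is the separation step: it is what forces the Hausdorff (hence normality) hypothesis on the ambient space, and without it the argument breaks down.
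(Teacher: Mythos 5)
The paper does not actually prove this theorem: it is one of three results quoted from Munkres without argument, so there is no in-text proof to compare against. Your proof is correct and is the standard one: separate the two pieces of a putative disconnection of $\bigcap_n C_n$ by disjoint open sets $U,V$ (possible for disjoint compact sets in a Hausdorff space), observe that connectedness of each $C_n$ forces $K_n = C_n\smallsetminus(U\cup V)$ to be non-empty, and then apply the finite-intersection-property characterization of compactness (Theorem \ref{2.8.7}) to the nested compact family $\{K_n\}$ to produce a point of $\bigcap_n C_n$ lying outside $U\cup V$, a contradiction. You are also right to flag the Hausdorff hypothesis: as literally stated for an arbitrary topological space the theorem is false (compact subsets need not be closed, and disjoint compact sets need not be separable by disjoint open sets), but in the paper's only application the sets involved are the $\overline{B_n}\subset\ti\C$, so your added assumption is harmless and in fact necessary for a correct statement.
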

\begin{lemma}\label{2.8.9}
Any cross cut $c$ divides $U$ into two  components.
\end{lemma}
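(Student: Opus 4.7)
I would reduce the assertion to the Jordan curve theorem on $\ti{\C}$. Parametrize the cross cut as $\gamma : [0,1] \to \ti{\C}$ with $\gamma(0), \gamma(1) \in \partial U$, $\gamma((0,1)) \subset U$, and $\gamma$ injective; thus $c = \gamma([0,1])$ is a Jordan arc. Because $U$ is simply connected in $\ti{\C}$, the complement $\ti{\C} \setminus U$ is connected, and one can exhibit a simple arc $c'$ joining $\gamma(0)$ to $\gamma(1)$ whose interior lies in $\ti{\C} \setminus U$ and which meets $c$ only at the two endpoints $\gamma(0)$ and $\gamma(1)$. Then $J := c \cup c'$ is a Jordan curve in $\ti{\C}$, and by the Jordan curve theorem $\ti{\C} \setminus J$ decomposes into two open connected regions $V_1$ and $V_2$.

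Since $c' \cap U = \emptyset$, we get $U \setminus c \subset V_1 \cup V_2$, and the decomposition is disjoint: $U \setminus c = (U \cap V_1) \sqcup (U \cap V_2)$. To conclude I would verify two things. First, each $U \cap V_i$ is nonempty: picking an interior point $\gamma(t_0)$ of $c$ and a small disk around it in $\ti{\C}$, the arc $c$ crosses this disk as a Jordan arc and thus locally separates it into two parts, each of which lies in $U$ (because $\gamma(t_0) \in U$) and each of which must lie in a different component of $\ti{\C} \setminus J$. Second, each $U \cap V_i$ is connected: it is the intersection of the connected open set $V_i$ with the open set $U$, and since $U$ is simply connected any path between two points of $U \cap V_i$ can be chosen to avoid $c$, hence stays in one component of $\ti{\C} \setminus J$, namely $V_i$.

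The main obstacle is the construction of the completing arc $c'$: connectedness of $\ti{\C} \setminus U$ does not by itself yield a \emph{simple} path from $\gamma(0)$ to $\gamma(1)$, so $c'$ has to be produced either by local Schoenflies-type reasoning near the endpoints (pushing a small arc slightly off $c$ into $\ti{\C} \setminus U$ and concatenating with a path in the complement) or by sidestepping the issue entirely. A cleaner alternative is to use the Riemann mapping theorem: conformally map $U$ to the disk $\D$, observe that the open arc $c \cap U$ maps to a simple arc in $\D$ whose two ends accumulate on $\partial \D$ (since preimages of points in $\partial U$ must leave every compact subset of $\D$), and then apply the Jordan curve theorem inside $\D$ by adjoining a complementary arc through $\ti{\C} \setminus \overline{\D}$. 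Either route ultimately reduces the separation claim to the Jordan theorem in a disk-shaped model.
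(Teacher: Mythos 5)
Your reduction target is the same as the paper's -- both arguments close the cross cut up into a Jordan curve and invoke the Jordan separation theorem -- but the closing-up mechanism is different, and it is exactly there that your version breaks. The paper collapses the \emph{entire} boundary $\partial U$ to a single point: it passes to the quotient $\overline{U}/\partial U$, identifies this quotient with $S^2$ via $\overline{\D}/S^1\cong S^2$ and the Riemann map, observes that $c$ with its two endpoints identified becomes a Jordan curve in that quotient sphere, and applies Theorem \ref{2.8.7}'s companion, the Jordan separation theorem. You instead try to join the two endpoints by a simple arc $c'$ running through $\ti{\C}\smallsetminus U$. That step is not a removable technicality: $\ti{\C}\smallsetminus U$ is a compact connected set, but continua need not be arcwise connected (think of the $\sin(1/x)$ continuum or a pseudo-arc occurring as $\partial U$), so the arc $c'$ you need may simply not exist, and no amount of ``pushing a small arc slightly off $c$'' near the endpoints will manufacture one, since $\ti{\C}\smallsetminus U$ can fail to contain any arc emanating from a given boundary point. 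The quotient construction is precisely how the paper sidesteps this.

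Two further steps of your argument are also unjustified as written. First, your proof that each $U\cap V_i$ is connected rests on the claim that ``any path between two points of $U\cap V_i$ can be chosen to avoid $c$'' because $U$ is simply connected; simple connectivity says nothing about avoiding a prescribed arc, and the ability to join two points of $U\cap V_i$ inside $U\smallsetminus c$ is essentially the statement being proved, so this is circular. Second, in your Riemann-map fallback, knowing only that the two ends of $\phi^{-1}(c\cap U)$ \emph{accumulate} on $\partial\D$ is not enough to adjoin a complementary arc through $\ti{\C}\smallsetminus\overline{\D}$ and obtain a Jordan curve: for that you need each end to \emph{converge} to a single point of $\partial\D$, and to two distinct points. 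That convergence is a genuine theorem (it is where the length--area or Wolff-type estimate enters in the standard treatment of cross cuts), not an observation. So while your plan points at the right classical machinery, each of its three routes has a hole that would need to be filled by an argument of comparable depth to the lemma itself.
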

\begin{proof}
From \emph{quotient topology} we know that $\overline{\D}\,\diagdown S^1\cong S^2,$ where $S^1$ is the unit circle which is the boundary of $\,\overline{\D}$ and  $S^2$ is unit 2 - sphere in $\mathbb R^3.$ By Riemann mapping theorem, there is a conformal isomorphism $\phi$ from $\D$ onto $U$, so that $\overline{U}\,\diagdown\partial U\cong S^2.$ Therefore $c$ corresponds to a Jordan curve in this quotient sphere. By Jordan separation theorem,\;$c$  divides $U$ into two components.
\end{proof}
\begin{definition}
A \emph{chain} is a sequence of cross cuts $c_n$ such that
\begin{enumerate}
\item\ $diameter\;c_n\to 0\quad$(spherical metric),
\item\ the closures of $c_n$ are disjoint,
\item\ (relative to a fixed base point $b$ in $U$), $c_n$ separates $c_{n+1}$ from $b$.
\end{enumerate}
\end{definition}
We now define a relation among chains :\\
 Two chains\;$\{c_n\}$ and $\{\bar c_n\}$ are equivalent, if each cross cut $c_n$ separates all but finitely many $\bar c_n$ from $b$ and vice\,-\,versa. It can be easily seen that this relation is an equivalence relation.
\begin{example} 
Consider $\D$, the unit disk. Let $\{c_n\}$ and $\{\bar c_n\},$ where $c_n$ and $\bar c_n$ are line segments whose end points lie on $\partial\D$, be the chains at a distance $\dfrac{1}{2^n}$ and  $\dfrac{1}{3^n}$ respectively from the top of the disk. Fix any point $b$ in the disk which is at a distance more than $\dfrac{1}{2}$ from the top  of the disk. Then it can be seen easily that the two chains are equivalent.
\end{example}
\begin{definition}\label{d18}
[\textbf{Prime end}]
A prime end of $U$ is an equivalence class of chains.\\
Let $P$ be a prime end of $U$, $\{c_n\}$ be a chain which belongs to  $P$ and for each n let $B_n$ denote that subdomain of $U$ determined by $c_n$ which does not contain the fixed point $b.$ The set $I(P)=\bigcap_{n}\overline{B_n}$ is called the $impression$ of $P.$ Let $V$ be another simply connected domain with boundary $\partial V.$ By a consequence of Riemann mapping theorem, there is a conformal isomorphism $\phi$  from $U$ onto $V.$ \textbf{Caratheodory} in his principal theorem on the correspondence between boundaries under conformal mappings showed that the conformal isomorphism $\phi$ determines an isomorphism between the set of prime ends of $U$ and $V$. For the unit disk $\D$, by Riemann mapping theorem,\;$\exists$ a conformal isomorphism $\psi$ from $\D$ onto $U,$ Caratheodory showed that  $\psi$ induces a one-to-one correspondence between the points of unit circle and the prime ends of $U$ (see \textbf{Piranian}~\cite{piranian}).
\end{definition}
\begin{proposition}\label{2.8.13}
The impression $I(P)$ of a prime end\;$P$ is a non-empty compact connected subset of $\partial U.$
\end{proposition}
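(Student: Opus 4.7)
The plan is to verify non-emptiness, compactness, and connectedness by analyzing the nested sequence $\{\overline{B_n}\}$ directly, and then to establish $I(P)\subseteq\partial U$ by transferring the problem to the disk via a Riemann map.

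First I would show that $\overline{B_{n+1}}\subseteq\overline{B_n}$ for every $n$. Condition (3) in the definition of a chain says that $c_n$ separates $c_{n+1}$ from $b$ in $U$, so since $B_{n+1}$ is the component of $U\setminus c_{n+1}$ not containing $b$, one first obtains $c_{n+1}\subseteq B_n$; applying Lemma \ref{2.8.9} inside the subdomain $B_n$ and tracking which side $b$ lies on then gives $B_{n+1}\subseteq B_n$, and closures inherit the inclusion. Each $\overline{B_n}$ is therefore a non-empty closed subset of the compact space $\ti\C$, and the finite intersection property (Theorem \ref{2.8.7}) yields $I(P)=\bigcap_n\overline{B_n}\neq\emptyset$. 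Being an intersection of closed sets, $I(P)$ is closed in $\ti\C$, hence compact. Since each $B_n$ is an open connected subset of $U$, its closure $\overline{B_n}$ is connected, so Theorem \ref{2.8.8} shows $I(P)$ is connected.

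The substantive content of the proposition is the inclusion $I(P)\subseteq\partial U$. For this I would fix, via the Riemann mapping theorem, a conformal isomorphism $\psi:\D\to U$. By the Caratheodory boundary correspondence recalled just after Definition \ref{d18}, the prime end $P$ corresponds to a unique point $\zeta\in\partial\D$, and the chain $\{c_n\}$ pulls back to a chain in $\D$ whose cross-cuts shrink to $\zeta$. Given $z\in I(P)$, choosing $w_n\in\psi^{-1}(B_n)$ with $\psi(w_n)\to z$ identifies $z$ as a cluster value of $\psi$ at $\zeta$. If we had $z\in U$ we could write $z=\psi(w)$ for some $w\in\D$; since $\psi$ is a homeomorphism of $\D$ onto $U$, this would force $w_n\to w\in\D$, contradicting $w_n\to\zeta\in\partial\D$. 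Hence $z\notin U$, and since $z\in\overline{\psi(\D)}=\overline{U}$, we conclude $z\in\partial U$.

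The main obstacle is the cluster-set identification in the last step: one must justify that every $z\in I(P)$ is realised as a limit $\psi(w_n)$ along a sequence $w_n\to\zeta$, and that conversely such cluster values all lie in the impression. This is precisely the content of the non-trivial bijection between prime ends of $U$ and boundary points of $\D$ supplied by Caratheodory's theorem. The topological arguments in the first paragraph, by contrast, are routine once the nesting $\overline{B_{n+1}}\subseteq\overline{B_n}$ has been verified.
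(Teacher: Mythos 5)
Your first paragraph is essentially the paper's own proof of this proposition: nesting of the $\overline{B_n}$, the finite intersection property (Theorem \ref{2.8.7}) for non-emptiness, closedness inside the compact $\overline{U}$ for compactness, and Theorem \ref{2.8.8} for connectedness; your verification that $\overline{B_{n+1}}\subseteq\overline{B_n}$ follows from condition (3) of a chain is a detail the paper simply asserts. For the inclusion $I(P)\subseteq\partial U$ --- which the paper actually splits off as the separate Proposition \ref{2.8.14} --- you take a genuinely different route. The paper argues directly: given $z\in U$, join it to a point $z_0\in U\smallsetminus B_1$ by a path whose image is a compact subset of $U$, let $\delta>0$ be the distance from that image to $\partial U$, and observe that once the diameter of $c_j$ is less than $\delta$ the cross cut cannot meet the path and hence cannot separate $z$ from the base point, so $z\notin B_j$ for $j$ large and thus $z\notin I(P)$. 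You instead pull everything back to $\D$ via the Riemann map and invoke the Caratheodory correspondence to make the pulled-back cross cuts shrink to a single point $\zeta\in\partial\D$, then derive a contradiction from continuity of $\psi^{-1}$. Both arguments are valid within the logical framework of this chapter (the correspondence is stated after Definition \ref{d18} and treated as known), but the paper's is more elementary and self-contained: it uses only that the diameters of the $c_n$ tend to zero, whereas yours leans on the full prime-end correspondence theorem, whose standard proofs require facts of exactly this kind --- so from a foundational standpoint the direct distance argument is preferable, and you rightly flag this dependence as the main obstacle in your sketch.
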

\begin{proof}
We have $I(P)=\bigcap_{n}\overline{B_n}$. The collection $\mathfrak C=\{\overline{B_n} : n\in\mathbb N\}$ forms a nested sequence of closed sets in $\overline{U}$ which is a closed and hence compact subspace of $\ti\C.$ Also each $\overline{B_n}$ is non-empty, therefore  the collection  $\mathfrak C$ has the finite intersection property. Hence by Theorem \ref{2.8.7}, $\bigcap_{n}\overline{B_n}$ is non-empty. Thus $I(P)$ is non-empty. Since each $B_n$ is connected, so is each $\overline{B_n}\;$( In a Hausdorff space, closure of a connected set is connected). Also each $ \overline{B_n}$ is compact. Using Theorem \ref{2.8.8} we get that $I(P)$ is connected.
\end{proof}
\begin{proposition}\label{2.8.14}
$I(P)\subset\partial U.$
\end{proposition}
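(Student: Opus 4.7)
The plan is to show that $I(P)$ contains no interior point of $U$, from which $I(P)\subset\partial U$ will follow because each $B_n\subset U$ forces $I(P)=\bigcap_n\overline{B_n}\subset\overline U=U\cup\partial U$. I will argue by contradiction: assume some $x\in I(P)$ lies in $U$, set $r=\mathrm{dist}(x,\partial U)>0$ in the spherical metric, and pick the open disk $D=D(x,r/2)$, which lies inside $U$.

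The key observation is a ``thinness'' property of the cross cuts $c_n$: since both endpoints of $c_n$ lie on $\partial U$ and $\mathrm{diam}(c_n)\to 0$ by condition (1) in the definition of a chain, every point of $c_n$ is within $\mathrm{diam}(c_n)$ of $\partial U$. In particular, for all sufficiently large $n$ the cross cut $c_n$ is disjoint from $D$. By Lemma~\ref{2.8.9}, $D$ is then a connected subset of $U\setminus c_n$ and so lies entirely in one of its two components. Because $x\in\overline{B_n}$ forces $D\cap B_n\ne\emptyset$, we conclude $D\subset B_n$ for every sufficiently large $n$.

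The contradiction will now come from a path-crossing argument. Fix any $y\in D$ and any arc $\gamma$ in $U$ joining $y$ to the base point $b$; since $b$ lies in the component of $U\setminus c_n$ other than $B_n$, the curve $\gamma$ must meet $c_n$ in some point $z_n$ for every large $n$. But the image of $\gamma$ is a compact subset of the open set $U$, hence sits at a strictly positive distance $\eta>0$ from $\partial U$, whereas the thinness observation gives $\mathrm{dist}(z_n,\partial U)\le\mathrm{diam}(c_n)\to 0$. For $n$ large this is incompatible with $z_n\in\gamma$, and the proof is complete. The one delicate step is the thinness property; it rests on combining the vanishing of $\mathrm{diam}(c_n)$ with the fact that at least one endpoint of $c_n$ belongs to $\partial U$. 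Everything else is elementary connectedness and compactness, together with the separation already supplied by Lemma~\ref{2.8.9}.
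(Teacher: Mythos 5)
Your proof is correct and follows essentially the same route as the paper's: both rest on the observation that a cross cut with endpoints on $\partial U$ and small diameter must lie close to $\partial U$, hence cannot meet a compact path contained in $U$, so for large $n$ the cut $c_n$ fails to separate a putative interior point of $I(P)$ from the base point. Your use of the disk $D(x,r/2)$ to pass from $x\in\overline{B_n}$ to $D\subset B_n$ is in fact a slightly more careful treatment of the closure than the paper's direct argument.
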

\begin{proof}
Let $z\in U.$ We can find a $j$ such that $z\notin B_j.$ Let $z_0\in U\smallsetminus B_1$ and $p : [0,1]\to U$ be a path in $U$ joining $z_0$ to $z$. Since continuous image of a compact set is compact, therefore $p[0,1]$ is compact. Let $P=p[0,1]$. If 
\begin{equation}\label{eq12}
\delta=dist\{P,\partial U\}
\end{equation}
be the distance between $P$ and $\partial U,$
and $j$ is chosen to be sufficiently large so that $diameter\,c_j<\delta$, then 
\begin{equation}\label{eq13}
c_j\cap P=\emptyset
\end{equation}
so that $c_j$ does not separate $z_0$ from $z$. Also  $z\notin\partial U\;(z\in\partial U,\;\Rightarrow \delta=0$ from (\ref{eq12})). Since $z_0\notin B_1,\;\Rightarrow z_0\notin B_j\;\;\forall\;j\in\mathbb N\;\;(B_j\subset B_1\;\;
\forall\;j\in\mathbb N)$ which will imply that $z\notin B_j\;\;\forall\;j\in\mathbb N,$ i.e $z\notin \overline{B_j}\;\;\forall\;j\in\mathbb N$ and so $z\notin\bigcap_j\overline{B_j}.$  Thus we conclude that $\bigcap_j\overline{B_j}\subset\partial U.$ Hence $I(P)\subset\partial U.$
\end{proof}
\begin{proposition}
$I(P)$ consists of a single point $z_0\in\partial U$ if and only if diameter\,$\overline{B_j}$ tends to 0 as $j\to\ity.$
\end{proposition}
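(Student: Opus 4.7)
The plan is to prove the two implications separately, with the main engine being the nested inclusion $\overline{B_1}\supset\overline{B_2}\supset\cdots$ together with compactness of $\ti\C$.

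For the forward direction, suppose $\mathrm{diam}\,\overline{B_j}\to 0$ as $j\to\ity$. Since $I(P)=\bigcap_n\overline{B_n}\subset\overline{B_j}$ for each $j$, the spherical diameter of $I(P)$ is at most $\mathrm{diam}\,\overline{B_j}$ for every $j$, hence equals $0$. By Proposition \ref{2.8.13}, $I(P)$ is non-empty, so it consists of a single point, which lies in $\partial U$ by Proposition \ref{2.8.14}.

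For the converse, suppose $I(P)=\{z_0\}$ but, for contradiction, that $\mathrm{diam}\,\overline{B_j}\not\to 0$. Then there exists $\epsilon>0$ such that $\mathrm{diam}\,\overline{B_j}\geqslant\epsilon$ for infinitely many $j$; since the sets are nested, this in fact holds for all $j$. Choose points $x_j,y_j\in\overline{B_j}$ with $\sigma(x_j,y_j)\geqslant\epsilon$. Since $\overline{B_1}$ is a closed subset of the compact space $\ti\C$, it is compact, so after passing to a subsequence we may assume $x_j\to x^\ast$ and $y_j\to y^\ast$, with $\sigma(x^\ast,y^\ast)\geqslant\epsilon$. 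For any fixed $k$, the tail of the sequence $(x_j)_{j\geqslant k}$ lies in the closed set $\overline{B_k}$ by nestedness, so $x^\ast\in\overline{B_k}$; letting $k$ vary, $x^\ast\in\bigcap_k\overline{B_k}=I(P)=\{z_0\}$. The same argument gives $y^\ast=z_0$, so $\sigma(x^\ast,y^\ast)=0<\epsilon$, a contradiction.

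The argument is essentially soft, so I do not anticipate a hard step; the only point requiring care is the extraction of limit points in the contrapositive, which uses compactness of $\overline{B_1}$ inside $\ti\C$ together with the nestedness of the $\overline{B_j}$ to force the limits into every $\overline{B_k}$ and therefore into $I(P)$.
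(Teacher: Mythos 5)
Your proof is correct and rests on the same underlying fact as the paper's, which simply invokes Cantor's Intersection Theorem for the nested closed sets $\overline{B_j}$ in the compact (hence complete) space $\overline{U}$. If anything your write-up is more complete: the converse implication (a singleton intersection forces the diameters to shrink) is not literally part of Cantor's theorem as usually stated, and your compactness/subsequence extraction supplies exactly the justification the paper leaves implicit.
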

\begin{proof}
As $\overline{U}$ is compact space, therefore it is complete as a metric space (with spherical metric induced from $\ti\C$). So $I(P)$ is singleton set if and only if diameter\,$\overline{B_j}$ tends to 0 as $j\to\ity$ ( by \emph{Cantor's Intersection theorem} ).
\end{proof}
\begin{definition}\label{d16}
The \emph{fibre} of a point $x\in\partial U$ is the collection of prime ends represented by chains\;$\{c_n\}$ such that $c_n\to x$ as $n\to\ity.$
\end{definition}
\begin{proposition}\label{2.8.17}
Any prime end belongs to atleast one fibre (of some point in its impression). 
\end{proposition}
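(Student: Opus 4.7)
The plan is to start with an arbitrary chain representing the prime end $P$, extract a subsequence converging to a point $x$ in the impression, and then verify that this subsequence is itself a chain equivalent to the original one. This will exhibit $x$ as a point whose fibre contains $P$.

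First I would pick a representative chain $\{c_n\}$ of $P$, with associated nested subdomains $B_n$ (so that $I(P)=\bigcap_n\overline{B_n}$). For each $n$, choose a point $z_n\in c_n$. Since $z_n\in\overline{U}$ and $\overline{U}$ is a compact subset of $\ti\C$, the sequence $(z_n)$ has a subsequence $z_{n_k}\to x$ for some $x\in\overline{U}$. The first key step is to check $x\in I(P)$: fix $n$; by the separation condition in the definition of a chain, $c_m\subset B_n$ for every $m>n$, so $z_{n_k}\in\overline{B_n}$ for all $k$ with $n_k>n$, and therefore $x\in\overline{B_n}$. Taking the intersection over $n$ gives $x\in I(P)\subset\partial U$ (the last inclusion being Proposition \ref{2.8.14}).

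Next I would show that $c_{n_k}\to x$ in the Hausdorff sense. This is essentially immediate from $\mathrm{diameter}\,c_{n_k}\to 0$ and $z_{n_k}\in c_{n_k}$: for any $\varepsilon>0$ eventually $z_{n_k}$ is within $\varepsilon/2$ of $x$, and every other point of $c_{n_k}$ is within $\varepsilon/2$ of $z_{n_k}$, so $c_{n_k}$ lies in the $\varepsilon$-neighbourhood of $x$. Thus the subsequence converges to $x$ in the sense required by Definition \ref{d16}.

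Then I would verify that $\{c_{n_k}\}$ is itself a chain: the diameters still tend to zero, the closures are disjoint (inherited from $\{c_n\}$), and $c_{n_k}$ separates $c_{n_{k+1}}$ from $b$, because for $m>n_k$ the cross cut $c_m$ lies in $B_{n_k}$, so in particular $c_{n_{k+1}}\subset B_{n_k}$. The final step is the equivalence $\{c_n\}\sim\{c_{n_k}\}$: for each fixed $n$, every $c_{n_k}$ with $n_k>n$ is separated from $b$ by $c_n$, so all but finitely many $c_{n_k}$ lie in $B_n$; conversely, for each fixed $k$, every $c_n$ with $n>n_k$ lies in $B_{n_k}$, so all but finitely many $c_n$ are separated from $b$ by $c_{n_k}$. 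Hence $\{c_{n_k}\}$ also represents $P$, and by the convergence $c_{n_k}\to x$ established above, $P$ belongs to the fibre of $x$.

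The only slightly delicate point is justifying $x\in I(P)$ rather than merely $x\in\overline{U}$; everything else is a direct unwinding of definitions. Compactness of $\overline{U}$ in the spherical metric does the key work, so passing to subsequences is legitimate without any further topological hypotheses on $U$.
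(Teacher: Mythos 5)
Your proof is correct and follows the same basic strategy as the paper's (exhibit a point of the impression to which the cross cuts converge), but it is noticeably more careful at the one point where care is actually needed. The paper's proof takes an \emph{arbitrary} $x\in I(P)$ and asserts that ``it can be seen easily that $c_n\to x$''; as stated this is only true when $I(P)$ is a single point, since for a nondegenerate impression the cross cuts can accumulate on several points of $I(P)$ and the full sequence need not converge at all. Your version repairs this: you manufacture $x$ as a subsequential limit of points $z_{n_k}\in c_{n_k}$ (legitimate by compactness of $\overline{U}$ in the spherical metric), use the nesting $c_m\subset B_n$ for $m>n$ to place $x$ in $I(P)$, and then — crucially — check that the subsequence $\{c_{n_k}\}$ is itself a chain equivalent to $\{c_n\}$, so that $P$ really is \emph{represented by} a chain converging to $x$ as Definition \ref{d16} requires. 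The equivalence check and the verification of the three chain axioms for the subsequence are exactly the steps the paper omits, and all of them go through as you wrote them. The only implicit ingredient is the nesting $B_{n+1}\subset B_n$, which the paper also uses without comment in Proposition \ref{2.8.13} and which follows from the separation condition together with the disjointness of the closures of the $c_n$.
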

\begin{proof}
Let $P$ be a prime end represented by chain\;$\{c_n\}.$ Let $x\in I(P)=\bigcap_n\overline {B_n}\subset\partial U.$ We know that for each $n, B_n$ is the subdomain of $U$ determined by cross cut $c_n$ which does not contain the fixed point $b$. From this it can be seen easily that $c_n\to x$ as $n\to\ity$ and this completes the proof.
\end{proof}
\begin{definition}\label{d17}
[\textbf{Radial limit}]
A function $f(z),\; z\in\D$ has radial limit $L$ at $\zeta\in\partial\D$ if
\[
f(r\zeta)\to L\;\;\text{as}\; r\uparrow 1.
\]
Now we state two theorems (see ~\cite{milnor}), one due to Fatou and the other due to Fatou; Riesz and Riesz.
\end{definition}
\begin{theorem}\label{2.8.19}
The Riemann map \;$\phi : \D\to U$ has radial limits except on a set $E$ of measure zero.
\end{theorem}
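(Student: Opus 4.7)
The plan is to reduce the theorem to the classical Fatou theorem on radial boundary values of bounded holomorphic functions on the unit disk, and then invoke the real-variable machinery (Poisson representation plus Lebesgue differentiation) to handle that reduction.

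First I would reduce to the case where $\phi$ is bounded. Since $\J$ is uncountable by Theorem \ref{1.5.12}, the complement of the simply connected domain $U\subset\ti\C$ contains infinitely many points; in particular one can choose a point $\alpha\in\partial U$ and post-compose with a M\"obius transformation $T$ of $\ti\C$ sending $\alpha$ to $\infty$, so that $T(U)\subset\C$ is bounded. Because $T$ is a homeomorphism of $\ti\C$, for any $\zeta\in\partial\D$ the radial limit $\lim_{r\uparrow 1}\phi(r\zeta)$ in $\ti\C$ exists if and only if the radial limit of $T\circ\phi$ exists in $\C$. Thus without loss of generality I may replace $\phi$ by $T\circ\phi$ and assume $\phi:\D\to\C$ is holomorphic and bounded.

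Next I would invoke the classical Fatou theorem for bounded holomorphic functions. Writing $\phi=u+iv$ with $u,v$ bounded and harmonic on $\D$, I would obtain a Poisson representation
\begin{equation}
u(z)=\int_{\partial\D}P(z,\zeta)\,h(\zeta)\,|d\zeta|,\qquad h\in L^{\infty}(\partial\D),
\notag
\end{equation}
by taking a weak-$\ast$ limit of the bounded family $\{u(r\,\cdot)\}_{0<r<1}\subset L^{\infty}(\partial\D)$ via Banach--Alaoglu and recovering $u$ inside $\D$ by the mean value / Poisson formula. The same is done for $v$.

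The core step is then to show that if $h\in L^{\infty}(\partial\D)$ and $u=P[h]$, the radial limit $\lim_{r\uparrow 1}u(r\zeta)$ equals $h(\zeta)$ for almost every $\zeta\in\partial\D$. The argument is: the radial maximal function of the Poisson integral is dominated by the Hardy--Littlewood maximal function $Mh$, which satisfies the weak-$(1,1)$ inequality. By a standard density argument (approximate $h$ in $L^{1}$ by continuous functions, for which the radial limit is immediate from the approximate-identity property of the Poisson kernel) combined with the weak-type bound, the set of $\zeta$ where radial convergence to $h(\zeta)$ fails has measure zero; equivalently, this set is contained in the complement of the Lebesgue set of $h$. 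Applied to $u$ and $v$, this yields radial limits of $\phi$ almost everywhere on $\partial\D$, giving the desired exceptional set $E$ of measure zero.

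The main obstacle is this last real-variable step: producing the Poisson representation of a bounded harmonic function and proving radial convergence a.e.\ via the maximal inequality and Lebesgue differentiation. The M\"obius reduction and the passage from harmonic to holomorphic are conformal bookkeeping once that analytic tool is in hand.
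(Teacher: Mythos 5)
The paper does not prove this theorem (it is quoted from Milnor), so your proposal has to stand on its own, and it has a genuine gap at the very first step. You claim that sending a point $\alpha\in\partial U$ to $\infty$ by a M\"obius transformation $T$ makes $T(U)$ bounded. It does not: since $T$ is a homeomorphism of $\ti\C$ and $\alpha$ is a boundary point of $U$, the point $\infty=T(\alpha)$ is a boundary point of $T(U)$, so $T(U)$ is an \emph{unbounded} subset of $\C$ (for example, $U=\D$, $\alpha=1$, $T(z)=1/(z-1)$ gives a half-plane). To make $T(U)$ bounded you would need $\alpha$ to lie at positive distance from $U$, i.e.\ in the interior of $\ti\C\smallsetminus U$; but that interior can be empty even in the dynamical setting (e.g.\ $U=\ti\C\smallsetminus[-2,2]$, the Fatou set of $z^2-2$). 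So the reduction to a bounded holomorphic function, on which everything else in your argument rests, fails as written. The remainder of the proposal --- Poisson representation of a bounded harmonic function via Banach--Alaoglu, domination of the radial maximal function by the Hardy--Littlewood maximal function, and the density/weak-type argument --- is the correct standard proof of Fatou's theorem \emph{for bounded functions}, but you never legitimately arrive at a bounded function.

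Two standard repairs exist. One is the square-root trick: after arranging $\infty\notin U$, pick $a\in\partial U$ and set $h=\sqrt{\phi-a}$ on the simply connected $\D$; then $h$ is injective and $h(\D)\cap(-h(\D))=\emptyset$, so $h(\D)$ omits a disk $D(w_0,r)$ and $1/(h-w_0)$ is bounded; applying your bounded-case machinery to it and unwinding gives radial limits of $\phi$ in $\ti\C$ a.e. The other, which is the route the cited source (Milnor) takes and which is reflected in the very next theorem of the paper (Fatou; Riesz and Riesz), is the length--area argument in the spherical metric: injectivity of $\phi$ gives
\begin{equation}
\int_0^{2\pi}\!\!\int_0^1\frac{|\phi'(re^{i\theta})|^2}{(1+|\phi(re^{i\theta})|^2)^2}\,r\,dr\,d\theta<\ity,
\notag
\end{equation}
and Cauchy--Schwarz then forces the image of the radius to have finite spherical length for almost every $\theta$, so the radial limit exists in $\ti\C$ a.e.\ with no boundedness hypothesis at all. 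Either repair closes the gap; as it stands, your proof does not.
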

\begin{theorem}
[\textbf{Fatou; Riesz and Riesz}]
For almost every point $e^{i\theta}\in\partial\D,\;\theta\in [0,2\pi],$ the radial line $r\to r e^{i\theta},\;0\leqslant r\leqslant 1$ maps under the Riemann map \;$\phi$ to a curve of finite spherical length in $U$. In particular, the radial limit
\begin{equation}
\lim_{r\uparrow 1}{\phi(r e^{i\theta})}\;{\in\partial U}
\end{equation}
exists for almost every $\theta\in [0,2\pi]$. However, if we fix any particular point $u_0\in\partial U,$ then the set of $\theta$ such that this radial limit is equal to $u_0$ has Lebesgue measure zero.
\end{theorem}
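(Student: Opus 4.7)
The plan is to prove the theorem in two logically distinct stages: first that $\phi$ sends almost every radial segment in $\D$ to a curve of finite spherical length in $\ti\C$ (which immediately yields existence of the radial limit via completeness), and second that this radial limit equals any fixed $u_{0}\in\partial U$ only on a $\theta$-set of Lebesgue measure zero.

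For the first stage, the only geometric input needed is that the total spherical area of $\ti\C$ equals $4\pi$. Since $\phi$ is injective, the conformal change of variables gives
\[
\iint_{\D}\frac{4\,|\phi'(z)|^{2}}{(1+|\phi(z)|^{2})^{2}}\,dA(z)\;\leqslant\;4\pi.
\]
Writing $L(\theta)$ for the spherical length of the curve $r\mapsto\phi(re^{i\theta})$ on $[0,1)$, I would split $L(\theta)=L_{1}(\theta)+L_{2}(\theta)$ at $r=1/2$. On $[0,1/2]$ the integrand is uniformly bounded in $\theta$ by continuity of $\phi$ on the compact set $\{|z|\leqslant 1/2\}$, so $L_{1}(\theta)$ is bounded independently of $\theta$. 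On $[1/2,1]$, Cauchy--Schwarz with the weights $\sqrt{r}$ and $1/\sqrt{r}$ produces
\[
L_{2}(\theta)^{2}\;\leqslant\;(\ln 2)\int_{1/2}^{1}\frac{4\,|\phi'(re^{i\theta})|^{2}}{(1+|\phi(re^{i\theta})|^{2})^{2}}\,r\,dr.
\]
Integrating in $\theta$ and rewriting the right side in polar coordinates bounds $\int_{0}^{2\pi}L_{2}(\theta)^{2}\,d\theta$ by $4\pi\ln 2$ via the spherical area estimate above. Hence $L(\theta)<\ity$ for almost every $\theta$, and since $(\ti\C,\sigma_{0})$ is complete, any curve of finite spherical length has a unique limit endpoint. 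That endpoint must lie in $\partial U$, since an interior limit would contradict bijectivity of the Riemann map $\phi:\D\to U$.

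For the second stage, I would invoke the classical \emph{F.~and M.~Riesz uniqueness theorem}: a non-constant function of bounded type on $\D$ cannot attain the same radial limit on a subset of $\partial\D$ of positive Lebesgue measure. After a preparatory M\"obius post-composition of $\phi$ sending $u_{0}$ to $0$ (and, whenever possible, a point of $\ti\C\smallsetminus\overline{U}$ to $\ity$ to present $\phi$ as a bounded holomorphic function), the hypothesis becomes that a meromorphic function of bounded type on $\D$ has radial limit $0$ on a positive-measure set $E=\{\theta:\lim_{r\uparrow 1}\phi(re^{i\theta})=u_{0}\}$. The F.~and M.~Riesz theorem then forces $\phi\equiv u_{0}$, contradicting that $\phi$ is a conformal isomorphism onto $U$ and in particular non-constant.

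The main obstacle is the F.~and M.~Riesz step, which is the only non-elementary ingredient: a self-contained proof rests on Jensen's inequality together with the integrability of $\log|\psi^{\ast}|$ on $\partial\D$ for every nontrivial $\psi$ in the Nevanlinna class, which in turn prevents $\psi^{\ast}$ from vanishing on a positive-measure set. Every other step reduces to a hands-on Cauchy--Schwarz-plus-Fubini calculation exploiting only the single fact that $\ti\C$ has finite spherical area.
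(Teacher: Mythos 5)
The paper does not prove this theorem; it simply states it and refers the reader to Milnor's book, so there is no in-text argument to compare yours against. Your proposal is correct, and it is in fact the standard proof (essentially the one in Milnor): the Cauchy--Schwarz/Fubini estimate bounding $\int_0^{2\pi}L_2(\theta)^2\,d\theta$ by $(\ln 2)$ times the spherical area of $\phi(\{1/2<|z|<1\})\leqslant 4\pi$ is exactly right, completeness of $(\ti\C,\sigma_0)$ gives the radial limit on the full-length set, and the limit lies in $\partial U$ because an interior limit $w_0\in U$ would force $re^{i\theta}=\phi^{-1}(\phi(re^{i\theta}))\to\phi^{-1}(w_0)\in\D$, contradicting $re^{i\theta}\to e^{i\theta}$. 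For the last assertion, the one point needing care is that $U\subset\ti\C$ may contain $\ity$ and may even be dense in $\ti\C$ (e.g.\ $U=\ti\C\smallsetminus[-2,2]$), so one cannot always arrange $\phi$ to be bounded; the clean statement to quote is that a univalent map of $\D$ into $\C$ (after a M\"obius change of coordinates sending some point of $\ti\C\smallsetminus U$ to $\ity$ and $u_0$ to $0$) lies in the Nevanlinna class, and the Riesz uniqueness theorem for that class (via integrability of $\log|\psi^\ast|$) rules out a radial limit $0$ on a set of positive measure. You flag exactly this, so the argument is complete.
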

From the discussion following Definition \ref{d18}, we know that each point of $\partial\D$ corresponds to a prime end of $U$. Also under a conformal isomorphism, boundary goes to boundary, so we conclude that each point of $\partial U$ corresponds to a prime end of $U$ which implies that, fibre of any point in $\partial U$ is discrete, so its angular measure is $0$. We also conclude that the fibres are totally disconnected in the topology on the unit circle.
\section[Quasiconformal Mappings]{The Measurable Riemann mapping theorem with parameters}\label{ch2,sec9}
Here we discuss the notion of quasiconformal mapping following Ahlfors treatment of this topic. There are several approaches to quasiconformal mappings viz. geometric approach, analytic approach, etc. The approach which we have taken is based on the assumption that the mappings are of class $C^1.$ Throughout this section we restrict ourselves to $C^1$ mappings.  Let $w = f(z)\; (z = x +iy, w = u+iv)$ be a $ C^1$ homeomorphism from one region to another. Then
\begin{equation}
dw=f_zdz+f_{\bar z} d{\bar z}
\end{equation}
with $f_z=\dfrac{1}{2}(f_x-if_y),\; f_{\bar z}= \dfrac{1}{2}(f_x+if_y).$ The Jacobian of $f$ is given by  
\begin{equation}\label{eq14}
Jac = |f_{z}|^{2} - |f_{\bar{z}}|^{2}=u_{x}v_{y} -  u_{y}v_{x}.
\end{equation}
 We will denote $Jac$ by $J.$
The Jacobian is positive for sense (orientation) preserving mappings and negative for sense(orientation) reversing mappings. We will consider the sense preserving case i.e $\; |f_{\bar{z}}|< |f_{z}|$.\ The dilatation of $f$ at the point z is given by 
\begin{equation}\label{eq15}
D_{f}(z) = \dfrac{ |f_{z}| + |f_{\bar{z}}|} { |f_{z}| - |f_{\bar{z}}|} \geqslant 1.
\end{equation}
Define $d_{f} = \dfrac{|f_{\bar{z}}|}{|f_{z}|}< 1$, related to $D_{f} $ by  $D_{f} = \dfrac{1 + d_{f}}{1 - d_{f}}$.\ The complex dilatation $\mu_{f}$ is defined as 
\begin{equation}\label{eq16}
\mu_{f} = \dfrac{f_{\bar{z}}}{f_{z}}\cdot
\end{equation}
 We  drop the subscript $f$ from $\mu_{f} $ and simply denote it by $\mu$.\ So
\begin{equation}\label{eq17}
 D_{f}(z) = \dfrac{1 + |\mu(z)|}{1 - |\mu(z)|}\cdot
\end{equation}
\underline{\textbf{Geometrical interpretation of $\mu$ :}} Since $f$ is differentiable in the region say $D,$ $\mu$ can be thought as a field of infinitesimal ellipses on $D$ with minor axis of each ellipse tilted at $\dfrac{1}{2}\,\emph{arg}\,\mu(z)$ and major axis tilted at $\dfrac{\pi}{2}+\dfrac{1}{2}\,\emph{arg}\,\mu(z)$. The derivative map [$Df(z)$] maps the infinitesimal ellipse at $z$ to a circle at $f(z).$ The dilatation $D_{f}(z)$ gives the eccentricity, 
\begin{equation}\label{eq18}
D_{f}(z) =\text{ratio of major to minor axis}= \dfrac{ |f_{z}| + |f_{\bar{z}}|} { |f_{z}| - |f_{\bar{z}}|}= \dfrac{1 + |\mu(z)|}{1 - |\mu(z)|}\geqslant 1\;\;\text{(using (\ref{eq17}))}.
\end{equation}
Since $|\mu|<1,$ therefore eccentricities of the ellipses are bounded.
\begin{definition}\label{d19}
 The mapping $f$ is said to be quasiconformal if $D_{f}$ is bounded. It is $K -$ quasiconformal if $D_{f} \leqslant K$.
\end{definition}
\begin{remark}\label{2.9.2}
A $1-$ quasiconformal map is conformal.
\end{remark}
\begin{proof}
Suppose $f$ is $1-$ quasiconformal, so $D_{f} \leqslant 1.$ Also  from (\ref{eq15}), $D_{f} \geqslant 1$ so that $D_{f}=1,\;\Rightarrow \dfrac{ |f_{z}| + |f_{\bar{z}}|} { |f_{z}| -|f_{\bar{z}}|}=1,\;\Rightarrow  |f_{\bar{z}}|=0$ and so $f$ is analytic. Since $f$ is a homeomorphism, so it is one-one and hence $f$ is conformal.
\end{proof}
We now give an example of a $K-$ quasiconformal map.
\begin{example}
 Let $U\subset\C$ be a domain. Define $f:U\to\C$ as $f(z)=Kx+iy,$ where $K\geqslant 1$ and $z=x+iy.$ In terms of $z$ and $\bar z,\; f$ can be written as $f(z)=z+(K-1)\,\dfrac{(z+\bar z)}{2}\cdot$ Then $f_z=1+\dfrac{(K-1)}{2},\;f_{\bar z}=\dfrac{K-1}{2}\cdot$ \;So $f_z+f_{\bar z}=K$ and $f_z-f_{\bar z}=1.$ Therefore $D_{f}(z)=K.$ Hence $f$ is $K-$ quasiconformal.
\end{example}
\subsection{Composed mappings} Let $U$ be a domain in $\ti\C.$ Let $f:U\to f(U)$ and $g: f(U)\to\C$ be $C^1$ homeomorphisms. We shall now determine the complex derivatives and complex dilatations of the composed mapping $g\circ f.$ It can be easily seen that 
\begin{equation}\label{eq19}
\overline{(f_z)}=(\bar f)_{\bar z}\;\;\text{and}
\end{equation}
\begin{equation}\label{eq20}
\overline{(f_{\bar z})}=(\bar f)_z.
\end{equation}
Let $\zeta=f(z)$. Then
\begin{equation}
\begin{split}
\notag
(g\circ f)_z
&=\dfrac{d}{dz}\,(g\circ f)(z)\\
&=\dfrac{d}{dz}\,(g(f(z))\\
&=\dfrac{d}{dz}\,(g(\zeta))\\
&=\dfrac{d}{d\zeta}\,g(\zeta)\,\dfrac{d\zeta}{dz}+\dfrac{d}{d\bar\zeta}\,g(\zeta)\,\dfrac{d\bar\zeta}{dz}\\
&=g_\zeta(\zeta)\,f_z+g_{\bar\zeta}\,(\zeta)\,(\bar f)_z\\
&=g_\zeta\,(f(z))\,f_z+g_{\bar\zeta}\,(f(z))\,(\bar f)_z\\
&=(g_{\zeta}\circ f)\,f_z+(g_{\bar\zeta}\circ f)\,{(\bar f)_z}\\
&=(g_{\zeta}\circ f)\,f_z+(g_{\bar\zeta}\circ f)\,\overline{( f_{\bar z})}\quad(\text{by}\;(\ref{eq20})).\\
\end{split}
\end{equation}
\begin{equation}\label{eq21}
\Rightarrow\; (g\circ f)_z=(g_{\zeta}\circ f)f_z+(g_{\bar\zeta}\circ f)\,\overline{( f_{\bar z})}.
\end{equation}
Now we compute $(g\circ f)_{\bar z}.$
\begin{equation}
\begin{split}
\notag
(g\circ f)_{\bar z}
&=\dfrac{d}{d\bar z}\,(g\circ f)(z)\\
&=\dfrac{d}{d\bar z}\,(g(f(z))\\
&=\dfrac{d}{d\bar z}\,(g(\zeta))\\
&=\dfrac{d}{d\zeta}\,g(\zeta)\,\dfrac{d\zeta}{d\bar z}+\dfrac{d}{d\bar\zeta}\,g(\zeta)\,\dfrac{d\bar\zeta}{d\bar z}\\
&=g_\zeta(\zeta)f_{\bar z}+g_{\bar\zeta}\,(\zeta)(\bar f)_{\bar z}\\
&=g_\zeta(f(z))f_{\bar z}+g_{\bar\zeta}\,(f(z))(\bar f)_{\bar z}\\
&=(g_{\zeta}\circ f)f_{\bar z}+(g_{\bar\zeta}\circ f)(\bar f)_{\bar z}\\
&=(g_{\zeta}\circ f)f_{\bar z}+(g_{\bar\zeta}\circ f)\overline{( f_z)}\quad(\text{by}\;(\ref{eq19})).\\
\end{split}
\end{equation}
\begin{equation}\label{eq22}
\Rightarrow\; (g\circ f)_{\bar z}=(g_{\zeta}\circ f)f_{\bar z}+(g_{\bar\zeta}\circ f)\overline{( f_z)}.
\end{equation}
Multiplying (\ref{eq21}) by $\overline{(f_z)}$ and (\ref{eq22}) by $\overline{(f_{\bar z})}$ and subtracting we get 
\begin{equation}
\begin{split}
\notag
(g\circ f)_z\,\overline{(f_z)}-(g\circ f)_{\bar z}\,\overline{(f_{\bar z})}
&=(g_{\zeta}\circ f)\,(|f_{z}|^{2} - |f_{\bar{z}}|^{2})\\
&=(g_{\zeta}\circ f)\,J\quad(\text{from (\ref{eq14}})).\\
\Rightarrow (g_{\zeta}\circ f)
&=\dfrac{1}{J}\,(\,(g\circ f)_z\,\overline{(f_z)}-(g\circ f)_{\bar z}\,\overline{(f_{\bar z})}\,)\\
&=\dfrac{1}{J}\,((g\circ f)_z\,(\bar f)_{\bar z}-(g\circ f)_{\bar z}\,(\bar f)_z)\;(\text{using}\;(\ref{eq19})\;\text{and}\;(\ref{eq20})).
\end{split}
\end{equation}
\begin{equation}\label{eq23}
\Rightarrow\; (g_{\zeta}\circ f)=\dfrac{1}{J}\,((g\circ f)_z\,(\bar f)_{\bar z}-(g\circ f)_{\bar z}\,(\bar f)_z).
\end{equation}
Again multiplying (\ref{eq21}) by $f_{\bar z}$ and (\ref{eq22}) by $f_z$ and subtracting we get
\begin{equation}
\begin{split}
\notag
(g\circ f)_z\,f_{\bar z}-(g\circ f)_{\bar z}\,f_z
&=(g_{\bar\zeta}\circ f)\,(\overline{(f_{\bar z})}\,f_{\bar z}-\overline{(f_z)}\,f_z)\\
&=-(g_{\bar\zeta}\circ f)\,(|f_{z}|^{2} - |f_{\bar{z}}|^{2})\quad(\text{using}\;(\ref{eq19})\;\text{and}\;(\ref{eq20}))\\
&=-(g_{\bar\zeta}\circ f)\,J\quad(\text{using} (\ref{eq14})).\\
\end{split}
\end{equation}
\begin{equation}\label{eq24}
\Rightarrow\; (g_{\bar\zeta}\circ f)=\dfrac{1}{J}\,((g\circ f)_{\bar z}\,f_z-(g\circ f)_z\, f_{\bar z}).
\end{equation}
Now let $g=f^{-1}.$ Then from (\ref{eq23}) and (\ref{eq24}) we get
\begin{equation}
\begin{split}
\notag
(f^{-1})_{\zeta}\circ f
&=\dfrac{1}{J}\,((I)_z\,(\bar f)_{\bar z}-(I)_{\bar z}\,(\bar f)_z)\quad(f^{-1}\circ f=I)\\
&=\dfrac{1}{J}\,((\bar f)_{\bar z}-0)\quad(I\;\text{is analytic, so}\;(I)_{\bar z}=0)\\
&=\dfrac{1}{J}\,(\bar f)_{\bar z}
\end{split}
\end{equation}
\begin{equation}\label{eq25}
\Rightarrow\;(f^{-1})_{\zeta}\circ f=\dfrac{1}{J}\,(\bar f)_{\bar z}
\end{equation}
and
\begin{equation}
\begin{split}
\notag
(f^{-1})_{\bar\zeta}\circ f
&=\dfrac{1}{J}\,((I)_{\bar z}\,f_z-(I)_z\, f_{\bar z})\\
&=\dfrac{1}{J}\,(-(I)_z\, f_{\bar z})\\
&=-\dfrac{1}{J}\,f_{\bar z}
\end{split}
\end{equation}
\begin{equation}\label{eq26}
\Rightarrow\;(f^{-1})_{\bar\zeta}\circ f=-\dfrac{1}{J}\,f_{\bar z}.
\end{equation}
Now we compute $\mu_{f^{-1}}\circ f.$
\begin{equation}
\begin{split}
\notag
\mu_{f^{-1}}\circ f
&=\dfrac{({f^{-1})}_{\bar\zeta}}{({f^{-1})}_{\zeta}}\,\circ f\quad(\text{by (\ref{eq16})})\\
&=-\dfrac{f_{\bar z}}{(\bar f)_{\bar z}}\quad(\text{using (\ref{eq25}) and (\ref{eq26})})\\
&=-\dfrac{f_{\bar z}}{\overline{(f_z)}}\,\dfrac{f_z}{f_z}\quad(\text{by}\; (\ref{eq19}))\\
&=-\dfrac{f_{\bar z}}{|f_z|^2}\,f_z\\
&=-\dfrac{f_{\bar z}}{f_z}\,\dfrac{(f_z)^2}{|f_z|^2}\\
&=\B(-\B(\dfrac{f_z}{|f_z|}\B)^2\mu{_f}\B)
\end{split}
\end{equation}
\begin{equation}\label{eq27}
\Rightarrow\;\mu_{f^{-1}}\circ f=\B(-\B(\dfrac{f_z}{|f_z|}\B)^2\mu{_f}\B).
\end{equation}
Now \;$\mu_{f^{-1}}\circ f=-\dfrac{f_{\bar z}}{(\bar f)_{\bar z}}=-\dfrac{f_{\bar z}}{{\overline{(f_z)}}}$ and so $|\mu_{f^{-1}}\circ f|=\dfrac{|f_{\bar z}|}{|f_z|}=|\mu_f|.$\\
From (\ref{eq23}) and (\ref{eq24}) we have
\[
J\,(g_{\zeta}\circ f)=(g\circ f)_z(\bar f)_{\bar z}-(g\circ f)_{\bar z}(\bar f)_z\quad\text {and}
\]
\[
J\,(g_{\bar\zeta}\circ f)=(g\circ f)_{\bar z}f_z-(g\circ f)_z f_{\bar z}
\]
Multiplying first equation by $f_{\bar z},$ second equation by $(\bar f)_{\bar z}$ and adding we get\\
$J$\,($f_{\bar z}\,(g_{\zeta}\circ f)+(\bar f)_{\bar z}\,(g_{\bar\zeta}\circ f)$) = ($g\circ f$)$_{\bar z}\,(f_z\,(\bar f)_{\bar z}-(\bar f)_z\,f_{\bar z})$ and so
\begin{equation}\label{eq28}
(g\circ f)_{\bar z}=J\,\dfrac{(f_{\bar z}\,(g_{\zeta}\circ f)+(\bar f)_{\bar z}\,(g_{\bar\zeta}\circ f))}{(f_z\,(\bar f)_{\bar z}-(\bar f)_z\,f_{\bar z})}\cdot
\end{equation}
Again multiplying first equation by $f_z$ and second equation by $(\bar f)_z$ we have\\
$J\,(f_z\,(g_{\zeta}\circ f)+(\bar f)_z\,(g_{\bar\zeta}\circ f))=(g\circ f)_z(f_z\,(\bar f)_{\bar z}-(\bar f)_z\,f_{\bar z})$ and so
\begin{equation}
(g\circ f)_z=J\,\dfrac{(f_z\,(g_{\zeta}\circ f)+(\bar f)_z\,(g_{\bar\zeta}\circ f))}{(f_z\,(\bar f)_{\bar z}-(\bar f)_z\,f_{\bar z})}\cdot
\end{equation}
Then
\begin{equation}
\begin{split}
\notag 
\mu_{g\circ f}
&=\dfrac{(g\circ f)_{\bar z}}{(g\circ f)_z}\quad(\text{by (\ref{eq16}}))\\
&=\dfrac{J\,\dfrac{(f_{\bar z}\,(g_{\zeta}\circ f)+(\bar f)_{\bar z}\,(g_{\bar\zeta}\circ f))}{(f_z\,(\bar f)_{\bar z}-(\bar f)_z\,f_{\bar z})}}{J\,\dfrac{(f_z\,(g_{\zeta}\circ f)+(\bar f)_z\,(g_{\bar\zeta}\circ f))}{(f_z\,(\bar f)_{\bar z}-(\bar f)_z\,f_{\bar z})}}\\
&=\dfrac{(f_{\bar z}\,(g_{\zeta}\circ f)+(\bar f)_{\bar z}\,(g_{\bar\zeta}\circ f))}{(f_z\,(g_{\zeta}\circ f)+(\bar f)_z\,(g_{\bar\zeta}\circ f))}\\
&=\dfrac{\dfrac{f_{\bar z}}{f_z}+\dfrac{\overline{(f_z)}}{f_z}\dfrac{(g_{\bar\zeta}\circ f)}{(g_{\zeta}\circ f)}}{1+\dfrac{(\bar f)_z}{f_z}\,\dfrac{(g_{\bar\zeta}\circ f)}{(g_{\zeta}\circ f)}}\quad((\bar f)_{\bar z}=\overline{(f_z)}\;\text{and dividing throughout by}\;f_z\,(g_{\zeta}\circ f))\\
&=\dfrac{\dfrac{f_{\bar z}}{f_z}+\dfrac{\overline{(f_z)}}{f_z}\,(\mu_{ g}\,\circ f)}{1+\dfrac{\overline{(f_{\bar z})}}{f_z}\,\dfrac{\overline{(f_z)}}{\overline{(f_z)}}\,(\mu_{ g}\,\circ f)}\quad\B((\bar f)_z=\overline{(f_{\bar z})}\;\text{and}\;\mu_{g}\,\circ  f=\dfrac{g_{\bar\zeta}\circ f}{g_{\zeta}\circ f}\B)\\
&=\dfrac{\mu_ f+k_f\,(\mu_g\,\circ f)}{1+k_f\,\overline{(\mu_f)}\,(\mu_{g}\,\circ f)}\quad\text{where}\;\; k_f=\dfrac{\overline{(f_z)}}{f_z}\cdot 
\end{split}
\end{equation}
\begin{equation}\label{eq30}
\Rightarrow \mu_{g\circ f}=\dfrac{\mu_ f+k_f\,(\mu_g\,\circ f)}{1+k_f\,\overline{(\mu_f)}\,(\mu_{g}\,\circ f)}\quad\text{where}\;\; k_f=\dfrac{\overline{(f_z)}}{f_z} 
\end{equation}
which on rearrangement gives
\begin{equation}\label{eq31}
\mu_g\,\circ f=\dfrac{1}{k_f}\,\B(\dfrac{\mu_{g\circ f}-\mu_f}{1-\overline{(\mu_f)}\,\mu_{g\circ f}}\B)\cdot
\end{equation}
If $g$ is conformal, then $\mu_g=0$ and so $\mu_{g\circ f}=\dfrac{\mu_f+0}{1+0}=\mu_f,$ which implies that $D_{g\circ f}=\dfrac{1+|\mu_{g\circ f}|}{1-|\mu_{g\circ f}|}=\dfrac{1+|\mu_f|}{1-|\mu_ f|}=D_f.$\\
If $f$ is conformal then $\mu_f=0$  and so $\mu_{g\circ f}=\dfrac{k_f(\mu_g\,\circ f)}{1+0}\;$(since $\mu_f=0,\;\Rightarrow \overline{(\mu_f)}=0$). $\;\Rightarrow \mu_{g\circ f}=\dfrac{\overline{(f_z)}}{f_z} (\mu_g\,\circ f).\;$ Similarly we get $D_{g\circ f}=D_g.$ Hence we conclude that the dilatation is invariant with respect to all conformal transformations.\\
If we set $g\circ f=h,$ then from (\ref{eq31}) we get
\begin{equation}\label{eq32}
\mu_{h\circ {f^{-1}}}\,\circ f=\dfrac{f_z}{(\bar f)_{\bar z}}\,\B(\dfrac{\mu_h-\mu_f}{1-\overline{(\mu_f)}\,\mu_h}\B)\quad(\text{by (\ref{eq19})})\cdot
\end{equation}
We now show that quasiconformal maps are closed under compositions and inverses.
\begin{theorem}\label{2.9.4}
 Suppose $U,\,V$ and $W$ are open sets in $\ti\C.$ If $f: U\to V$ is $K_1-$ quasiconformal and $g:V\to W$ is $K_2-$ quasiconformal, then $g\circ f : U\to W$ is $K_1K_2-$ quasiconformal.
\end{theorem}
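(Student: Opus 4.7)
The plan is to start from the explicit composition formula for complex dilatations, namely equation~(\ref{eq30}), and reduce the problem to a purely algebraic estimate on the unit disk.

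First, I would observe that the factor $k_f = \overline{(f_z)}/f_z$ appearing in (\ref{eq30}) has modulus one, since numerator and denominator are complex conjugates. Writing $\alpha = \mu_f(z)$ and $\beta = k_f(z)\,\mu_g(f(z))$, the formula (\ref{eq30}) becomes
\[
\mu_{g\circ f}(z)=\frac{\alpha+\beta}{1+\overline{\alpha}\,\beta},
\]
with $|\beta|=|\mu_g(f(z))|$. Both $|\alpha|<1$ and $|\beta|<1$, because $f$ and $g$ are $K_1$- and $K_2$-quasiconformal, respectively. Thus the composite dilatation is obtained from the individual dilatations by a disk automorphism of the unit disk.

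Next, I would establish the key pointwise estimate
\[
|\mu_{g\circ f}(z)|\;\leqslant\;\frac{|\mu_f(z)|+|\mu_g(f(z))|}{1+|\mu_f(z)|\,|\mu_g(f(z))|}.
\]
This reduces to maximizing $\bigl|(\alpha+\beta)/(1+\bar\alpha\beta)\bigr|$ over all $\alpha,\beta$ with prescribed moduli $u=|\alpha|$, $v=|\beta|$. By a rotation we may take $\alpha=u$ and $\beta=v\,e^{i\phi}$; then a short calculation shows
\[
\left|\frac{\alpha+\beta}{1+\overline{\alpha}\,\beta}\right|^{2}
=\frac{u^{2}+v^{2}+2uv\cos\phi}{1+u^{2}v^{2}+2uv\cos\phi},
\]
which is monotonically increasing in $\cos\phi$ (the derivative being $(1-u^{2})(1-v^{2})/(\cdots)^{2}>0$), so the maximum over $\phi$ is $(u+v)^{2}/(1+uv)^{2}$. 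This gives the estimate above.

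Finally, using the identity $D_{h}(z)=(1+|\mu_{h}(z)|)/(1-|\mu_{h}(z)|)$ from (\ref{eq17}) and the elementary algebraic factorisation
\[
\frac{1+\dfrac{u+v}{1+uv}}{1-\dfrac{u+v}{1+uv}}
=\frac{1+u+v+uv}{1-u-v+uv}
=\frac{1+u}{1-u}\cdot\frac{1+v}{1-v},
\]
I obtain the multiplicativity
\[
D_{g\circ f}(z)\;\leqslant\;D_{f}(z)\cdot D_{g}(f(z))\;\leqslant\;K_{1}K_{2}
\]
at every point, proving that $g\circ f$ is $K_{1}K_{2}$-quasiconformal. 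The main potential obstacle here is the algebraic Möbius bound on $|(\alpha+\beta)/(1+\bar\alpha\beta)|$; once one recognises that $|k_f|=1$ converts (\ref{eq30}) into a disk automorphism, the rest is a direct computation on the dilatations.
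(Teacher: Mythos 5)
Your proof is correct, and it takes a genuinely different route from the one in the text. The paper never touches the complex dilatation: it bounds $D_{g\circ f}$ directly from the chain-rule expressions (\ref{eq21}) and (\ref{eq22}) for $(g\circ f)_z$ and $(g\circ f)_{\bar z}$, applying the triangle inequality to the numerator and the reverse triangle inequality to the denominator of the quotient $\bigl(|(g\circ f)_z|+|(g\circ f)_{\bar z}|\bigr)\big/\bigl(|(g\circ f)_z|-|(g\circ f)_{\bar z}|\bigr)$ and then factoring to obtain $D_{g\circ f}\leqslant D_g D_f$. You instead pass through the composition formula (\ref{eq30}), observe that $|k_f|=1$ turns it into a disk automorphism applied to $\mu_g\circ f$, and maximize over the relative phase to get the sharp pointwise bound
\[
|\mu_{g\circ f}(z)|\leqslant\frac{|\mu_f(z)|+|\mu_g(f(z))|}{1+|\mu_f(z)|\,|\mu_g(f(z))|},
\]
after which the factorisation $(1+u)(1+v)/\bigl((1-u)(1-v)\bigr)$ gives the product bound (you use implicitly, and correctly, that $x\mapsto(1+x)/(1-x)$ is increasing on $[0,1)$). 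Your approach buys the sharp ``hyperbolic triangle inequality'' for the dilatations, identifies exactly when equality occurs (aligned phases), and its algebra is airtight; by contrast, the paper's term-by-term estimate of the denominator is somewhat delicate to organise into the factored form $(|g_\zeta|-|g_{\bar\zeta}|)(|f_z|-|f_{\bar z}|)$, though it has the virtue of requiring only the chain rule and no prior derivation of (\ref{eq30}). Both arguments live within the $C^1$ framework assumed throughout the section, so your reliance on (\ref{eq30}) is legitimate.
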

\begin{proof}
Let $\zeta=f(z).$ We denote $g_{\zeta}\circ f$ by $g_{\zeta}$ and $g_{\bar\zeta}\circ f$ by $g_{\bar\zeta}$. We compute the dilatation of  $g\circ f.$ 
\begin{equation}
\begin{split}
\notag
D_{g\circ f}
&=\dfrac{|(g\circ f)_z|+|(g\circ f)_{\bar z}|}{|(g\circ f)_z|-|(g\circ f)_{\bar z}|}\quad\text{(by (\ref{eq15}))}\\
&=\dfrac{|(g_{\zeta}\circ f)f_z+(g_{\bar\zeta}\circ f)\overline{(f_{\bar z})}|+|(g_{\zeta}\circ f)f_{\bar z}+(g_{\bar\zeta}\circ f)\overline{(f_z)}|}{|(g_{\zeta}\circ f)f_z+(g_{\bar\zeta}\circ f)\overline{(f_{\bar z})}|-|(g_{\zeta}\circ f)f_{\bar z}+(g_{\bar\zeta}\circ f)\overline{( f_z)}|}\quad(\text{by (\ref{eq21}) and (\ref{eq22})})\\
&=\dfrac{|g_\zeta f_z + g_{\bar\zeta}\overline{(f_{\bar z})}|+|g_\zeta f_{\bar z} + g_{\bar\zeta}\overline{(f_z)|}}{|g_\zeta f_z + g_{\bar\zeta}\overline{(f_{\bar z})}| - |g_\zeta f_{\bar z} +g_{\bar\zeta}\overline{(f_z)|}}\\
&\leqslant\dfrac{|g_\zeta||f_z |+| g_{\bar\zeta}||\overline{(f_{\bar z})}|+|g_\zeta|| f_{\bar z}|+|g_{\bar\zeta}||\overline{(f_z)|}}{|g_\zeta||f_z |-| g_{\bar\zeta}||\overline{(f_{\bar z})}|-|g_\zeta|| f_{\bar z}|+|g_{\bar\zeta}||\overline{(f_z)|}}\\
&=\dfrac{|g_\zeta||f_z |+| g_{\bar\zeta}||f_z|+|g_\zeta|| f_{\bar z}|+|g_{\bar\zeta}||f_{\bar z}|}{|g_\zeta||f_z |-| g_{\bar\zeta}||f_ z|-|g_\zeta|| f_{\bar z}|+|g_{\bar\zeta}|f_{\bar z}|}\quad(\text{by (\ref{eq19}) and (\ref{eq20})})\\
&=\dfrac{|f_z|(|g_\zeta|+|g_{\bar\zeta}|)+|f_{\bar z}|(|g_\zeta|+| g_{\bar\zeta}|)}{|f_z|(|g_\zeta|-|g_{\bar\zeta}|)-|f_{\bar z}|(|g_\zeta|-|g_{\bar\zeta}|)}\\
&=\dfrac{(|g_\zeta|+|g_{\bar\zeta}|)(|f_z|+|f_{\bar z}|)}{(|g_\zeta|-|g_{\bar\zeta}|)(|f_z|-|f_{\bar z}|)}\\
&=D_g D_f.
\end{split}
\end{equation}
As $f$ is $K_1-$ quasiconformal and $g$ is $K_2-$ quasiconformal, so $D_f\leqslant K_1$ and $D_g\leqslant K_2$ which implies that $D_{g\circ f}\leqslant  K_1 K_2.$ Hence ${g\circ f}$ is $K_1K_2-$ quasiconformal.
\end{proof}
\begin{theorem}\label{2.9.5}
The inverse of a $K-$ quasiconformal map is  $K-$ quasiconformal.
\end{theorem}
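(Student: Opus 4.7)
The plan is to exploit the formulas already derived in the discussion of composed mappings, specifically the identity (\ref{eq27}) for the complex dilatation of the inverse:
\[
\mu_{f^{-1}} \circ f = -\Big(\dfrac{f_z}{|f_z|}\Big)^2 \mu_f.
\]
The factor $-(f_z/|f_z|)^2$ is a unit complex number, so on taking absolute values it drops out entirely, giving the pointwise identity $|\mu_{f^{-1}}(f(z))| = |\mu_f(z)|$.

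First I would fix an arbitrary point $\zeta \in V$ and write $\zeta = f(z)$ with $z \in U$, which makes sense because $f$ is a homeomorphism. Applying the observation above at $z$ yields $|\mu_{f^{-1}}(\zeta)| = |\mu_f(z)|$. Then I would appeal to the formula (\ref{eq17}) relating the dilatation to the modulus of the complex dilatation, once to $f^{-1}$ at $\zeta$ and once to $f$ at $z$:
\[
D_{f^{-1}}(\zeta) = \dfrac{1 + |\mu_{f^{-1}}(\zeta)|}{1 - |\mu_{f^{-1}}(\zeta)|} = \dfrac{1 + |\mu_f(z)|}{1 - |\mu_f(z)|} = D_f(z).
\]
Since $f$ is $K$-quasiconformal, $D_f(z) \leqslant K$, and hence $D_{f^{-1}}(\zeta) \leqslant K$ for every $\zeta \in V$.

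There is essentially no obstacle here, since the hard computational work was already carried out while establishing (\ref{eq25}), (\ref{eq26}) and (\ref{eq27}); the present statement is a one-line corollary. The only minor subtlety worth a remark is to note that $f^{-1}$ is indeed $C^1$ (so that the notion of complex dilatation for $f^{-1}$ is defined in the sense of this section): this follows from the inverse function theorem, since $J = |f_z|^2 - |f_{\bar z}|^2 > 0$ wherever $f$ is $K$-quasiconformal (because $|\mu_f| = |f_{\bar z}|/|f_z| < 1$ forces $|f_z| > |f_{\bar z}|$ and in particular $f_z \neq 0$). Thus the inverse map meets the standing $C^1$ hypothesis of the section, and the conclusion $D_{f^{-1}} \leqslant K$ finishes the proof.
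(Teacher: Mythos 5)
Your proof is correct and is essentially the paper's argument: the paper computes $D_{f^{-1}}$ directly from the formulas (\ref{eq25}) and (\ref{eq26}) for $(f^{-1})_\zeta\circ f$ and $(f^{-1})_{\bar\zeta}\circ f$, while you use their consequence (\ref{eq27}), i.e.\ the identity $|\mu_{f^{-1}}\circ f|=|\mu_f|$ (which the paper itself records right after (\ref{eq27})), together with (\ref{eq17}) --- the same computation in a slightly different dress. Your added remark that $f^{-1}$ meets the section's standing $C^1$ hypothesis because $J>0$ is a reasonable extra observation that the paper leaves implicit.
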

\begin{proof}
 From (\ref{eq25}) and (\ref{eq26}) we get $(f^{-1})_{\zeta}=\dfrac{1}{J}(\bar f)_{\bar z}\,f^{-1}$ and $(f^{-1})_{\bar\zeta}=-\dfrac{1}{J}f_{\bar z}\,f^{-1}.$ Then
\begin{equation}
\begin{split}
\notag
D_{f^{-1}}
&=\dfrac{|f^{-1}\,_\zeta|+|f^{-1}\,_{\bar\zeta}|}{|f^{-1}\,_\zeta|-|f^{-1}\,_{\bar\zeta}|}\\
&=\dfrac{\dfrac{1}{J}|(\bar f)_{\bar z}||f^{-1}|+\dfrac{1}{J}|f_{\bar z}||f^{-1}|}{\dfrac{1}{J}|(\bar f)_{\bar z}||f^{-1}|-\dfrac{1}{J}|f_{\bar z}||f^{-1}|}\quad(J>0)\\
&=\dfrac{|f_z|+|f_{\bar z}|}{|f_z|-|f_{\bar z}|}\quad(\text{by (\ref{eq19}) and (\ref{eq20})})\\
&=D_f\\
&\leqslant K.
\end{split}
\end{equation}
Hence $f^{-1}$ is $K-$ quasiconformal.
\end{proof}
We now state a result on compactness property of quasiconformal mappings (see~\cite{lehto}).
\begin{theorem}\label{2.9.6}
Let $U\subset\ti\C$ be a domain in $\ti\C.$ Let $\{f_n\}$ be a sequence of $K-$ quasiconformal mappings of $U$ into $\ti\C,$ which is uniformly convergent on each compact subset of $U$ to a function $f$. Then $f$ is either $constant$ or a $K-$ quasiconformal mapping.
\end{theorem}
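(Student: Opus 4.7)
The plan is to proceed in three stages: first establish continuity of the limit $f$, then prove a Hurwitz-type dichotomy (either $f$ is constant or $f$ is a homeomorphism onto its image), and finally verify the dilatation estimate for $f$ via the geometric characterization of quasiconformality through moduli of annuli.

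First, since each $f_n$ is continuous and $f_n \to f$ uniformly on each compact subset of $U$, the limit $f$ is continuous on $U$.

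Second, I would prove the dichotomy: if $f$ is not constant, then $f$ is injective. Suppose for contradiction that $f(z_1) = f(z_2) = w_0$ with $z_1 \neq z_2$, yet $f \not\equiv w_0$ on $U$. A winding-number argument works when one can find small disjoint closed disks $\overline{V_1}, \overline{V_2} \subset U$ around $z_1, z_2$ on whose boundaries $f$ omits the value $w_0$: setting $\delta = \min\{|f(\zeta) - w_0| : \zeta \in \partial V_1 \cup \partial V_2\} > 0$ and using uniform convergence $|f_n - f| < \delta/2$ on $\overline{V_1} \cup \overline{V_2}$, together with the fact that each $f_n$ is a homeomorphism, forces $f_n$ to attain the value $w_0$ in both $V_1$ and $V_2$ for all large $n$, contradicting injectivity of $f_n$. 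The delicate case in which $f$ is constant on a neighborhood of $z_1$ or $z_2$ reduces to the previous one by replacing $w_0$ with a suitably chosen nearby value and exploiting that each $f_n$, being quasiconformal, is an open mapping. Combined with the connectedness of $U$, this yields the dichotomy.

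Third, assume $f$ is non-constant and hence a homeomorphism of $U$ onto $f(U)$. I would apply the geometric characterization of $K$-quasiconformality: a homeomorphism $g$ is $K$-quasiconformal if and only if $\mathrm{mod}(g(A)) \leq K \cdot \mathrm{mod}(A)$ for every doubly connected domain $A$ compactly contained in its domain. Fix such an annulus $A$ with $\overline{A} \subset U$. Since each $f_n$ is $K$-quasiconformal, $\mathrm{mod}(f_n(A)) \leq K \cdot \mathrm{mod}(A)$. Because $f_n \to f$ uniformly on $\overline{A}$ with $f$ a homeomorphism, the image annuli $f_n(A)$ converge to $f(A)$ in the Carath\'eodory kernel sense, and the modulus is continuous under such kernel convergence. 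Passing to the limit gives $\mathrm{mod}(f(A)) \leq K \cdot \mathrm{mod}(A)$ for every admissible $A$, establishing $K$-quasiconformality of $f$.

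The main obstacle is justifying the continuity of the modulus under kernel convergence in the third step. This appeals either to extremal-length machinery (upper semicontinuity of the modulus on monotone sequences of annuli), or alternatively to the Sobolev-theoretic definition of quasiconformality where one shows that $\partial f$ and $\bar\partial f$ exist as locally $L^2$ distributional derivatives satisfying $|\bar\partial f| \leq k\,|\partial f|$ almost everywhere with $k = (K - 1)/(K + 1)$. Either route relies on deeper facts from the theory of quasiconformal mappings that lie outside the purely $C^1$ framework of this section, which is precisely why the statement defers to Lehto's treatise.
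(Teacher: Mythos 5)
The first thing to note is that the paper does not prove Theorem \ref{2.9.6} at all: it is quoted as a known compactness result with a citation to Lehto, so there is no in-text argument to compare yours against. Your overall strategy -- continuity of the limit, then the constant-or-injective dichotomy, then the modulus (ring-domain) characterization of $K$-quasiconformality -- is the standard textbook route, and your steps 1 and 3 are sound modulo the continuity-of-modulus lemma that you correctly flag as the technical heart lying outside the $C^1$ framework of this section.

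The genuine gap is in step 2. Your winding-number argument requires circles $\partial V_1,\partial V_2$ about $z_1,z_2$ on which $f$ omits the value $w_0$, and you assert that the only obstruction to finding them is that $f$ be constant on a neighbourhood of $z_1$ or $z_2$. That case analysis is not exhaustive: $f^{-1}(w_0)$ can be a nondegenerate continuum passing through $z_1$ and $z_2$ without $f$ being locally constant anywhere, in which case every small circle about $z_1$ meets $f^{-1}(w_0)$ and no admissible $V_1$ exists. More seriously, whatever closes this case must use the uniform bound $K$, which your step 2 never invokes beyond openness of the $f_n$. Take $f_n(x+iy)=x+iy/n$: each $f_n$ is a quasiconformal homeomorphism of $\C$ (with dilatation $K_n=n$), the convergence is locally uniform, yet the limit $f(x+iy)=x$ is neither constant nor injective, and $f^{-1}(0)$ is exactly the degenerate configuration above (the imaginary axis). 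Every ingredient your step 2 actually uses -- homeomorphism, openness, uniform convergence -- is present in this example, so the dichotomy cannot follow from that argument alone. The missing ingredient is a quantitative distortion estimate, e.g.\ the inequality $\mathrm{mod}\,(f_n(A))\geqslant \mathrm{mod}\,(A)/K$ applied to round annuli separating $z_1$ from $z_2$ together with the Gr\"{o}tzsch--Teichm\"{u}ller modulus bounds, which is what rules out the collapse of a continuum to a point when $K$ is fixed. The same objection applies to your proposed repair of the locally-constant subcase: showing that the set where $f$ is locally constant is closed (hence all of $U$) again needs the dilatation bound, not merely openness of the $f_n$.
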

We now deal with  measurable conformal structure on an open set $U\subset\ti\C.$\\ 
A conformal structure at some point $z\in U$ is prescribed by choosing some ellipse centered at origin in the tangent space $T_z U\cong\ti\C.$ This ellipse is viewed as a circle in the new conformal structure.
A bounded measurable conformal structure $\mu$ is a measurable field of infinitesimal ellipses on the tangent spaces to $\ti\C$, defined a.e. (almost everywhere) with minor axis of each ellipse tilted at $\dfrac{1}{2}\emph{arg}\,\mu(z)$ and major axis tilted at $\dfrac{\pi}{2}+\dfrac{1}{2}\emph{arg}\,\mu(z)$ and with eccentricities of  ellipses being bounded.
A quasiconformal mapping $\phi$ of $\ti\C$ converts the standard conformal structure into a bounded measurable conformal structure.
\begin{definition}
[\textbf{Beltrami differential equation}]
If $f\in C^1$ and $\mu$ is a complex valued Lebesgue measurable function with $\|\mu\|_{\ity}\leqslant k<1$ a.e., then the Beltrami equation is defined as 
\begin{equation}
  \dfrac{\partial f}{\partial\bar z}=\mu(z)\dfrac{\partial f}{\partial z}\cdot
\end{equation}
\end{definition}
\begin{definition}\label{d20}
\textbf{(Distributional derivatives)}
Let $U\subset\C$ be open. A continuous function $g :U\to \C$ has distributional derivatives\;$g_z$ and $g_{\bar z}$ which is defined a.e. in $U$ and belonging to $L^1(U)$ if
\begin{enumerate}
\item\ $\iint_{U}g_z\phi(z)\,dx\,dy=-\iint_{U}g(z)\dfrac{\partial\phi}{\partial z}\,dx\,dy\;\;$and
\item\ $\iint_{U}g_{\bar z}\phi(z)\,dx\,dy=-\iint_{U}g(z)\dfrac{\partial\phi}{\partial\bar z}\,dx\,dy$
\end{enumerate}
holds for every \emph{test function} $\phi.$ (A test function $\phi :U\to\C$ is of class $C^\ity$ and has compact support.) The  Beltrami equation for $f$ now requires
\begin{equation}\label{eq33}
f_{\bar z}(z)=\mu(z) f_z(z)
\end{equation}
to hold  a.e. in $U.$
\end{definition}
We want to find the solution of the  Beltrami equation which is known as \emph{The mapping\,-\,theorem}. The mapping theorem is the foundation stone for quasiconformal mappings.\\
We define two Integral Operators on $L^p(\Omega)$ where $\Omega$ is the complex plane $\C$ as :
\begin{equation}\label{eq34}
Ph(\zeta)=-\dfrac{1}{\pi}\iint_{\Omega} h(z)(\dfrac{1}{z-\zeta}-\dfrac{1}{z})\,dx\,dy.
\end{equation}
\begin{lemma}\label{2.9.9}
$Ph$ is \emph{H$\ddot{\text{o}}$lder continuous}  of exponent $1-\frac{2}{p}\cdot$
\end{lemma}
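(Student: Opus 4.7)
The plan is to estimate $|Ph(\zeta_1)-Ph(\zeta_2)|$ directly and read off the H\"older exponent from a scaling argument. First I would form the difference: since both integrands share the term $-h(z)/(\pi z)$, that piece cancels and one obtains
\[
Ph(\zeta_1)-Ph(\zeta_2) = -\frac{\zeta_1-\zeta_2}{\pi}\iint_{\Omega}\frac{h(z)\,dx\,dy}{(z-\zeta_1)(z-\zeta_2)}.
\]
Next, apply H\"older's inequality with conjugate exponents $p$ and $q=p/(p-1)$ to bound this by
\[
|Ph(\zeta_1)-Ph(\zeta_2)| \leqslant \frac{|\zeta_1-\zeta_2|}{\pi}\,\|h\|_p \left(\iint_{\Omega}\frac{dx\,dy}{|z-\zeta_1|^q\,|z-\zeta_2|^q}\right)^{1/q}.
\]

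The core of the argument is then the substitution $z=\zeta_1+|\zeta_1-\zeta_2|\,w$, which rescales the two factors in the denominator and the area element uniformly. After simplification the integral becomes
\[
|\zeta_1-\zeta_2|^{2-2q}\iint_{\C}\frac{du\,dv}{|w|^q\,|w+e^{i\theta}|^q},
\]
where $\theta=\arg(\zeta_1-\zeta_2)$, so the right-hand integral is a finite universal constant $C_q$ independent of $\zeta_1$ and $\zeta_2$. Taking the $q$-th root and combining the exponents of $|\zeta_1-\zeta_2|$, the bound reduces to
\[
|Ph(\zeta_1)-Ph(\zeta_2)| \leqslant K\,\|h\|_p\,|\zeta_1-\zeta_2|^{\,1+(2-2q)/q} = K\,\|h\|_p\,|\zeta_1-\zeta_2|^{\,1-2/p},
\]
which is precisely the claimed H\"older estimate with exponent $1-\tfrac{2}{p}$.

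The main obstacle, and the step that fixes the admissible range of $p$, will be verifying finiteness of the universal integral $C_q$. Near the singular points $w=0$ and $w=-e^{i\theta}$ the integrand behaves like $|w|^{-q}$, which is locally integrable in the plane iff $q<2$; at infinity it decays like $|w|^{-2q}$, which is integrable iff $2q>2$, i.e.\ $q>1$. Both conditions translate to $p>2$, which matches precisely the requirement $1-\tfrac{2}{p}>0$ for the H\"older exponent to be positive. Once convergence of $C_q$ is secured, assembling the estimates above completes the proof, and the implicit constant $K$ depends only on $p$.
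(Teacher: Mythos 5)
Your proof is correct and follows essentially the same route as the paper: both rest on the partial-fraction identity for the difference of kernels, H\"older's inequality with the conjugate exponent $q=p/(p-1)$, and a scaling substitution that extracts the factor $|\zeta_1-\zeta_2|^{2/q-2}=|\zeta_1-\zeta_2|^{-2/p}$ from the $L^q$-norm of the kernel (with $p>2$ giving $1<q<2$ and hence convergence). The only organizational difference is that the paper first proves the one-point bound $|Ph(\zeta)|\leqslant K_p\,\|h\|_p\,|\zeta|^{1-\frac{2}{p}}$ and then deduces the two-point estimate by applying it to the translate $h_1(z)=h(z+\zeta_2)$, whereas you estimate $|Ph(\zeta_1)-Ph(\zeta_2)|$ directly; both are valid.
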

\begin{proof}
We first show that the integral is convergent.
\begin{equation}
\begin{split}
\notag
|Ph(\zeta)|
&=\dfrac{1}{\pi}|\iint h(z)(\dfrac{1}{z-\zeta}-\dfrac{1}{z})\,dx\,dy|\\
&=\dfrac{1}{\pi}|\iint\dfrac{ h(z)\,\zeta}{z(z-\zeta)}\,dx\,dy|\\
&\leqslant \dfrac{|\zeta|}{\pi}\,\|h\|_p\,\|\frac{1}{z(z-\zeta)}\|_q\quad(\text{H$\ddot{\text{o}}$lder's inequality)}). 
\end{split}
\end{equation}
(Since $h\in L^p\;\;p>2,\;\dfrac{\zeta}{z(z-\zeta)}\in L^q,\;1<q<2$ and $\dfrac{1}{p}+\dfrac{1}{q}=1$).\\
Consider $\dfrac{1}{|z(z-\zeta)|^q}=\dfrac{1}{|\zeta|^q\,|z\,(\dfrac{z}{\zeta}-1)|^q}$. Put $\dfrac{z}{\zeta}=u,\;\Rightarrow\dfrac{1}{|z\,(z-\zeta)|^q}=\dfrac{1}{|\zeta|^q\,|u\zeta|^q\,|u-1|^q}=\dfrac{1}{|\zeta|^{2q}\,|u|^q\,|u-1|^q}.\;$ Now $dx=\dfrac{dz+d\bar z}{2},\;dy=\dfrac{dz-d\bar z}{2i}.\;$ Suppose $u=x_1+iy_1$. Then $u=\dfrac{z}{\zeta},\;\Rightarrow du=\dfrac{dz}{\zeta}$ and $d\bar u=\dfrac{d\bar z}{\zeta}$ and so $dx_1=\dfrac{dx}{\zeta}$ and $dy_1=\dfrac{dy}{\zeta}$ which implies $dx_1\,dy_1=\dfrac{dx\,dy}{\zeta^2}\cdot$ Then\\
$\iint \dfrac{1}{|z(z-\zeta)|^q}\,dx\,dy=\iint \dfrac{1}{|\zeta|^{2q}}\,\dfrac{|\zeta|^2}{(|u||u-1|)^q}\,dx_1\,dy_1=|\zeta|^{2-2q}\,\iint \dfrac{1}{|z(z-1)|^q}\,dx\,dy,$ which implies $\|\dfrac{1}{z(z-\zeta)}\|_q\leqslant|\zeta|^{\frac{2}{q} - 2}\,\B(\iint \dfrac{1}{|z(z-1)|^q}\,dx\,dy\B)^{\frac{1}{q}}.\;$ So 
\begin{equation}
\begin{split}
\notag
|Ph(\zeta)|
&\leqslant\dfrac{|\zeta|}{\pi}\,|\zeta|^{\frac{2}{q}-2}\,\|h\|_p\B(\iint \dfrac{1}{|z(z-1)|^q}\,dx\,dy\B)^{\frac{1}{q}}\\
&=\dfrac{|\zeta|^{\frac{2}{q}-1}}{\pi}\,\|h\|_p\,\B(\iint \dfrac{1}{|z(z-1)|^q}\,dx\,dy\B)^{\frac{1}{q}}\\
&=|\zeta|^{1-\frac{2}{p}}\,K_p\,\|h\|_p\quad(\frac{2}{q}-1=1-\frac{2}{p})
\end{split}
\end{equation}
where the last integral is some constant that depends only on $p.$ Thus we have
\begin{equation}\label{eq35}
|Ph(\zeta)|\leqslant K_p\,\|h\|_p\,|\zeta|^{1-\frac{2}{p}}.
\end{equation}
We apply this result to $h_1(z)=h(z+\zeta_2)\;$(clearly $h_1\in L^p$ and $\|h_1\|_p=\|h\|_p$).
\begin{equation}
\begin{split}
\notag
Ph_1(\zeta_1-\zeta_2)
&=-\dfrac{1}{\pi}\,\iint h_1(z)(\dfrac{1}{z+\zeta_2-\zeta_1}-\dfrac{1}{z})\,dx\,dy\\
&=-\dfrac{1}{\pi}\,\iint h(z+\zeta_2)(\dfrac{1}{z+\zeta_2-\zeta_1}-\dfrac{1}{z})\,dx\,dy\\
&=-\dfrac{1}{\pi}\,\iint h(z)(\dfrac{1}{z-\zeta_1}-\dfrac{1}{z-\zeta_2})\,dx\,dy\quad(\text{change of variable})\\
&=Ph(\zeta_1)-Ph(\zeta_2).\\
\Rightarrow |Ph(\zeta_1)-Ph(\zeta_2)|
&=|Ph_1(\zeta_1-\zeta_2)|\\
&\leqslant K_p\,\|h\|_p\,|\zeta_1-\zeta_2|^{1-\frac{2}{p}}\quad(\text{by (\ref{eq35})}).
\end{split}
\end{equation}
Thus we have 
\begin{equation}\label{eq36}
 |Ph(\zeta_1)-Ph(\zeta_2)|\leqslant K_p\,\|h\|_p\,|\zeta_1-\zeta_2|^{1-\frac{2}{p}}
\end{equation}
which proves that $Ph$ is H$\ddot{\text{o}}$lder continuous of exponent $1-\frac{2}{p}.$
\end{proof}
The second operator $T$ is initially defined for functions $h\in {C_0}^2\;(C^2$ with compact support), as the Cauchy Principal Value
\begin{equation}\label{eq37}
Th(\zeta)=\lim_{\epsilon\to 0}\iint_{|z-\zeta|>\,\epsilon}\;\dfrac{h(z)}{(z-\zeta)^2}\,dx\,dy.
\end{equation}
We now state the following results which we will use to prove the mapping theorem (see \textbf{Ahlfors}~\cite{ahlfors}).
\begin{lemma}\label{2.9.10}
For $h\in {C_0}^2, Th$ is well defined and $Th\in C^1$ and
\begin{enumerate}
\item\ $(Ph)_{\bar z}=h$
\item\ $(Ph)_z=Th$
\item\ $\|Th\|_2= \|h\|_2.$
\end{enumerate}
\end{lemma}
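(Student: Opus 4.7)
The strategy is to recognize $P$ as a normalized Cauchy transform $\mathcal{C}h(\zeta) = -\frac{1}{\pi}\iint h(z)/(z-\zeta)\,dx\,dy$ (specifically $Ph = \mathcal{C}h - \mathcal{C}h(0)$) and $T$ as a Beurling transform with principal-value kernel $(z-\zeta)^{-2}$ (up to the conventional $-1/\pi$ factor), then to reduce each claim to a classical result. Well-definedness of $Th$ follows from a Taylor expansion at $\zeta$: $h(z) = h(\zeta) + h_z(\zeta)(z-\zeta) + h_{\bar z}(\zeta)\,\overline{(z-\zeta)} + O(|z-\zeta|^2)$. On an annulus $\epsilon \leqslant |z-\zeta| \leqslant R$ with $R$ chosen so that $\{|z-\zeta|<R\}$ contains $\mathrm{supp}\,h$, polar coordinates $z-\zeta = re^{i\theta}$ show that the constant and linear pieces contribute the angular integrals $\int_0^{2\pi}e^{-2i\theta}\,d\theta$, $\int_0^{2\pi}e^{-i\theta}\,d\theta$ and $\int_0^{2\pi}e^{-3i\theta}\,d\theta$, all zero, while the $O(|z-\zeta|^2)$ remainder is majorized by $C|z-\zeta|^{-1}$, locally integrable in $\mathbb{R}^2$. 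Hence the $\epsilon\to 0$ limit exists. For part~(1), the change of variable $w = z-\zeta$ transfers the $\zeta$-dependence of $\mathcal{C}h$ entirely onto $h$, permitting differentiation under the integral to yield $(\mathcal{C}h)_{\bar\zeta} = \mathcal{C}(h_{\bar z})$; the classical Cauchy--Pompeiu formula $h = \mathcal{C}(h_{\bar z})$ (valid for $h\in C_0^1 \supset C_0^2$) then gives $(\mathcal{C}h)_{\bar\zeta} = h$, and since $Ph-\mathcal{C}h$ is a constant, $(Ph)_{\bar z} = h$.

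For part~(2) I would write the difference quotient
\[
\frac{Ph(\zeta+\eta)-Ph(\zeta)}{\eta} \;=\; -\frac{1}{\pi}\iint \frac{h(z)}{(z-\zeta)(z-\zeta-\eta)}\,dx\,dy
\]
and split the integration into $\Omega_1 = \{|z-\zeta|>2|\eta|\}$ and $\Omega_2 = \{|z-\zeta|\leqslant 2|\eta|\}$. On $\Omega_1$ the integrand converges uniformly to $h(z)/(z-\zeta)^2$ and is dominated by $4|h(z)|/|z-\zeta|^2$; together with the principal-value argument of the preceding paragraph this identifies the limit with $-\frac{1}{\pi}$ times the PV integral defining $Th(\zeta)$. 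On $\Omega_2$, Taylor-expand $h$ at $\zeta$: the constant term $h(\zeta)$ against $1/((z-\zeta)(z-\zeta-\eta))$ has vanishing angular average (circles centered at $\zeta+\eta/2$ work cleanly), while the linear and quadratic remainders produce contributions of size $O(|\eta|)$ after integration over a disk of radius $2|\eta|$. Sending $\eta\to 0$ yields $(Ph)_z = Th$. I expect this to be the main obstacle: the kernel $(z-\zeta)^{-2}$ is not locally integrable, so differentiation under the integral cannot be performed naively, and the splitting--Taylor device is the delicate mechanism that justifies the interchange between differentiation and the singular integral.

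Finally, for part~(3), the first two parts give $h = u_{\bar z}$ and $Th = u_z$ with $u:=Ph$, so the identity reduces to $\|u_z\|_2 = \|u_{\bar z}\|_2$. A Laurent expansion of $1/(z-\zeta)-1/z$ at $\zeta=\infty$ shows $u_z, u_{\bar z} = O(|\zeta|^{-2})$, which is enough decay to integrate by parts on all of $\C$ with no boundary contribution:
\[
\iint |u_z|^2\,dx\,dy \;=\; -\iint u\,\overline{u_{z\bar z}}\,dx\,dy \;=\; -\iint u\,\overline{u_{\bar z z}}\,dx\,dy \;=\; \iint |u_{\bar z}|^2\,dx\,dy,
\]
the middle equality being equality of mixed partials for the $C^2$ function $u$. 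An alternative route is via Plancherel: on $L^2(\C) = L^2(\mathbb{R}^2)$, the Beurling transform acts in Fourier variables as multiplication by $\bar\xi/\xi$, a unimodular multiplier, so $T$ is an $L^2$-isometry, and $\|Th\|_2 = \|h\|_2$ follows at once.
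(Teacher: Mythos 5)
The paper itself offers no proof of this lemma: it is quoted from Ahlfors's \emph{Lectures on Quasiconformal Mappings}, so the only benchmark is the standard argument there. Your treatment of the well-definedness of the principal value (Taylor expansion plus the vanishing angular integrals), of part (1) via the Cauchy--Pompeiu formula after the substitution $w=z-\zeta$, and of part (3) (either by integration by parts using the $O(|\zeta|^{-2})$ decay of $u_z$, or by the Fourier multiplier $\bar\xi/\xi$) is sound, and your remark that the paper's displayed definition of $T$ is missing the $-1/\pi$ normalization is correct.

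Part (2), however, contains a genuine error. The complex difference quotient $\bigl(Ph(\zeta+\eta)-Ph(\zeta)\bigr)/\eta$ does not converge as $\eta\to 0$: for a real-differentiable $F$ one has $F(\zeta+\eta)-F(\zeta)=F_z\,\eta+F_{\bar z}\,\bar\eta+o(|\eta|)$, so the quotient tends directionally to $F_z+F_{\bar z}\,\bar\eta/\eta$, and here $F_{\bar z}=h$ is not zero. Concretely, the constant term $h(\zeta)$ of your Taylor expansion does \emph{not} wash out: using the partial-fraction decomposition of $1/\bigl(w^2(w-\eta)\bigr)$ together with the identity $\iint_{|w|<R}(w-\eta)^{-1}\,dA=-\pi\bar\eta$, its total contribution to the difference quotient is
\[
-\frac{\eta}{\pi}\,h(\zeta)\,\mathrm{PV}\iint_{|w|<R}\frac{dA}{w^{2}(w-\eta)}\;=\;h(\zeta)\,\frac{\bar\eta}{\eta},
\]
which is $O(1)$ and direction-dependent --- precisely the $F_{\bar z}\bar\eta/\eta$ term --- so ``sending $\eta\to 0$'' cannot yield $(Ph)_z=Th$, and the claimed vanishing angular averages on $\Omega_2$ cannot all hold. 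The repair is either to use real increments, computing $\partial_x Ph$ and $\partial_y Ph$ separately and forming the Wirtinger combination, or (cleaner, and this is Ahlfors's route) to integrate by parts inside the principal value --- the boundary term on $|z-\zeta|=\epsilon$ dies because $\int_0^{2\pi}e^{-2i\theta}\,d\theta=0$ --- to obtain $Th(\zeta)=-\frac{1}{\pi}\iint h_z(z)(z-\zeta)^{-1}\,dA$, and then to differentiate $Ph$ under the integral sign after the substitution $w=z-\zeta$ (legitimate since $1/w$ is locally integrable), which gives $(Ph)_z=-\frac{1}{\pi}\iint h_z(z)(z-\zeta)^{-1}\,dA=Th$. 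That identity also yields the clause $Th\in C^1$ (apply parts (1)--(2) to $h_z\in C_0^1$), which your proposal never addresses.
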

(\textbf{Calderon\,-\,Zygmund inequality}) This allows us to extend $T$ to $L^p,\; p> 2.$
\begin{equation}\label{eq38}
\|Th\|_p\leqslant C_p\,\|h\|_p\;\;p>1,\;\;h\in L^p,
\end{equation}
and $C_p\to 1$ as $p\to 2.$
\begin{lemma}\label{2.9.11}
For $h\in L^p$ with $p>2,$ the relations
\begin{equation}\label{eq39}
(Ph)_{\bar z}=h
\end{equation}
\begin{equation}\label{eq40}
(Ph)_z=Th
\end{equation}
hold in the sense of distributions.
\end{lemma}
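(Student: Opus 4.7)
The plan is to deduce the lemma from the corresponding smooth statement (Lemma \ref{2.9.10}) by approximation, using the continuity estimates already established: the H\"older bound from Lemma \ref{2.9.9} for $P$ and the Calder\'on--Zygmund inequality (\ref{eq38}) for $T$. Since $C_0^2(\C)$ is dense in $L^p(\C)$ for $1\leqslant p<\ity$, I would fix $h\in L^p$ with $p>2$ and choose a sequence $h_n\in C_0^2$ with $\|h_n-h\|_p\to 0$. The goal is then to pass to the limit in the distributional identities
\[
\iint (Ph_n)\,\phi_z\,dx\,dy=-\iint (Th_n)\,\phi\,dx\,dy,\qquad \iint (Ph_n)\,\phi_{\bar z}\,dx\,dy=-\iint h_n\,\phi\,dx\,dy,
\]
which hold for each $n$ by Lemma \ref{2.9.10} since the derivatives exist classically.

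First I would handle the right-hand sides. Fix a test function $\phi\in C_0^{\ity}$ supported in a compact set $K$, and let $q$ be the H\"older conjugate of $p$. Then $\phi\mathbf{1}_K\in L^q$, so $\iint h_n\phi\,dx\,dy\to\iint h\phi\,dx\,dy$ by H\"older's inequality. Similarly, the Calder\'on--Zygmund inequality (\ref{eq38}) gives $\|Th_n-Th\|_p\leqslant C_p\|h_n-h\|_p\to 0$, so that $\iint Th_n\,\phi\,dx\,dy\to\iint Th\,\phi\,dx\,dy$, again by H\"older on the compact set $K$.

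Next I would handle the left-hand sides, which require uniform control on $Ph_n-Ph$ over $K$. The estimate (\ref{eq35}) (that is, $|Pg(\zeta)|\leqslant K_p\|g\|_p|\zeta|^{1-2/p}$) applied to $g=h_n-h$ shows $|P(h_n-h)(\zeta)|\leqslant K_p\|h_n-h\|_p|\zeta|^{1-2/p}$, so $Ph_n(\zeta)\to Ph(\zeta)$ pointwise, and in fact uniformly on any bounded set (since $|\zeta|^{1-2/p}$ is bounded on bounded sets). Consequently $Ph_n\to Ph$ uniformly on $K$, so $\iint(Ph_n)\phi_z\,dx\,dy\to\iint(Ph)\phi_z\,dx\,dy$ and the same for $\phi_{\bar z}$.

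Combining these limits yields
\[
\iint (Ph)\,\phi_z\,dx\,dy=-\iint (Th)\,\phi\,dx\,dy,\qquad \iint (Ph)\,\phi_{\bar z}\,dx\,dy=-\iint h\,\phi\,dx\,dy,
\]
for every test function $\phi$, which is precisely the assertion that $(Ph)_{\bar z}=h$ and $(Ph)_z=Th$ in the distributional sense of Definition \ref{d20}. The main technical point is the locally uniform convergence $Ph_n\to Ph$; this is where the H\"older continuity from Lemma \ref{2.9.9} is indispensable, because $Ph$ is not a priori an $L^p$ function and one cannot simply quote an $L^p$--continuity of $P$. The Calder\'on--Zygmund bound plays the analogous role for $T$, and together they make the approximation argument go through.
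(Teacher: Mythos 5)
Your argument is correct, and it is the standard approximation proof (the one in Ahlfors' \emph{Lectures on Quasiconformal Mappings}, to which the paper defers for this lemma without giving a proof): density of $C_0^2$ in $L^p$, Lemma \ref{2.9.10} for the smooth case, locally uniform convergence of $Ph_n$ via the H\"older estimate (\ref{eq35}) applied to $h_n-h$, and $L^p$ convergence of $Th_n$ via the Calder\'on--Zygmund inequality (\ref{eq38}). Your closing remark correctly identifies the one genuinely delicate point, namely that $Ph$ need not lie in $L^p$, so the pairing with test functions must be controlled through the pointwise H\"older bound rather than through any $L^p$-continuity of $P$.
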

\begin{lemma}\label{2.9.12}
\textbf{(Weyl)}
Let $U\subset\C$ be open, and $f:U\to\C$ is a distribution in $U$ satisfying $f_{\bar z}=0,$ then $f$ is analytic in $U$.
\end{lemma}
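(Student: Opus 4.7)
The strategy is to regularize $f$ by convolution with a smooth mollifier so that each regularized version is classically smooth, verify that the regularizations are in fact classically holomorphic using the hypothesis $f_{\bar z}=0$, and finally pass to the limit via Weierstrass' theorem on uniform limits of holomorphic functions. Fix a nonnegative, radially symmetric $\phi\in C_c^\infty(\C)$ supported in $\overline{B(0,1)}$ with $\iint_\C \phi\,dx\,dy=1$, and for $\epsilon>0$ set $\phi_\epsilon(z)=\epsilon^{-2}\phi(z/\epsilon)$. On the open set $U_\epsilon:=\{z\in U:\overline{B(z,\epsilon)}\subset U\}$ define the mollification
$$
f_\epsilon(z)=\iint_{U}f(\zeta)\,\phi_\epsilon(z-\zeta)\,d\xi\,d\eta.
$$
Since $\phi_\epsilon$ is smooth with compact support, differentiation under the integral sign gives $f_\epsilon\in C^\infty(U_\epsilon)$, and the standard theory of mollifiers (together with the continuity of $f$, which is the working hypothesis of Definition \ref{d20}) yields $f_\epsilon\to f$ locally uniformly on $U$ as $\epsilon\downarrow 0$.

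The key step is to show $(f_\epsilon)_{\bar z}\equiv 0$ on $U_\epsilon$. For fixed $z\in U_\epsilon$, the map $\zeta\mapsto\phi_\epsilon(z-\zeta)$ is a valid test function with support contained in $U$. Differentiating under the integral sign and applying the chain-rule identity $\partial_{\bar z}\phi_\epsilon(z-\zeta)=-\partial_{\bar\zeta}\phi_\epsilon(z-\zeta)$, one computes
$$
(f_\epsilon)_{\bar z}(z)=-\iint_U f(\zeta)\,\partial_{\bar\zeta}\phi_\epsilon(z-\zeta)\,d\xi\,d\eta=\iint_U f_{\bar\zeta}(\zeta)\,\phi_\epsilon(z-\zeta)\,d\xi\,d\eta=0,
$$
where the second equality is precisely the defining property (Definition \ref{d20}) of the distributional derivative applied with test function $\phi_\epsilon(z-\cdot)$, and the last equality uses the hypothesis that $f_{\bar z}$ vanishes distributionally. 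Consequently $f_\epsilon$ is a $C^\infty$ function on $U_\epsilon$ satisfying the Cauchy-Riemann equation pointwise, hence is holomorphic on each component of $U_\epsilon$.

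It remains to deduce analyticity of the limit. For any $z_0\in U$, choose a disk $D=B(z_0,r)$ with $\overline{D}\subset U$; then for all sufficiently small $\epsilon$ we have $\overline{D}\subset U_\epsilon$ and $f_\epsilon$ is holomorphic on a neighborhood of $\overline{D}$. Since $f_\epsilon\to f$ uniformly on $\overline{D}$, Weierstrass' theorem on uniform limits of holomorphic functions ensures that $f$ is holomorphic on $D$. As $z_0\in U$ was arbitrary, $f$ is analytic throughout $U$. The main technical obstacle is the computation displayed above: one must carefully track the sign that arises when transferring the $\bar\partial$-derivative from the outer variable $z$ onto the integration variable $\zeta$, for it is exactly this maneuver that converts the \emph{distributional} vanishing of $f_{\bar z}$ into the \emph{classical} vanishing of $(f_\epsilon)_{\bar z}$; once this is in hand, the analyticity of $f_\epsilon$ and the Weierstrass limit argument are routine.
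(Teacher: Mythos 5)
The paper does not prove this lemma at all: it is stated as a quoted result with a citation to Ahlfors' \emph{Lectures on Quasiconformal Mappings}, so there is no in-text argument to compare yours against. Your mollification proof is the classical one and it is correct: the sign bookkeeping in the identity $\partial_{\bar z}\phi_\epsilon(z-\zeta)=-\partial_{\bar\zeta}\phi_\epsilon(z-\zeta)$, combined with the integration-by-parts convention of Definition \ref{d20}, does convert the distributional hypothesis into the pointwise Cauchy--Riemann equation for the smooth functions $f_\epsilon$, and Weierstrass' theorem then transfers holomorphy to the locally uniform limit $f$. One interpretive point is worth making explicit: your argument uses that $f$ is a \emph{continuous} function (both to form the convolution against an $L^1_{loc}$ object and to get $f_\epsilon\to f$ locally uniformly rather than merely in the sense of distributions). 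That is consistent with the paper's Definition \ref{d20}, where distributional derivatives are only defined for continuous $g$, and it suffices for every application in Section \ref{ch2,sec9}; but the lemma as literally phrased (``$f$ is a distribution'') would, in full generality, require the extra step of showing that a genuine distribution annihilated by $\partial_{\bar z}$ is represented by a function before your uniform-convergence step applies.
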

We now give a genaralization of Weyl's lemma.
\begin{lemma}\label{2.9.13}
If $p$ and $q$ are continuous and have locally integrable distributional derivatives that satisfy $\; p_{\bar z}=q_z,$ then $\;\exists\;$ a function $f\in C^1$ with $f_z=p$ and $f_{\bar z}=q.$
\end{lemma}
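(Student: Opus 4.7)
The plan is to construct $f$ locally using the integral operator $P$ from expression (\ref{eq34}) as a distributional right inverse to $\partial_{\bar z}$, and then to correct the failure of $Pq$ to have $\partial_z$ equal to $p$ by adding an analytic antiderivative. This correction is available precisely because the hypothesis $p_{\bar z}=q_z$ forces the error term $p-T\tilde q$ to be holomorphic, at which point Weyl's Lemma (in its distributional formulation, Lemma \ref{2.9.12}) can be invoked. The local pieces are then glued by constants.

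Fix a disk $D\subset U$. Since $p$ and $q$ are continuous, they lie in $L^s_{\mathrm{loc}}$ for every finite $s$; cut $q$ off outside a slightly larger disk to obtain a compactly supported $\tilde q\in L^s(\C)$ with $s>2$ and $\tilde q=q$ on $D$. Set $g=P\tilde q$. By Lemma \ref{2.9.11},
\[
g_{\bar z}=\tilde q, \qquad g_z=T\tilde q
\]
in the sense of distributions, so in particular $g_{\bar z}=q$ on $D$. Consider the distribution $h=p-T\tilde q$ on $D$. Using equality of mixed distributional partials of $g$,
\[
(T\tilde q)_{\bar z}=(g_z)_{\bar z}=(g_{\bar z})_z=\tilde q_z=q_z \quad \text{on } D.
\]
Combined with the hypothesis $p_{\bar z}=q_z$, this yields $h_{\bar z}=p_{\bar z}-q_z=0$ as a distribution on $D$.

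Now Weyl's Lemma \ref{2.9.12} gives that $h$ agrees on $D$ with a holomorphic function $\phi$. Since $D$ is simply connected, $\phi$ has a holomorphic antiderivative $\Phi$ with $\Phi'=\phi$. Define $f=g+\Phi=P\tilde q+\Phi$ on $D$. Then, as distributions on $D$,
\[
f_{\bar z}=q+0=q, \qquad f_z=T\tilde q+\phi=T\tilde q+(p-T\tilde q)=p.
\]
Because the distributional derivatives $f_z=p$ and $f_{\bar z}=q$ are continuous, the classical partials $f_x=p+q$ and $f_y=i(p-q)$ exist and are continuous, so $f\in C^1(D)$. For a global $f$ on $U$, cover $U$ by such disks; on any overlap, two local solutions differ by a function with vanishing $\partial_z$ and $\partial_{\bar z}$, hence locally constant, and these constants can be adjusted along chains of overlapping disks to assemble a single $f\in C^1(U)$ with $f_z=p$ and $f_{\bar z}=q$.

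The main obstacle is the rigorous handling of the distributional chain $(T\tilde q)_{\bar z}=(g_z)_{\bar z}=(g_{\bar z})_z=q_z$: one must justify interchanging mixed distributional partials of $g$ (by testing against $C_0^\infty$ functions) and verify that the cutoff does not disturb the identity on $D$, so that $\tilde q_z=q_z$ holds there as distributions. Once this is in place, Weyl's Lemma upgrades $h$ to a genuine holomorphic function, and the remaining arguments (antidifferentiation on a disk, continuity upgrade to $C^1$, patching by constants) are routine.
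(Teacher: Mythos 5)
The paper does not actually prove Lemma \ref{2.9.13}; it is quoted from Ahlfors as one of the ingredients for the mapping theorem, so there is no in-text argument to compare yours against. Your local construction is correct and is essentially the standard one: $P\tilde q$ gives a distributional primitive for $\partial_{\bar z}$, commuting the mixed distributional partials of $P\tilde q$ converts the compatibility condition $p_{\bar z}=q_z$ into $(p-T\tilde q)_{\bar z}=0$ on $D$, Weyl's Lemma \ref{2.9.12} upgrades the error term to a holomorphic function, and a holomorphic antiderivative of that function corrects $f_z$. The two verifications you defer are genuinely routine: mixed distributional partials commute because test functions are smooth, and a continuous function whose distributional partials are continuous functions is $C^1$ (mollify and pass to the limit in the classical difference quotients). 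The cutoff causes no trouble because distributional derivatives are local, so $\tilde q_z=q_z$ on $D$.

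The one genuine gap is your last sentence. The claim that the locally constant differences ``can be adjusted along chains of overlapping disks'' is precisely the claim that the closed form $p\,dz+q\,d\bar z$ has no periods, and that is false on a general open set: on $\C\smallsetminus\{0\}$ take $p(z)=1/z$ and $q=0$; then $p_{\bar z}=0=q_z$ and both are continuous with locally integrable derivatives, yet no single-valued $f$ with $f_z=1/z$ exists. So the globalization requires the domain to be simply connected (or at least that all periods of the form vanish). This costs nothing where the lemma is actually used --- in Lemma \ref{2.9.16} the domain is all of $\C$, which is how Ahlfors states the result --- but as written your patching step cannot be carried out on an arbitrary $U$; you should either restrict the statement to simply connected domains or verify the vanishing of the periods before assembling the local solutions.
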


\subsection{Solution of the Measurable Riemann mapping theorem (Ahlfors - Bers)}
We want to solve the Beltrami equation\; $f_{\bar z}(z)=\mu(z) f_z(z)$  with $\|\mu\|_{\ity}\leqslant k<1$ a.e.  Firstly we suppose that $\mu$ has compact support which will imply that $f$ is analytic in a neighbourhood of $\ity$ (a neighbourhood of $\ity$ is of the form $\{z : |z|>r \;\cup\{\ity\} \}$ for some $r>0$). We use a fixed exponent $p>2,$ fixed $k<1$ such that $kC_p<1$ where $C_p$ is the constant of 
Calderon\,-\,Zygmund inequality.
\begin{theorem}\label{2.9.14}
Under the above assumptions, there exists a unique solution of  the Beltrami equation,\;$ f_{\bar z}(z)=\mu(z) f_z(z)$ such that $f(0)=0$ and $f_z-1\in L^p.$
\end{theorem}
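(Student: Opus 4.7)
The plan is to convert the Beltrami equation into a fixed-point equation on $L^p(\C)$ using the operators $P$ and $T$. Specifically, I would seek $f$ in the form
\[
f(z) = z + Ph(z) \qquad \text{for some } h \in L^p(\C).
\]
By Lemma \ref{2.9.11}, $Ph$ is continuous with distributional derivatives $(Ph)_z = Th$ and $(Ph)_{\bar z} = h$, so formally $f_z = 1 + Th$ and $f_{\bar z} = h$. The Beltrami equation $f_{\bar z} = \mu f_z$ then becomes the linear equation
\[
h = \mu(1 + Th), \qquad \text{equivalently} \qquad (I - \mu T)h = \mu.
\]
Since $\mu$ has compact support and $\|\mu\|_\infty \leqslant k$, we have $\mu \in L^p$, so this is a genuine equation in $L^p$.

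For existence, I would show that the operator $S : L^p \to L^p$ defined by $Sh = \mu Th$ is a contraction. Indeed, by the Calderon--Zygmund inequality \eqref{eq38},
\[
\|Sh\|_p = \|\mu Th\|_p \leqslant k\,\|Th\|_p \leqslant kC_p\,\|h\|_p,
\]
and by hypothesis $kC_p < 1$. Hence $I - S$ is invertible on $L^p$, and the Neumann series
\[
h = (I - S)^{-1}\mu = \sum_{n=0}^{\infty} (\mu T)^n \mu = \mu + \mu T\mu + \mu T\mu T\mu + \cdots
\]
converges absolutely in $L^p$ since $\|(\mu T)^n \mu\|_p \leqslant (kC_p)^n \|\mu\|_p$. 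Setting $f(z) := z + Ph(z)$ then gives $f(0) = 0$ (since $Ph(0) = 0$ is immediate from \eqref{eq34}), $f_z - 1 = Th \in L^p$ by Calderon--Zygmund, and $f_{\bar z} = h = \mu(1 + Th) = \mu f_z$, verifying the Beltrami equation in the distributional sense.

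For uniqueness, suppose $\tilde f$ is any solution with $\tilde f(0) = 0$ and $\tilde f_z - 1 \in L^p$. Set $\tilde h := \tilde f_{\bar z} = \mu \tilde f_z = \mu + \mu(\tilde f_z - 1)$; since $\mu$ has compact support and is bounded, both summands lie in $L^p$, so $\tilde h \in L^p$. Consider $g := \tilde f - P\tilde h$. By Lemma \ref{2.9.11}, $g_{\bar z} = \tilde f_{\bar z} - \tilde h = 0$ in the sense of distributions, so Weyl's lemma (Lemma \ref{2.9.12}) forces $g$ to be entire. Moreover $g_z - 1 = (\tilde f_z - 1) - T\tilde h \in L^p(\C)$, and $g_z$ is itself entire. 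For any $w \in \C$, the mean-value property of holomorphic functions together with Jensen's inequality give
\[
|g_z(w) - 1|^p \leqslant \frac{1}{\pi R^2}\iint_{|z-w|<R}|g_z - 1|^p\,dx\,dy \leqslant \frac{\|g_z - 1\|_p^p}{\pi R^2} \longrightarrow 0 \quad \text{as } R \to \infty,
\]
so $g_z \equiv 1$ and $g(z) = z + c$. The normalization $g(0) = \tilde f(0) - P\tilde h(0) = 0$ forces $c = 0$, whence $\tilde f = z + P\tilde h$ with $\tilde h$ necessarily satisfying $(I - \mu T)\tilde h = \mu$; by contractivity this $\tilde h$ is unique, so $\tilde f = f$. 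The main obstacle is the $L^p$-boundedness $\|Th\|_p \leqslant C_p\|h\|_p$ with $C_p \to 1$ as $p \to 2$; everything else is a direct application of the contraction mapping principle and Weyl's lemma, but without Calderon--Zygmund the entire scheme collapses.
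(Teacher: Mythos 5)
Your proposal is correct and follows essentially the same route as the paper: both reduce the Beltrami equation to a fixed-point problem in $L^p$ via the operators $P$ and $T$, invert by a Neumann series using $kC_p<1$ from the Calderon--Zygmund inequality, and obtain uniqueness from Weyl's lemma plus the normalization (your unknown is $h=f_{\bar z}$ where the paper solves for $h=f_z-1$, a trivial reparametrization). Your mean-value/Jensen argument showing that the entire function $g_z-1\in L^p$ must vanish is in fact a cleaner justification of a step the paper only asserts.
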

\begin{proof}
We first prove the uniqueness part. This will also suggest the existence part of the proof.\\
Suppose $f$ is a solution of  the Beltrami equation such that $f(0)=0$ and $f_z-1\in L^p.$ Then $f_{\bar z}(z)=\mu(z) f_z(z)\in L^p$ and so $P(f_{\bar z})$ is well defined. Consider the function
\begin{equation}\label{eq}
F=f-P(f_{\bar z}).
\end{equation}
So $F_{\bar z}=f_{\bar z}-(P(f_{\bar z}))_{\bar z}=f_{\bar z}-f_{\bar z}\;$(by (\ref{eq39}))=0 in the distributional sense. By Lemma \ref{2.9.12}, $F$ is analytic. Also we have  $f_z-1\in L^p.$ Then
\begin{equation}
\begin{split}
\notag
f
&=F+P(f_{\bar z})\\
\Rightarrow f_z-1
&=F_z+(P(f_{\bar z}))_z-1\;\in L^p\\
&=F'+T(f_{\bar z})-1\;\in L^p\quad(F\;\text{is holomorphic and}\;(Ph)_z=Th\;\text{ by (\ref{eq40})})\\
&=F'-1+T(f_{\bar z})\;\in L^p.
\end{split}
\end{equation}
$\Rightarrow F'-1\in L^p.$ This is possible only if $ F'-1=0\;\{$since $\|g'\|_p <\ity$ only if $g$ is a constant, but converse is not true as can be seen for $g(z)=e^{-z^2}\}.$ So $F'=1.$ On integrating we get $F(z)=z+a$ for some constant $a.$ Also we have $f(0)=0,$
\begin{equation}
\begin{split}
\notag
\Rightarrow (F+P(f_{\bar z}))(0)
&=0\\
\Rightarrow F(0)+P(f_{\bar z})(0)
&=0\\
\Rightarrow a +\dfrac{-1}{\pi}\,\iint f_{\bar z}(z)\,(\dfrac{1}{z-0}-\dfrac{1}{z})\,dx\,dy
&=0\\
\Rightarrow a-0
&=0\\
\Rightarrow a
&=0.
\end{split}
\end{equation}
Thus we have
\begin{equation}\label{eq}
f=P(f_{\bar z})+z.
\end{equation}
$f_z=(P(f_{\bar z}))_z+1=T(f_{\bar z})+1\;$(by (\ref{eq40}))$=T(\mu f_z)+1\;$(by (1.8.22)). Thus $f_z=T(\mu f_z)+1.$\\
Similarly if $g$ is another solution of (\ref{eq33}), then $g_z=T(\mu g_z)+1$ and so $f_z-g_z=T(\mu f_z)-T(\mu g_z)=T(\mu(f_z-g_z))\;(T$ is linear). Then
\begin{equation}
\begin{split}
\notag
\|f_z-g_z\|_p
&=\|T(\mu(f_z-g_z))\|_p\\
&\leqslant C_p\,\|\mu(f_z-g_z)\|_p\quad\text{(by (\ref{eq38}))}\\
&\leqslant C_p\,\|\mu\|_\ity\,\|f_z-g_z\|_p\\
&\leqslant kC_p\,\|f_z-g_z\|_p\quad(\|\mu\|_\ity\leqslant k).
\end{split}
\end{equation}
Since $kC_p<1,$ this is possible if and only if $\|f_z-g_z\|_p=0,\;\Rightarrow f_z-g_z=0$ a.e.\;$\Rightarrow f_z=g_z$ a.e. So we have
\begin{equation}
\begin{split}
\notag
f_{\bar z}
&=\mu f_z\quad(\text{by (\ref{eq33})})\\
&=\mu g_z\;\;\text{a.e.}\\
&=g_{\bar z}\;\;\text{a.e.}\quad(\text{by (\ref{eq33})}).
\end{split}
\end{equation}
Hence $f-g$ and $\bar f-\bar g$ are analytic and so the difference must be a constant. The normalization at origin gives $f-g=0,\;$i.e $f=g.$ Thus we have established the uniqueness.\\
To prove the existence, we consider the operator
\begin{equation}\label{eq43}
U_\mu(h)=T(\mu h), \;h\in L^p.
\end{equation}
Consider the equation 
\begin{equation}\label{eq46}
h=T(\mu h)+T(\mu).
\end{equation}
$\Rightarrow h-T(\mu h)
=T(\mu),\;
\Rightarrow h-U_\mu(h)
=T(\mu),\;
\Rightarrow (I-U_\mu)(h)
=T(\mu).$
Then
\begin{equation}
\begin{split}
\notag
\|U_\mu(h)\|
&=\|T(\mu h)\|\\
&\leqslant C_p\,\|\mu h\|_p\quad\text{(by (\ref{eq38}))}\\
&\leqslant k C_p\,\|h\|_p\\
&< \|h\|_p\quad( k C_p<1)\\
\Rightarrow \|U_\mu\|_p
&<1.\\
\Rightarrow I-U_\mu\;\text{is invertible and}\\
(I-U_\mu)^{-1}
&=\sum_{n=0}^{\ity}\,{U_\mu}^n\\
\Rightarrow\|(I-U_\mu)^{-1}\|_p
&\leqslant \sum_{n=0}^{\ity}\,\|{U_\mu}^n\|_p\;\;(\text{the sum depends analytically on}\;\mu)\\
&\leqslant \sum_{n=0}^{\ity}\,(kC_p)^n\\
&=\dfrac{1}{1-kC_p}\cdot
\end{split}
\end{equation}
Thus we can solve for $h$ to obtain
\begin{equation}\label{eq44}
h=(I-U_\mu)^{-1}\,T(\mu)\;\in L^p\quad(\text{as}\;h\in L^p).
\end{equation}
Define
\begin{equation}\label{eq45}
f(z)=P(\mu(h+1))+z.
\end{equation}
Then $f$ also depends analytically on $\mu.$ \\
We claim that $f$ is the desired solution of (\ref{eq33}).\\
Since $\mu$ has compact support, therefore $\mu(h+1)\in L^p$ and so $P(\mu(h+1))$ is well defined. Since the operator $P$ is continuous, therefore $f$ is continuous. Then
\begin{equation}
\begin{split}
\notag
f_{\bar z}
&=(P(\mu(h+1)))_{\bar z}+0\\
&=\mu(h+1)\quad\text{(by (\ref{eq39}))}
\end{split}
\end{equation}
and
\begin{equation}
\begin{split}
\notag
f_z
&=(P(\mu(h+1)))_z+1\\
&=T(\mu(h+1))+1\quad\text{( by (\ref{eq40}) )}\\
&=T(\mu h+\mu)+1\\
&=T(\mu h)+T(\mu)+1\quad(T\;\text{is linear})\\
&=h+1\quad\text{(by (\ref{eq46}))}.
\end{split}
\end{equation}
On taking pointwise product with $\mu$ on both sides we have $\mu f_z=\mu(h+1)\;(\mu\in L^\ity, f_z\in L^p$ so $\mu f_z\in L^p$) and thus we have $f_{\bar z}=\mu f_z$ which proves the claim.\\
 From (\ref{eq45}), $f(0)=P(\mu(h+1))(0)=0.$ Also $f_z=h+1,\;\Rightarrow f_z-1=h\in L^p.$\\
The function $f$ is called \emph{normal solution} of (\ref{eq33}).
\end{proof}
We now derive some estimates :\\
From (\ref{eq46}) we have 
\begin{equation}
\begin{split}
\notag
h
&=T(\mu h)+T(\mu)\\
\Rightarrow \|h\|_p
&\leqslant kC_p\,\|h\|_p+C_p\,\|\mu\|_p\\
\Rightarrow (1-kC_p)\,\|h\|_p
&\leqslant C_p\,\|\mu\|_p\\
\end{split}
\end{equation}
\begin{equation}\label{eq47}
\Rightarrow \|h\|_p\leqslant \dfrac{C_p}{1-kC_p}\,\|\mu\|_p.
\end{equation}
Also from (\ref{eq36}) 
\begin{equation}
\begin{split}
\notag
|f(\zeta_1)-f(\zeta_2)|
&=|P(\mu(h+1))\zeta_1+\zeta_1 - P(\mu(h+1))\zeta_2-\zeta_2|\\
&=|P(\mu(h+1))\zeta_1 - P(\mu(h+1))\zeta_2 +\zeta_1-\zeta_2|\\
&\leqslant K_p\,\|\mu(h+1)\|_p\,|\zeta_1-\zeta_2|^{1-\frac{2}{p}} +|\zeta_1-\zeta_2|\\
&\leqslant K_p\,(\|\mu h\|_p+\|\mu\|_p)\,|\zeta_1-\zeta_2|^{1-\frac{2}{p}} +|\zeta_1-\zeta_2|\quad(\text{triangle inequality})\\
&\leqslant\dfrac{K_p}{1-kC_p}\,\|\mu\|_p\,|\zeta_1-\zeta_2|^{1-\frac{2}{p}} +|\zeta_1-\zeta_2|\quad\text{(on simplification)}.
\end{split}
\end{equation}
Thus we have
\begin{equation}\label{eq48}
|f(\zeta_1)-f(\zeta_2)|\leqslant\dfrac{K_p}{1-kC_p}\,\|\mu\|_p\,|\zeta_1-\zeta_2|^{1-\frac{2}{p}} +|\zeta_1-\zeta_2|.
\end{equation}
\begin{lemma}\label{2.9.15}
Let $\mu_n,\,\mu$ satisfy the hypothesis of Theorem \ref{2.9.14} with compact supports and $f_n,\, f$ be the respective normal solutions. Suppose $\mu_n\to\mu$\;\;a.e. Then $\|(f_n)_z-f_z\|_p\to 0$ and $f_n\to f$ uniformly on compact sets.
\end{lemma}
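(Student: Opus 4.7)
The plan is to reduce the nonlinear convergence $f_n \to f$ to the linear $L^p$-convergence of the auxiliary functions $h_n \to h$, and then use the integral operator $P$ together with the H\"older estimate from Lemma \ref{2.9.9}. Recall from the proof of Theorem \ref{2.9.14} that the normal solutions arise from first solving the fixed-point equations
\[
h_n = T(\mu_n h_n) + T(\mu_n), \qquad h = T(\mu h) + T(\mu),
\]
and then setting $f_n = P(\mu_n(h_n+1)) + z$ and $f = P(\mu(h+1)) + z$. Since $(f_n)_z = h_n + 1$ and $f_z = h + 1$, the first assertion $\|(f_n)_z - f_z\|_p \to 0$ is equivalent to $\|h_n - h\|_p \to 0$, so I focus on this.

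The key step is to subtract the two fixed-point equations and use the algebraic identity $\mu_n h_n - \mu h = \mu_n(h_n - h) + (\mu_n - \mu)h$ together with linearity of $T$ to obtain
\[
(I - U_{\mu_n})(h_n - h) \;=\; T\bigl((\mu_n - \mu)(h+1)\bigr).
\]
Because $\|\mu_n\|_\infty \leqslant k$ by hypothesis, the operator norm $\|U_{\mu_n}\|_p \leqslant kC_p < 1$ uniformly in $n$, so $I - U_{\mu_n}$ is invertible with $\|(I - U_{\mu_n})^{-1}\|_p \leqslant (1-kC_p)^{-1}$. Combining this with the Calder\'on--Zygmund inequality (\ref{eq38}) gives the estimate
\[
\|h_n - h\|_p \;\leqslant\; \frac{C_p}{1-kC_p}\,\bigl\|(\mu_n - \mu)(h+1)\bigr\|_p.
\]
To conclude $\|h_n - h\|_p \to 0$ it now suffices to show the right-hand side tends to zero, which I would do by the Dominated Convergence Theorem: since the supports of all $\mu_n$ and of $\mu$ lie in a common compact set $K$ (as is standard in applications of the measurable Riemann mapping theorem here), the integrand $(\mu_n - \mu)(h+1)$ is supported on $K$, converges to $0$ a.e., and is dominated pointwise by the fixed $L^p$ function $2k\,|h+1|\,\chi_K$ (note $\chi_K \in L^p$ since $K$ has finite Lebesgue measure, and $h \in L^p$).

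For the uniform convergence on compact sets, I would use the same algebraic splitting and linearity of $P$ to write
\[
f_n - f \;=\; P\bigl(\mu_n(h_n - h)\bigr) + P\bigl((\mu_n - \mu)(h+1)\bigr),
\]
and then apply the H\"older-type bound (\ref{eq35}) from Lemma \ref{2.9.9}, $|Pg(z)| \leqslant K_p \|g\|_p |z|^{1-2/p}$, to each summand. Together with $\|\mu_n\|_\infty \leqslant k$ this yields
\[
|f_n(z) - f(z)| \;\leqslant\; K_p\,|z|^{1-2/p}\,\Bigl(k\,\|h_n - h\|_p + \|(\mu_n - \mu)(h+1)\|_p\Bigr).
\]
On any compact set of $\C$ the factor $|z|^{1-2/p}$ is bounded, and both norms on the right tend to zero by the previous step, so $f_n \to f$ uniformly on compacta.

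The main obstacle, and indeed the only delicate point, is verifying $\|(\mu_n - \mu)(h+1)\|_p \to 0$: the function $h+1$ is not itself in $L^p$ globally (because of the constant $1$), and the argument only works because multiplication by $\mu_n - \mu$ effectively truncates it to the common compact support $K$. All other estimates are essentially uniform reruns of those appearing in the existence proof of Theorem \ref{2.9.14}.
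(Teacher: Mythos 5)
Your proposal is correct, and for the first assertion it uses essentially the same decomposition as the paper: subtracting the two fixed-point equations and splitting the difference as $T(\mu_n(h_n-h))+T((\mu_n-\mu)h)+T(\mu_n-\mu)$ is exactly the paper's identity, only written in terms of $h_n=(f_n)_z-1$; your step of inverting $I-U_{\mu_n}$ with the uniform bound $\|(I-U_{\mu_n})^{-1}\|_p\leqslant(1-kC_p)^{-1}$ is a cleaner way of absorbing the term $kC_p\|(f_n)_z-f_z\|_p$ than the paper's somewhat garbled ``contradiction'' phrasing, but it is the same estimate. Where you genuinely diverge, to your advantage, is in the two places the paper is thin. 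First, you isolate and justify the claim $\|(\mu_n-\mu)(h+1)\|_p\to 0$ via dominated convergence, correctly noting that the constant term forces one to assume the supports lie in a common compact set (without that, $\mu_n\to\mu$ a.e.\ with $\|\mu_n\|_\infty\leqslant k$ does not give $\|\mu_n-\mu\|_p\to 0$); the paper simply asserts both norms tend to zero. Second, for uniform convergence on compacta you apply the H\"older estimate (\ref{eq35}) to $f_n-f=P\bigl(\mu_n(h_n-h)\bigr)+P\bigl((\mu_n-\mu)(h+1)\bigr)$, which is the correct (Ahlfors) argument; the paper instead passes from a.e.\ convergence of $(f_n)_z$ and $(f_n)_{\bar z}$ to ``the difference must be a constant,'' which is not a valid deduction as written. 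So your route buys a rigorous second half at the cost of making explicit one hypothesis (common compact support) that the lemma's statement leaves implicit but which is satisfied in its only application, Theorem \ref{2.9.17}.
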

\begin{proof}
From (\ref{eq47}), $\;\|h\|_p\leqslant \dfrac{C_p}{1-kC_p}\,\|\mu\|_p$. Now $f_{\bar z}=\mu f_z,\;\Rightarrow \|f_{\bar z}\|_p=\|\mu f_z\|_p\;\leqslant \|\mu\|_\ity\,\|f_z\|_p.$ Also from (\ref{eq48}), 
\[
|f(\zeta_1)-f(\zeta_2)|\leqslant\dfrac{K_p}{1-kC_p}\,\|\mu\|_p\,|\zeta_1-\zeta_2|^{1-\frac{2}{p}} +|\zeta_1-\zeta_2|.
\]
We have $(f_n)_z\in L^p$ and $f_z\in L^p.$ 
Then
\begin{equation}
\begin{split}
\notag
(f_n)_z-f_z
&=T\mu_n((f_n)_z+1)-T\mu(f_z+1)\quad(\text{using (\ref{eq45})})\\
&=T\mu_n(f_n)_z-T\mu_n(f_z)+T\mu_n(f_z)+T\mu_n-T\mu(f_z)-T\mu\\
&=T(\mu_n((f_n)_z-f_z))+ T((\mu_n-\mu)f_z)+T(\mu_n-\mu).\\
\Rightarrow \|(f_n)_z-f_z\|_p
&\leqslant \|T(\mu_n((f_n)_z-f_z))\|_p+\|T((\mu_n-\mu)f_z)\|_p+\|T(\mu_n-\mu)\|\\
&\leqslant kC_p\|(f_n)_z-f_z\|_p+C_p\|(\mu_n-\mu)f_z\|_p+C_p\|\mu_n-\mu\|_p\quad\text{(by (\ref{eq38}))}.
\end{split}
\end{equation}
Since $\mu_n\to\mu$\;\;a.e.  (on compact sets), so $\|\mu_n-\mu\|_p\to 0$. Also $\|f_z\|_p <\ity,\;\Rightarrow \|(\mu_n-\mu)f_z|_p\to 0.$ So we get that $\|(f_n)_z-f_z\|_p\leqslant  kC_p\|(f_n)_z-f_z\|_p\;<\|(f_n)_z-f_z\|_p\;$(since $kC_p<1$), which implies that $\|(f_n)_z-f_z\|_p<\|(f_n)_z-f_z\|_p,$ a contradiction.\, Therefore we must have $\|(f_n)_z-f_z\|_p\to 0$ and so $(f_n)_z-f_z\to 0$\;\;a.e., i.e $(f_n)_z\to f_z$\;\;a.e. Also 
\begin{equation}
\begin{split}
\notag
(f_n)_{\bar z}
&=\mu_n(f_n)_z\\
&\to \mu_n f_z\;\;\text{a.e.}\\
&\to \mu f_z\;\;\text{on compact sets}\;\;\text{a.e.}\\
&=f_{\bar z}\;\;\text{a.e.}\\
\Rightarrow (f_n)_{\bar z}
&\to f_{\bar z}\;\;\text{a.e.}
\end{split}
\end{equation}
Thus we have $(f_n)_z\to f_z$\;\;a.e. and $(f_n)_{\bar z}\to f_{\bar z}$\;\;a.e.which implies that the difference of $f_n$ and $f$ must be a constant and the normalization at origin gives that the constant is 0. Hence $f_n\to f$ uniformly on compact sets. 
\end{proof}
Thus we conclude from the lemma that the normal solution depends continuously on $\mu.$\\
We now show that if $\mu$  has derivatives then so does the normal solution $f$. We will use Lemma \ref{2.9.13}.
\begin{lemma}\label{2.9.16}
If $\mu$ has a distributional derivative $\mu_z\in L^p,\;p>2,$ then $f\in C^1,$ and it is a homeomorphism.
\end{lemma}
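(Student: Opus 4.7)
My plan is to apply the generalized Weyl's lemma (Lemma \ref{2.9.13}) to the pair $(p,q)=(f_z,f_{\bar z})$. From the proof of Theorem \ref{2.9.14} we have $f_z=h+1$ and $f_{\bar z}=\mu(h+1)$, where $h\in L^p$ is the unique fixed point of $g\mapsto T(\mu g)+T(\mu)$; in particular the compatibility $(f_z)_{\bar z}=(f_{\bar z})_z$ holds automatically in the distributional sense, since both expressions are the mixed second distributional derivative of the continuous function $f$. Thus, once I verify that $f_z$ and $f_{\bar z}$ are continuous with locally integrable distributional derivatives, Lemma \ref{2.9.13} produces a $C^1$ function with the same first derivatives as $f$; matching values at the origin identifies this function with $f$, giving $f\in C^1$.

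The technical heart of the argument is to bootstrap the regularity of $h$. Applying $\partial_z$ in the sense of distributions to the fixed-point equation $h=T(\mu(h+1))$ of \eqref{eq46}, and using the product rule together with the hypothesis $\mu_z\in L^p$, yields
\begin{equation*}
(I-U_\mu)(h_z)=T(\mu_z(h+1)).
\end{equation*}
Since $\|U_\mu\|_p\leqslant kC_p<1$ from the proof of Theorem \ref{2.9.14}, the operator $I-U_\mu$ is invertible on $L^p$, so provided the right-hand side lies in $L^p$ we conclude $h_z\in L^p$. The Beltrami relation $f_{\bar z}=\mu f_z$, combined with the equality of mixed distributional derivatives of $f$, then gives $h_{\bar z}=\mu_z(h+1)+\mu h_z\in L^p_{\mathrm{loc}}$ as well. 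Having both first distributional derivatives of $h$ in $L^p_{\mathrm{loc}}$ with $p>2$, Sobolev embedding makes $h$ continuous (in fact H\"older continuous), and hence so are $f_z$ and $f_{\bar z}$.

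The principal obstacle lies in verifying $T(\mu_z(h+1))\in L^p$: a priori the product $\mu_z\cdot h$ only belongs to $L^{p/2}$ by H\"older's inequality, so one must first show $h$ is locally bounded. I would overcome this by approximation: choose smooth compactly supported $\mu_n\to\mu$ with $(\mu_n)_z\to\mu_z$ in $L^p$, to which the classical theory applies and produces smooth normal solutions $f_n$ for which the derivation above gives quantitative $W^{1,p}_{\mathrm{loc}}$-bounds on $h_n$ depending only on $k$, $\|\mu_n\|_p$ and $\|(\mu_n)_z\|_p$, together with the uniform $L^p$ estimate \eqref{eq47}. By Lemma \ref{2.9.15}, $f_n\to f$ uniformly on compact sets and $(f_n)_z\to f_z$ in $L^p$, so the uniform local bounds on $h_n$ transfer to $h$ and close the bootstrap.

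For the homeomorphism claim, once $f\in C^1$ the Jacobian satisfies $J=|f_z|^2-|f_{\bar z}|^2=(1-|\mu|^2)|f_z|^2\geqslant(1-k^2)|f_z|^2$. Each classical approximant $f_n$ is a diffeomorphism with $(f_n)_z$ nowhere zero, and $(f_n)_z\to f_z$ locally uniformly, so $f_z$ is nowhere zero; hence $f$ is a local $C^1$ diffeomorphism with $J>0$. The integral representation $f(z)=z+P(\mu(h+1))(z)$ and the compact support of $\mu$ show that $f(z)-z$ stays bounded at infinity, so $f$ extends continuously to $\ti\C$ with $f(\infty)=\infty$ and is proper. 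A proper $C^1$ local diffeomorphism of $\ti\C$ is a covering map, and the simple connectedness of $\ti\C$ forces it to be a global homeomorphism.
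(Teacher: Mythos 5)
Your strategy is genuinely different from the paper's, and it contains a gap that you identify but do not actually close. You differentiate the fixed-point equation $h=T(\mu(h+1))$ to get $(I-U_\mu)(h_z)=T(\mu_z(h+1))$, and you correctly observe that the right-hand side is problematic: $\mu_z\in L^p$ and $h\in L^p$ give $\mu_z h\in L^{p/2}$ only, so the inversion of $I-U_\mu$ on $L^p$ is unavailable. Your proposed repair --- approximate by smooth $\mu_n$ and claim ``quantitative $W^{1,p}_{\mathrm{loc}}$-bounds on $h_n$ depending only on $k$, $\|\mu_n\|_p$ and $\|(\mu_n)_z\|_p$'' --- is precisely the assertion in dispute, not a proof of it. For the smooth approximants the same derivation bounds $\|(h_n)_z\|_p$ by a quantity involving $\|(\mu_n)_z h_n\|_p\leqslant\|(\mu_n)_z\|_p\,\|h_n\|_\ity$, and uniform control of $\|h_n\|_\ity$ is exactly what you intended to extract, via Sobolev embedding, from the $W^{1,p}$ bound you are trying to prove. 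The circle is not broken, and Lemma \ref{2.9.15} (which gives only $L^p$ convergence of $(f_n)_z$ and locally uniform convergence of $f_n$) does not supply the missing uniform local boundedness. The paper avoids this entirely by the logarithmic substitution: writing $f_z=\la=e^{\sigma}$, the compatibility condition $\la_{\bar z}=(\mu\la)_z$ becomes the \emph{linear} equation $\sigma_{\bar z}=\mu\,\sigma_z+\mu_z$, whose fixed-point form $q=T(\mu q)+T\mu_z$ involves only $\mu q$ (with $\mu\in L^\ity$, $q\in L^p$) and $T\mu_z$ (with $\mu_z\in L^p$); no product of two merely-$L^p$ functions ever appears, so $(I-U_\mu)^{-1}$ applies at once and $f$ is then produced directly as the $C^1$ primitive of the exact form $\la\,dz+\la\mu\,d\bar z$.

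The homeomorphism step has a second gap. You infer that $f_z$ is nowhere zero from ``$(f_n)_z\to f_z$ locally uniformly,'' but Lemma \ref{2.9.15} gives convergence of the derivatives only in $L^p$, pointwise nonvanishing does not pass to $L^p$ limits, and there is no Hurwitz-type theorem for these non-holomorphic functions. In the paper's construction $f_z=e^{\sigma}$ is nonvanishing by fiat, so $J=(1-|\mu|^2)|e^{2\sigma}|\neq 0$ immediately, local injectivity follows from the inverse function theorem, and global injectivity is obtained from the behaviour at $\ity$. Your final step (a proper local homeomorphism of $\ti\C$ is a covering, hence a homeomorphism by simple connectedness) is fine once $J\neq 0$ is secured, but securing it requires something like the paper's explicit exponential representation of $f_z$.
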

\begin{proof}
We try to find $\la$, so that the system\\
$f_z=\la;$\\
$f_{\bar z}=\mu\la$\\
has a solution. By  Lemma \ref{2.9.13}, this will be so if 
\begin{equation}\label{eq49}
\la_{\bar z}=(\mu\la)_z=\la_z\mu+\la\mu_z;
\end{equation}
or
\[
(log\la)_{\bar z}=\mu(log\la)_z+\mu_z.
\]
We can solve the equation 
\begin{equation}\label{eq50}
q=T(\mu q)+T\mu_z
\end{equation}
for $q\in L^p.$ Using (\ref{eq43}), we get $q=U_\mu(q)+T\mu_z$ and so $(I-U_\mu)q=T\mu_z$ which implies that $q=(I-U_\mu)^{-1}\,T\mu_z.$ We set $\sigma=P(\mu q+\mu_z)+constant\;$ in such a way that $\sigma\to 0$ as $z\to\ity.$ Since $\mu q+\mu_z\in L^p,$ $P(\mu q+\mu_z)$ is defined, so that $\sigma$ is well defined. As $P$ is continuous, so is $\sigma.$ Then by \ref{eq39}, $\sigma_{\bar z}=\mu q+\mu_z\;$  and 
\begin{equation}
\begin{split}
\notag
\sigma_z
&=T(\mu q+\mu_{z})\quad\text{(by (\ref{eq40}))}\\
&=q\;\;\text{(by (\ref{eq50}))}.
\end{split}
\end{equation}
Thus the distributional derivatives of $\sigma$ are in $L^p.$\\
Consider $\la=e^\sigma.$ Then
\begin{equation}
\begin{split}
\notag
\la_{\bar z}
&=(e^\sigma)_{\bar z}\\
&=e^\sigma\sigma_{\bar z}\\
&=e^\sigma(\mu q+\mu_{z})\\
&=\la(\mu q+\mu_{z})
\end{split}
\end{equation}
and
\begin{equation}
\begin{split}
\notag
\la_ z\,\mu+\la\, \mu_z
&=(e^\sigma)_z\,\mu+e^\sigma\mu_ z\\
&=e^\sigma\sigma_z\mu+\la \mu_z\\
&=\la\,\mu\,q+\la\, \mu_z\;\;(\text{as}\;\sigma_z=q,\;\Rightarrow \mu\sigma_z=\mu q)\\
&=\la(\mu\, q+\mu_ z).
\end{split}
\end{equation}
Thus $\la=e^\sigma$ satisfies (\ref{eq49}) i.e $\la_{\bar z}=(\mu\la)_z$. Hence $\la\,dz+(\la\,\mu)d\bar z$ is exact and 
\begin{equation}\label{eq51}
f(z)=\int_{0}^{z}\la\,dz+(\la\,\mu)d\bar z
\end{equation}
is well defined and of class $C^1.$ Hence the system we required to be solved can be solved for $f\in C^1.$ Moreover\\
$f_z=\la=e^\sigma\;$ and\\
$f_{\bar z}=\la\,\mu=\mu f_z$. Also $f(0)=0.$\\
We claim that $f$ is the normal solution of the Beltrami equation.\\
Since $\sigma\to 0$ as $z\to\ity\;$ (by assumption), therefore $\la=e^\sigma\to 1.$ Also $f_z=\la\to 1$ implies $f_z-1\to 0$ and so $f_z-1\in L^p.$ Hence $f$ is the normal solution of the Beltrami equation. Now the Jacobian of $f$  is
$J
=|f_z|^2-|f_{\bar z}|^2
=|\la|^2-|\mu|^2\,|\la|^2
=(1-|\mu|^2)|e^{2\sigma}|
\neq 0\;\,(\mu<1).$
$\Rightarrow f$ is locally one-one. Also $f(z)\to\ity$ as $\;z\to\ity$ i.e $f$ is one-one in a neighbourhood of $\ity.$ It can be seen easily that $f$ is   one-one in the complement of  neighbourhood of $\ity\;$(using local one-oneness). So $f$ is globally one-one. Thus $f$ is a homeomorphism of $\ti\C$ to itself.
\end{proof}
Assume the hypothesis of Lemma \ref{2.9.15}. By Theorem \ref{2.9.5}, the inverse function $f^{-1}$ is $K-$ quasiconformal with complex dilatation $\mu^{-1}=\mu_{f^{-1}},$ which satisfies $|\mu^{-1}\circ f|=|\mu|\;$ (by (\ref{eq27})). We try to estimate $\|\mu^{-1}\|_p.$ 
\begin{equation}
\begin{split}
\notag
\iint_\C |\mu^{-1}|^p\,d\xi\,d\eta
&=\iint_\C |\mu|^p J\,dx\,dy\\
&=\iint_\C |\mu|^p(|f_z|^2-|f_{\bar z}|^2)\,dx\,dy\\
&\leqslant \iint_\C |\mu|^p|f_z|^2\,dx\,dy\\
&=\iint_\C |\mu|^p\dfrac{|f_{\bar z}|^2}{|\mu|^2}\,dx\,dy\quad(f_{\bar z}=\mu f_z\;\Rightarrow f_z=\dfrac{f_{\bar z}}{\mu})\\
&=\iint_\C |\mu|^{p-2}|f_{\bar z}|^2\,dx\,dy\\
&\leqslant (\iint_\C  |\mu|^p\,dx\,dy)^{\frac{p-2}{p}}\,(\iint_\C |f_{\bar z}|^p\,dx\,dy)^{\frac{2}{p}}\quad(H\ddot{o}lder\text{'}s\;inequality)\\
&=\B\|\mu\B\|_{p}^{p-2}\,\B\|f_{\bar z}\B\|_{p}^{2}\\
\Rightarrow \|\mu^{-1}\|_p
&\leqslant \B\|\mu\B\|_{p}^{\frac{p-2}{p}}\,\B\|f_{\bar z}\B\|_{p}^{\frac{2}{p}}\\
&\leqslant \B\|\mu\B\|_{p}^{\frac{p-2}{p}}\,\dfrac{1}{(1-kC_p)^{\frac{2}{p}}}\,\B\|\mu\B\|_{p}^{\frac{2}{p}}\quad(\|f_{\bar z}\|_p\leqslant\dfrac{1}{1-kC_p}\,\|\mu\|_p)\\
&=\B\|\mu\B\|_{p}^{\frac{p-2+2}{p}}\,\dfrac{1}{(1-kC_p)^{\frac{2}{p}}}\\
&=\dfrac{1}{(1-kC_p)^{\frac{2}{p}}}\|\mu\|_p.
\end{split}
\end{equation}
Thus
\begin{equation}\label{eq52}
\|\mu^{-1}\|_p\leqslant (1-kC_p)^{\frac{-2}{p}}\|\mu\|_p.
\end{equation}
From (\ref{eq48}) we have,
$|f(\zeta_1)-f(\zeta_2)|\leqslant\dfrac{K_p}{1-kC_p}\,\|\mu\|_p\,|\zeta_1-\zeta_2|^{1-\frac{2}{p}} +|\zeta_1-\zeta_2|.$\\
Applying this to the inverse function $f^{-1}$ we get
\begin{equation}
\begin{split}
\notag
|z_1-z_2|
&\leqslant \dfrac{K_p}{1-kC_p}\,\|\mu^{-1}\|_p\,|f(z_1)-f(z_2)|^{1-\frac{2}{p}}+|f(z_1)-f(z_2)|\\
&\leqslant \dfrac{K_p}{1-kC_p}\,\|\mu\|_p(1-kC_p)^{\frac{-2}{p}}\,|f(z_1)-f(z_2)|^{1-\frac{2}{p}}+|f(z_1)-f(z_2)|\quad\text{(by (\ref{eq52}))}\\
&=K_p\,(1-kC_p)^{-1-\frac{2}{p}}\|\mu\|_p\,|f(z_1)-f(z_2)|^{1-\frac{2}{p}}+|f(z_1)-f(z_2)|.
\end{split}
\end{equation}
Thus
\begin{equation}\label{eq53}
|z_1-z_2|\leqslant  K_p\,(1-kC_p)^{-1-\frac{2}{p}}\|\mu\|_p\,|f(z_1)-f(z_2)|^{1-\frac{2}{p}}+|f(z_1)-f(z_2)|.
\end{equation}
\begin{theorem}\label{2.9.17}
For any $\mu$ with compact support and $\|\mu\|_\ity\leqslant k<1,$ the normal solution to the Beltrami equation (\ref{eq33}), is a quasiconformal homeomorphism which satisfies $\mu_f=\mu.$
\end{theorem}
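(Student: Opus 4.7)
The plan is to prove the theorem by approximation, reducing to the smooth case already handled in Lemma \ref{2.9.16}. First, I would mollify $\mu$ by convolving with a standard $C^\infty_c$ approximation to the identity to produce a sequence $\mu_n \in C^\infty_c$ with $\|\mu_n\|_\ity \leqslant k$, $\mu_n \to \mu$ pointwise a.e., and all $\mu_n$ supported in a fixed compact neighborhood of $\emph{supp}\,\mu$. Since each $\mu_n$ is smooth with $(\mu_n)_z \in L^p$, Lemma \ref{2.9.16} produces normal solutions $f_n \in C^1$ which are homeomorphisms of $\ti\C$ satisfying $(f_n)_{\bar z} = \mu_n (f_n)_z$. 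Because $|\mu_n| \leqslant k$ a.e., expression (\ref{eq17}) gives the pointwise bound $D_{f_n} \leqslant (1+k)/(1-k) =: K$, so each $f_n$ is $K$-quasiconformal.

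Next, Lemma \ref{2.9.15} yields $f_n \to f$ uniformly on compact subsets of $\C$, where $f$ is the normal solution for $\mu$ furnished by Theorem \ref{2.9.14}. By the compactness result, Theorem \ref{2.9.6}, the uniform limit $f$ is either constant or $K$-quasiconformal. To rule out the constant case, I would invoke the reverse H\"older-type estimate (\ref{eq53}) applied to each $f_n$, namely
\begin{equation}
|z_1 - z_2| \leqslant K_p (1-kC_p)^{-1-\frac{2}{p}} \|\mu_n\|_p \, |f_n(z_1)-f_n(z_2)|^{1-\frac{2}{p}} + |f_n(z_1)-f_n(z_2)|.
\end{equation}
Since $\|\mu_n\|_p$ remains bounded uniformly in $n$ (the $\mu_n$ share a common compact support and $\|\mu_n\|_\ity \leqslant k$), passing $n \to \ity$ yields the same inequality for $f$, which forces $f(z_1) \neq f(z_2)$ whenever $z_1 \neq z_2$. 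Hence $f$ is injective, non-constant, and therefore $K$-quasiconformal.

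Finally, to identify $\mu_f$ with $\mu$, note that the Beltrami relation $f_{\bar z} = \mu f_z$ already holds by construction of the normal solution (Theorem \ref{2.9.14}). The quasiconformality of $f$ guarantees that $f_z \neq 0$ almost everywhere (for a quasiconformal map the Jacobian $J = (1-|\mu|^2)|f_z|^2$ is non-zero a.e.), so the quotient $\mu_f = f_{\bar z}/f_z$ is well defined and equals $\mu$ a.e.

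The main obstacle I anticipate is ensuring that the approximation procedure does not collapse the limit map to a constant, and that the homeomorphism property survives in the limit. Both issues reduce to the single uniform \emph{reverse} estimate (\ref{eq53}), which encodes the crucial quasisymmetry of the $f_n$ and which must hold with constants independent of $n$. Control over $\|\mu_n\|_p$ via the common compact support is the technical ingredient that makes this uniformity work; once it is in hand, Theorem \ref{2.9.6} does the rest.
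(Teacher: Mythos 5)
Your proposal is correct and follows essentially the same route as the paper: approximate $\mu$ by smooth compactly supported $\mu_n$, apply Lemma \ref{2.9.16} to get $C^1$ homeomorphic normal solutions, pass to the limit via Lemma \ref{2.9.15}, use the uniform reverse estimate (\ref{eq53}) to secure injectivity of the limit, and invoke Theorem \ref{2.9.6} together with $f_z\neq 0$ a.e.\ to conclude $\mu_f=\mu$. The only cosmetic difference is that you make explicit the mollification and the role of Lemma \ref{2.9.16}, which the paper leaves implicit.
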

\begin{proof}
We can approximate $\mu$\; a.e. in $p-norm$ with a sequence of functions $\mu_n\in C_{0}^{1}\;$ ($C^1$ with compact support) with $|\mu_n|\leqslant k.$ Let $f_n$ be the normal solution corresponding to each $\mu_n.$ Then $f_n$\text{'}s satisfy (\ref{eq53}). By Lemma \ref{2.9.15}, $f_n\to f$ uniformly on compact sets and $\|\mu_n\|_p\to \|\mu\|_p,$ we have 
\begin{equation}
\begin{split}
\notag
|z_1-z_2|
&\leqslant  K_p\,(1-kC_p)^{-1-\frac{2}{p}}\|\mu_n\|_p\,|f_n(z_1)-f_n(z_2)|^{1-\frac{2}{p}}+|f_n(z_1)-f_n(z_2)|\\
&\to K_p\,(1-kC_p)^{-1-\frac{2}{p}}\|\mu\|_p\,|f(z_1)-f(z_2)|^{1-\frac{2}{p}}+|f(z_1)-f(z_2)|\\
\Rightarrow |z_1-z_2|
&\leqslant K_p\,(1-kC_p)^{-1-\frac{2}{p}}\|\mu\|_p\,|f(z_1)-f(z_2)|^{1-\frac{2}{p}}+|f(z_1)-f(z_2)|.
\end{split}
\end{equation}
So $f$ also satisfies (\ref{eq53}) and hence can be seen easily to be injective. Then $f$ being a uniform limit of a sequence of $K-$ quasiconformal mappings, by Theorem  \ref{2.9.6}, $f$ is either a \emph{constant} or a $K-$  quasiconformal mapping. But $f$ is not \emph{constant} since it is injective, so it must be a $K-$ quasiconformal mapping and so $f$ possesses locally integrable partial derivatives which are also distributional derivatives. Since $f$ is one-one, therefore $f_z\neq 0$\; a.e. and so $\mu_f=\dfrac{f_{\bar z}}{f_z}$ is defined  a.e. and $\mu_f=\mu.$ 
\end{proof}
We shall now get rid of the assumption that $\mu$ has compact support.
\begin{theorem}\label{2.9.18}
For any complex valued Lebesgue measurable function $\mu\in L^{\ity}(\C)$ with $\|\mu\|_\ity<1,$ there exists a unique normalized quasiconformal mapping $f^\mu$ with complex dilatation $\mu$ that leaves $0,1$ and $\ity$ fixed.
\end{theorem}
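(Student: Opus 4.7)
The plan is to reduce the statement to the compactly supported case already handled by Theorem \ref{2.9.17}, via a splitting of $\mu$ combined with the M$\ddot{\text{o}}$bius inversion trick, and then to obtain the three-point normalization by a further M$\ddot{\text{o}}$bius post-composition.

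For existence, write $\mu = \mu_1 + \mu_2$ with $\mu_1 = \mu\cdot\chi_{\overline\D}$ (compact support) and $\mu_2 = \mu\cdot\chi_{\C\smallsetminus\overline\D}$, each with $L^{\ity}$-norm $\leqslant k<1$. By Theorem \ref{2.9.17} there is a quasiconformal homeomorphism $f_1$ of $\ti\C$ with $\mu_{f_1}=\mu_1$, conformal off $\overline\D$, and fixing $\ity$ by virtue of the normalization $(f_1)_z-1\in L^p$. I seek $f$ in the form $f=h\circ f_1$. Using the composition formula (\ref{eq31}) with $\mu_f=\mu$ and $\mu_{f_1}=\mu_1$, the required dilatation of $h$ must satisfy
\[
\mu_h\circ f_1 \;=\; \dfrac{1}{k_{f_1}}\cdot\dfrac{\mu-\mu_1}{1-\overline{\mu_1}\,\mu},
\]
which vanishes on $f_1(\D)$ (where $\mu=\mu_1$) and has modulus at most $k$ elsewhere. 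In particular $\mu_h$ is measurable, $\|\mu_h\|_{\ity}\leqslant k<1$, and $\mu_h$ is supported in the complement of the compact set $f_1(\overline\D)$.

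To handle this non-compact support, pick any $w_0\in f_1(\D)$ and set $\psi(w)=1/(w-w_0)$. The pullback $\tilde\mu:=\psi^{\ast}\mu_h$ (well defined since $\psi$ is conformal) is supported in a bounded open subset of $\C$, because $\psi$ exchanges a neighbourhood of $w_0$ with a neighbourhood of $\ity$; hence $\tilde\mu$ has compact support. Applying Theorem \ref{2.9.17} once more yields a quasiconformal $\tilde h$ with $\mu_{\tilde h}=\tilde\mu$, and setting $h:=\psi^{-1}\circ\tilde h\circ\psi$ (invoking conformality of $\psi$ together with (\ref{eq30})) produces a quasiconformal self-map of $\ti\C$ with dilatation $\mu_h$. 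Consequently $f:=h\circ f_1$ is a quasiconformal homeomorphism of $\ti\C$ with $\mu_f=\mu$. Post-composing $f$ with the unique M$\ddot{\text{o}}$bius transformation that sends $f(0),f(1),f(\ity)$ to $0,1,\ity$ yields the desired normalized map $f^{\mu}$; this does not disturb the dilatation, since M$\ddot{\text{o}}$bius transformations are conformal (cf.\ the remark after (\ref{eq30})).

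For uniqueness, if two such maps $g_1,g_2$ both fix $0,1,\ity$ and both have dilatation $\mu$, the composition formula (\ref{eq30}) gives $\mu_{g_1\circ g_2^{-1}}=0$, so $g_1\circ g_2^{-1}$ is conformal on $\ti\C$, hence a M$\ddot{\text{o}}$bius transformation; fixing three points forces it to be the identity, so $g_1=g_2$. The principal obstacle is technical rather than conceptual: one must carefully track how complex dilatations transform under composition and under the inversion $\psi$, using the transformation rules (\ref{eq30})--(\ref{eq32}) and the conformal invariance of dilatation, and verify the measurability and $L^{\ity}$-bounds of the auxiliary coefficients $\mu_h$ and $\tilde\mu$ at each step, so that Theorem \ref{2.9.17} may legitimately be invoked.
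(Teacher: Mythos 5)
Your proposal is correct and follows essentially the same route as the paper: split $\mu$ into a compactly supported piece and a piece vanishing near a point, invoke Theorem \ref{2.9.17} for the former, handle the latter by conjugating with a M\"obius inversion so that its pullback has compact support, and glue the two factors with the composition formulas (\ref{eq30})--(\ref{eq32}). The only differences are cosmetic --- you compose the factors in the opposite order and use the inversion $w\mapsto 1/(w-w_0)$ adapted to $f_1(\D)$ rather than assuming the vanishing occurs near $0$ --- and you supply the standard three-point uniqueness argument explicitly, which the paper only asserts.
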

\begin{proof}
We divide the proof into three cases.
\begin{enumerate}
\item[(i)] If $\mu$ has compact support, then by Theorem \ref{2.9.17} we need to only normalize $f.$
\item[(ii)] Now suppose $\mu=0$ in a neighbourhood of $0.$ We set 
\begin{equation}\label{eq54}
\tilde{\mu}(z)=\mu(\dfrac{1}{z})\,\dfrac{z^2}{{\bar z}^2}\cdot
\end{equation}
Then \emph{support}\;$\tilde{\mu}$ equals $\ti\C\smallsetminus\D,$ which is closed and hence compact, so that $\tilde{\mu}$ has compact support. So by case (i), $f^{\tilde{\mu}}$ is the normal solution of the Beltrami equation\;$f_{\bar z}=\tilde{\mu}\,f_{z},\;\Rightarrow f_{\bar z}^{\tilde{\mu}}(\dfrac{1}{z})=\tilde{\mu}(\dfrac{1}{z})\,f_{z}^{\tilde{\mu}}(\dfrac{1}{z})\;$ (replacing $z\to\dfrac{1}{z}$).\\
We claim that 
\[
f^{\mu}(z)=\dfrac{1}{f^{\tilde{\mu}}(\frac{1}{z})}
\]
is the required normal solution of (\ref{eq33}).
Using (\ref{eq54}) we get
\begin{equation}\label{eq55}
f_{\bar z}^{\tilde{\mu}}(\dfrac{1}{z})=\mu(z)\,\dfrac{{\bar z}^2}{z^2}\,f_{z}^{\tilde{\mu}}(\dfrac{1}{z}).
\end{equation}
Then\\
$f_{z}^{\mu}(z)=\dfrac{-1}{f^{\tilde{\mu}}(\dfrac{1}{z})^2}\,\dfrac{-1}{z^2}\,f_{z}^{\tilde{\mu}}(\dfrac{1}{z}),$ which implies
\begin{equation}\label{eq56}
f_{z}^{\tilde{\mu}}(\dfrac{1}{z})=z^2\,f^{\tilde{\mu}}(\dfrac{1}{z})^2\,f_{z}^{\mu}(z)
\end{equation}
and $\;f_{\bar z}^{\mu}(z)=\dfrac{1}{f^{\tilde{\mu}}(\dfrac{1}{z})^2}\,\dfrac{1}{{\bar  z}^2}\,f_{\bar z}^{\tilde{\mu}}(\dfrac{1}{z}),$ implies
\begin{equation}\label{eq57}
f_{\bar z}^{\tilde{\mu}}(\dfrac{1}{z})={\bar  z}^2\,f^{\tilde{\mu}}(\dfrac{1}{z})^2\,f_{\bar z}^{\mu}(z).
\end{equation}
We note that the computations are well defined, since a quasiconformal map is differentiable  a.e.\,(from the analytic approach to quasiconformal mappings we get that $f^{\tilde{\mu}}$ is differentiable a.e.) Thus we have 
\begin{equation}
\begin{split}
\notag
{\bar  z}^2\,f^{\tilde{\mu}}(\dfrac{1}{z})^2\,f_{\bar z}^{\mu}(z)
&=\mu(z)\,\dfrac{{\bar z}^2}{z^2}\,f_{z}^{\tilde{\mu}}(\dfrac{1}{z})\quad\text{(by (\ref{eq55}))}\\
&=\mu(z)\,\dfrac{{\bar z}^2}{z^2}\,z^2\,f^{\tilde{\mu}}(\dfrac{1}{z})^2\,f_{z}^{\mu}(z)\quad\text{ (by (\ref{eq56})) }\\
\Rightarrow f_{\bar z}^{\mu}(z)
&=\mu(z)\,f_{z}^{\mu}(z)
\end{split}
\end{equation}
which proves the claim.
\item[(iii)] In the general case we set $\mu=\mu_1+\mu_2$ where $\mu_1=0$ in a neighbourhood of $\ity$ and $\mu_2=0$ in a neighbourhood of $0.$ We try to find $\la$ so that 
\[
f^{\la}\circ f^{\mu_2}=f^{\mu} \;\;\text{and}\;\;
f^\la=f^{\mu}\circ(f^{\mu_2})^{-1}.
\]
From (\ref{eq32}), we find that this is so for
\[
\la={\B(\dfrac{\mu-\mu_2}{1-\mu\,\overline{\mu_2}}\B)\,\B(\dfrac{(f^{\mu_2})_{z}}{(\overline{f^{\mu_2}})_{\bar z}}\B)}\circ (f^{\mu_2})^{-1}.
\]
Proceeding like case (ii), we get that $ f^{\mu_2}$ is the normalized solution of $f_{\bar z} =\mu_2\,f_z$. Also it can be seen that $\la$ has compact support and therefore by case (i), $f^\la$ is the normalized solution of  $f_{\bar z} =\la f_z$. Hence $f^\mu$ is the required unique normalized quasiconformal mapping with complex dilatation $\mu$ which leaves $0,1$ and $\ity$ fixed.
\end{enumerate}
\end{proof}
In short, the \textbf{Ahlfors\,-\,Bers Measurable Riemann mapping theorem} says that the mapping $\phi\to \mu(\phi)$ is onto the space of bounded measurable conformal structures. Moreover, if we normalize $\phi$ to fix three points $\{0,1,\ity\},$ then the inverse correspondence 
$\{$bounded measurable conformal structures$\}\to\{$normalized quasiconformal homeomorp-hisms of $\;\ti\C\}$\\
is well defined and bijective.
\section{Quasiconformal deformations of rational mappings}\label{ch2,sec10}
Let $\mu$ be a measurable conformal structure which is preserved a.e. by the rational mapping $\R,$ i.e $\R$ carries infinitesimal ellipses to infinitesimal ellipses. Let $\phi_\mu$ be the normalized quasiconformal homeomorphism of the sphere (by Ahlfors-Bers theorem) which carries $\mu$ to the standard conformal structure a.e., i.e $\phi_\mu$ maps infinitesimal ellipses to infinitesimal circles.\\
We form the quasiconformal conjugates of the rational map $\R.$
\begin{proposition}\label{2.10.1}
Let $\R_\mu=\phi_\mu\R{\phi_\mu}^{-1}.$ Then $\R_\mu$ is a rational mapping of the sphere ( the $\mu -$\,quasiconformal deformation of $\R$ with $deg\,\R_\mu=deg\,\R$).
\end{proposition}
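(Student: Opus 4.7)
The plan is to establish three facts about $\R_\mu=\phi_\mu\circ\R\circ\phi_\mu^{-1}$ in turn: that it is a quasiconformal self-homeomorphism of $\ti\C$; that its complex dilatation vanishes almost everywhere; and that it is therefore rational, of the same degree as $\R$.

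First I would observe that $\phi_\mu$ is a quasiconformal homeomorphism of $\ti\C$ by Theorem~\ref{2.9.18}, that $\R$ is $1$-quasiconformal because it is holomorphic (Remark~\ref{2.9.2}), and that $\phi_\mu^{-1}$ is quasiconformal with the same dilatation as $\phi_\mu$ by Theorem~\ref{2.9.5}. Theorem~\ref{2.9.4} then says that the composition $\R_\mu$ is quasiconformal, so in particular $\R_\mu$ is a homeomorphism of $\ti\C$ possessing locally $L^p$ distributional derivatives and is differentiable almost everywhere.

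The heart of the argument is the claim that $\mu_{\R_\mu}=0$ a.e. Geometrically this is transparent: $\phi_\mu^{-1}$ carries infinitesimal round circles to the field of $\mu$-ellipses; the hypothesis that $\R$ preserves $\mu$ says that $\R$ sends $\mu$-ellipses to $\mu$-ellipses a.e.; and $\phi_\mu$ then sends those $\mu$-ellipses back to round circles. Hence the derivative of $\R_\mu$ carries circles to circles a.e., which is exactly the statement $\mu_{\R_\mu}=0$. Algebraically, I would verify this by two applications of the composition formula \eqref{eq30} together with the inversion identity \eqref{eq27}: first compute $\mu_{\R\circ\phi_\mu^{-1}}$, exploiting the conformality of $\R$ (so that $\mu_\R=0$) and the fact that $\mu_{\phi_\mu}=\mu$; then compute $\mu_{\phi_\mu\circ(\R\circ\phi_\mu^{-1})}$ and observe that the ``$\R$ preserves $\mu$'' hypothesis is precisely what makes the two $\mu$-contributions cancel. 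This dilatation book-keeping is the step I expect to require the most care, since one must track the twist factors $k_f$ and verify the cancellation algebraically rather than leaning on the geometric picture.

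Once $(\R_\mu)_{\bar z}=\mu_{\R_\mu}\,(\R_\mu)_z=0$ holds a.e.\ in the distributional sense, Weyl's lemma (Lemma~\ref{2.9.12}), applied in a local chart about each point of $\ti\C$ (including a chart centred at $\infty$), implies that $\R_\mu$ is holomorphic as a map $\ti\C\to\ti\C$, i.e.\ meromorphic on the Riemann sphere; and every meromorphic self-map of the sphere is a rational function. For the degree, recall that the degree of a rational map equals the cardinality of the preimage of any non-critical value. If $w\in\ti\C$ is such that $\phi_\mu^{-1}(w)$ is not a critical value of $\R$, then
\[
\R_\mu^{-1}(w)=\phi_\mu\B(\R^{-1}(\phi_\mu^{-1}(w))\B),
\]
and since $\phi_\mu$ is a bijection this set has the same cardinality as $\R^{-1}(\phi_\mu^{-1}(w))$, namely $\deg\R$. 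Hence $\deg\R_\mu=\deg\R$.
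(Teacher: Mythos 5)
Your proposal follows essentially the same route as the paper's proof: both compose the quasiconformality results (Theorems \ref{2.9.4} and \ref{2.9.5}) to see that $\R_\mu$ is quasiconformal where $\R$ is locally injective, then argue that $\R_\mu$ carries infinitesimal circles to infinitesimal circles a.e.\ because $\R$ preserves $\mu$, and conclude local conformality. You add three things the paper leaves implicit, all of them improvements: an algebraic verification of the dilatation cancellation via (\ref{eq30}) and (\ref{eq27}) (the paper leans entirely on the geometric ellipse picture), an appeal to Weyl's lemma (Lemma \ref{2.9.12}) to pass from vanishing dilatation a.e.\ to genuine holomorphy (necessary, since Remark \ref{2.9.2} is only established for $C^1$ homeomorphisms), and an explicit preimage-counting proof that $\deg\R_\mu=\deg\R$, which the paper asserts in the statement but never proves. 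Two points in your write-up need correction. First, $\R_\mu$ is not a homeomorphism of $\ti\C$ unless $d=1$; it is a $d$-to-one branched cover, and Theorems \ref{2.9.4}--\ref{2.9.5} only apply locally, away from the critical points of $\R$, so your opening conclusion should read ``locally quasiconformal.'' Second, and consequently, your argument as written yields holomorphy of $\R_\mu$ only on the complement of the finitely many points $\phi_\mu(c)$ with $c$ a critical point of $\R$; the paper devotes a separate paragraph to showing these are removable. In your framework this is a one-line patch --- $\R_\mu$ is continuous on all of $\ti\C$ and holomorphic off a finite set, so the exceptional points are removable singularities --- but it must be said, since otherwise the claim that $\R_\mu$ is meromorphic on the whole sphere is not yet established.
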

\begin{proof}
$\R$ is locally a conformal mapping away from its branch points and so it is $1-$  quasiconformal there. By Theorem \ref{2.9.5}, ${\phi_\mu}^{-1}$ is quasiconformal. Also  by Theorem \ref{2.9.4}, the composition of two quasiconformal mappings is  quasiconformal, so $\R_\mu=\phi_\mu\R{\phi_\mu}^{-1}$ is locally a quasiconformal homeomorphism of $\ti\C.$ We have $\R$ preserves $\mu$\; a.e. ${\phi_\mu}^{-1}$ maps infinitesimal circles to infinitesimal ellipses, $\R$ maps infinitesimal  ellipses to infinitesimal  ellipses a.e. and $\phi_\mu$ then maps  infinitesimal ellipses to  infinitesimal circles a.e. So $\R_\mu$ maps infinitesimal circles to infinitesimal circles a.e. Hence $\R_\mu$ preserves the standard conformal structure a.e. and so locally $\R_\mu$ must be conformal (a conformal map maps infinitesimal circles to infinitesimal circles).\\
Now let us see what happens on the branch points of $\R.$\\
We have $\R_\mu\phi_\mu=\phi_\mu\R.$ Suppose $z_0\in\ti\C$ is such that $\R(z_0)$ is a branch point of $\R.$ Then $\phi_\mu$ sends  $\R(z_0)$ to a removable singularity of $\R.$ Also $(\R_\mu\phi_\mu)(z_0)=\phi_\mu(\R(z_0)),$ implies $z_0$ is a removable singularity of  $\R_\mu\phi_\mu$ and so $\phi_\mu(z_0)$ is a branch point of $\R_\mu.$ Thus the branch points of $\R$ are transformed by $\phi_\mu$ to branch points of $\R_\mu.$ Hence $\R_\mu$  is a continuous map of $\ti\C$, which is locally analytic and thus a rational mapping of $\ti\C.$
\end{proof}
\section{The arc argument for simply connected wandering domains}\label{ch2,sec11}
Let $U$ be a simply connected component of $\F$ and suppose that $U$ is a wandering domain. Then the iterates of $U$ under $\R$ viz. 
\[
U,\R(U),\R^2(U),\ldots
\]
 are pairwise disjoint. Let 
\begin{diagram}
\R^n(U)&\rTo^{\R} &\R^{n+1}(U)\, ,\quad n>0
\end{diagram}
be injective.\\
We claim that such a simply connected wandering domain is not possible.\\
Since $U$ is simply connected, by Riemann mapping theorem, we have a conformal isomorphism $\phi : \D\to U.$ Let $\bar{a_1},\bar{b_1},\bar{c_1}\,\in\partial U$ be three distinct radial limits of $\phi$ at the points $a_1,b_1,c_1\,\in\partial\D.$ We shall make use of a real analytic family of diffeomorphisms of $\partial\D$ which fixes $a_1,b_1,c_1$ and having dimension greater than the dimension of the space of rational maps of degree $d$. (Recall that the space of rational maps of degree $d$ viz. $CP_0^{2d+1}$ has dimension $2d+1$). Let $W_0$ be an open neighbourhood of the origin in the Euclidean space of dimension greater than $4d+2.$ Let $W\subset W_0$ be a compact neighbourhood of origin. Consider a real analytic map
\begin{equation}
\partial\varphi : W_0\times\partial\D\to\partial\D.
\end{equation}
For $w\in W, \partial\varphi(w,\theta)$ is a diffeomorphism of $\partial\D$, and $\partial\phi(0,\theta)=I,$ the identity map. We suppose that for $w_1\neq w_2,\;\exists\;\,\theta$ such that $\partial\varphi(w_1,\theta)\neq\partial\varphi(w_2,\theta)$ and call this as the injective property of $W_0.$ Now with the help of $\partial\varphi$ we define
\begin{equation}\label{eq}
\varphi :  W_0\times\D\to\D\quad\text{as}
\end{equation}
$\varphi(w;(r,\theta))=(r,\partial\varphi(w,\theta)).$ As $\partial\phi(w,\theta)$ is a real analytic family of diffeomorphisms, so is $\varphi(w;.)$ for $w\in W.$ Thus  $\varphi(w;.)$ for $w\in W$ and its inverse have their derivative bounded. Hence  $\varphi(w;.),$ $w\in W$  and its inverse are uniformly \emph{Lipschitz} (using Taylor expansion of each $\varphi(w;.)$ and its inverse).Thus $\{\varphi(w;.)\,,w\in W\}$ are uniform bi\,-\,Lipschitz homeomorphism. So $\varphi(w;.)$ for $w\in W$ are $K-$ quasiconformal for some $K$ (a bi\,-\,Lipschitz homeomorphism is $K-$ quasiconformal).
On applying the Riemann map $\phi$, the conformal distortion $\tilde{\mu}$ of $\varphi(w;.)$ is transported to $U.$\\
Define a relation `$\sim$' on $U$ as
\[
x\sim y\;\;\text{if and only if}\;\;\R^n(x)=\R^m(y)\;\text{for some}\;n,m\geqslant 0.
\]
It can be easily seen that $\sim$ is an equivalence relation. Let $\tilde{U}$ denote the union of equivalence classes of points of $U$.\\
We claim that an equivalence class $[x]$ of $x\in U,$ intersects $U$ in atmost one point.\\
Suppose $y\in [x],\;\Rightarrow \R^n(x)=\R^m(y)\;\text{for some}\;n,m\geqslant 0.$ Then $\R^n(x)\in\R^n(U)$ and $\R^m(y)\in\R^m(U)$. As $U$ is a wandering domain, so $\R^n(U)\cap\R^m(U)=\emptyset,$ thus we arrive at a contradiction and hence the claim.\\
With the help of $\R$ we can transport each conformal structure $\tilde{\mu}(w)$ over $\tilde{U}$ to get a conformal structure preserved by $\R,$ so that with the help of standard structure on $\tilde{U}$, we get a measurable conformal structure $\mu(w)$ which is defined on $\ti\C,$ is invariant under $\R$ and depends real analytically on $w.$\\
By the measurable Riemann mapping theorem of Ahlfors\,-\,Bers, we get 
\begin{diagram}
W\times\ti\C &\rTo^{\eta} &\ti\C
\end{diagram}
a real analytic family of normalized (to fix $\{a_1,b_1,c_1\}$) quasiconformal homeomorphisms of $\ti\C$ so that $\varphi_w=\varphi(w,.)$ converts bounded measurable conformal structure $\mu(w)$ to the standard conformal structure.\\
As $\{\varphi_w\}$ is a real analytic family of normalized quasiconformal homeomorphism of $\ti\C$, therefore by Proposition \ref{2.10.1}, $\R_w=\varphi_w\R{\varphi_w}^{-1}$ is a real analytic family of quasiconformal deformations of $\R$, having the same degree $d$ as $\R$ .\\
Define $p : W\to C{P_0}^{2d+1}$ as $p(w)=\R_w$ . Since $\R_w$ is real analytic so is $p$ . Also $p(0)=\R_0=\varphi_0\R{\varphi_0}^{-1}=\R$ (since $\varphi_0=I$). As dim$\,W>$ dim$\,C{P_0}^{2d+1},$ therefore some fibre of $p$ has positive dimension. Let $\R_t,\, t\in [0,1],$ be a non trivial arc in the fibre of $p$ over some $g\in C{P_0}^{2d+1}.$ Then $\R_t=g\,$ i.e $\,\varphi_t\R{\varphi_t}^{-1}=g\;$ or $\;\bar{\varphi_t}\R\bar{\varphi_t}^{-1}=\R,$ where $\bar{\varphi_t}={\varphi_0}^{-1}\varphi_t$, since 
\begin{equation}
\begin{split}
\notag
{\varphi_0}^{-1}\varphi_t\R{\varphi_t}^{-1}\varphi_0
&={\varphi_0}^{-1}g\varphi_0\quad(\varphi_t\R{\varphi_t}^{-1}=g)\\
&=\R\quad(t=0,\;\Rightarrow \varphi_0\R{\varphi_0}^{-1}=g,\;\Rightarrow {\varphi_0}^{-1}g\varphi_0=\R).
\end{split}
\end{equation}
Now $\bar{\varphi_t}\R\bar{\varphi_t}^{-1}=\R$ on the Julia set $\J$ implies $\bar{\varphi_t}\R=\R\,\bar{\varphi_t}$ on $\J$ and so $\bar{\varphi_t}\in$ \emph{centralizer}$\,\R.$\\
We claim that this implies $\bar{\varphi_t}=I$ on $\J.$\\
Now $\bar{\varphi_0}={\varphi_0}^{-1}\varphi_0=I$ (since $\varphi_0=I$). From Proposition \ref{2.7.1}, there is a continuous injection of $C_\R$ into a totally disconnected Cantor group. So the centralizer of $\R$ on $\J$ cannot contain any non trivial arc. If $\bar{\varphi_t}\neq I$ for some $t,$ then there exists a connected neighbourhood of t in which $\bar{\varphi_t}\neq I$ and we will get the image of that connected neighbourhood to be an arc, which will be a contradiction to the fact that the centralizer of $\R$ on $\J$ does not contain any non trivial arc. Hence we conclude that $\bar{\varphi_t}\R=\R\,\bar{\varphi_t}$ on $\J\;\Rightarrow\bar{\varphi_t}=I$ on $\J.$\\
We further claim that $\bar{\varphi_t}$ has trivial action on the prime ends of $U.$\\
From Section \ref{ch2,sec8}, we know that the fibres of points of $\partial U$ are totally disconnected in the topology on the unit circle. Since $\bar{\varphi_t}=I$ on $\J$ and $\partial U\subset \J,$ we get $\bar{\varphi_t}$ is the identity on $\partial U$, so that $\bar{\varphi_t}$ can only permute the points in various fibres. But by Proposition \ref{2.8.17}, points in various fibres cover all the prime ends of $U,$ hence we conclude that $\bar{\varphi_t}$ acts trivially on the prime ends of $U$ which proves the claim.\\
The restricted map $\varphi_t : U\to U$ is transported by the Riemann map $\phi$ to the map $\tilde{\varphi_t}={\phi}^{-1}\phi_t\phi : \D\to\D.$ Thus $\varphi_t$ is conjugate to $\tilde{\varphi_t}.$ As $\partial\D$ corresponds to $\partial U$ under $\phi$ and $\bar{\varphi_t}= {\varphi_0}^{-1}\varphi_t$ has trivial action on the prime ends of $U,$ therefore $\,\tilde{\varphi_t}\vert_{\partial\D}=\varphi_t\vert_{\partial U}.$ But this is a contradiction to the injective property of $W,$ for $\tilde{\varphi_t}$ must agree with $\varphi_t$ having the same normalization and the same conformal distortion. So our assumption that $U$ is a simply connected wandering domain and $\R : \R^n(U)\to\R^{n+1}(U),\;\,n>0$ is injective is not possible.
\section{Sullivan's proof of Fatou's no wandering domain conjecture}\label{ch2,sec12}
We now collect together the various tools to prove Fatou's no wandering domain conjecture.
\begin{theorem}\label{2.12.1}
If $\R : \ti\C\to\ti\C$ is a rational map of degree $d$, then every component of the Fatou set $\F$ is eventually periodic.
\end{theorem}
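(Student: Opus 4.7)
The plan is to argue by contradiction, following Sullivan's strategy of combining the measurable Riemann mapping theorem with a dimension-counting argument against the rigidity established in Proposition \ref{2.7.1}. Suppose $U = U_0$ is a wandering component of $\F$. Since $\R$ has only finitely many critical points (Theorem \ref{1.5.21}), after discarding finitely many iterates I may assume each $\R : U_n \to U_{n+1}$ is an unbranched covering between hyperbolic Riemann surfaces $U_n = \R^n(U_0)$, which are pairwise disjoint. Proposition \ref{2.6.1} then partitions the possibilities into two cases, which I will dispatch separately.

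In the first case, the direct limit $U^\ity$ of the sequence $U_0 \to U_1 \to U_2 \to \cdots$ exists and has infinite topological type. By Proposition \ref{2.5.5}, the space of deformations of the hyperbolic structure on $U^\ity$ is infinite dimensional. I would pick a real-analytic family of such deformations parametrized by an open set $W$ of arbitrarily large finite dimension, pull it back through the projections $\pi_n : U_n \to U^\ity$ to produce a family $w \mapsto \mu(w)$ of $\R$-invariant measurable conformal structures on $\bigcup_n U_n$, and extend each $\mu(w)$ by the standard structure off this union to obtain an $\R$-invariant bounded measurable conformal structure on $\ti\C$. By the Ahlfors-Bers Measurable Riemann Mapping Theorem (Theorem \ref{2.9.18}) and Proposition \ref{2.10.1}, each $\mu(w)$ yields a normalized quasiconformal homeomorphism $\varphi_w$ of $\ti\C$ and a rational map $\R_w = \varphi_w \R \varphi_w^{-1}$ of the same degree $d$. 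Since $CP_0^{2d+1}$ has real dimension $4d+2$, choosing $\dim W > 4d+2$ forces a nontrivial real-analytic arc $\{w(t) : t\in[0,1]\}$ on which $\R_{w(t)} \equiv \R_{w(0)}$. Then $\bar\varphi_t := \varphi_{w(0)}^{-1} \varphi_{w(t)}$ commutes with $\R$, so its restriction to $\J$ lies in the group $C_\R$ of Proposition \ref{2.7.1}, which embeds continuously into a totally disconnected Cantor group. A nontrivial continuous arc in a totally disconnected group is constant, so $\bar\varphi_t\vert_\J = I$ for all $t$; the prime-end discussion of Section \ref{ch2,sec8} then forces $\bar\varphi_t$ to act trivially on the prime ends of each $U_n$. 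Transporting back via the Riemann maps contradicts the injectivity of the deformation family, exactly as in Section \ref{ch2,sec11}.

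In the second case, from some $n$ onward each $U_{n+i}$ has finite topological type and each $\R : U_{n+i} \to U_{n+i+1}$ is an isomorphism. If $U_n$ is simply connected, the arc argument of Section \ref{ch2,sec11} applies verbatim and supplies the contradiction. If $U_n$ is an annulus or a surface of higher genus of finite type, Proposition \ref{2.4.2} and the part (iii) analysis inside Proposition \ref{2.6.1} already put us in the unbranched-isomorphism regime, and I would run the same deformation/arc argument but replace the injective real-analytic family of diffeomorphisms of $\partial\D$ used in Section \ref{ch2,sec11} with an injective real-analytic family of diffeomorphisms of the ideal boundary of $U_n$ fixing three radial limits, of arbitrarily large finite parameter dimension. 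The rigidity of $C_\R$ and the prime-end action on each boundary component of $U_n$ again close the argument.

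The main obstacle is the analytic bookkeeping in the first case: one must verify that the pulled-back conformal structures really assemble into a single $\R$-invariant measurable structure on $\ti\C$ whose dependence on $w$ is real analytic and whose essential supremum norm stays bounded away from $1$, and that the map $w \mapsto \R_w$ into $CP_0^{2d+1}$ is itself real analytic so that a genuine positive-dimensional fibre can be extracted. Once these compatibilities are in place, the contradiction between the infinite-dimensional deformation space of $U^\ity$ and the finite-dimensional moduli space of rational maps of degree $d$, mediated by the totally disconnected rigidity of $C_\R$ and the prime-end analysis, closes the proof, and every component of $\F$ must be eventually periodic.
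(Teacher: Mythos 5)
Your proposal is correct and follows essentially the same route as the paper: the dichotomy of Proposition \ref{2.6.1}, the arc argument of Section \ref{ch2,sec11} (with the prime-end and circle-domain modifications) for the finite-type case, and the contradiction between the infinite-dimensional deformation space of $U^\ity$ and the finite-dimensional space $CP_0^{2d+1}$, mediated by the Ahlfors--Bers theorem and the Cantor-group rigidity of $C_\R$, for the infinite-type case. The only difference is presentational: you spell out the arc/fibre extraction explicitly in the infinite-type case where the paper simply asserts the injectivity of $\mu\mapsto\R_\mu$, but the underlying mechanism is identical.
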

\begin{proof}
Suppose $U_0$ is a wandering Fatou component. Then the forward images of $U_0$ under $\R$ viz. $U_1=\R(U_0), U_2=\R(U_1), \ldots $ are pairwise disjoint. From Proposition \ref{2.6.1} of \textbf{Section \ref{ch2,sec6}},  \\
Either
\begin{enumerate}
\item[(i)]  from some n onwards,  $U_{n+i}$ has finite topological type and each 
\begin{diagram}
U_{n+i} &\rTo^{\R} &U_{n+i+1} 
\end{diagram}
is an isomorphism $i\geqslant 0$, or
\item[(ii)]  the direct limit $U^{\ity} $ of 
\[
\begin{CD}
\ U_0 @>\R>> \ U_1 @>\R>> \ U_{2} @>\R>>\cdots
\end{CD}
\]
exists and has infinite topological type.
\end{enumerate}
For case (i), we have described in detail the arc argument in \textbf{Section \ref{ch2,sec11}} which leads to a contradiction in the simply connected case.\\
Suppose now that $U_0$ is finitely connected, say $\partial U_0$ has $n$ components. We have $\R :U_{n+i}\to U_{n+i+1}$ is eventually bijective and $U_{n+i}$ has finite topological type, $i\geqslant 0$. Consider $\ti\C\smallsetminus\{n\;\text{number of round disks}\}.$ We now have a Riemann map 
\[
\tilde\phi : U_0\to \ti\C\smallsetminus\{n\;\text{number of round disks}\}.
\]
This boundary of circles is described by prime ends in the region $U_0.$ The discussion and results of \textbf{Section \ref{ch2,sec8}} and the arc argument of \textbf{Section \ref{ch2,sec11}} virtually remains the same. So we again get a contradiction in the finitely connected case. So case (i) is not possible.\\
For case (ii), we make use of Proposition \ref{2.5.5} of \textbf{Section \ref{ch2,sec5}}. \\
We have that the direct limit $U^\ity$ exists and has infinite topological type. Since the direct limit of Riemann surfaces is again a Riemann surface, so $U^\ity$ is a Riemann surface of infinite topological type. Therefore the space $\mathcal C(U^\ity)$ of conformal structures of  $U^\ity$ is infinite dimensional. With the help of these structures $\mu$, by \textbf{Section \ref{ch2,sec10}}, we construct quasiconformal deformations $\R_\mu$ of $\R$ having the same degree $d$ as that of $\R$. Thus we get a continuous injection $\sigma :  C(U^\ity)\to CP_0^{2d+1}$ defined by $\sigma(\mu)=\R_\mu$. But dim$\,CP_0^{2d+1}=2d+1$ and dim$\,C(U^\ity)=\ity$, so there cannot be any injective mapping from $C(U^\ity)$ into $CP_0^{2d+1}.$ So we arrive at a contradiction and hence case (ii) is also not possible. This completes the proof of the theorem.
\end{proof}

\appendix
\chapter{Topological Viewpoint on Complex Analysis}\label{A}
This chapter deals with the standard definitions and basic results from classical complex analysis, and topological facts used throughout the dissertation. The details and proofs of the results are omitted.
\section{Basic definitions and results in complex analysis}
In this section, we introduce standard definitions of complex analysis. In addition, we briefly review some classical theorems  that has been used in this dissertation. \\
A \emph{domain} is an open connected subset of $\mathbb{C}$. A domain is said to be \emph{simply connected} if its complement is connected. An \emph{arc} in the complex plane is a continuous mapping $z=\phi(t)$ of an interval $a \leq t \leq b$ into $\mathbb{C}$. A \emph{Jordan arc} is an arc without self-intersection. A \emph{closed curve} is the continuous image of an arc whose endpoints coincide. A \emph{simple closed curve}, or a \emph{Jordan curve}, is a closed curve without self-intersections.\\
A complex valued function $f$ of a complex variable is said to be \textbf{differentiable} at a point $z_{0} \in \C$ if it has a derivative
\[f'(z_0)=\lim_{z\rightarrow z_0}\frac{f(z)-f(z_0)}{z-z_0}\]
at $z_{0}$. Such a function $f$ is \textbf{analytic} at $z_{0}$ if it is differentiable at every point in some neighborhood of $z_{0}$. It is one of the ``miracles'' of complex analysis that $f$ must then have derivatives of all orders at $z_{0}$, and that $f$ has a \emph{Taylor series} expansion
\[f(z)=\sum_{n=0}^{\infty}a_{n}(z-z_{0})^{n},\quad a_{n}=\frac{f^{n}(z_{0})}{n!},\]
convergent in some open disk centered at $z_{0}$.
The \emph{uniqueness principle} for analytic functions is a direct consequence of the Taylor series representation. It asserts that whenever two analytic functions agree on a sequence of points which cluster inside their common domain of analyticity, they must agree everywhere. Equivalently, if $f$ is analytic at a point $z_{0}$ and $f(z_{n})=0$ on a sequence of distinct points $z_{n}$ converging to $z_{0}$, then $f\equiv 0$.
\begin{theorem}
[\textbf{Schwarz Lemma}]
Let $f$ be analytic in the unit disk $\D$, with $f(0)=0$ and $|f(z)|<1$ in $\D$.\,Then $|f'(0)| \leq 1$ and $|f(z)| \leqslant |z|$ in $\D$. Strict inequality holds in both estimates unless $f$ is a rotation of the disk: $f(z)=e^{i\theta}z$.
\end{theorem}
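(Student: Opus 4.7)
The plan is to prove the Schwarz Lemma by the standard trick of desingularizing $f$ at the origin and invoking the Maximum Modulus Principle on nested disks.

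First I would introduce the auxiliary function
\[
g(z) = \begin{cases} f(z)/z & \text{if } z \neq 0, \\ f'(0) & \text{if } z = 0. \end{cases}
\]
Since $f$ is analytic in $\D$ with $f(0)=0$, its Taylor expansion about $0$ has the form $f(z) = f'(0)z + a_2 z^2 + \cdots$, so after dividing by $z$ the series $g(z) = f'(0) + a_2 z + a_3 z^2 + \cdots$ converges in $\D$. Hence $g$ is analytic in $\D$, and the singularity at the origin is removable with value $g(0) = f'(0)$.

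Next, for each $r$ with $0 < r < 1$, I would apply the Maximum Modulus Principle to $g$ on the closed disk $\overline{\Delta(0,r)}$. On the circle $|z| = r$ one has $|g(z)| = |f(z)|/r < 1/r$ using the hypothesis $|f(z)| < 1$ in $\D$. By the Maximum Modulus Principle, $|g(z)| \leqslant 1/r$ throughout $|z| \leqslant r$. Letting $r \uparrow 1$ with $z$ fixed yields $|g(z)| \leqslant 1$ for every $z \in \D$. Translating back gives $|f(z)| \leqslant |z|$ for all $z \in \D$ and, taking $z=0$, $|f'(0)| = |g(0)| \leqslant 1$.

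Finally, for the rigidity statement, I would examine when equality can occur. If either $|f(z_0)| = |z_0|$ for some $z_0 \in \D \setminus \{0\}$ or $|f'(0)| = 1$, then $|g|$ attains its supremum $1$ at an interior point of $\D$. The Maximum Modulus Principle then forces $g$ to be a constant of modulus one, say $g \equiv e^{i\theta}$, which gives $f(z) = e^{i\theta} z$; otherwise both inequalities are strict. The only conceptual obstacle here is verifying that $g$ is genuinely analytic at the origin (not merely that the singularity is removable in a formal sense) — but this is handled cleanly by the Taylor expansion of $f$ at $0$, since $f(0)=0$ forces the constant term to vanish. Everything else reduces to one invocation of the Maximum Modulus Principle.
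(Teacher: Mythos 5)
Your proof is correct and is the classical argument: desingularize $f/z$ at the origin via the Taylor expansion, apply the Maximum Modulus Principle on the disks $|z|\leqslant r$ and let $r\uparrow 1$, then treat the equality case by noting that an interior maximum forces $g$ to be a unimodular constant. The paper states the Schwarz Lemma in its appendix without proof (it explicitly omits proofs of the standard results collected there), so there is no argument in the text to compare against; your write-up supplies exactly the standard proof one would cite.
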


A function $f$ analytic in a domain $D \subset\ti\C$ is said to be \emph{conformal} there if $f'(z) \neq 0$ for every $z \in D$. Such a function $f$ is conformal at a point $\zeta$ if it is conformal in some neighborhood of $\zeta$. Every M$\ddot{\text{o}}$bius transformation
\[w=f(z)=\frac{az+b}{cz+d}, \quad ad-bc \neq 0,\]
where $a$, $b$, $c$, $d$ are complex constants, provides a conformal mapping of the extended complex plane $\tilde{\mathbb{C}}$ onto itself. 

The most general conformal automorphism of the unit disk $\D$  has the form
\[f(z)=e^{i\phi}\frac{z-a}{1-\bar{a}z},\quad |a| <1.\]
With the aid of the Schwarz lemma, it can be shown that every conformal automorphism of the unit disk  has this form.

As early as 1851, Riemann enunciated that every simply connected domain can be mapped conformally onto the unit disk. This is contained in the \textbf{Riemann Mapping Theorem} stated as follows:

\emph{Let $U$ be a simply connected domain which is a proper subset of $\C$. Let $\zeta$ be a given point in $U$. Then there is a unique conformal isomorphism $\phi$ which maps $U$ onto $\D$ and has the properties $\phi(\zeta)=0$ and $\phi'(\zeta) >0$.}\\
For more details and proofs of the theorem one may refer \textbf{Ahlfors}~\cite{complex analysis}.
\subsection{Hyperbolic Metric}
From above discussion, a conformal automorphism of $\D$ has the form 
\[
w=e^{i\phi}\dfrac{(z-a)}{(1-\bar{a}z)},\; 0\leqslant\phi\leqslant 2\pi,\; |a|<1.
\]
By a simple computation one derives $|\dfrac{dw}{dz}|=\dfrac{1-|w|^2}{1-|z|^2}\cdot$
Then the metric 
\[
d\rho=\dfrac{2\,|dz|}{1-|z|^2}=\dfrac{2\,|dw|}{1-|w|^2}
\]
is invariant under the conformal automorphisms of $\D.$ This metric $d\rho$ is called the hyperbolic (or Poincar$\acute{\text{e}}$) metric on $\D.$ The hyperbolic distance $\rho(z,w)$ between two points $z, w\in\D$ is defined as 
\[
\rho(z,w)=\inf_{\gamma}\int_{\gamma}d\rho
\]
where the infimum is taken over all curves $\gamma$ joining $z, w\in\D.$ The geodesics in this metric are arcs of Euclidean circles orthogonal to $\partial\D$ and its diameters (as a special case). For a detailed discussion on hyperbolic metric one can refer \textbf{Beardon}~\cite{the geometry of discrete groups}.

The \textbf{uniformization theorem} is the most fundamental result of Riemann surface theory. This theorem is the cornerstone of the classification of Riemann surfaces. It states that :

\emph{The universal covering space of every Riemann surface is conformally equivalent to either the complex plane $\C,$ the Riemann sphere $\ti\C$ or the unit disk $\D.$}\\
As the universal covering space of any space is simply connected, so equivalently the uniformization theorem may be stated as :

\emph{Every simply connected Riemann surface is conformally equivalent to either the complex plane $\C,$ the Riemann sphere $\ti\C$ or the unit disk $\D.$}\\
The three cases are referred to as \emph{parabolic}, \emph{elliptic} and \emph{hyperbolic}.
For a proof of the uniformization theorem one may refer \textbf{Ahlfors}~\cite{conformal invariants: topics in geometric function theory}.
\section{Some topological results}
\begin{definition}
[\textbf{Topological groups}]
A topological group $G$ is both a group and a Hausdorff topological space, the two structures being \emph{compatible} in the sense that, the \emph{multiplication map} $m : G\times G\to G$ and the \emph{inversion map} $i : G\to G$ are both continuous.
\end{definition}
\begin{example}
Any abstract group with the \emph{discrete} topology is a topological group.
\end{example}
\begin{example}
The real line with \emph{additive structure} is a topological group.
\end{example}
\begin{example}
$S^1=\{z:\,|z|=1\},$ the set of complex numbers of unit modulus is a topological group since\\
$m : S^1\times S^2\to S^1$ defined as $m(e^{i\theta}, e^{i\phi})=e^{i(\theta+\phi)},$ and\\
$i : S^1\to S^1$ defined as $i(e^{i\theta})=e^{-i\theta}$ are both  continuous. This group is known as \emph{circle group}.\\
For more details one can refer \textbf{Armstrong}~\cite{armstrong}.
\end{example}
The \textbf{Euler characteristic} of a surface is a numerical invariant. Let $S$ be a compact surface and $T$ be a triangulation of $S.$ Then $T$ partitions $S$ into mutually disjoint subsets which is either a vertex, an edge or a face and are called simplices  of $T.$ If $T$ contains $F$ faces, $E$ edges and $V$ vertices, then the Euler characteristic $\chi(S)$ of $S$ equals $F-E+V.\;\chi(S)$ is a topological invariant, which is independent of the choice of triangulation. If $S_1$ and $S_2$ are subsets of $S$ each consisting of a union of simplices in $T,$ then
\[
\chi(S_1\cup S_2)=\chi(S_1)+\chi(S_2)-\chi(S_1\cap S_2).\tag{$\ast$}
\]
By constructing explicit triangulations one can see that $\chi(\ti\C)=2$ and $\chi(D)=1$ for any disk $D$ in $\ti\C.$  Now let $D=\ti\C\smallsetminus\{D_1, D_2,\ldots, D_k\}$ where 
$D_i,\, 1\leqslant i\leqslant k$ are mutually disjoint topological closed disks each being bounded by a Jordan curve, so that connectivity of $D$ is $k.$ By triangulating the sphere, each of $D, D_1,\ldots, D_k$ is a union of simplices, so that by ({$\ast$}),$    \;2=\chi(\ti\C)=\chi(D)+\sum_{i=1}^{k}\chi(D_i)=\chi(D)+k$ (as $\chi(D_i)=1\;\forall\;i$), which implies that $\chi(D)=2-k.$ Hence we conclude that if $D\subset\ti\C$ has connectivity $k,$ then $\chi(D)=2-k.$ For more details one may refer \textbf{Massey}~\cite{massey}.


\subsection{The Fundamental Group}
Here we discuss a basic method for attaching a group to each topological space, which is a topological invariant. This algebraic invariant was introduced by Henri Poincar$\acute{\text{e}},$ and is called the \textbf{Fundamental Group} or \textbf{Poincar$\acute{\text{e}}$ Group} of the space.\\
Let $X$ be a topological space. Fix a point $x_0\in X.$ Consider the collection $\mathcal C$ of all curves $\gamma : [0,1]\to X$ such that $\gamma(0)=\gamma(1)=x_0.$ Such a curve $\gamma$ is called a \emph{loop} based at $x_0.$ Two curves $\gamma_1$ and $\gamma_2$ are called \emph{homotopic} relative to $x_0,$ if there is a continuous function $H : [0,1]\times [0,1]\to X$ such that
\begin{enumerate}
\item[(i)] $H(0,t)=\gamma_1(t)\;\; \forall\;t\in [0,1];$
\item[(ii)] $H(1,t)=\gamma_2(t)\;\; \forall\;t\in [0,1];$
\item[(iii)] $H(s,0)=H(s,1)=x_0\;\; \forall\;s\in [0,1].$
\end{enumerate}
The property of being \emph{homotopic} is an equivalence relation on $\mathcal C.$ Let $\mathcal H$ be the collection of all equivalence classes.\\
We now define a binary operation `$\ast$' on $\mathcal H$ which turns it into a group.\\
Let $\gamma_1, \gamma_2$ be curves in $\mathcal C.$ Define $\gamma_1\ast\gamma_2$ 
 to be the curve $\gamma_1$ \emph{followed} by the curve $\gamma_2,$ by the parametrization
 \begin{equation}
\notag
(\gamma_1\ast\gamma_2)(t)=
\begin{cases}
\gamma(2t)    &\text{if $0\leqslant t\leqslant \frac{1}{2}$ \quad and}\\
\gamma(2t-1)    &\text{if $\frac{1}{2}\leqslant t\leqslant 1.$}
\end{cases}
\end{equation}
We note that, the particular parametrization chosen has no significance. The operation is well defined and associative on equivalence classes. The identity element is the equivalence class $[e]$ consisting of the constant loop based at $x_0.$ The inverse of the curve $\gamma$ is the curve $\gamma^{-1}$ defined by $\gamma^{-1}(t)=\gamma(1-t)\;\;\forall\;t\in [0,1].$ Both curves start and end at the same point $x_0$ and have the same image; but $\gamma^{-1}$ traverses in the reverse direction of $\gamma.$ The operation of taking inverses respects the equivalence relation and so is well defined on equivalence classes. Also $[\gamma]\ast [\gamma]^{-1}=[\gamma]^{-1}\ast [\gamma]=[e].$\\
Thus $\mathcal H,$ equipped with the binary operation $\ast,$ forms a group. This group is usually denoted by $\pi_1(X, x_0)$  and is called the fundamental group of $X$ based at $x_0.$ However if $X$ is \emph{path connected} (i.e any two points in $X$ can be joined by a path in $X$), then the fundamental group is independent of the choice of base point. Thus we speak of fundamental group of $X.$\\ 
We give examples of fundamental groups of a few topological spaces.
\begin{example}
A {convex} subset of the Euclidean space $\mathbb R^n$ has \emph{trivial} fundamental group.
\end{example}
\begin{example}
The fundamental group of $S^1$ is isomorphic to the additive group of integers.
\end{example}
\begin{example}
For $n>1,$ the fundamental group of $S^n$ is trivial.
\end{example}
\begin{example}
For $n>1,$ the fundamental group of $\mathbb RP^n$ is $\mathbb Z_2,$ where $\mathbb RP^n$ is the projective $n-$space and is obtained from the unit $n-$sphere $S^n$ by identifying the \emph{antipodal points}.
\end{example}
\begin{example}
The fundamental group of Torus, $T=S^1\times S^1$ is $\mathbb Z\times\mathbb Z.$\\
For a detailed discussion on fundamental group one may refer ~\cite{massey},~\cite{munkres}.
\end{example}
\subsection{Covering Spaces}

\begin{definition}
[\textbf{Covering map}]
Let $X$ and $\tilde X$ be topological spaces. A covering map is a continuous surjective map $p : \tilde X\to X,$ such that each point $x\in X$ has an open neighbourhood $U$ in $X$ such that $p^{-1}(U)$ is the disjoint union of open sets $\tilde{U_{\alpha}}\subset\tilde X,\;\alpha\in\Lambda$ with $p\vert_{\tilde{U_{\alpha}}} : \tilde{U_{\alpha}}\to U$ a homeomorphism. We say that $U$ is \emph{evenly covered} by $p.$ $\tilde X$ is called \emph{covering space} of $X.$ Every covering map is a local homeomorphism and therefore an open map.
\end{definition}
\begin{example}
Any homeomorphism between topological spaces is a covering map.
\end{example}
\begin{example}
The \emph{exponential map} $p :\mathbb R\to S^1$ defined as $p(x)=e^{2\pi ix}$ is a covering map.
\end{example}
\begin{definition}
[\textbf{Deck transformation}]
A deck transformation or a covering transformation of a covering map $p :\tilde X\to X$ is a homeomorphism $h : \tilde X\to\tilde X$ such that the following diagram 
\begin{diagram}
\tilde X &\rTo^{h} &\tilde X \\
&\rdTo^{p} &\dTo_{p} \\
& &X
\end{diagram}
is commutative.\\
The collection $\Gamma=\{h :\tilde X\to\tilde X\,|\;h\,\,\text{is a homeomorphism and}\,\,ph=p\}$ forms a group under composition of mappings, called the group of deck transformations of the covering map $p.$
\end{definition}
\begin{definition}
[\textbf{Regular covering map}] Let $p :\tilde X\to X$ be a covering map and let $x\in X.$ Let $F=p^{-1}(x)$ be the fibre of $x.$ $p$ is called a regular covering map if \;$\forall\; \tilde{x_1}, \tilde{x_2}\in F,\; \exists$\; a unique deck transformation $\gamma\in\Gamma$ such that $\gamma(\tilde{x_1})=\tilde{x_2},$ and in this case $\tilde{X}$ is called a regular covering space of $X.$\\
If $\tilde{X}$ is simply connected covering space of $X,$ then it is unique upto homeomorphism and it is a regular covering space of any other covering space of $X.$ Hence $\tilde{X}$ is called the \emph{universal covering space} of $X.$
\end{definition}
\begin{example}
The universal covering space of $S^1$ is the \emph{real line} $\mathbb R.$
\end{example}
\begin{example}
The universal covering space of \emph{Torus}, $T=S^1\times S^1$ is $\mathbb R^2.$
\end{example}
\begin{example}
The universal covering space of $\mathbb RP^n$ is $S^n.$ 
\end{example}
A universal covering map $p :\tilde X\to X$ is a regular covering map, and in this case the deck transformation group $\Gamma$ coincides with the fundamental group $\pi_{1}(X)$ of $X.$ Moreover $\tilde X\diagup\Gamma\cong X.$ For a detailed discussion on covering spaces one may refer ~\cite{bredon}, ~\cite{massey}.\\

\chapter{Pictures of Julia Set}\label{B : Pictures of Julia set}
More than sixty years after the fundamental work of Julia and Fatou, the field attracted new interest in the early eighties when Sullivan announced the solution of the most important problem which is the topic of this thesis and the empirical discoveries due to Mandelbrot and the beautiful dynamical color- plates. Probably these computer assisted graphics displays stimulated new interest in rational iteration. 

Here we are displaying some graphics of iterations of functions (Julia sets) mentioned in the thesis.
\backmatter

\end{document}